\title[On symplectic fillings of spinal open book decompositions~I]{On 
symplectic fillings of spinal open book decompositions~I:\\
Geometric constructions}
\author{Samuel Lisi}
\address{University of Mississippi\\
Department of Mathematics\\
USA }
\email{stlisi@olemiss.edu}
\author{Jeremy Van Horn-Morris}
\address{Department of Mathematical Sciences \\
The University of Arkansas\\
USA}
\email{jvhm@uark.edu}
\author{Chris Wendl}
\address{Institut f\"ur Mathematik \\
Humboldt-Universit\"at zu Berlin \\
Germany}
\email{wendl@math.hu-berlin.de}
\thanks{S.L.~was partially supported during this project by the ERC Starting Grant of Fr\'ed\'eric Bourgeois
StG-239781-ContactMath, Vincent Colin's ERC Grant geodycon, and by a University
of Mississippi CLA SRG. J.V.H.--M.~was partially supported by 
Simons Foundation grant No.~279342 and NSF grant DMS-1612412. 
C.W.~was partially supported by a 
Humboldt Foundation Postdoctoral Fellowship, a Royal Society University
Research Fellowship, and EPSRC grant EP/K011588/1.
}
\subjclass[2010]{Primary 32Q65; Secondary 57R17}
\begin{document}

\begin{abstract}
A spinal open book decomposition on a contact manifold is a
generalization of a supporting open book which exists naturally 
e.g.~on the boundary of a symplectic filling with a Lefschetz fibration 
over any compact oriented surface with boundary.  In this first paper of
a two-part series, we introduce the basic notions relating spinal open
books to contact structures and symplectic or Stein structures on Lefschetz fibrations, leading to
the definition of a new symplectic cobordism construction called 
\emph{spine removal} surgery, which generalizes previous constructions due to
Eliashberg \cite{Eliashberg:cap}, Gay-Stipsicz \cite{GayStipsicz:cap}
and the third author \cite{Wendl:cobordisms}.  As an application, spine removal
yields a large class of new examples of contact manifolds that are not
strongly (and sometimes not weakly) symplectically fillable.  This paper
also lays the geometric groundwork for a theorem to be proved in part~II,
where holomorphic curves are used to classify the symplectic and Stein fillings of contact
$3$-manifolds admitting a spinal open book with a planar page.
\end{abstract}

\maketitle

\tableofcontents

\setcounter{section}{-1}
\section{Introduction}
\label{sec:overview}

The present paper is the first in a two-part series aimed at generalizing
the well-known interplay between contact structures with supporting open book 
decompositions and their fillings by symplectic or Stein manifolds with
symplectic Lefschetz fibrations.  We can point to at least two specific previous
applications of open books in contact topology as inspiration
for this project:
\begin{enumerate}
\item In \cites{Wendl:fillable,NiederkruegerWendl}, the third author proved
that for every contact $3$-manifold supported by a planar open book, the deformation
classes of its symplectic fillings are in bijective correspondence to the
diffeomorphism classes of Lefschetz fibrations over $\DD^2$ that fill the
open book.  Some version of this statement is true moreover for all of the
usual notions of symplectic fillability (i.e.~weak, strong, Liouville and Stein),
thus proving that for planar contact manifolds, they are all equivalent.
The problem of classifying fillings for such contact manifolds was reduced
in this way to a factorization problem on the mapping class group of surfaces,
cf.~\cites{PlamenevskayaVanHorn,Plamenevskaya:surgeries,Wand:planar,
KalotiLi:Stein,Kaloti:Stein}.
\item In \cite{Eliashberg:cap}, Eliashberg used non-exact symplectic 
$2$-handles attached along the binding of an open book to construct symplectic
caps for all closed contact $3$-manifolds.  This served among other things as
an ingredient in Kronheimer-Mrowka's proof of Property~P \cite{KronheimerMrowka:propertyP}, 
and it was later
generalized to various forms of non-exact symplectic cobordism between 
contact manifolds, cf.~\cites{Gay:GirouxTorsion,GayStipsicz:cap,Wendl:cobordisms}.
\end{enumerate}
The motivating question behind the present project was as follows: 
what structure naturally arises on the convex boundary of a Lefschetz
fibration with exact symplectic fibers over a surface with boundary
other than~$\DD^2$?  Spinal open books are the answer to this question, and
we will show that they give rise to far-reaching generalizations of both of
the results mentioned above.  One example of the first type appeared already
in \cite{Wendl:fillable}, where the symplectic fillings of $\TT^3$ were
classified in terms of Lefschetz fibrations over the annulus 
$[-1,1] \times S^1$.  This was proved using methods from the low-dimensional
theory of $J$-holomorphic curves, and the aim of the sequel to this 
paper \cite{LisiVanhornWendl2} will be to push those techniques as
far as they can reasonably be pushed.

Here is an initial sketch of the main idea.
Roughly speaking, a spinal open book decomposes a $3$-manifold
$M$ into two (possibly disconnected) pieces, called the \emph{paper} $M\paper$
and the \emph{spine} $M\spine$, where $M\paper$ consists of families of
\emph{pages} fibering over~$S^1$, 
$M\spine$ is an $S^1$-fibration over some collection of compact 
oriented surfaces, and the boundaries of fibers in $M\paper$ consist of 
fibers in~$M\spine$ (see Figure~\ref{fig:spinalPicture}).  The usual notion of 
open books is recovered if one takes the base of the fibration on $M\spine$ to 
be a disjoint union of disks (see Example~\ref{ex:openbook}); similarly, 
allowing annuli in the base produces
the notion of \emph{blown up summed open books} (Example~\ref{ex:bindingSum}), 
which were studied in \cite{Wendl:openbook2}.  
One of the main results of \cite{LisiVanhornWendl2} 
can be summarized as follows (see \S\ref{sec:results} below for the pertinent
definitions):

\begin{figure}
\includegraphics{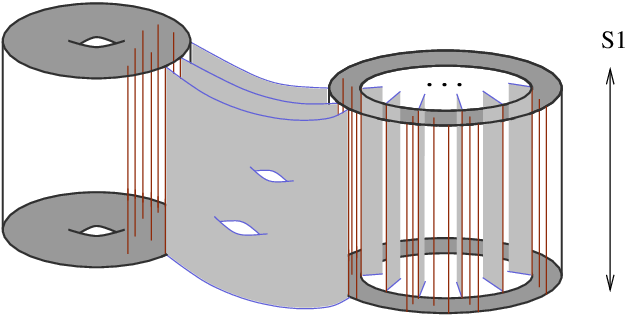}
\caption{\label{fig:spinalPicture} 
A spinal open book with two spine components, which are $S^1$-fibrations
over a genus~$1$ surface with one boundary component and an annulus respectively.
They are connected to each other by an $S^1$-family of pages with genus~$2$,
and we can also see a fragment of a second $S^1$-family of pages attached
to the annular spine component.}
\end{figure}

\begin{thma}[\cite{LisiVanhornWendl2}]
\label{thma:main}
Suppose $(M,\xi)$ is a closed contact $3$-manifold containing a domain
$M_0$ on which $\xi$ is supported by an amenable spinal
open book $\boldsymbol{\pi}$ that has a planar page in its interior. 
If $(M, \xi)$ admits a weak filling that is exact on the spine of~$\boldsymbol{\pi}$,
then $M = M_0$, and the set of weak symplectic fillings of $(M,\xi)$ that are 
exact on the spine is, up to symplectic deformation equivalence, 
in one-to-one correspondence
with the set of Lefschetz fibrations (up to diffeomorphism) that match
$\boldsymbol{\pi}$ at their boundaries.  Moreover, every such filling can
be deformed to a blowup of a Stein filling.
\end{thma}

To focus for a moment on Stein fillings in particular: most previous results
classifying Stein fillings have classified them up to diffeomorphism or
symplectic deformation, the only exceptions we are aware of being results of
Eliashberg \cites{Eliashberg:diskFilling,CieliebakEliashberg} 
and Hind \cites{Hind:RP3,Hind:Lens},
which achieved uniqueness up to Stein deformation equivalence for
fillings of $S^3$, connected sums of $S^1 \times S^2$, and certain lens spaces.
In these examples, the classification up to Stein deformation matches the
classification up to symplectic deformation, and we will see that
this is not a coincidence---it can be seen as a symptom of a general
\emph{quasiflexibility} phenomenon for Stein surfaces:

\begin{thma}[\cite{LisiVanhornWendl2}]
\label{thma:quasi}
Suppose $W$ is a compact $4$-manifold with boundary, 
admitting two Stein structures
$J_0$ and $J_1$ such that $(W,J_0)$ is compatible with a Lefschetz fibration
(over an arbitrary compact oriented surface) with fibers of genus zero.
Then $J_0$ and $J_1$ are Stein homotopic if and only if their induced 
symplectic structures are homotopic as symplectic structures
convex at the boundary.
\end{thma}

Note that the symplectic deformation in this statement need not be in a fixed
cohomology class---in particular, quasiflexibility is a very different phenomenon
from the familiar relationship between Stein and Weinstein structures
(cf.~\cite{CieliebakEliashberg}).

While the results quoted above require holomorphic curve techniques, this
first paper in the series will focus on the less analytical but more geometric
aspects of the theory of spinal open books.  We will start by giving natural constructions
of contact structures supported by spinal open books and symplectic or
Stein structures related to them.  The most subtle of these results pertains specifically
to Stein (or equivalently Weinstein) structures, and gives a verifiable criterion in terms of Lefschetz fibrations
for two Stein structures to be Stein homotopic.
This will serve in \cite{LisiVanhornWendl2} as an 
essential ingredient for the classification of Stein fillings up to Stein
deformation and the proof of Theorem~\ref{thma:quasi}.  The result is most
easily stated in terms \emph{almost Stein structures}, which are 
pairs $(J,f)$ consisting of an almost complex
structure $J$ and a $J$-convex function~$f$.  Here $J$ is not required to be
integrable, and $f$ need not be constant at the boundary, thus they do not
immediately define a Stein structure, but if we assume the Liouville vector
field dual to $-df \circ J$ is outwardly transverse at the boundary, then
$(J,f)$ nonetheless determines a Weinstein structure 
canonically up to Weinstein homotopy (see \S\ref{sec:BLF}).

The following theorem can be interpreted as saying that the Stein homotopy class
of a Stein structure can be deduced from a Lefschetz fibration if it satisfies
fairly strict compatibility conditions near the boundary but a minimum of
reasonable conditions in the interior---this result is well suited in particular
to the scenario in which fibers of a Lefschetz fibration are $J$-holomorphic curves.

\begin{thma}[see Theorem~\ref{thm:SteinHomotopy}]
\label{thma:SteinHomotopy}
Suppose $\Pi : E \to \Sigma$ is a Lefschetz fibration 
whose regular fibers and
base are each compact oriented surfaces with nonempty boundary, and write
$$
\p_v E := \Pi^{-1}(\p\Sigma), \qquad \p_h E := \bigcup_{z \in \Sigma} \p E_z.
$$
For $\tau =0,1$, assume $J_\tau$ is an almost complex structure on $E$ and
$f_\tau : E \to \RR$ is a smooth $J_\tau$-convex function such that the
following conditions are satisfied:
\begin{enumerate}
\item $J_\tau$ preserves the vertical subbundle of $TE$ and is compatible with
its orientation;
\item $f_\tau$ is constant on the boundary components of every fiber;
\item The Liouville form $\lambda_\tau := -df_\tau \circ J_\tau$ restricts to
both $\p_v E$ and $\p_h E$ as contact forms, the induced Reeb vector field
on $\p_h E$ is tangent to the fibers, and its flow preserves the maximal
$J_\tau$-complex subbundle of $T(\p_h E)$;
\item There exists a complex structure $j_\tau$ on $\Sigma$ and
an open neighborhood $\uU \subset \Sigma$ of $\p\Sigma$
such that the Cauchy-Riemann equation $T\Pi \circ J_\tau = j_\tau \circ T\Pi$
is satisfied on $E|_{\uU}$ and~$\p_h E$.
\end{enumerate}
Then the Weinstein structures on $E$ (after smoothing the corners) determined
by $(J_0,f_0)$ and $(J_1,f_1)$ are Weinstein homotopic.
\end{thma}

With this groundwork in place, we will then introduce
a new construction of non-exact symplectic cobordisms that generalizes
previous results from \cites{Eliashberg:cap,GayStipsicz:cap,Wendl:cobordisms} and 
arises from a natural topological operation on
spinal open books called \emph{spine removal surgery}.  An informal version
of the result can be stated as follows:

\begin{thma}[see Theorem~\ref{thm:spineRemoval}]
\label{thma:spineRemoval}
Assume $(M,\xi)$ is a contact $3$-manifold supported by a spinal open book
$\boldsymbol{\pi}$, $\Sigma\remove \times S^1 \cong M\remove \subset M\spine$ is an
open and closed subset of the spine of~$\boldsymbol{\pi}$, and $\widetilde{\boldsymbol{\pi}}$
is a spinal open book on a contact $3$-manifold $(\widetilde{M},\widetilde{\xi})$ 
defined by deleting $M\remove$ from $M$ and capping off all adjacent boundary components of pages of
$\boldsymbol{\pi}$ by disks.  Then there exists a symplectic cobordism
with strongly concave boundary $(M,\xi)$ and weakly convex
boundary $(\widetilde{M},\widetilde{\xi})$, defined by attaching the ``handle''
$\Sigma\remove \times \DD^2$ with a product symplectic structure along 
$\Sigma\remove \times S^1 \cong M\remove$.
\end{thma}

Special cases of this operation were used in \cite{Wendl:cobordisms} to
construct non-exact symplectic cobordisms between pairs of contact $3$-manifolds
that do not admit exact ones, e.g.~it showed that all of the known examples
of contact $3$-manifolds with finite orders of algebraic torsion
(cf.~\cite{LatschevWendl}) are symplectically cobordant to overtwisted ones.
We will use the general version in this paper to prove the vast
majority of cases of Theorem~\ref{thma:main} for which the contact manifold
turns out to be non-fillable, a result that can be interpreted as generalizing the
local filling obstruction defined as \emph{planar torsion} in \cite{Wendl:openbook2}.
A slightly different kind of 
application appears in \cite{LisiWendl:geography}, where spine removal is used 
to prove that contact $3$-manifolds supported by planar spinal open books
satisfy a universal bound on the geography of their symplectic fillings.
This generalizes a previous result
for the case of planar open books due to Plamenevskaya 
\cite{Plamenevskaya:surgeries} (see also \cite{Kaloti:Stein}).

In our project we have focused specifically on dimension three, since 
that is where the strongest results on classification of fillings can be
proved, but it should be mentioned that the theory of spinal open books has
already had some impact on developments in higher-dimensional contact 
topology.  In dimension $2n-1$, it is natural to consider decompositions
$M = M\spine \cup M\paper$ where $M\paper$ is a fibration of Liouville
domains over a contact manifold and $M\spine$ is a strict contact
fibration over a Liouville domain.  Taking $\DD^2$ and $S^1$ as bases produces 
the usual notion of open books in arbitrary dimensions, but it is sometimes
also useful to allow higher-dimensional bases,
e.g.~the first author has observed that Bourgeois's 
construction \cite{Bourgeois:tori} of contact structures on $M \times \TT^2$
can be understood as an operation replacing $\DD^2$ and $S^1$ with
$T^*\TT^2$ and $\TT^3$ as base spaces in a spinal open book
(cf.~\cite{LisiMarinkovicNiederkrueger}).  Working with
strictly low-dimensional fibers but higher-dimensional bases, 
\cite{MassotNiederkruegerWendl} constructed a higher-dimensional version of
a spine removal cobordism in order to establish the first examples of
higher-dimensional tight contact manifolds that are not symplectically fillable.
More recently, Moreno \cites{Moreno:thesis,Moreno:algebraicGiroux} and
Zhou \cite{Zhou:APT} have used high-dimensional
spinal open books to construct new examples of contact manifolds with
higher-order algebraic torsion.  Spinal open books also appeared
in work of Acu and Moreno \cite{AcuMoreno:planarity}, which used a variant 
of spine removal surgery to study a higher-dimensional
analogue of planar contact manifolds, and in Zhou's construction \cite{Zhou:infinite}
of contact manifolds in arbitrary dimensions admitting infinitely many
distinct Weinstein fillings.

\subsubsection*{A remark on timing}

While this paper is intended as the ``official''
introduction to spinal open books in dimension three, 
the project has by now been in preparation 
long enough for some of the fundamental notions to have appeared already
in several other papers, some by the authors plus collaborators
(e.g.~\cite{BaykurVanhorn:large}) and some by others
(e.g.~\cites{MinRoyWang:exotic,PlamenevskayaStarkston:nearly}).
We have tried to make sure all definitions are consistent with what has
previously appeared, but in the event of any discrepancies, the present
paper is meant to be definitive.

\subsubsection*{Outline of the paper}

Section~\ref{sec:results} is an extended introduction, intended to give
precise versions of all the essential definitions and main results, including
some definitions that are needed mainly for the classification discussion
in~\cite{LisiVanhornWendl2}.
Section~\ref{sec:topology} then proves the essential theorems relating
spinal open books and Lefschetz fibrations to their associated deformation
classes of contact and symplectic structures, and \S\ref{sec:SteinHomotopy}
proves Theorem~\ref{thma:SteinHomotopy} on Stein homotopy classes.
In \S\ref{sec:model}, we construct a concrete symplectic model for collar
neighborhoods (in the symplectization) of a contact manifold
supported by an arbitrary
spinal open book, which is then
used to prove the main theorem on symplectic cobordisms arising from
spine removal surgery.  This result is then applied in
\S\ref{sec:spineRemoval} to establish new criteria for nonfillability.

\subsubsection*{Acknowledgments}

This project has taken several years to come to fruition, and we are
grateful to many people for valuable conversations along the way,
including especially Denis Auroux, \.{I}nan\c{c} Baykur, Michael Hutchings, 
Tom Mark, Patrick Massot, Richard Siefring, and Otto van Koert.
We would also like to thank the American Institute of Mathematics for bringing
the three of us together at key junctures in this project.

\section{Definitions and results}
\label{sec:results}

\subsection{Main definitions}
\label{sec:defns}

In this section we give the main definitions and state precise versions of
the main results of the paper.

\subsubsection{Types of symplectic fillings}
\label{sec:fillings}

Throughout this paper, we assume all contact structures on oriented 
$3$-manifolds to be \emph{co-oriented} and \emph{positive}, i.e.~they
can always be written as $\xi = \ker\alpha$ where the contact form
$\alpha$ satisfies $\alpha \wedge d\alpha > 0$.
Suppose $(M,\xi)$ is a closed contact $3$-manifold and $(W,\omega)$ is a
compact connected symplectic $4$-manifold with boundary. 
Then $(W,\omega)$ is a \defin{weak filling} of $(M,\xi)$ if
$\p W$ can be identified via an orientation-preserving diffeomorphism with~$M$
such that $\omega|_\xi > 0$.  We also say in this case that $\omega$
\defin{dominates} $\xi$ at the boundary, and that the boundary is
\defin{weakly convex} with respect to~$\omega$.  If there additionally exists a $1$-form
$\lambda$ near $\p W$ that satisfies $d\lambda=\omega$ and restricts to the boundary as a contact form
for~$\xi$ under the above identification $\p W \cong M$, then the boundary is called \defin{convex} and  
$(W,\omega)$ is called a \defin{strong filling} of $(M,\xi)$.
We say that two weak/strong
fillings $(W,\omega)$ and $(W',\omega')$ of contact manifolds
$(M,\xi)$ and $(M',\xi')$ respectively are weakly/strongly 
\defin{symplectically deformation equivalent} if there exists a diffeomorphism
$\varphi : W \to W'$ and smooth $1$-parameter families of symplectic
structures $\{ \omega_\tau \}_{\tau \in [0,1]}$ on $W$ and
contact structures $\{ \xi_\tau \}_{\tau \in [0,1]}$ on $M$ such that
$\omega_0 = \omega$, $\omega_1 = \varphi^*\omega'$, $\xi_0 = \xi$,
$\xi_1 = \varphi^*\xi'$, and $(W,\omega_\tau)$ is a weak/strong filling
of $(M,\xi_\tau)$ for each $\tau \in [0,1]$.  Note that by
Gray's stability theorem, deformation equivalence implies that
$(M,\xi)$ and $(M',\xi')$ must be contactomorphic.

Recall that a symplectic $4$-manifold is
said to be \defin{minimal} if it does not contain any \emph{exceptional
spheres}, i.e.~symplectically embedded $2$-spheres with self-intersection
number~$-1$.  By an argument due to McDuff \cite{McDuff:rationalRuled}, minimality
is invariant under (strong or weak) symplectic deformation.\footnote{In our
context, McDuff's argument that minimality is preserved under deformations 
depends on the conditions we impose on $\omega$ at~$\p W$:
these guarantee in particular that one can always make the boundary
$J$-convex for a tame almost complex structure~$J$, thus preventing
$J$-holomorphic spheres from escaping the interior.}

We call $(W,\omega)$ an \defin{exact filling} of $(M,\xi)$ if it is a strong
filling such that the $1$-form $\lambda$ as defined above near the boundary
extends to a global primitive of~$\omega$ on~$W$.  In this case $(W,d\lambda)$
is also called a \defin{Liouville domain}, with \defin{Liouville form}
$\lambda$, which determines the \defin{Liouville vector field} $V_\lambda$
via the condition
$$
\omega(V_\lambda,\cdot) = \lambda.
$$
Two exact fillings are said to be \defin{Liouville deformation equivalent} if 
they are strongly symplectically deformation equivalent and each of the symplectic
structures in the smooth homotopy defines an exact filling.  Note that 
for any fixed $\omega$ on a Liouville domain, the space of Liouville forms
$\lambda$ satisfying $d\lambda = \omega$ is convex, thus every Liouville
deformation in this sense can be realized by a smooth homotopy of
Liouville forms.

Finally, a \defin{Stein filling} of $(M,\xi)$ is a compact 
connected complex manifold 
$(W,J)$, also called a \defin{Stein domain}, with oriented boundary 
identified with~$M$ such that $\xi \subset TM$
is the maximal complex-linear subbundle, and such that there exists a
smooth function $f : W \to \RR$ that has the boundary as a regular level set
(we say that $f$ is \defin{exhausting})
and is \defin{plurisubharmonic}.  The latter means that $\lambda_J := -df
\circ J$ is a Liouville form and the resulting symplectic form
$\omega_J := d\lambda_J$ \defin{tames}~$J$, i.e.
$$
\omega_J(X,JX) > 0 \text{ for all nonzero $X \in TW$.}
$$
Two Stein fillings are \defin{Stein deformation equivalent} if they
can be identified via a diffeomorphism so that the two complex structures
are homotopic through a smooth family of integrable complex structures
that all admit exhausting plurisubharmonic functions.

Note that for a given~$J$, the space of exhausting plurisubharmonic functions
is convex, and the plurisubharmonicity condition is open with respect to~$J$;
one can use these facts to show that any smooth homotopy of Stein structures
can be accompanied by a smooth homotopy of exhausting plurisubharmonic
functions.  By the correspondence $J \mapsto \lambda_J \mapsto \omega_J$
defined above, it follows that
a Stein deformation class of Stein fillings always gives rise to a canonical
Liouville deformation class of exact symplectic fillings.
The exact fillings arising in this way have the additional feature that
their Liouville vector fields are gradient-like: indeed, any exhausting
plurisubharmonic function $f : W \to \RR$ on a Stein domain $(W,J)$ is also
a Lyapunov function for the Liouville vector field $V_J$ dual to~$\lambda_J$,
thus giving $(W,\omega_J,V_J,f)$ the structure of a \defin{Weinstein domain}.
We will occasionally make use of the deep theorem from
\cite{CieliebakEliashberg} giving
a one-to-one correspondence between deformation classes of
Stein domains and Weinstein domains respectively.

\begin{remark}
\label{remark:Morse}
Strictly speaking, the function $f$ in a Weinstein structure should always be
required to be Morse (or generalized Morse in the case of deformations),
but on Stein domains this can always be achieved via small perturbations of 
plurisubharmonic functions since the plurisubharmonicity condition is open.
\end{remark}

\subsubsection{Spinal open books}
\label{sec:spinalDef}

The following topological notion will be of central importance in this paper.

\begin{defn}
\label{defn:spinal}
A \defin{spinal open book decomposition} on a compact oriented
$3$-dimensional manifold~$M$, possibly with boundary, 
is a decomposition $M = M\spine \cup M\paper$, where the pieces $M\spine$ and
$M\paper$ (called the \defin{spine} and \defin{paper} respectively) 
are smooth compact $3$-dimensional submanifolds
with disjoint interiors such that $\p M\spine \subset \p M\paper$,
carrying the following additional structure:
\begin{enumerate}
\item A smooth fiber bundle $\pi\spine : M\spine \to \Sigma$ with connected
and oriented
fibers, all of which are either disjoint from $\p M\spine$ or contained in it.
Here, $\Sigma$ is a compact oriented surface whose connected components 
(called \defin{vertebrae}\footnote{The use of the bookbinding metaphor for open
book decompositions was the original inspiration for our choice of the 
terms ``spine'' and ``paper'', though the alternative anatomical 
meaning of ``spine'' also has some advantages.  The term ``vertebrae'' makes
sense especially when one observes that the fibration $\pi\spine : M\spine \to \Sigma$
is necessarily trivial, thus the spine can be foliated by vertebrae.  It
makes less sense perhaps in higher-dimensional analogues of spinal
open books, where $\pi\spine : M\spine \to \Sigma$ need not
always be a trivial fibration.})
all have nonempty boundary.
\item A smooth fiber bundle $\pi\paper : M\paper \to S^1$ with oriented fibers
whose connected components (called \defin{pages}) are each preserved
by the monodromy map, have nonempty boundary and meet $\p M\paper$ transversely.  
Moreover, the intersection of any fiber of $\pi\paper$ with $M\spine$ consists of
fibers of~$\pi\spine$.
\item At each connected boundary component $T \subset \p M$ 
(which is necessarily a
$2$-torus component of $\p M\paper$), there is a preferred homology class
$m_T \in H_1(T)$ with the property that if $f_T \in H_1(T)$
denotes the homology class of a connected component of 
$\pi\paper^{-1}(*) \cap T$ oriented as boundary of the fiber,
then $(m_T,f_T)$ defines a positively oriented basis of $H_1(T) \cong \ZZ^2$
for the boundary orientation of~$\p M$.
We call $m_T$ the \defin{preferred meridian} at~$T$.
\end{enumerate}
\end{defn}

We should emphasize that in the above definition, neither $M$ nor its
paper or spine is required to be connected, though pages and vertebrae are
connected by definition.  One can also allow the spine to be empty,
in which case $M$ must have nonempty boundary (since the pages do).
We shall typically denote the full collection of
data defining a spinal open book on~$M$ by
$$
\boldsymbol{\pi} := \Big(\pi\spine : M\spine \to \Sigma,
\pi\paper : M\paper \to S^1, \{m_T\}_{T \subset \p M}\Big).
$$
For any connected component $\gamma \subset \p \Sigma$, the fact that
boundary components of pages are also fibers of $\pi\spine$ means that there
is a well-defined map
\begin{equation}
\label{eqn:multiplicity}
\gamma \to S^1 : \phi \mapsto \pi\paper(\pi\spine^{-1}(\phi)).
\end{equation}
This map is always a diffeomorphism for ordinary open books
(see Example~\ref{ex:openbook}), but more generally it may be a finite cover.

\begin{defn}
\label{defn:multiplicity}
Given the spinal open book $\boldsymbol{\pi}$ as described above, we define 
the \defin{multiplicity} of $\pi\paper$ at a boundary component
$T \subset \p M\paper$ as the number of distinct page boundary components 
that touch~$T$.  If $T \subset M\paper \cap M\spine$, then the multiplicity
can equivalently be described as the degree
of the map $\gamma \to S^1$ defined in \eqref{eqn:multiplicity}.
\end{defn}

\begin{defn}
\label{defn:supported}
Given a spinal open book $\boldsymbol{\pi}$ on~$M$,
a positive contact form $\alpha$ on~$M$ will be called a
\defin{Giroux form} for $\boldsymbol{\pi}$ if the following conditions
hold:
\begin{enumerate}
\item The $2$-form $d\alpha$ is positive on the interior of every page;
\item The Reeb vector field $R_\alpha$ is positively tangent to every
oriented fiber of $\pi\spine : M\spine \to \Sigma$;
\item At $\p M$, $R_\alpha$ is positively tangent to the fibers of 
$\pi\paper|_{\p M} : \p M \to S^1$ and the
characteristic foliation defined by $\ker\alpha$ on $\p M$ has only closed 
leaves, which are homologous on each connected component $T \subset \p M$
to the preferred meridian~$m_T$.\footnote{The 
characteristic foliation $\ker (\alpha|_{T(\p M)}) \subset T(\p M)$ is 
oriented by any vector field $X$ that satisfies $\Omega(X,\cdot) = 
\alpha|_{T(\p M)}$ for a positive area form $\Omega$ on~$\p M$.}

\end{enumerate}
A contact structure $\xi$ on~$M$ will be said to be
\defin{supported} by $\boldsymbol{\pi}$ whenever it admits a contact form which
is a Giroux form.
\end{defn}

In order to obtain the existence and uniqueness of contact structures 
supported by a given spinal open book, technical issues will require us to
examine the smooth compatibility of the spine and paper at their common
boundary components slightly closer.

\begin{defn}
\label{defn:overlap}
We will say that a spinal open book $\boldsymbol{\pi}$ admits
a \defin{smooth overlap} if the fibration $\pi\paper : M\paper \to S^1$
can be extended over an open neighborhood $M\paper' \subset M$ containing
$M\paper$ such that all fibers of $\pi\spine$ intersecting $M\paper'$ are
contained in fibers of the extended~$\pi\paper$.
\end{defn}

\begin{remark}
\label{remark:smoothingSOB}
Any spinal open book can be modified, via a pair of smooth 
isotopies on the spine and paper which match on their common boundary 
components, so as to produce a spinal open book admitting a smooth overlap.
The result of this ``smoothing'' operation is also unique up to isotopy.
\end{remark}

In \S\ref{sec:support} we shall prove the following generalization of
the standard theorem of Thurston and Winkelnkemper 
\cite{ThurstonWinkelnkemper} on open books:

\begin{thm}
\label{thm:GirouxForms}
Suppose $M$ is a compact oriented $3$-manifold, possibly with boundary,
and $\boldsymbol{\pi}$ is a
spinal open book on~$M$ which admits a smooth overlap.  Then the
space of Giroux forms for $\boldsymbol{\pi}$ is nonempty and contractible.
In particular, any isotopy class of spinal open books gives rise to a
canonical isotopy class of supported contact structures.
\end{thm}

\begin{remark}
\label{remark:Gray}
When $\p M \ne \emptyset$, the above statement about uniqueness up to isotopy depends on the following
version of Gray's stability theorem for manifolds with boundary: 
a smooth $1$-parameter family of contact structures on a compact manifold with
boundary is induced by a smooth isotopy if and only if the resulting
characteristic foliations at the boundary are all isotopic.
This follows by a variation on the usual proof of the standard version
(see e.g.~\cite{Geiges:book}): if the characteristic foliations are isotopic,
then after an isotopy near the boundary one can assume they are constant,
and then check that the contact isotopy constructed in the standard way is
generated by a vector field tangent to the boundary.
For this reason it is important that supported contact structures always induce
characteristic foliations on $\p M$ with closed leaves in a fixed
homology class.
\end{remark}

\begin{example}
\label{ex:openbook}
An ordinary open book is the special case of 
a spinal open book where the spine is a tubular neighborhood of a 
transverse link $B \subset M$, 
i.e.~each connected component of $M\spine$ is of the form
$\DD^2 \times S^1$, and the multiplicities of Definition~\ref{defn:multiplicity}
are all~$1$.  Our definition of a
Giroux form in this case does not quite match the standard one, but a
Giroux form in our sense can be perturbed to the standard version,
so the notion of a supported contact structure is the same.
\end{example}

\begin{example}
\label{ex:rationalOB}
In the previous example, relaxing the condition that all multiplicities 
equal~$1$ generalizes from open books to certain types of \emph{rational}
open books as in \cite{BakerEtnyreVanhorn}.
\end{example}

\begin{example}
\label{ex:bindingSum}
Any \emph{blown up summed open book} as defined in \cite{Wendl:openbook2}
can be viewed as a spinal open book whose vertebrae are all disks or annuli.
For instance, one can understand
the \emph{binding sum} construction of \cite{Wendl:openbook2} as follows.
Topologically, it is defined by taking an ordinary open book 
$\pi : M \setminus B \to S^1$
with at least two binding circles $B_1, B_2 \subset B$, removing
tubular neighborhoods of $B_1$ and $B_2$ and attaching the resulting 
boundary tori
by an orientation reversing diffeomorphism that maps oriented 
boundaries of pages to each other 
and maps meridians to meridians (with reversed 
orientation).  In terms of
spinal open books, this is the same as removing two solid torus components
$$
(\DD^2\times S^1) \amalg (\DD^2 \times S^1) \subset M\spine
$$
from the spine and replacing these with $([-1,1] \times S^1) \times S^1$,
which we view as a spinal component with the annulus as a vertebra.
In contact geometric terms, the binding sum on a supported contact
structure produces a contact fiber sum (cf.~\cite{Geiges:book}), 
and it is not hard to show that
the resulting contact structure is supported by the spinal open book
described above.
\end{example}

\begin{example}
\label{ex:blowup}
Spinal open books with boundary can always be constructed from closed
spinal open books by deleting components of the spine and then choosing
suitable preferred meridians.  For example,
suppose ${\boldsymbol{\pi}}$ is an ordinary open book as characterized
in Example~\ref{ex:openbook}, so it is a spinal open book whose spinal
components are all trivial fibrations $\DD^2 \times S^1 \to \DD^2$.
We can then define a new spinal open book by deleting one such 
component $\DD^2 \times S^1$ from the spine; this produces a new boundary
component on the paper, which inherits a canonical meridian, namely
$[\p\DD^2 \times \{*\}] \in H_1(\p(\DD^2 \times S^1))$.
Topologically this has the effect
of removing a tubular neighborhood of one binding component, and the effect
on supported contact structures is exactly what is described in
\cite{Wendl:openbook2} as \emph{blowing up} along the binding.
\end{example}

\begin{remark}
\label{remark:corner}
Many of the notions of this section are also well defined without assuming
that $M$ is a globally smooth manifold: to define a spinal open book, $M$ must
at minimum be a topological
manifold that is obtained by gluing together two smooth manifolds
$M\spine$ and $M\paper$ along a smooth embedding $\p M\spine \hookrightarrow
\p M\paper$, so there are well-defined smooth structures on $M\spine$, 
$M\paper$ and $M\spine \cap M\paper$ but not necessarily on a 
neighborhood of the latter in~$M$.
In particular, it will be useful in the next section to take $M = \p E$
where $E$ is a smooth $4$-manifold with boundary and corners;
here the smooth faces of the boundary are $M\spine$ and $M\paper$ and
the corner is $M\spine \cap M\paper$.  In this case, a
Giroux form will be assumed to be the restriction to $M = \p E$ of a smooth
$1$-form on a neighborhood of the boundary in~$E$, such that the conditions
of Definition~\ref{defn:supported} are satisfied separately on each of the
smooth faces $M\spine$ and~$M\paper$.
\end{remark}

\subsubsection{Bordered Lefschetz fibrations}
\label{sec:BLF}

The motivating example of a spinal open book is obtained by considering
boundaries of Lefschetz fibrations.
In the following, we assume
$E$ to be a smooth, compact, oriented and connected $4$-manifold with 
boundary and  corners such that $\p E$ is the union of two smooth faces
$$
\p E = \p_h E \cup \p_v E
$$
which intersect at a corner of codimension two.  Likewise,
$\Sigma$ will denote a compact, oriented and connected surface with
nonempty boundary.

\begin{defn}
\label{defn:bordered}
A \defin{bordered Lefschetz fibration} of $E$ over $\Sigma$ is a smooth
map $\Pi : E \to \Sigma$ with finitely many interior critical points
$E\crit \subset \mathring{E}$ and critical values 
$\Sigma\crit \subset \mathring{\Sigma}$ such that the following
conditions hold:
\begin{enumerate}
\item $\Pi^{-1}(\p\Sigma) = \p_v E$ and $\Pi|_{\p_v E} : \p_v E \to \p\Sigma$
is a smooth fiber bundle;
\item $\Pi|_{\p_h E} : \p_h E \to \Sigma$ is also a smooth fiber bundle;
\item There exist integrable complex structures near $E\crit$ and
$\Sigma\crit$ such that $\Pi$ is holomorphic near $E\crit$ and
the critical points are nondegenerate;
\item All fibers $E_z := \Pi^{-1}(z)$ for $z \in \Sigma$ are connected and have
nonempty boundary in $\p_h E$.
\end{enumerate}
We call $E_z$ a \defin{regular fiber} if $z \in \Sigma\setminus
\Sigma\crit$ and otherwise a \defin{singular fiber}; the latter are
necessarily unions of smoothly immersed connected surfaces
(the \defin{irreducible components})
with positive transverse intersections.  We say that $\Pi$ is
\defin{allowable} if all the irreducible components of its fibers have
nonempty boundary.  
\end{defn}

By the complex Morse lemma, one can find holomorphic coordinates near
$E\crit$ and $\Sigma\crit$ so that $\Pi$ takes the form
$$
\Pi(z_1,z_2) = z_1^2 + z_2^2
$$
near each critical point.
Note also that in the standard language of vanishing cycles
(cf.~\cite{GompfStipsicz}), the ``allowability'' condition defined above is
equivalent to requiring that no vanishing cycles be homologically 
trivial in the fiber.\footnote{In some sources in the literature, it is erroneously
stated that a Lefschetz fibration is allowable if and only if its vanishing
cycles are always nonseparating in the fiber.  We will often want to consider
situations in which vanishing cycles are homologically nontrivial but
separating, e.g.~when the fiber is an annulus.
In the case where fibers have genus zero, a
Lefschetz fibration is allowable if and only if it is
\emph{relatively minimal}.}

A bordered Lefschetz fibration $\Pi : E \to \Sigma$ naturally gives rise to a
spinal open book on $\p E$, with spine $M\spine := \p_h E$ and paper 
$M\paper := \p_v E$.
The fibration $\pi\paper : \p_v E \to S^1$ is defined as the restriction 
$\Pi|_{\p_v E} : \p_v E \to \p\Sigma$ after choosing an orientation preserving
identification of each connected component of~$\p\Sigma$ with~$S^1$.
Likewise, $\Pi|_{\p_h E} : \p_h E \to \Sigma$ defines a smooth fibration
whose fibers are disjoint unions of finitely many circles, hence it can be 
factored as
$$
\p_h E \stackrel{\pi\spine}{\longrightarrow} \widetilde{\Sigma} 
\stackrel{p}{\longrightarrow} \Sigma,
$$
where $\pi\spine : \p_h E \to \widetilde{\Sigma}$ is a fiber bundle with 
connected fibers over another compact oriented surface 
$\widetilde{\Sigma}$ with boundary, and $p : \widetilde{\Sigma} \to \Sigma$
is a smooth finite covering map.
As discussed in Remark~\ref{remark:corner}, the fact that
$\p E$ is not naturally a smooth manifold does not present any problem here.
In this class of examples, 
every vertebra is a finite cover of the base $\Sigma$, the 
pages are all
diffeomorphic to the regular fibers of~$\Pi$, and the boundary components of 
these fibers form the fibers on the spine.
Whenever $\boldsymbol{\pi}$ is a spinal open book on a $3$-manifold $M$
admitting a homeomorphism to~$\p E$ that restricts to diffeomorphisms
$M\spine \to \p_h E$ and $M\paper \to \p_v E$ such that $\boldsymbol{\pi}$
is related to $\Pi : E \to \Sigma$ as described above,
we shall indicate this relationship by writing
$$
\p\Pi \cong \boldsymbol{\pi}.
$$

Clearly not all spinal open books can be obtained as boundaries of
Lefschetz fibrations, so those that can deserve a special name.
\begin{defn}
\label{defn:simple}
A spinal open book $\boldsymbol{\pi}$ on a $3$-manifold 
$M$ will be called \defin{symmetric} if 
\begin{enumerate}[label=(\roman{enumi})]
\item $\p M = \emptyset$;
\item All pages are diffeomorphic;
\item For each of the vertebrae $\Sigma_1,\ldots,\Sigma_r \subset \Sigma$,
there are corresponding numbers $k_1,\ldots,k_r \in \NN$ such that every
page has exactly $k_i$ boundary components in $\pi\spine^{-1}(\p\Sigma_i)$ for
$i=1,\ldots,r$.
\end{enumerate}
We shall say that $\boldsymbol{\pi}$ is \defin{uniform} if, in addition
to the above conditions, there exists a fixed compact
oriented surface $\Sigma_0$ whose boundary components correspond bijectively
with the connected components of $M\paper$ such that for each $i=1,\ldots,r$ 
there exists a $k_i$-fold branched cover 
$$
\Sigma_i \to \Sigma_0
$$
for which the restriction to each connected boundary component $\gamma \subset \p\Sigma_i$
is an $m_\gamma$-fold cover of the component of $\p\Sigma_0$ corresponding
to the component of~$M\paper$ touching $\pi\spine^{-1}(\gamma)$,
where $m_\gamma$ denotes the multiplicity of $\pi\paper$ 
at~$\pi\spine^{-1}(\gamma)$ (see Definition~\ref{defn:multiplicity}).
Finally, $\boldsymbol{\pi}$ is \defin{Lefschetz-amenable} if it is uniform
and all branched covers satisfying the above conditions have no branch
points.
\end{defn}

\begin{remark}
\label{remark:easy}
In many examples of interest---in particular for the circle bundles over
oriented surfaces studied in \S\ref{subsec:circleBundles} and further in
\cite{LisiVanhornWendl2}, $\boldsymbol{\pi}$ is symmetric with
$k_1 = \ldots = k_r = 1$, in which case it is uniform if and only if
all vertebrae are diffeomorphic.  The Lefschetz-amenability condition
is trivially satisfied in such cases since branched covers of degree~$1$
are diffeomorphisms.  In more general situations, the uniformity and
amenability conditions can often both be checked via the Riemann-Hurwitz
formula;\footnote{Note that by capping $\Sigma_i$ and $\Sigma_0$ with disks,
the existence of the required branched cover $\Sigma_i \to \Sigma_0$ is equivalent
to a question about the existence of a branched cover of closed surfaces with
certain prescribed branching orders.  Questions of this type can be subtle in
general, but are trivial e.g.~if the degree is~$2$, or more generally if all
branch points are required to be simple, cf.~\cite{EdmondsKulkarniStong}*{Prop.~2.8}.}; 
we will use this in \cite{LisiVanhornWendl2} to classify the fillings of 
certain non-orientable contact circle bundles over
non-orientable surfaces.
\end{remark}

The discussion above shows that for any bordered Lefschetz fibration 
$\Pi : E \to \Sigma$, the spinal open book $\boldsymbol{\pi} := \p\Pi$ is 
necessarily uniform, and the associated branched covers
$\Sigma_i \to \Sigma$ have no branch points.  The more precise version of
Theorem~\ref{thma:main} proved in \cite{LisiVanhornWendl2} will imply
that every spinal open book which contains a planar page and supports a
strongly fillable contact structure must be uniform---moreover, if it is
also amenable, then its strong fillings can be classified entirely in terms
of Lefschetz fibrations.

\begin{example}
For any bordered Lefschetz fibration over the disk,
the spinal open book induced at its boundary is an ordinary open book
(see Example~\ref{ex:openbook}).  In fact,
any ordinary open book on a closed and connected $3$-manifold, when regarded
as a spinal open book, is uniform and Lefschetz-amenable.  
Of course not every open book is the
boundary of a bordered Lefschetz fibration; this depends on its
monodromy!
\end{example}

\begin{example}
For a bordered Lefschetz fibration over the annulus, if the fibration 
restricted to the horizontal boundary is trivial, then the induced spinal
open book at the boundary is equivalent to a \emph{symmetric summed open book}
as defined in \cite{Wendl:openbook2}.
\end{example}

We now define various types of symplectic structures that are natural to
consider on the total space of
a bordered Lefschetz fibration $\Pi : E \to \Sigma$.
Note that the orientations of $E$ and $\Sigma$ give rise to a natural
orientation of the fibers.  We shall say
that a symplectic form $\omega$ on~$E$ is \defin{supported} by~$\Pi$ whenever
the following conditions hold:
\begin{enumerate}
\item Every oriented fiber is a symplectic submanifold away from $E\crit$;
\item A neighborhood of $E\crit$ admits a smooth almost complex structure $J$ 
which restricts to a positively oriented complex structure on the smooth
part of each fiber and satisfies $\omega(v, J v) > 0$ for every nonzero vector
$v \in TE|_{E\crit}$, i.e.~$J$ is \emph{tamed} by $\omega$ at $E\crit$.
\end{enumerate}

For the following definitions, assume always that $\omega$ is a symplectic
structure supported by~$\Pi$.

\begin{defn}
\label{defn:weak}
We say that $\omega$ is \defin{weakly convex} if it can be written near
$\p_h E$ as $\omega = d\lambda$, where $\lambda$ is a smooth $1$-form
that restricts to $\p_h E$ as a contact form whose Reeb orbits are 
boundary components of fibers.
\end{defn}

\begin{defn}
\label{defn:strong}
We say that $\omega$ is \defin{strongly convex} if it can be written
near $\p E$ as $\omega = d\lambda$, where $\lambda$ is a smooth $1$-form
that restricts to $\p E$ as a Giroux form for $\boldsymbol{\pi} = \p\Pi$
(see Remark~\ref{remark:corner}).
\end{defn}

\begin{defn}
\label{defn:Liouville}
We say that $\omega$ is \defin{Liouville} if it is strongly convex and
the primitive $\lambda$ of Definition~\ref{defn:strong} extends to a global
primitive of~$\omega$ on~$E$.
\end{defn}

These three definitions are designed so that a suitable smoothing
of $E$ at the corners 
will inherit the structure of a weak/strong/exact
symplectic filling of $(M,\xi)$, with $\xi$ supported by~$\boldsymbol{\pi}$,
see \S\ref{subsec:smoothing}.

To move from the Liouville to the Stein case, it will be convenient to
introduce a notion that is intermediate between Weinstein and Stein
structures.

\begin{defn}
\label{defn:almostStein}
Suppose $W$ is a compact manifold with boundary, possibly also with corners.
An \defin{almost Stein structure} on $W$ is a pair $(J,f)$ consisting of an 
almost complex structure $J$ and a smooth function $f : W \to \RR$ such
that, writing $\lambda := -df \circ J$, $d\lambda$ is a symplectic form
taming~$J$ (i.e.~$f$ is \defin{$J$-convex}) and $\lambda$ restricts to a 
contact form on every smooth face of~$\p W$.  If $M := \p W$ is smooth and
$\xi = \ker (\lambda|_{TM})$, we will call $(W,J,f)$ an \defin{almost Stein
filling} of~$(M,\xi)$.
\end{defn}

We assign the natural $C^\infty$-topology to the space of almost Stein
structures and say that two such structures are \defin{almost Stein
homotopic} if they lie in the same connected component of this space.
Any Stein structure $J$ determines an almost Stein structure $(J,f)$
uniquely up to homotopy, where uniqueness follows from the fact that the
space of exhausting $J$-convex functions is convex.
We should point out two aspects of almost Stein structures that differ from
Stein structures: first, $J$ is not assumed integrable, and second,
$f$ is not assumed constant at the boundary (indeed, it \emph{cannot} be
constant on $\p W$ if there are corners).  The latter has the consequence
that for a fixed~$J$, the space of functions $f$
making $(J,f)$ an almost Stein structure is \emph{not} generally convex---linear 
interpolations 
between two such functions may fail to induce contact structures at the
boundary.  For this reason we can no longer regard the $J$-convex function
as auxiliary data.
It is clear however that any almost Stein structure $(J,f)$ determines a 
Weinstein structure on the smooth manifold with boundary obtained by
rounding the corners of $\p W$, and this structure is
canonical up to Weinstein homotopy. Indeed,
the Liouville form $-df \circ J$ is dual to a Liouville vector field that
points transversely outward at every smooth face of $\p W$, and this vector
field is automatically gradient-like with respect to~$f$.
One can therefore perturb $f$ if necessary to make it Morse (cf.~Remark~\ref{remark:Morse}),
and then modify it outside a neighborhood of its critical points to a Lyapunov
function that is constant on the smoothed boundary, the result being unique
up to homotopy through Lyapunov functions fixed near the
critical points.  Using \cite{CieliebakEliashberg}, this 
implies that for any manifold $W$ with
boundary and corners, there is a canonical one-to-one correspondence between 
almost Stein homotopy classes on $W$ 
and Stein homotopy classes on $W$ after smoothing corners.

\begin{defn}
\label{defn:almostSteinFibration}
Given a bordered Lefschetz fibration $\Pi : E \to \Sigma$, we will
say that an almost Stein structure $(J,f)$ on~$E$ is \defin{supported by~$\Pi$} 
if the following conditions are satisfied:
\begin{itemize}
\item
There exists a complex structure $j$ on $\Sigma$ such that
$\Pi : (E,J) \to (\Sigma,j)$ is pseudoholomorphic;
\item
The $1$-form $\lambda_J := -df \circ J$ restricts to
$\p E$ as a Giroux form for $\boldsymbol{\pi} = \p\Pi$
(see Remark~\ref{remark:corner});
\item
For every $z \in \Sigma$, $f$ is constant on each connected component
of $\p E_z$;
\item
The maximal $J$-complex subbundle of $T(\p_h E)$ is invariant under the
Reeb flow determined by $\lambda_J|_{T(\p_h E)}$.
\end{itemize}
\end{defn}

Observe that if $(J,f)$ is supported by~$\Pi$, then every fiber $E_z \subset E$
inherits a Stein structure $J|_{TE_z}$ with plurisubharmonic function
$f|_{E_z}$, so in particular the fibers are both $J$-holomorphic and
symplectic, and $-d(df \circ J)$ defines a supported Liouville structure.

The following variation on results of Thurston \cite{Thurston:symplectic}
and Gompf \cite{GompfStipsicz} will be proved in \S\ref{sec:Gompf}.

\begin{thm}
\label{thm:Gompf}
For any $4$-dimensional bordered Lefschetz fibration $\Pi : E \to \Sigma$, 
the spaces of supported symplectic
structures that are weakly or strongly convex are both nonempty and
contractible.  Moreover, if the Lefschetz fibration is allowable, then
the same is true for the spaces of supported Liouville structures 
and almost Stein structures.  In each case, the corners can be smoothed
to produce
a weak/strong/exact/Stein filling of the contact manifold supported by
$\boldsymbol{\pi} := \p\Pi$, and this filling is canonically defined up to
deformation equivalence.
\end{thm}

\subsection{Surgery on spinal open books}
\label{subsec:surgery}

There are various natural topological operations on spinal open books that
give rise to symplectic cobordisms.  We now briefly describe two such
operations.

\subsubsection{Spine removal surgery}

The following makes precise the non-exact cobordism construction that was sketched in
Theorem~\ref{thma:spineRemoval} of the introduction, generalizing previous
constructions from
\cites{Eliashberg:cap,GayStipsicz:cap,Wendl:cobordisms}
(see also the higher-dimensional analogues in
\cites{MassotNiederkruegerWendl,DoernerGeigesZehmisch,Klukas:openBooks}).
For this discussion, it is useful to allow a slight loosening of 
the main definition of this paper: we will say that a
\defin{generalized spinal open book} is an object satisfying all the
conditions of Definition~\ref{defn:spinal} except that fibers of the
paper $\pi\paper : M\paper \to S^1$ are allowed to have components
with no boundary.  A generalized spinal open book may therefore have some
connected components that have neither spine nor boundary, but are simply
fibrations of closed pages over $S^1$; note that a Giroux form cannot
exist in this case, due to Stokes' theorem.  Such an object is then a spinal
open book in the usual sense---and thus supports a contact structure---if 
and only if it has no closed pages.

Suppose $(M',\xi)$ is a closed contact $3$-manifold containing a
compact $3$-dimensional submanifold $M$ (possibly with boundary) on which~$\xi$
is supported by a spinal open book $\boldsymbol{\pi}$
with spine $\pi\spine : M\spine \to \Sigma$ and paper $\pi\paper :
M\paper \to S^1$.  Choose an open and closed subset
$$
\Sigma\remove \subset \Sigma.
$$
The boundary of the corresponding union of spinal components
$\pi\spine^{-1}(\Sigma\remove) \subset M\spine$ is
a disjoint union of $2$-tori, each foliated by an
$S^1$-family of oriented circles which are fibers of~$\pi\spine$.
We can then define a new compact manifold $\widetilde{M}$ from~$M$ 
(and a closed manifold $\widetilde{M}'$ from $M'$)
by removing the interior of
$\pi\spine^{-1}(\Sigma\remove)$ and attaching solid tori $S^1 \times \DD^2$ to each of
the connected components of $\p \left(\pi\spine^{-1}(\Sigma\remove)\right)$ 
so that the oriented circles
$\{*\} \times \p\DD^2$ match the leaves of the foliation.
(Some schematic pictures of this procedure are shown in
Figures~\ref{fig:spineRemoval} and~\ref{fig:spineRemoval3} in \S\ref{sec:easySpineRemoval}.)
The new domain $\widetilde{M} \subset \widetilde{M}'$ inherits from 
$\boldsymbol{\pi}$ a generalized spinal 
open book $\widetilde{\boldsymbol{\pi}}$ with spine $M\spine \setminus \pi\spine^{-1}(\Sigma\remove)$
and pages that are obtained from the pages of $\boldsymbol{\pi}$ by
attaching disks to cap every boundary component touching~$\pi\spine^{-1}(\Sigma\remove)$.
We say that $\widetilde{\boldsymbol{\pi}}$ is obtained from $\boldsymbol{\pi}$ by
\defin{spine removal surgery}.

The spine removal operation corresponds to a cobordism that can be
understood as a form of handle attachment.  In particular,
we can consider the compact $4$-dimensional manifold with boundary and corners
$$
X := ([0,1] \times M') \cup_{\{1\} \times \pi\spine^{-1}(\Sigma\remove)}
(\Sigma\remove \times \DD^2),
$$
where $\Sigma\remove \times \p\DD^2$ is identified with $\pi\spine^{-1}(\Sigma\remove)$
via a choice of trivialization $\pi\spine^{-1}(\Sigma\remove) \cong \Sigma\remove \times S^1$.
After smoothing the corners, we have
$$
\p X = - M' \amalg \widetilde{M}'.
$$
The following result will be proved in \S\ref{sec:easySpineRemoval}.

\begin{thm}
\label{thm:spineRemoval}
Suppose $\Omega$ is a closed $2$-form on~$M'$ such that
$\Omega|_{\xi} > 0$ and $\Omega|_{\pi\spine^{-1}(\Sigma\remove)}$ is exact.  
Then for any choice of compact subset $\Sigma_0$ in the interior of $\Sigma\remove$,
the cobordism $X$ described above admits 
a symplectic structure $\omega$ with the following properties:
\begin{enumerate}
\item $\omega|_{TM'} = \Omega$.
\item $\omega$ is positive on the interior of every page of $\widetilde{\boldsymbol{\pi}}$.
\item On every connected component of $\widetilde{M}'$ that is not
foliated by closed pages of $\widetilde{\boldsymbol{\pi}}$, there exists a
contact structure $\widetilde{\xi}$ which is supported by $\widetilde{\boldsymbol{\pi}}$
in $\widetilde{M}$, matches $\xi$ on $\widetilde{M}' \setminus \widetilde{M} = M'
\setminus M$, and satisfies $\omega|_{\widetilde{\xi}} > 0$.
\item For every $z \in \Sigma_0$, the \emph{core} $\{z\} \times \DD^2$ and \emph{co-core}
$\Sigma\remove \times \{0\}$ of the handle $\Sigma\remove \times \DD^2$
are both symplectic submanifolds, the former with reversed orientation.
\end{enumerate}
\end{thm}

We will see in the proof that the disk $\DD^2$ in the symplectic handle
$\Sigma\remove \times \DD^2$ can freely be replaced by any other compact oriented
surface with connected boundary; more generally, one could equally well
remove several spine components at once and replace them with $\Sigma\remove \times S$
for a compact oriented surface $S$ with the right number of boundary components.
The key intuition is to view $S$ as a symplectic \emph{cap} for the appropriate
disjoint union of fibers in the contact circle fibration 
$\pi\spine : M\spine \to \Sigma$, and this is also the right perspective in 
higher-dimensional cases such as \cites{DoernerGeigesZehmisch,Klukas:openBooks}.
We will not comment any further on these generalizations since the applications
in this paper do not require them.

\subsubsection{Fiber connected sum along pages}

The following is a special case of a construction due to
R.~Avdek \cite{Avdek:sums}.  As in the previous section, suppose 
$(M',\xi)$ is a closed contact $3$-manifold containing a compact
domain $M \subset M'$ on which $\xi$ is supported by a
spinal open book~$\boldsymbol{\pi}$.  Suppose $S$ is a compact, connected 
and oriented surface with boundary, and $S_0, S_1 \subset M\paper$ are pages
of $\boldsymbol{\pi}$ admitting orientation preserving diffeomorphisms
$$
\psi_i : S \to S_i, \qquad i = 0,1.
$$
By a minor adjustment to the proof of Theorem~\ref{thm:GirouxForms}
(see Lemma~\ref{lemma:fiberLiouville} in particular), one can find a
Giroux form $\alpha$ for $\boldsymbol{\pi}$ on~$M$ such that
$\psi_0^*\alpha = \psi_1^*\alpha$.  In the terminology of
\cite{Avdek:sums}, $S_0$ and $S_1$ can then be regarded as a pair of
identical \emph{Liouville hypersurfaces} in~$M$.  Choose neighborhoods
$[-1,1] \times S_i \cong \nN(S_i) \subset M\paper$ of $S_i$ for $i=0,1$ and 
define the compact $4$-manifold with boundary and corners
$$
X := ([0,1] \times M') \cup_{\nN(S_0) \amalg \nN(S_1)}
\left([0,1] \times [-1,1] \times S\right),
$$
by identifying $\{i\} \times [-1,1] \times S$ with $\nN(S_i)$ for $i=0,1$.
After smoothing corners, we have
$$
\p X = -M' \amalg \widetilde{M}',
$$
where $\widetilde{M}'$ is obtained from $M'$ by performing a
so-called \emph{Liouville connected sum} along $S_0$ and~$S_1$.
Then $\widetilde{M}'$ contains a compact subdomain $\widetilde{M}$ which naturally
carries a spinal open book $\widetilde{\boldsymbol{\pi}}$; it is obtained from
$\boldsymbol{\pi}$ by attaching $1$-handles to the vertebrae and concatenating
families of pages correspondingly.
The following is an immediate consequence of the main result in \cite{Avdek:sums}:

\begin{thmu}
The manifold $X$ described above can be given the structure of a
Stein cobordism with concave boundary $(M',\xi)$ and convex
boundary $(\widetilde{M}',\widetilde{\xi})$, where $\widetilde{\xi}$ is a contact structure
which matches $\xi$ on $\widetilde{M}' \setminus \widetilde{M} = M' \setminus M$
and is supported by $\widetilde{\boldsymbol{\pi}}$ on~$\widetilde{M}$.
\end{thmu}

It was observed in \cite{Avdek:sums} that the simplest case of this
operation turns ordinary open books into \emph{symmetric summed open books}
in the sense of \cite{Wendl:openbook2}, i.e.~disk vertebrae become annuli.
More generally, this construction can be used to give an
alternative proof of the fact that allowable bordered Lefschetz fibrations
over arbitrary compact oriented surfaces always admit Stein 
structures---the details of this argument have been
worked out by Baykur and the second author, see \cite{BaykurVanhorn:large}.

\subsection{Partially planar domains, torsion and filling obstructions}
\label{sec:invariantsSketch}

We now state a few theorems that are straightforward
generalizations of results from \cite{Wendl:openbook2}, and will
all be proved in \S\ref{sec:spineRemoval} using spine removal surgery.
Most of them can also be derived from algebraic counterparts that we will
prove in \cite{LisiVanhornWendl2}, involving contact invariants in
symplectic field theory and embedded contact homology.

The following is the basic condition needed in order to apply the machinery of
pseudoholomorphic curves in studying spinal open books.

\begin{defn}
A $3$-dimensional 
spinal open book will be called
\defin{partially planar} if its interior contains a page of genus zero.
A compact contact
$3$-manifold $(M,\xi)$, possibly with boundary, will be called a
\defin{partially planar domain} if $\xi$ is supported by a partially
planar spinal open book.  We then refer to any interior connected component of
the paper containing planar pages as a \defin{planar piece}.
\end{defn}

\begin{defn}
\label{defn:OmegaSeparating}
Suppose $(M,\xi)$ is a closed contact $3$-manifold and~$\Omega$ is a
closed $2$-form on~$M$.  A partially planar domain $M_0$ embedded 
in $(M,\xi)$ is called \defin{$\Omega$-separating} if it has a planar
piece $M_0^P \subset {\mathring M}_0$ such that $\Omega$ is exact on
every spinal component touching~$M_0^P$.
It is called \defin{fully separating} 
if this is true for all closed $2$-forms~$\Omega$ on~$M$.
\end{defn}

Note that this condition depends only on the cohomology class
$[\Omega] \in H^2_\dR(M)$, and it is vacuous if $\Omega$ is exact.
We will see that it determines precisely which results on the strong fillings
of spinal open books admit extensions for \emph{weak} fillings.

\begin{example}
Since all closed $2$-forms are exact on a solid torus $\DD^2 \times S^1$,
every planar open book is a fully separating partially planar domain
(cf.~Example~\ref{ex:openbook}).  As explained in \cites{Wendl:openbook2,Wendl:fillable},
a Giroux torsion domain can also be viewed as a partially planar domain
in terms of the binding sum construction, but its spinal components are
thickened $2$-tori and thus can have cohomology, so such a domain is fully
separating if and only if it separates the ambient $3$-manifold.
\end{example}

Our first main result about partially planar domains generalizes the main
theorem from \cite{AlbersBramhamWendl}; indeed, taking $\Omega=0$ in the
following statement produces an obstruction to the existence of non-separating
hypersurfaces of contact type.

\begin{thm}
\label{thm:nonseparating}
Suppose $(M,\xi)$ is a closed contact $3$-manifold, $\Omega$ is a closed
$2$-form on~$M$ and $(M,\xi)$ contains an $\Omega$-separating partially
planar domain.  Then there exists no closed symplectic $4$-manifold
$(W,\omega)$ admitting a non-separating embedding $\iota : M \hookrightarrow W$
for which $\iota^*\omega|_\xi > 0$ and $[\iota^*\omega] =
[\Omega] \in H^2_\dR(M)$.
\end{thm}

The following related result generalizes a planarity obstruction originally 
due to Etnyre \cite{Etnyre:planar}.  Recall that $(W,\omega)$ is called a symplectic
\defin{semifilling} of $(M,\xi)$ whenever it is a filling of the disjoint
union of $(M,\xi)$ with some other (possibly empty)
contact manifold.  

\begin{cor}
\label{cor:semifillings}
If $(M,\xi)$ is a closed contact $3$-manifold containing a partially
planar domain, then it admits no weak semifilling $(W,\omega)$ with disconnected
boundary for which the partially planar domain is 
$\left(\omega|_{TM}\right)$-separating.
\end{cor}
\begin{proof}
We use a suggestion by Etnyre that first appeared in \cite{AlbersBramhamWendl}:
if such a semifilling exists, then one can attach a Weinstein $1$-handle to
build a weak filling of the boundary connected sum of its two components,
and then cap the result via \cite{Eliashberg:cap} or \cite{Etnyre:fillings}.
This produces a closed symplectic manifold $(W,\omega)$ that contains $(M,\xi)$ 
as a non-separating hypersurface in violation of Theorem~\ref{thm:nonseparating}.
\end{proof}

Next, we can consider the natural generalization of the local filling 
obstruction known as \emph{planar $k$-torsion} from \cite{Wendl:openbook2}
into the spinal open book setting.

\begin{defn}
Suppose $(M,\xi)$ is a closed contact $3$-manifold and $\Omega$ is a
closed $2$-form on~$M$.
Then for $k \ge 0$ an integer, a partially planar domain
$M_0 \subset M$ is called a \defin{(spinal) planar torsion domain of
order~$k$} (or simply a \emph{planar $k$-torsion domain})
if it is not symmetric and contains an interior planar piece
$M_0^P \subset \mathring{M}_0$ whose pages have $k+1$ boundary
components.  Further, it is an \defin{$\Omega$-separating} planar
$k$-torsion domain if $\Omega$ is exact on all spinal components 
touching $M_0^P$, and a \defin{fully separating} planar $k$-torsion
domain if this is true for all closed $2$-forms $\Omega$ on~$M$.
Any contact $3$-manifold containing such a domain is said to have 
(perhaps \defin{$\Omega$-separating} or \defin{fully separating})
\defin{planar $k$-torsion}.
\end{defn}

The less general version of this definition in \cite{Wendl:openbook2} 
was expressed in the framework of \emph{blown up
summed open books}, i.e.~spinal open books whose vertebrae are all disks
and annuli.  We have inserted the word ``spinal'' in front of 
``planar torsion'' in the above definition to distinguish the new notion 
from the less general version, but we shall usually drop the word ``spinal''
from the nomenclature: this will not cause any confusion since anything
satisfying the old definition also satisfies the new one, and
the results
we are able to prove with the new definition parallel results in
\cite{Wendl:openbook2} almost exactly.  For instance, the less general
version of planar $0$-torsion was shown in \cite{Wendl:openbook2} to be
equivalent to overtwistedness, and this is still true in the new framework:

\begin{prop}
\label{prop:overtwisted}
A closed contact $3$-manifold is overtwisted if and only if it has
planar $0$-torsion.  
\end{prop}
\begin{proof}
If $(M,\xi)$ is overtwisted, then Eliashberg's flexibility result
\cite{Eliashberg:overtwisted} implies that $(M,\xi)$ contains a
so-called \emph{Lutz tube}, and any neighborhood of this contains
a planar $0$-torsion domain by \cite{Wendl:openbook2}*{Prop.~2.19}.
For the converse, see Lemma~\ref{lemma:overtwisted}.
\end{proof}

It was also shown in \cite{Wendl:openbook2} that anything with Giroux
torsion also has planar $1$-torsion, but it was left open whether the
converse might be true.  Working with spinal open books makes it easy to
find a counterexample to this converse:

\begin{prop}
\label{prop:GT}
If $(M,\xi)$ is a closed contact $3$-manifold with positive Giroux torsion 
then it has planar $1$-torsion.  However, there exist closed
contact $3$-manifolds that have planar $1$-torsion but no
Giroux torsion.
\end{prop}
\begin{proof}
The fact that Giroux torsion implies $1$-torsion was shown in
\cite{Wendl:openbook2}; in fact, any Giroux torsion domain has an open
neighborhood that contains a planar $1$-torsion domain whose pages
and vertebrae are all annuli.  Some examples with planar $1$-torsion
but no Giroux torsion are exhibited in~\S\ref{subsec:circleBundles};
see Corollary~\ref{cor:S1planarTorsion} and
Remark~\ref{remark:noGirouxTorsion}.
\end{proof}

We will prove the following statement in \S\ref{sec:spineRemoval} by using
spine removal surgery to reduce it to standard results in closed
holomorphic curve theory.

\begin{thm}
\label{thm:planarTorsion}
If $(M,\xi)$ has planar torsion, then it is not strongly fillable.
Moreover, if $(M,\xi)$ has $\Omega$-separating planar torsion for
some closed $2$-form $\Omega$ on~$M$, then it admits no weak filling
$(W,\omega)$ with $\omega|_{TM}$ cohomologous to~$M$.  In particular
$(M,\xi)$ is not weakly fillable whenever it has fully separating
planar torsion.
\end{thm}

\subsection{Fillability of circle bundles}
\label{subsec:circleBundles}

As an application of the filling obstructions in the previous subsection,
we now exhibit a large class of non-fillable contact $3$-manifolds that were
not previously accessible to holomorphic curve methods.  (Some of them can
be understood using techniques from Heegaard Floer homology; see
especially \cites{HondaKazezMatic,Massot:vanishing}.)
They take the form of circle bundles with $S^1$-invariant contact structures
partitioned by multicurves.

Throughout this subsection, assume $\pi : M \to B$ is a smooth $S^1$-bundle 
with structure group $\Ortho(2)$ acting on the circle by rotations and
reflections, where the base $B$ is a
closed and connected (but not necessarily orientable) surface, and the
total space $M$ is oriented.  If $B$ is orientable, then the
$\Ortho(2)$-structure lifts to the structure of a principal $S^1$-bundle,
with the $S^1$-action defined up to a sign, so we can speak of 
$S^1$-invariant contact structures on~$M$.  More generally, 
we will abuse terminology and call a contact structure \defin{$S^1$-invariant}
if its expression in every $\Ortho(2)$-compatible local trivialization
of $\pi : M \to B$ is $\Ortho(2)$-invariant.  This is the same as saying
that it lifts to an $S^1$-invariant contact structure on the induced fibration
over the canonical oriented double cover of~$B$.
As usual, all contact structures in this discussion are assumed to be
positive and co-oriented.

Any $S^1$-invariant contact structure $\xi$ on~$M$ determines a $1$-dimensional
submanifold $\Gamma \subset B$ (i.e.~a \defin{multicurve}) consisting of all points at 
which the fiber is Legendrian.  One says in this case that $\xi$ is
\defin{partitioned by}~$\Gamma$.  Notice that outside of $\Gamma$, 
transversality to $\xi$ determines an orientation of the bundle and therefore 
an orientation of $B \setminus \Gamma$.  Moreover, $\Gamma$ automatically
has the following property:

\begin{defn}
\label{defn:localOrient}
Suppose $B$ is a closed surface and $\Gamma \subset B$ is a multicurve such
that $B \setminus \Gamma$ is oriented.  We say that $\Gamma$ \defin{inverts
orientations} if for every sufficiently small neighborhood $\uU \subset B$ 
that is divided by $\Gamma$ into two components $\uU_+$ and $\uU_-$, $\uU$ 
can be given an orientation that matches that of $B \setminus \Gamma$
on $\uU_+$ and is the opposite on~$\uU_-$.
\end{defn}

If $B$ is orientable, this condition simply means that $\Gamma$ divides
$B$ into components $B_+$ and $B_-$ (each possibly disconnected) which
inherit opposite orientations.  Some concrete examples where $B$ is
non-orientable (in particular the Klein bottle) can be constructed in the
form of contact parabolic torus bundles; see \cite{LisiVanhornWendl2}.
The following result is due to Lutz \cite{Lutz:77} in the orientable case,
and in general it can easily be derived from 
Theorem~\ref{thm:GirouxForms} via Proposition~\ref{prop:S1SOBD} below.

\begin{prop}
\label{prop:Lutz}
Suppose $\pi : M \to B$ is a smooth circle bundle with structure group
$\Ortho(2)$, where $B$ is a closed connected surface and $M$ is oriented,
and $\Gamma \subset B$ is a nonempty multicurve such that $B \setminus \Gamma$ is
orientable and $\Gamma$ inverts orientations.  Then each choice of orientation 
on $B \setminus \Gamma$ determines an $S^1$-invariant contact structure 
$\xi_\Gamma$ that is partitioned by~$\Gamma$ and is positively transverse to 
the fibers over $B \setminus \Gamma$.  Moreover, the contact structure with 
these properties is unique up to isotopy.
\end{prop}

We will see in \cite{LisiVanhornWendl2} that the strong symplectic fillings
of each circle bundle $(M,\xi_\Gamma)$ arising from the above proposition
can be classified completely whenever its base is
orientable, and also in some cases where the base is not orientable.  The
basic observation behind this is that there is a
natural correspondence between $S^1$-bundles $\pi : M \to B$ with nonempty
multicurves $\Gamma \subset B$ satisfying the stated conditions in
Prop.~\ref{prop:Lutz} and spinal 
open book decompositions of $M$ with annular pages.  Topologically this is 
easy to see: choosing a tubular neighborhood $\uU_\Gamma \subset B$ of 
$\Gamma$, we identify each connected component of the closure 
$\overline{\uU}_\Gamma$ with an interval bundle over~$S^1$, which gives 
$\pi^{-1}(\overline{\uU}_\Gamma)$ 
the structure of a disjoint union of smooth annulus bundles over~$S^1$ whose 
fibers each have boundary equal to some pair of oriented fibers of~$\pi$.  We therefore
call $\pi^{-1}(\overline{\uU}_\Gamma)$ with its associated fibration
over $S^1$ the paper $\pi\paper : M\paper \to S^1$, and the spine
$\pi\spine : M\spine \to \Sigma$ is defined as the restriction of $\pi$ to
$\pi^{-1}(B \setminus \uU_\Gamma)$, with the fibers oriented to be compatible
with the orientation of $\Sigma := B \setminus \uU_\Gamma \subset B \setminus \Gamma$.
Note that if $B$ is orientable,
then every component of $\Gamma \subset B$ has trivial normal bundle, so
the monodromies of components of $\pi\paper : M\paper \to S^1$ (defined after
choosing a trivialization of $\pi\spine : M\spine \to \Sigma$) can be
taken to be powers of Dehn twists, fixing the boundary of the annulus.
This is not always true if $B$ is non-orientable: in particular, if
$\gamma \subset \Gamma$ is a component whose neighborhood in $B$ is a
M\"obius band, then the monodromy of the corresponding component of 
$\pi\paper : M\paper \to S^1$ interchanges the boundary components of the 
annulus, meaning this component of $M\paper$ has connected boundary,
with multiplicity $2$ (cf.~Definition~\ref{defn:multiplicity}).

\begin{prop}
\label{prop:S1SOBD}
The spinal open book $\boldsymbol{\pi}$ associated to a circle bundle $\pi :
M \to B$ and nonempty multicurve $\Gamma \subset B$ as described above
supports a contact structure that is $S^1$-invariant and partitioned
by~$\Gamma$.  Moreover, any $S^1$-invariant contact structure partitioned
by $\Gamma$ is isotopic to one that is supported by~$\boldsymbol{\pi}$.
\end{prop}
\begin{proof}
It is easy to check that the supported contact structure constructed by
Theorem~\ref{thm:GirouxForms} in this setting is $S^1$-invariant and
partitioned by~$\Gamma$.  In the other direction, suppose $\xi_\Gamma$ is
$S^1$-invariant and partitioned by~$\Gamma$, and choose a tubular
neighborhood $\uU_\Gamma$ of $\Gamma \subset B$ such that the pages of the resulting 
fibration $\pi\paper : M\paper \to S^1$ are tangent to~$\xi_\Gamma$
along $\pi^{-1}(\Gamma)$.  This means that every contact form 
for~$\xi_\Gamma$ has a Reeb vector field transverse to the pages in some
neighborhood of~$\pi^{-1}(\Gamma)$.  On $\pi^{-1}(B \setminus \uU_\Gamma)$, 
$\xi_\Gamma$ is positively transverse to the contact vector field which 
generates a fiber-preserving $S^1$-action, hence one can choose a contact 
form for which this vector field is the Reeb field.  One can now piece
this together with a contact form near $\pi^{-1}(\Gamma)$ whose Reeb field
is transverse to the pages so that the conditions of a Giroux form are
satisfied.
\end{proof}

Since the pages of the natural spinal open book on $(M,\xi_\Gamma)$ have
genus zero, we can immediately apply the $\Omega=0$ case of
Theorem~\ref{thm:nonseparating} and
Corollary~\ref{cor:semifillings} to conclude:

\begin{cor}
\label{cor:S1nonseparating}
For any nonempty multicurve $\Gamma \subset B$ as in Proposition~\ref{prop:Lutz},
the resulting contact circle bundle $(M,\xi_\Gamma)$ admits no non-separating
contact-type embeddings into any closed symplectic $4$-manifold, and it also
admits no strong semifillings with disconnected boundary.
\qed
\end{cor}

It is similarly easy to identify cases in which $(M,\xi_\Gamma)$ has planar
torsion, and is therefore not strongly fillable.  If there is torsion it will 
be of order~$1$, since the pages of the spinal open book $\boldsymbol{\pi}$
supporting $(M,\xi_\Gamma)$ are annuli, but the key question is in which
cases $\boldsymbol{\pi}$ is symmetric.  The vertebrae of $\boldsymbol{\pi}$
are equivalent to the connected components $B_1,\ldots,B_r$ of 
$B \setminus \Gamma$, and the pages come in $S^1$-families corresponding to
the connected components of $\Gamma$.  Given a component $\gamma \subset \Gamma$,
if it bounds the component $B_j \subset B \setminus \Gamma$, then $B_j$ lies
either on one side of $\gamma$ or on both, where the latter is possible only if
$\gamma$ has a non-orientable normal bundle, so $B$ is non-orientable.
Symmetry then means that 
there exist fixed numbers $k_1,\ldots,k_r \in \{0,1,2\}$ such that for each
$j=1,\ldots,r$, every component of $\Gamma$ touches $B_j$ on exactly
$k_j$ sides.  Clearly none of the $k_j$ can be $0$ in this case, since it
would mean there is a component $B_j$ whose closure does not touch
$\Gamma$ at all.  If any $k_j=2$, then it means every component of $\Gamma$
must have that particular component $B_j$ on both sides, hence $r=1$ and
$B$ is not orientable.  In the remaining case, $k_1= \ldots = k_r=1$,
and since at most two components $B_j$ can touch each component of~$\Gamma$,
we conclude $r=2$ and $B$ is orientable with $\Gamma$ splitting it into
two components.  We've proved:

\begin{cor}
\label{cor:S1planarTorsion}
Suppose $\xi_\Gamma$ is an $S^1$-invariant contact structure on a circle
bundle $\pi : M \to B$, partitioned by a nonempty multicurve $\Gamma$, and
that either of the following holds:
\begin{enumerate}[label=(\roman{enumi})]
\item $B \setminus \Gamma$ has at least three connected components;
\item $B \setminus \Gamma$ is disconnected and $B$ is non-orientable.
\end{enumerate}
Then $(M,\xi_\Gamma)$ has (untwisted) planar $1$-torsion, so in particular it 
is not strongly fillable.
\qed
\end{cor}

\begin{remark}
\label{remark:noGirouxTorsion}
If $B$ is oriented with positive genus and $M$ is not a torus bundle, then
\cite{Massot:vanishing}*{Theorem~3} implies that $(M,\xi_\Gamma)$ has
zero Giroux torsion whenever no two connected components of $\Gamma$ are
isotopic.  Using the theorem in \cite{LisiVanhornWendl2} that planar
$1$-torsion implies algebraic $1$-torsion,
one can now extract from Corollary~\ref{cor:S1planarTorsion} 
many new examples of contact manifolds with algebraic $1$-torsion but no 
Giroux torsion.  This generalizes a result for trivial circle bundles that
was proved in \cite{LatschevWendl}.
\end{remark}

\begin{remark}
Corollaries~\ref{cor:S1nonseparating} and~\ref{cor:S1planarTorsion} do not
generally hold for \emph{weak} fillings or semifillings.  Indeed,
\cite{NiederkruegerWendl}*{Theorem~5} implies that whenever every connected
component of the multicurve $\Gamma \subset B$ is nonseparating, 
$(M,\xi_\Gamma)$ admits both a weak filling and a weak semifilling with
disconnected boundary.  We can conclude from Corollary~\ref{cor:semifillings}
and Theorem~\ref{thm:planarTorsion} that the symplectic structures of these
weak fillings must always be nonexact on some spinal component
in~$(M,\xi_\Gamma)$.
\end{remark}

\section{Contact and symplectic structures}
\label{sec:topology}

The main objectives of this section are the proofs of 
Theorems~\ref{thm:GirouxForms} and~\ref{thm:Gompf} about the existence and
uniqueness of supported contact and symplectic structures, plus a related
result about almost Stein structures that will be needed for the classification of
Stein fillings in \cite{LisiVanhornWendl2}.  We begin in
\S\ref{sec:Thurston} with a short collection of ``Thurston-type'' lemmas for
defining contact or symplectic structures on fibrations.  
Section~\ref{sec:coordinates} will then fix some notation for collar 
coordinates and open coverings of spinal open books that will be useful 
throughout the rest of the paper.  Theorem~\ref{thm:GirouxForms} is proved in 
\S\ref{sec:support}, and the proof of Theorem~\ref{thm:Gompf} is carried out
mainly in \S\ref{sec:Gompf}, with the detail about smoothing corners dealt
with in \S\ref{subsec:smoothing}.

\subsection{Several varieties of the Thurston trick}
\label{sec:Thurston}

Since it will be useful in a wide range of contexts, we collect in this
subsection several elementary results that are variations on the main
trick behind Thurston's construction of symplectic forms on total spaces of
symplectic fibrations \cite{Thurston:symplectic}.  All of these results
have higher-dimensional analogues, but with the exception of
Remark~\ref{remark:higherDims}, we will keep things as brief as
possible by focusing on dimensions $3$ and~$4$.

All manifolds in the following will be compact and oriented, and though
it will not yet play a serious role in the discussion, 
they may also have boundary or corners.

\subsubsection{Contact forms}

\begin{prop}
\label{prop:GirouxVertical}
Assume $M$ is a compact oriented $3$-manifold,
$\pi : M \to S^1$ is a submersion, $\sigma$ is a positively oriented
volume form on $S^1$, and $\lambda$ is a $1$-form on $M$ such that
$d\lambda$ is positive on every fiber of~$\pi$.  Then
$\lambda_K := \lambda + K\, \pi^*\sigma$
is a contact form for all $K \gg 0$, and this is true for all $K \ge 0$ if
$\lambda$ is contact.
\end{prop}
\begin{proof}
We have $\pi^*\sigma \wedge d\lambda > 0$ since $d\lambda$ is positive on
fibers, so the result follows by writing
$$
\lambda_K \wedge d\lambda_K = K \left( \pi^*\sigma \wedge d\lambda + 
\frac{1}{K} \lambda \wedge d\lambda \right).
$$
\end{proof}

\begin{prop}
\label{prop:GirouxHorizontal}
Assume $M$ is a compact oriented $3$-manifold,
$\Sigma$ is a compact oriented surface, $\pi : M \to \Sigma$
is a submersion, $\sigma$ is a $1$-form on $\Sigma$
with $d\sigma > 0$, and $\lambda$ is a $1$-form on $M$ that is positive on
every fiber of~$\pi$ and satisfies $d\lambda(v,\cdot)=0$ for all $v \in \ker T\pi$.
Then $\lambda_K := \lambda + K\, \pi^*\sigma$ is a contact form for all
$K \gg 0$, and this is true for all $K \ge 0$ if $\lambda$ is contact.
\end{prop}
\begin{proof}
Since $\pi^*\sigma$ and $d\lambda$ both annihilate vertical vectors, we have
$\pi^*\sigma \wedge d\lambda \equiv 0$, but also $\lambda \wedge \pi^*d\sigma > 0$
due to the condition $d\sigma > 0$ and the positivity of $\lambda$ on fibers.
The result thus follows by writing
$$
\lambda_K \wedge d\lambda_K = K \left( \lambda \wedge \pi^*d\sigma + 
\frac{1}{K} \lambda \wedge d\lambda \right).
$$
\end{proof}

\begin{remark}
\label{remark:higherDims}
If we regard $\sigma$ in Proposition~\ref{prop:GirouxVertical} as a contact
form on $S^1$ and the fibers of $\pi : M \to S^1$ as Liouville domains with
respect to~$\lambda$, then the result has a straightforward generalization
to higher dimensions as a statement about a Liouville fibration over a contact
manifold.  Proposition~\ref{prop:GirouxHorizontal} similarly becomes a
statement about a contact fibration over a Liouville domain $(\Sigma,\sigma)$,
the only subtle point being the condition that $d\lambda$ should annihilate
vertical vectors: if $\pi : M \to \Sigma$ is a $3$-dimensional fibration,
then the secret meaning of this condition is that it reduces the structure
group to the group of \emph{strict} contactomorphisms on~$S^1$, 
i.e.~diffeomorphisms that preserve a fixed contact from and not only a
contact structure.
The natural generalization to higher dimensions can thus be phrased in
terms of strict contact fibrations.
\end{remark}

\subsubsection{Symplectic and Liouville forms}

\begin{prop}
\label{prop:ThurstonSymp}
Assume $E$ is a compact oriented $4$-manifold, $\Sigma$ is a compact oriented surface,
$\Pi : E \to \Sigma$ is a submersion, $\mu$ is a positive area form on $\Sigma$
and $\omega$ is a $2$-form on $E$ that is positive on all fibers of~$\Pi$.  Then
$\omega_K := \omega + K\, \Pi^*\mu$ is symplectic for all $K \gg 0$, and this
is true for all $K \ge 0$ if $\omega$ is symplectic.
\end{prop}
\begin{proof}
The positivity of $\omega$ on fibers implies $\Pi^*\mu \wedge \omega > 0$, so
the result follows by writing
$$
\omega_K \wedge \omega_K = K \left( 2\, \Pi^*\mu \wedge \omega + \frac{1}{K}
\omega \wedge \omega \right).
$$
\end{proof}

If both $\Sigma$ and the fibers of $\Pi : E \to \Sigma$ have nonempty boundary,
then we can also state a version specially for exact symplectic forms; note
that in this case $E$ must be a manifold with boundary \emph{and corners}.

\begin{cor}
\label{cor:ThurstonLiouville}
Assume $E$ is a compact oriented $4$-manifold, $\Sigma$ is a compact oriented surface,
$\Pi : E \to \Sigma$ is a submersion, $\sigma$ is a $1$-form on $\Sigma$
satifying $d\sigma > 0$, and $\lambda$ is a $1$-form on $E$ such that
$d\lambda$ is positive on fibers of~$\Pi$.  Then
$\lambda_K := \lambda + K\, \Pi^*\sigma$ is a Liouville form for all $K \gg 0$,
and this is true for all $K \ge 0$ if $\lambda$ is Liouville.
\qed
\end{cor}

\subsubsection{$J$-convex functions}

Recall that on an almost complex manifold $(W,J)$, a smooth function
$f : W \to \RR$ is called \defin{$J$-convex} (or \defin{plurisubharmonic})
if the $1$-form $\lambda_J := - d f \circ J$ is the primitive of a symplectic
form $d\lambda_J$ that tames~$J$.

\begin{prop}
\label{prop:ThurstonStein}
Assume $(E,J)$ is a compact almost complex $4$-manifold, $(\Sigma,j)$ is a 
compact Riemann surface, $\Pi : (E,J) \to (\Sigma,j)$ is a pseudoholomorphic
submersion, $\varphi : \Sigma \to \RR$ is a $j$-convex function and
$f : E \to \RR$ is a function whose restriction to every fiber of $\Pi$ is
$J$-convex.  Then $f_K := f + K(\varphi \circ \Pi)$ is a $J$-convex function
for all $K \gg 0$, and this is true for all $K \ge 0$ if $f$ is $J$-convex.
\end{prop}
\begin{proof}
The $1$-forms $\sigma := -d\varphi \circ j$ on $\Sigma$ and 
$\lambda = -d f \circ J$ on $E$ satisfy the hypotheses of 
Corollary~\ref{cor:ThurstonLiouville}, and since $T\Pi \circ J = j \circ T\Pi$,
we have
$$
\lambda_K := -d f_K \circ J = - d f \circ J - K\, d(\varphi \circ \Pi) \circ J
= \lambda + K \, \Pi^*\sigma.
$$
Then for any nontrivial $v \in TE$,
$$
d\lambda_K(v,Jv) = K\, \Pi^*d\sigma(v,Jv) + d\lambda(v,Jv) =
K \left[ d\sigma(\Pi_*v,j \Pi_*v) + \frac{1}{K} d\lambda(v,Jv)\right].
$$
This is clearly positive for all $K \ge 0$ if $d\lambda$ tames~$J$; more
generally, the second term will always be positive when $v$ lies in some
neighborhood of the vertical subbundle, and if $v$ is outside of this
neighborhood, then this term will be dominated by $d\sigma(\Pi_*v,j\Pi_*v)$
as long as $K > 0$ is large enough.
\end{proof}

\subsection{Collar neighborhoods and coordinates}
\label{sec:coordinates}

In this section we fix some notation that will be useful throughout the 
rest of the paper.

Fix a compact $3$-manifold $M$ with spinal open book
$$
\boldsymbol{\pi} = \left(\pi\spine : M\spine \to \Sigma , \ \pi\paper :
M\paper \to S^1 , \ \{m_T\}_{T \subset \p M}\right),
$$
and choose an oriented foliation $\fF$ of $\p M$ with closed leaves that
represent the homology classes~$m_T$.  
The circle bundle $\pi\spine : M\spine \to \Sigma$ is necessarily
trivializable, so for convenience we shall fix an identification of $M\spine$
with $\Sigma \times S^1$ such that
$$
\pi\spine : M\spine = \Sigma \times S^1 \to \Sigma
: (z,\theta) \mapsto z.
$$
This defines the coordinate $\theta \in S^1$ globally on~$M\spine$.
The boundary $\p\Sigma$ admits a collar neighborhood 
$$
\nN(\p\Sigma) \subset \Sigma
$$ 
whose connected components can be identified with $(-1,0] \times S^1$, carrying
coordinates $(s,\phi)$.  We shall denote the resulting collar
neighborhood of $\p M\spine$ in $M\spine$ by
$$
\nN(\p M\spine) := \pi\spine^{-1}(\nN(\p\Sigma)) = \nN(\p\Sigma) \times S^1;
$$
its connected components are identified with $(-1,0] \times S^1 \times S^1$
by our chosen trivialization and thus carry coordinates $(s,\phi,\theta)$.

The paper can be identified in turn with a mapping torus 
\begin{equation*}
\begin{split}
M\paper &= (\RR \times P) \Big/ \sim, \qquad 
\text{where $(\tau,p ) \sim (\tau + 1,\mu(p))$}, \\
M\paper &\stackrel{\pi\paper}{\longrightarrow} S^1 = \RR / \ZZ : [(\tau,p)] \to [\tau],
\end{split}
\end{equation*}
where the fiber $P := \pi\paper^{-1}(*)$ is a compact oriented (but not necessarily 
connected) surface with boundary, and the monodromy $\mu : P \to P$ is an
orientation-preserving diffeomorphism that preserves each connected component
of~$P$.  In contrast to the setting of ordinary open books, here we must
allow the possibility that $\mu$ is nontrivial near the boundary,
e.g.~it may permute boundary components.  We can assume without loss of 
generality however that $\p P$ has a collar neighborhood $\nN(\p P) \subset P$
whose connected components have coordinates
$(t,\theta) \in (-1,0] \times S^1$ in which
$\mu(t,\theta) = (t,\theta)$, hence the corresponding collar neighborhood
$$
\nN(\p M\paper) \subset M\paper
$$
of $\p M\paper$ is identified with
$$
\left( \RR \times (-1,0] \times S^1 \times \{1,\ldots,N\} \right)
\Big/ (\tau,t,\theta,i) \sim (\tau+1,t,\theta,\boldsymbol{\sigma}(i))
\subset M\paper
$$
for $N := \# \pi_0(\p P)$ and some permutation $\boldsymbol{\sigma} \in
S_N$.  The connected components of $\nN(\p M\paper)$ are now in one-to-one
correspondence with the invariant subsets of $\{1,\ldots,N\}$
on which the cyclic subgroup $\langle \boldsymbol{\sigma} \rangle
\subset S_N$ generated by $\boldsymbol{\sigma}$ acts transitively.
Given such a subset $I \subset \{1,\ldots,N\}$, if the action of
$\langle \boldsymbol{\sigma} \rangle$ on $I$ has order $m \in \NN$, then
the map $[(\tau,t,\theta,i)] \mapsto ([\tau/m],t,\theta)$ identifies the
corresponding component of $\nN(\p M\paper)$ diffeomorphically with
$S^1 \times (-1,0] \times S^1$.  Denote the resulting coordinates on
components of $\nN(\p M\paper)$ by $(\phi,t,\theta)$.  We now have
$$
\pi\paper(\phi,t,\theta) = m \phi,
$$
where the integer $m \in \NN$ may vary for different components
of $\nN(\p M\paper)$.  These integers are the \defin{multiplicities} 
of $\pi\paper : M\paper \to S^1$ at its boundary components
(cf.~Definition~\ref{defn:multiplicity}).
Note that there is some freedom to change these coordinates on each
component of $\nN(\p M\paper)$ without changing the formula for~$\pi\paper$,
thus we can assume without loss of generality that the chosen oriented 
foliation $\fF$ on $\p M$ with leaves homologous to the preferred meridians
is generated by the flow of the coordinate vector field~$\p_\phi$.

To summarize, we have defined collar neighborhoods of the boundary in
$\Sigma$, $M\spine$ and $M\paper$
whose connected components carry positively oriented coordinates as follows:
\begin{equation*}
\begin{split}
(s,\phi) \in (-1,0] \times S^1 &\subset (-1,0] \times \p \Sigma = 
\nN(\p\Sigma) \subset \Sigma,\\
(s,\phi,\theta) \in (-1,0] \times S^1 \times S^1 &\subset
(-1,0] \times \p M\spine = \nN(\p M\spine) \subset M\spine, \\
(\phi,t,\theta) \in S^1 \times (-1,0] \times S^1 &\subset
(-1,0] \times \p M\paper = \nN(\p M\paper) \subset M\paper.
\end{split}
\end{equation*}
These coordinates satisfy $\pi\spine(s,\phi,\theta) = (s,\phi) \in
\nN(\p \Sigma)$ on $\nN(\p M\spine)$ and
$\pi\paper(\phi,t,\theta) = m\phi \in S^1$ on $\nN(\p M\paper)$, 
where the multiplicity $m \in \NN$ may have different values on distinct 
connected components of~$\nN(\p M\paper)$.  We can also assume without loss of
generality that the coordinate labels
are consistent in the sense that the induced $2$-torus coordinates
$$
(\phi,\theta) \in S^1 \times S^1 \subset \p M\spine
$$
match the corresponding $\phi$- and $\theta$-coordinates defined on 
$\nN(\p M\paper)$ wherever it overlaps $\nN(\p M\spine)$.

To continue, let us add the assumption that $\boldsymbol{\pi}$ admits a
smooth overlap (see Definition~\ref{defn:overlap}).  
We can then introduce a decomposition of $M$ into open subsets
$$
M = \widecheck{M}\spine \cup \widecheck{M}\corner \cup \widecheck{M}\paper \cup 
\widecheck{M}\bndry
$$
defined as follows:
\begin{itemize}
\item $\widecheck{M}\paper$ is the complement of $\{ t \ge -1/2\} \subset 
\nN(\p M\paper)$ in $M\paper$, so the connected components of
$\nN(\p M\paper) \cap \widecheck{M}\paper$ 
inherit coordinates $(\phi,t,\theta) \in S^1 \times (-1,-1/2) \times S^1$.
\item $\widecheck{M}\spine$ is the complement of $\{ s \ge -1/2\} \subset
\nN(\p M\spine)$ in $M\spine$, and we will always use the chosen trivialization
of $\pi\spine$ to identify this with a subset of $\Sigma \times S^1$, denoting
the coordinate on $S^1$ by~$\theta$.  The connected components of
$\nN(\p M\spine) \cap \widecheck{M}\spine$
thus inherit coordinates
$(s,\phi,\theta) \in (-1,-1/2) \times S^1 \times S^1$.
\item $\widecheck{M}\corner$ is the union of $\nN(\p M\spine)$ with the components
of $\nN(\p M\paper)$ that touch~$M\spine$, so its connected components
can each be identified with $(-1,1) \times S^1 \times S^1$, and we assign
coordinates $(\rho,\phi,\theta)$ to these components such that 
$$
\widecheck{M}\corner \cap M\spine = \{ \rho \le 0 \}, \qquad
\widecheck{M}\corner \cap M\paper = \{ \rho \ge 0 \}.
$$
The smooth overlap assumption means we can also assume these coordinates
are related to the previously chosen coordinates on these subsets by 
$\rho = s$ and $\rho = -t$ respectively, with $\theta$ and $\phi$ matching
the existing coordinates on $\nN(\p M\spine)$ and $\nN(\p M\paper)$ 
in the obvious way.  The region $\widecheck{M}\corner$
will be called the \defin{interface} between the
spine and the paper.
\item $\widecheck{M}\bndry$ is the union of the components of $\nN(\p M\paper)$
that touch~$\p M$.  Its connected components therefore carry collar
coordinates $(\phi,t,\theta) \in S^1 \times (-1,0] \times S^1$, but for
consistency with $\widecheck{M}\corner$, we will prefer to use an alternative 
coordinate system
$$
(\rho,\phi,\theta) \in [0,1) \times S^1 \times S^1 \subset
\widecheck{M}\bndry
$$
defined by $\rho := -t$.
\end{itemize}
Note that the above definitions imply
$$
\nN(\p M\paper) \subset \widecheck{M}\corner \cup \widecheck{M}\bndry,
$$
hence we can and sometimes will use the $\rho$-coordinate as an alternative
to the $t$-coordinate on $\nN(\p M\paper)$; they are related by $\rho = -t$.

\subsection{Spinal open books support contact structures}
\label{sec:support}

We will say that a smooth
$1$-form $\alpha$ on~$M$ is a \defin{fiberwise Giroux form} if the
following conditions hold:
\begin{itemize}
\item $d\alpha$ is positive on the interior of every page;
\item $\alpha$ is positive on the fibers of $\pi\spine : M\spine \to \Sigma$,
and the tangent spaces to these fibers are contained in $\ker d\alpha$;
\item At $\p M$, $\alpha$ is positive on all boundaries of pages, the
tangent spaces to these boundaries are also contained in $\ker d\alpha$,
and $\alpha$ vanishes on the foliation~$\fF$ chosen at the beginning 
of~\S\ref{sec:coordinates}.
\end{itemize}
A fiberwise Giroux form is a Giroux form if and only if it is contact,
but since we have not required the latter in the above definition,
the space of fiberwise Giroux forms is \emph{convex}.  We will show in the
following that it is relatively easy to construct fiberwise Giroux forms, and
the main idea in the proof of Theorem~\ref{thm:GirouxForms} is---following
the ideas of Thurston outlined in \S\ref{sec:Thurston}---to turn these into Giroux forms by adding large
multiples of certain $1$-forms pulled back from the bases of the fibrations.

Observe that since every component of $\Sigma$ has nonempty
boundary, we can choose a $1$-form $\sigma$ on $\Sigma$ satisfying
$$
d\sigma > 0 \text{ on $\Sigma$}, \qquad
\sigma = e^s \, d\phi \text{ on $\nN(\p\Sigma)$}.
$$
Similarly:

\begin{lemma}
\label{lemma:fiberLiouville}
On $M\paper$ there exists a $1$-form $\eta$ such that $d\eta$ is positive on
each fiber of $\pi\paper : M\paper \to S^1$ and
$\eta = e^t \, d\theta$ in $\nN(\p M\paper)$.
\end{lemma}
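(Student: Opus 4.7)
The plan is to build $\eta$ in two stages: first construct its restriction $\eta_P$ to a single page $P$ as a Liouville form with the prescribed boundary behavior, then propagate it across the mapping torus structure on $M\paper$ by interpolating through the monodromy.

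For the single-page construction, I would extend $e^t\,d\theta$ from $\nN(\p P)$ to a $1$-form $\eta_1$ on $P$ using a cutoff function equal to $1$ near $\p P$, and choose a positive area form $\omega$ on $P$ which agrees with $e^t\,dt \wedge d\theta$ near $\p P$ and satisfies $\int_{P_i}\omega = 2\pi N_i$ on each connected component $P_i \subset P$, where $N_i := \#\pi_0(\p P_i)$; this can be arranged by shrinking the collar to make its contribution to the integral small and then choosing the interior area form with the appropriate total volume. Then $\omega - d\eta_1$ has compact support in $\mathring P$ and zero integral on each component by Stokes, so by de Rham cohomology with compact supports it equals $d\eta_2$ for some $\eta_2$ supported in $\mathring P$. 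Setting $\eta_P := \eta_1 + \eta_2$ then gives $d\eta_P = \omega > 0$ on $P$ and $\eta_P = e^t\,d\theta$ near $\p P$.

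For the propagation stage, the key observation is that in the chosen collar coordinates $\mu(t,\theta) = (t,\theta)$ up to a permutation of boundary components, so $\mu^*\eta_P = \eta_P$ on $\nN(\p P)$. I would then pick a smooth $\chi : [0,1] \to [0,1]$ that is $0$ near $0$ and $1$ near $1$, and define a $\tau$-dependent family of $1$-forms on $P$ by
\[
\beta_\tau := (1-\chi(\tau))\,\eta_P + \chi(\tau)\,(\mu^{-1})^*\eta_P, \quad \tau \in [0,1],
\]
extended to all $\tau \in \RR$ by $\beta_{\tau+1} := (\mu^{-1})^*\beta_\tau$; smoothness at integer values follows from $\chi$ being locally constant at the endpoints of $[0,1]$. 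The $1$-form $\eta$ on $\RR \times P$ given by $\eta|_{(\tau,p)} := \beta_\tau(p)$ (with no $d\tau$ component) descends to $M\paper$ because $\mu^*\beta_{\tau+1} = \beta_\tau$. On each fiber, $d\eta$ restricts to $d_P\beta_\tau$, a positive convex combination of the positive area forms $d\eta_P$ and $(\mu^{-1})^*d\eta_P$, hence positive; near $\p M\paper$ the interpolation is trivial since $(\mu^{-1})^*\eta_P = \eta_P = e^t\,d\theta$, so $\eta = e^t\,d\theta$ there, as required.

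The main obstacle is in the single-page construction: arranging $\int_{P_i}\omega = 2\pi N_i$ to match $\int_{\p P_i} e^t\,d\theta = 2\pi N_i$ is essential for the compactly supported primitive $\eta_2$ to exist. The mapping torus step is then essentially formal, hinging on the monodromy being the identity on collar coordinates so that $\eta_P$ and $(\mu^{-1})^*\eta_P$ agree near the boundary, decoupling the interpolation from any boundary complications.
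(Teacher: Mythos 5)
Your proposal is correct and follows essentially the same route as the paper: you construct a Liouville form on the page agreeing with the model $e^t\,d\theta$ in the collar (the paper simply asserts that the space of such forms is nonempty and convex), and then interpolate between $\eta_P$ and its pullback under the monodromy across the mapping torus, which is exactly the Thurston--Winkelnkemper-style argument the paper delegates to its citation of Etnyre. One small adjustment: the conclusion requires $\eta = e^t\,d\theta$ on \emph{all} of $\nN(\p M\paper)$, so you should take the cutoff equal to $1$ and $\omega = e^t\,dt\wedge d\theta$ on the entire collar $\nN(\p P)$ rather than merely ``near $\p P$''---no shrinking is needed, since the collar's contribution $(1-e^{-1})$ times the boundary period is automatically strictly less than the total area forced by Stokes' theorem, leaving positive area to be placed in the interior.
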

\begin{proof}
One only has to observe that since every connected component 
of~$P := \pi\paper^{-1}(*)$ has
nonempty boundary by assumption, the space of Liouville forms on $P$
which match $e^t\, d\theta$ in the collars is nonempty and convex.  The desired
$1$-form $\eta$ can thus be constructed by choosing such a Liouville form
$\eta_0$ and defining $\eta$ on each fiber of the
mapping torus with monodromy $\mu$ as a suitable interpolation between 
$\eta_0$ and $\mu^*\eta_0$;
cf.~\cite{Etnyre:lectures}*{Theorem~3.13}.
\end{proof}

In order to construct a fiberwise Giroux form, we next choose a smooth
function 
$$
F : M\paper \to (0,1]
$$
which is identically equal to~$1$ outside of $\nN(\p M\paper)$
and takes the form $e^\rho f(\rho)$ in $(\rho,\phi,\theta)$-coordinates
on $\nN(\p M\paper) \subset \widecheck{M}\corner \cup \widecheck{M}\bndry$,
where $f : (-1,1) \to (0,1]$ is a smooth function satisfying the conditions
\begin{itemize}
\item $f(\rho) = 1$ for $\rho \le 0$;
\item $f'(\rho) < 0$ for $\rho > 0$;
\item $f(\rho) = e^{-\rho}$ for $\rho$ near~$1$.
\end{itemize}
In particular, this implies that $F$ admits a smooth extension over
$\nN(\p M\spine)$ of the form $F(s,\phi,\theta) = e^s$.
Now using the fiberwise Liouville form $\eta$ provided by 
Lemma~\ref{lemma:fiberLiouville},
we can define a fiberwise Giroux form on~$M$ by
$$
\alpha = 
\begin{cases}
d\theta & \text{ on $M\spine$}, \\
F\eta & \text{ on $M\paper$}.
\end{cases}
$$
It takes the form $\alpha = f(\rho)\, d\theta$ on
$\widecheck{M}\corner \cup \widecheck{M}\bndry$.

We show next how to turn fiberwise Giroux forms into Giroux forms.
For any constant $\delta \in (0,1/2)$, choose
a pair of smooth functions $g\spine^\delta, g\bndry^\delta : [0,1) \to [0,2]$ 
such that
\begin{itemize}
\item $g\spine^\delta(\rho) = e^\rho$ for $\rho$ near~$0$;
\item $g\bndry^\delta(0) = 0$ and $(g\bndry^\delta)'(0) > 0$;
\item $(g\spine^\delta)'(\rho)$ and $(g\bndry^\delta)'(\rho)$ are both nonnegative for all~$\rho$;
\item $g\spine^\delta(\rho) = g\bndry^\delta(\rho) = 2$ for all $\rho \ge \delta$.
\end{itemize}
Using this, we define a smooth function $G_\delta : M\paper \to [0,2]$ by
$$
G_\delta = 
\begin{cases}
2 & \text{ on $\widecheck{M}\paper$},\\
g\spine^\delta(\rho) & \text{ on $\nN(\p M\paper) \cap \widecheck{M}\corner$},\\
g\bndry^\delta(\rho) & \text{ on $\widecheck{M}\bndry$}.
\end{cases}
$$
Then, defining the Liouville form $\sigma$ as a $1$-form on $M\spine$ by
identifying it with its pullback $\pi\spine^*\sigma$, we define for any
$\delta \in (0,1/2)$ another smooth $1$-form on~$M$ by
$$
\beta_\delta = \begin{cases}
\sigma & \text{ on $M\spine$},\\
G_\delta\, d\phi & \text{ on $M\paper$}.
\end{cases}
$$

\begin{lemma}
\label{lemma:fiberwiseGiroux}
For any fiberwise Giroux form $\alpha$, there exist constants
$\delta_0 \in (0,1/2)$ and $K_0 \ge 0$ such that for all constants 
$\delta \in (0,\delta_0]$ and $K \ge K_0$,
$$
\alpha_{K,\delta} := \alpha + K \beta_\delta
$$
is a Giroux form.  Moreover, whenever $\alpha$ itself is a Giroux form,
one can take $K_0 = 0$.
\end{lemma}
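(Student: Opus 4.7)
Expanding gives
\begin{equation*}
\alpha_{K,\delta} \wedge d\alpha_{K,\delta} \;=\; \alpha \wedge d\alpha \;+\; K\omega_\delta \;+\; K^2\, \beta_\delta \wedge d\beta_\delta, \qquad \omega_\delta := \alpha \wedge d\beta_\delta + \beta_\delta \wedge d\alpha.
\end{equation*}
My first step would be to observe that $\beta_\delta \wedge d\beta_\delta \equiv 0$: on $M\spine$ this holds since $\beta_\delta = \pi\spine^*\sigma$ is pulled back from the two-dimensional surface~$\Sigma$, while on $M\paper$ the formula $\beta_\delta = G_\delta\, d\phi$ gives $d\beta_\delta = G_\delta'\, d\rho \wedge d\phi$, in which a factor of $d\phi$ is repeated. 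Thus the contact condition reduces to showing that $\omega_\delta$ is a strictly positive multiple of the volume form at every point of $M$; granted this, $\alpha \wedge d\alpha + K\omega_\delta > 0$ holds for $K$ sufficiently large, and for all $K \ge 0$ whenever $\alpha$ is already contact, which is exactly the claim $K_0 = 0$ in the Giroux case.

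I would then check $\omega_\delta > 0$ piece by piece on the decomposition $M = \widecheck{M}\spine \cup \widecheck{M}\corner \cup \widecheck{M}\paper \cup \widecheck{M}\bndry$. On $\widecheck{M}\spine$, the $\pi\spine$-basicity of $\beta_\delta = \sigma$ forces $\iota_{\p_\theta}\beta_\delta = \iota_{\p_\theta} d\beta_\delta = 0$, so the fiberwise Giroux conditions $\alpha(\p_\theta) > 0$ and $\iota_{\p_\theta} d\alpha = 0$ make $\beta_\delta \wedge d\alpha$ vanish and $\alpha \wedge d\beta_\delta$ equal to a strictly positive multiple of the volume form---this is the horizontal Thurston trick of Proposition~\ref{prop:GirouxHorizontal}. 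On $\widecheck{M}\paper$, $\beta_\delta$ is locally $2\, d\phi$, a pullback from $S^1$, so $d\beta_\delta = 0$ and $\beta_\delta \wedge d\alpha > 0$ by the vertical trick of Proposition~\ref{prop:GirouxVertical}, using positivity of $d\alpha$ on pages. The genuinely new regions are $\widecheck{M}\corner$ and $\widecheck{M}\bndry$, where I would write $\alpha = a\, d\rho + b\, d\phi + c\, d\theta$ and $\beta_\delta = h(\rho)\, d\phi$ (with $h$ equal to $g\spine^\delta$ or $g\bndry^\delta$) and compute directly
\begin{equation*}
\omega_\delta \;=\; \bigl[c\, h'(\rho) \;+\; h(\rho)(a_\theta - c_\rho)\bigr]\, d\rho \wedge d\phi \wedge d\theta.
\end{equation*}
The two summands are complementary: the fiberwise Giroux positivity of $d\alpha$ on every page interior translates into $a_\theta - c_\rho > 0$ for $\rho > 0$, but the page-boundary condition $\iota_{\p_\theta} d\alpha = 0$ forces $a_\theta - c_\rho = 0$ on the spine ($\rho \le 0$) and on $\p M$ ($\rho = 0$ in $\widecheck{M}\bndry$). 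Precisely at these degeneracy loci the first summand is strictly positive: at the spine--paper interface $h(\rho) = e^\rho$ gives $h'(0) = 1$, while in $\widecheck{M}\bndry$ the construction of $g\bndry^\delta$ enforces $(g\bndry^\delta)'(0) > 0$, and in both cases $c = \alpha(\p_\theta) > 0$ by the fiberwise Giroux positivity on spine fibers and page boundaries. The main obstacle is exactly this interplay at the interfaces, and the tailored shapes of $g\spine^\delta$ and $g\bndry^\delta$ are designed so that at every point at least one summand is strictly positive; choosing $\delta_0$ small enough that $c > 0$ on the entire transition region $\{h'(\rho) \ne 0\}$, a compactness argument then produces a uniform positive lower bound on $\omega_\delta$ and hence a workable~$K_0$.

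Finally I would verify the Reeb and characteristic-foliation conditions of Definition~\ref{defn:supported}. On $M\spine$ the $\pi\spine$-basicity of $\beta_\delta$ yields $\iota_{\p_\theta} \alpha_{K,\delta} = \alpha(\p_\theta) > 0$ and $\iota_{\p_\theta} d\alpha_{K,\delta} = 0$, identifying $R_{\alpha_{K,\delta}}$ as a positive multiple of $\p_\theta$, hence positively tangent to spine fibers. At $\rho = 0$ in $\widecheck{M}\bndry$ we have $\beta_\delta \equiv 0$ on $\p M$ and $\iota_{\p_\theta} d\beta_\delta = 0$, so the same argument using the boundary fiberwise Giroux conditions ($b|_{\rho=0} = 0$, $c|_{\rho=0} > 0$, $\iota_{\p_\theta} d\alpha|_{\rho=0} = 0$) identifies $R_{\alpha_{K,\delta}}$ as $\p_\theta/c$, positively tangent to the fibers of $\pi\paper|_{\p M}$. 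Since $\beta_\delta$ vanishes on $\p M$, we have $\alpha_{K,\delta}|_{T\p M} = \alpha|_{T\p M}$, so the characteristic foliation coincides with that of $\alpha$ and its closed leaves represent the preferred meridian $m_T$ generated by $\p_\phi$. Both the Reeb and characteristic-foliation computations are $K$-independent, which is why the case $K_0 = 0$ for Giroux $\alpha$ preserves these conditions as well.
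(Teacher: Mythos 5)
Your proof is correct and follows essentially the same route as the paper's: both exploit $\beta_\delta \wedge d\beta_\delta \equiv 0$ to reduce the contact condition to positivity of the cross term $\alpha \wedge d\beta_\delta + \beta_\delta \wedge d\alpha$, verified region by region using the fiberwise Giroux conditions together with the design of $g\spine^\delta$ and $g\bndry^\delta$ (your collar formula $[c\,h' + h(a_\theta - c_\rho)]\,d\rho\wedge d\phi\wedge d\theta$ is just the explicit coordinate form of the paper's wedge computation), with $\delta_0$ chosen so that $\alpha(\p_\theta)>0$ on the transition collars and $K_0$ obtained by compactness. The only cosmetic difference is that you verify the Reeb and characteristic-foliation conditions by hand, whereas the paper notes that $\alpha_{K,\delta}$ is automatically a fiberwise Giroux form and that a fiberwise Giroux form is a Giroux form as soon as it is contact.
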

\begin{proof}
Observe that $\alpha_{K,\delta}$ is automatically a \emph{fiberwise} Giroux
form for all $K \ge 0$, $\delta \in (0,1/2)$, so we only need to show that
$\alpha_{K,\delta}$ is contact for the right choices of these constants.
Since $\beta_\delta \wedge d\beta_\delta \equiv 0$, we have
$$
\alpha_{K,\delta} \wedge d\alpha_{K,\delta} = K \left( \alpha \wedge d\beta_\delta +
\beta_\delta \wedge d\alpha\right) + \alpha \wedge d\alpha,
$$
thus it suffices to show that whenever $\delta > 0$ is sufficiently small,
\begin{equation}
\label{eqn:isContact}
\alpha \wedge d\beta_\delta + \beta_\delta \wedge d\alpha > 0.
\end{equation}
The conditions on fiberwise Giroux forms imply that $\alpha(\p_\theta) > 0$
at $\p M\paper$, so this is also true on collars of the form
$\{ \rho \le \delta_0 \} \subset \nN(\p M\paper)$ for sufficiently 
small $\delta_0 > 0$.  Assuming $0 < \delta \le \delta_0$, we shall now
show that \eqref{eqn:isContact} holds everywhere on~$M$.

On $M\spine$, $\beta_\delta \wedge d\alpha = \sigma \wedge d\alpha = 0$
since $\sigma(\p_\theta) = d\alpha(\p_\theta,\cdot) = 0$, but
$\alpha \wedge d\beta_\delta > 0$ since $\alpha(\p_\theta) > 0$ and
$d\beta_\delta = d\sigma$ is positive on~$\Sigma$.

On $M\paper$ outside of the collars $\{ \rho \le \delta\}$, we have
$\beta_\delta = 2\, d\phi$ and thus $d\beta_\delta = 0$, while
$\beta_\delta \wedge d\alpha = 2\, d\phi \wedge d\alpha > 0$ due to the
assumption that $d\alpha$ is positive on the fibers of~$\pi\paper$.

On the collars $\{ \rho \le \delta\}$, we have $\beta_\delta = G_\delta\, d\phi$,
with $G_\delta > 0$ on the interior of $M\paper$, hence
$\beta_\delta \wedge d\alpha = G_\delta\, d\phi \wedge d\alpha > 0$ again
except at $\p M\paper$.  It thus remains only to show that 
$\alpha \wedge d\beta_\delta \ge 0$, with strict positivity at $\p M\paper$.
This follows from the fact that $\alpha(\p_\theta) > 0$ on this region,
since $\alpha \wedge d\beta_\delta = g'(\rho)\, \alpha \wedge
d\rho \wedge d\phi$, where $g(\rho)$ denotes either $g\spine^\delta(\rho)$ or 
$g\bndry^\delta(\rho)$,
both of which we assumed to have nonnegative first derivatives which are
strictly positive at $\rho=0$.
\end{proof}

\begin{proof}[Proof of Theorem~\ref{thm:GirouxForms}]
In light of the construction of a fiberwise Giroux form explained above,
the existence of a Giroux form follows immediately from 
Lemma~\ref{lemma:fiberwiseGiroux}.

We claim now that for any $n \in \NN$, a continuous family of 
Giroux forms
$$
\{ \alpha_\tau \}_{\tau \in S^{n-1}}
$$
can always be extended to a family of Giroux forms 
parametrized by the disk $\DD^n$.
As an initial step, note that the characteristic foliations induced
by $\alpha_\tau$ at $\p M$ may not be precisely the foliation
$\fF$ we fixed above, but they are guaranteed to be isotopic to it and
also transverse to the coordinate vector field $\p_\theta$ (which is
parallel to Reeb orbits at the boundary).  We can thus alter
$\alpha_\tau$ by a fiber preserving isotopy supported near $\p M$, producing
a homotopy through $S^{n-1}$-families of Giroux forms, to a family whose
characteristic foliations at $\p M$ are all generated by~$\p_\phi$.
Let us therefore assume without loss of generality that the given family
$\alpha_\tau$ has this property, so all the $\alpha_\tau$ are also fiberwise
Giroux forms by our definition.

Since the space of fiberwise Giroux forms is convex, 
$\{\alpha_\tau\}_{\tau \in S^{n-1}}$ can now be extended via
linear interpolation to a family $\{ \tilde{\alpha} \}_{\tau \in \DD^n}$ of 
fiberwise Giroux forms.  These forms are also contact for all $\tau$ in some
collar neighborhood of $\p\DD^n$, since the contact condition is open.
Choose a continuous ``bump'' function
$$
\psi : \DD^n \to [0,1]
$$
that equals~$0$ at $\p \DD^n$ and~$1$ outside this collar.  Next, observe that
since $\DD^n$ is compact, one can find constants $K \ge 0$ sufficiently 
large and $\delta > 0$ sufficiently small so that 
Lemma~\ref{lemma:fiberwiseGiroux} holds with the same constants for all
$\tilde{\alpha}_\tau$, $\tau \in \DD^n$.  Then 
$$
\alpha_\tau := \tilde{\alpha}_\tau + K \psi(\tau) \beta_{\delta}
$$
defines the desired family of Giroux forms.  This shows that the space of
Giroux forms has vanishing homotopy groups of all orders, so by
Whitehead's theorem, it is contractible.
\end{proof}

We can now fill in a loose end from \S\ref{sec:invariantsSketch} and complete
the proof of Proposition~\ref{prop:overtwisted}.

\begin{lemma}
\label{lemma:overtwisted}
Every contact manifold with planar $1$-torsion is overtwisted.
\end{lemma}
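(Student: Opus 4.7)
The plan is to locate an overtwisted disk inside a tubular neighborhood of the disk-page planar piece by reducing the hypothesis to the blown-up summed open book framework of \cite{Wendl:openbook2}, where the corresponding statement is already known.

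Unpacking the hypothesis: by definition the planar piece $M_0^P \subset \mathring{M}_0$ has pages of genus zero with exactly one boundary component, so each page is a disk and $M_0^P$ is a solid torus $\DD^2 \times S^1$ fibered by these disks over $S^1$. The single boundary torus $T := \p M_0^P$ lies in an adjacent spine component $M_S \subset M\spine$, and in the coordinates of \S\ref{sec:coordinates} any Giroux form $\alpha$ restricts to a multiple of $d\theta$ on $M_S$. Consequently the Reeb vector field $R_\alpha$ is tangent to the $S^1$-fibers of $\pi\spine$, and the characteristic foliation of $\xi$ on $T$ is foliated by the meridional Reeb orbits that bound the disk pages of $M_0^P$.

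The next step is to pass to a tubular neighborhood $U \supset \overline{M_0^P}$ whose spinal structure is a blown-up summed open book in the sense of \cite{Wendl:openbook2}. Since $M_0$ is not symmetric while $M_0^P$ has disk pages, at least one of the following features must appear arbitrarily close to $T$: the base surface of $M_S$ is not a disk, paper pieces of different topological types meet $M_S$, or the multiplicities of $\pi\paper$ at the boundary components of $M_S$ differ. In each case, by cutting the base of $M_S$ along properly embedded arcs and applying Theorem~\ref{thm:GirouxForms} to control the resulting Giroux form up to isotopy, I would arrange that the induced spinal open book on $U$ has only disk and annular vertebrae, while the image of $M_0^P$ inside this blown-up summed open book remains a non-symmetric disk-page component. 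The lemma then follows from the analogue of Proposition~\ref{prop:overtwisted} proved in \cite{Wendl:openbook2}, since overtwistedness is intrinsic to $(M,\xi)$.

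The main obstacle will be this reduction step: one must verify that any non-symmetric spinal configuration near a disk-page planar piece admits an open neighborhood whose spinal structure is isotopic to a non-symmetric blown-up summed open book. The essential subtlety is that cutting the base of $M_S$ to disks and annuli changes the combinatorial data of the spine, and the non-symmetry inherited from $M_0$ must survive this change; this amounts to a topological bookkeeping argument about boundary components and multiplicities. An alternative, more direct approach would be to build an overtwisted disk by hand, gluing a disk page $D \subset M_0^P$ to an annular ribbon $A \subset M_S$ chosen so that $\p(D \cup A)$ can be perturbed to a Legendrian curve with $\mathrm{tb} = 0$ whose spanning disk has characteristic foliation free of limit cycles --- verifying these two conditions is precisely where the non-symmetry of $M_0$ enters decisively.
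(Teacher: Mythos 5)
Your main route has a genuine gap, and it sits exactly where you flagged it. A neighborhood $U$ of the disk-page piece together with a collar of the adjacent spinal component does not remember the non-symmetry of $M_0$ at all: it is just a solid torus of disk pages with an annular spinal collar, and such a configuration embeds in plenty of tight manifolds (e.g.~around a binding component of a planar open book supporting a tight structure). The non-symmetry lives in the \emph{other} paper components and in the topology of the vertebrae, and your proposed cutting of the base of $M_S$ along arcs discards precisely that data. Moreover, cutting a vertebra is not an operation on the fixed contact manifold: it is not an isotopy of spinal open books (it is more akin to reversing the fiber sum along pages, which is a cobordism-level operation, not a re-decomposition of $(M,\xi)$), so Theorem~\ref{thm:GirouxForms} gives you no control over what contact structure the cut object supports. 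Indeed, if every non-symmetric spinal configuration near a disk-page piece admitted a non-symmetric blown-up summed open book neighborhood, spinal planar torsion would reduce to the notion of \cite{Wendl:openbook2}, which is exactly what this paper is built to go beyond (the examples of Corollary~\ref{cor:S1planarTorsion} have vertebrae of arbitrary genus). So the reduction is not bookkeeping; it is the whole content, and as a statement about neighborhoods of the planar piece it fails.

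Your alternative sketch is much closer to the paper's actual argument, but it aims at a harder target than necessary. The paper never exhibits an overtwisted disk directly (controlling the characteristic foliation of a spanning disk is delicate); instead it produces a Legendrian knot $L$ with Thurston--Bennequin number $0$ that is smoothly isotopic to the boundary of the disk page $D$, which contradicts the Bennequin--Eliashberg inequality if $\xi$ were tight. Non-symmetry enters only to guarantee a non-disk page $P_1$ in a second paper component adjacent to the same spinal region; $L$ is then taken in the interior of $P_1$, isotopic to a boundary component adjacent to that spine, so both page framings agree and equal~$0$. If $P_1$ has another boundary component, the fiberwise Liouville form of Lemma~\ref{lemma:fiberLiouville} can be chosen to vanish along $L$ (with monodromy supported away from $L$), making $L$ Legendrian with contact framing $0$; if $P_1$ has a single boundary component, non-symmetry forces genus $g>0$, and a convex-surface argument (fold along a disjoint essential curve to make $L$ non-isolating, then Legendrian-realize) produces the same framing. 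Finally, note that the lemma is invoked as the converse direction of Proposition~\ref{prop:overtwisted}, so its content is the disk-page (planar $0$-torsion) case; you assumed disk pages, which matches the intended statement, but be aware that this is not what the literal definition of planar $1$-torsion would give you.
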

\begin{proof}
Suppose $(M, \xi)$ contains a planar $0$-torsion domain~$M_0$,
so $(M_0,\xi)$ is supported by a spinal open book $\boldsymbol{\pi}$
whose interior contains a page $D$ that is a disk.  Let
$M\spine^1 \subset M_0$ denote the spinal region adjacent to~$D$.
Since $\boldsymbol{\pi}$ is not symmetric, there is a paper component 
$M\paper^1 \subset M_0$ adjacent to $M\spine^1$ with a page $P_1 \subset M\paper^1$
that is not a disk.  Pick an embedded curve $L$ in the interior of $P_1$
that is smoothly isotopic to a boundary component adjacent to $M\spine^1$,
so $L$ is also smoothly isotopic to the boundary of $D$, 
and both page framings agree and are equal to~$0$.  We will show that 
one can realize $L$ as a Legendrian knot with Thurston-Bennequin
number $\tb(L)=0$, violating the Bennequin-Eliashberg bound if
$(M,\xi)$ is tight.

We consider two cases.  First, assume that $P_1$ has another 
boundary component. Then in the construction of the Giroux form,
the $1$-form $\eta$ of Lemma~\ref{lemma:fiberLiouville}
can be chosen to vanish identically along~$L$.  Choosing the support of 
the monodromy away from $L$, it remains Legendrian after converting 
$\eta$ into the compatible contact form $\alpha$, and the contact 
framing is $0$ relative to $P_1$, and hence also relative to the disk~$D$.

Alternatively, suppose $P_1$ has a single boundary component (to which
$L$ is isotopic), and since $\boldsymbol{\pi}$ is not symmetric, $P_1$
has genus $g > 0$.  Since $P_1$ is a convex surface (but with transverse boundary), 
we can flow it along a transverse contact vector field to create a
neighborhood of the form $P_1 \times [0,1] \subset M\paper^1$ and round the corners to produce a
convex handlebody whose dividing set is isotopic to 
$\p P_1 \times \{1/2\}$.  On this convex surface, $L$ is isolating 
(in the sense of Honda \cite{Honda:tight1}), but since $g > 0$, we can fold along any other 
(disjoint, homotopically nontrivial, embedded) curve in $P_1 \times \{1\}$, 
increasing the dividing set and making $L$ non-isolating. 
We can now Legendrian realize $L$, and since $L$ is disjoint from 
the dividing set, we can ensure this has contact framing $0$ relative to 
$P_1 \times \{1\}$.  This is an absolute $0$-framing since
the framings from $P_1\times \{1\}$, $P_1$ and the disk $D$ all agree.
\end{proof}

\subsection{Lefschetz fibrations and symplectic structures}
\label{sec:Gompf}

In this section we prove the main part of Theorem~\ref{thm:Gompf} regarding
the various spaces of
symplectic structures supported by a bordered Lefschetz fibration.  
The overall strategy
is similar to that of the previous section, and can be summarized as follows:
\begin{enumerate}
\item Define spaces of ``fiberwise'' symplectic structures which are 
manifestly contractible, and are nonempty under suitable 
assumptions.
\item Use the Thurston trick to turn fiberwise structures into
supported symplectic structures by adding large multiples of data pulled
back from the base.
\end{enumerate}
A version of Theorem~\ref{thm:Gompf} for almost Stein structures appeared
already in our appendix to \cite{BaykurVanhorn:large}, and we will repeat
some of those arguments here but will generalize them substantially
in \S\ref{sec:SteinHomotopy} below, with an eye toward classifying fillings
up to Stein homotopy.

For this subsection and the next, fix a bordered Lefschetz fibration 
$\Pi : E \to \Sigma$.  Recall that
a symplectic structure $\omega$ on~$E$ was defined to be \emph{supported}
by $\Pi$ if it is positive on fibers and also tames some almost complex
structure $J$ defined near the critical points $E\crit$ for which the
fibers are $J$-holomorphic.  It will be useful to note that this last
condition doesn't depend on the choice of~$J$:

\begin{prop}
\label{prop:J1J2}
Suppose $J_1$ and $J_2$ are two almost complex 
structures defined near $E\crit$ which each restrict to positively oriented
complex structures on the smooth part of every fiber.  
Then $J_1|_{T E\crit} = J_2|_{T E\crit}$.
\end{prop}
\begin{proof}
By \cite{Gompf:hyperpencils}*{Lemma~4.4(a)}, it will suffice to observe
that $J_1$ and $J_2$ determine the
same oriented complex $1$-dimensional subspaces in $TE|_{E\crit}$.  
Indeed, choosing local
complex coordinates $(z_1,z_2)$ near a point $p \in E\crit$ and a 
corresponding complex coordinate near $\Pi(p)$ such that 
$\Pi(z_1,z_2) = z_1^2 + z_2^2$, we see that in these coordinates every complex
$1$-dimensional subspace of $\CC^2$ occurs as a tangent space to a fiber
in any neighborhood of~$p$. Since such tangent spaces
are both $J_1$- and $J_2$-complex by assumption, the claim follows
by continuity.
\end{proof}

In the following, fix an integrable complex structure $J\crit$
near $E\crit$ for which
$\Pi$ is holomorphic near~$E\crit$.  Proposition~\ref{prop:J1J2} implies
that none of our definitions or results will depend on this choice.
We shall now define various spaces of smooth objects on $E$, each
assumed to carry the natural $C^\infty$-topology.
Denote the vertical subbundles in $E$ and $\p_h E$ by
$$
VE = \ker T\Pi \subset TE \quad \text{ and } \quad
V(\p_h E) = VE \cap T(\p_h E).
$$

\begin{defn}
\label{defn:fiberwiseGiroux}
Let $\GirouxFib(\p_h \Pi)$ denote the space of germs of $1$-forms 
$\lambda$ defined on a neighborhood of $\p_h E$ in~$E$ such that 
$\lambda|_{V(\p_h E)} > 0$ and 
$V(\p_h E) \subset \ker \left( d\lambda|_{T(\p_h E)} \right)$.
Similarly, $\GirouxFib(\p\Pi)$ will denote the space of germs of
$1$-forms $\lambda$ defined on a neighborhood of $\p E$ in~$E$ which 
satisfy the above conditions at $\p_h E$ and also
satisfy $d\lambda|_{VE} > 0$ at~$\p_v E$.
We call any $\lambda \in \GirouxFib(\p_h\Pi)$ or $\GirouxFib(\p\Pi)$
a \defin{fiberwise Giroux form near} $\p_h E$ or $\p E$ respectively.
\end{defn}

Observe that $\GirouxFib(\p_h \Pi)$ and $\GirouxFib(\p\Pi)$ are both
convex spaces.

\begin{defn}
\label{defn:GirouxNear}
The spaces of \defin{Giroux forms near} $\p_h E$ or
$\p E$ respectively (cf.~Remark~\ref{remark:corner}) are defined as
\begin{equation*}
\begin{split}
\Giroux(\p_h\Pi) &:= \left\{ \lambda \in \GirouxFib(\p_h\Pi)\ \Big|\ 
\text{$\lambda|_{T(\p_h E)}$ is contact} \right\}, \\
\Giroux(\p\Pi) &:= \left\{ \lambda \in \GirouxFib(\p\Pi)\ \Big|\ 
\text{$\lambda|_{T(\p_h E)}$ and $\lambda|_{T(\p_v E)}$ are both contact} \right\}.
\end{split}
\end{equation*}
\end{defn}

The following variation on Theorem~\ref{thm:GirouxForms} follows from a
simpler version of the same argument, implementing the Thurston trick
via Propositions~\ref{prop:GirouxVertical} and~\ref{prop:GirouxHorizontal}.
It implies in particular that both
$\Giroux(\p_h \Pi)$ and $\Giroux(\p\Pi)$ are nonempty and contractible.

\begin{prop}
\label{prop:GirouxNearBoundary}
The spaces $\GirouxFib(\p_h\Pi)$ and $\GirouxFib(\p\Pi)$ are each
nonempty.  Moreover, fixing a Liouville form $\sigma$ on $\Sigma$,
for any $\lambda \in \GirouxFib(\p_h\Pi)$
or $\GirouxFib(\p\Pi)$, there exists a constant $K_0 \ge 0$ depending
continuously on~$\lambda$ such that for every constant $K \ge K_0$,
$\lambda + K\, \Pi^*\sigma$ belongs to $\Giroux(\p_h\Pi)$ or
$\Giroux(\p\Pi)$ respectively, and we can take $K_0 = 0$ if
$\lambda$ is already in $\Giroux(\p_h\Pi)$ or $\Giroux(\p\Pi)$. \qed
\end{prop}

\begin{defn}
\label{defn:fiberwise}
The space of \defin{weakly convex fiberwise symplectic} structures
$\weakFib(\Pi)$ consists of all smooth closed $2$-forms $\omega$ on~$E$ such that
\begin{enumerate}
\item $\omega$ is positive on all fibers in $E \setminus E\crit$;
\item At $E\crit$, $\omega$ is nondegenerate and tames $J\crit$;
\item Near $\p_h E$, $\omega = d\lambda$ for some 
$\lambda \in \GirouxFib(\p_h\Pi)$.
\end{enumerate}
The space of supported weakly convex symplectic structures is then
\begin{equation*}
\begin{split}
\weak(\Pi) := \big\{ \omega \in \weakFib(\Pi)\ |\ 
&\text{$\omega^2 > 0$ and $\omega = d\lambda$ near $\p_h E$} \\
&\text{for some $\lambda \in \Giroux(\p_h\Pi)$} \big\}.
\end{split}
\end{equation*}
The space of \defin{strongly convex fiberwise symplectic} structures will be
$$
\strongFib(\Pi) := \left\{ \omega \in \weakFib(\Pi)\ \Big|\ 
\text{$\omega = d\lambda$ near $\p E$ for some $\lambda \in \GirouxFib(\p\Pi)$} 
\right\},
$$
so that the space of supported strongly convex symplectic structures is
\begin{equation*}
\begin{split}
\strong(\Pi) := \big\{ \omega \in \strongFib(\Pi)\ |\ 
&\text{$\omega^2 > 0$ and $\omega = d\lambda$ near $\p E$} \\
&\text{for some $\lambda \in \Giroux(\p\Pi)$} \big\}.
\end{split}
\end{equation*}
We similarly define the space of \defin{fiberwise Liouville structures}
$\LiouvilleFib(\Pi)$ to consist of all $\omega \in \strongFib(\Pi)$ for
which the primitive $\lambda \in \GirouxFib(\p\Pi)$ extends to a global
primitive of $\omega$ (i.e.~a \defin{fiberwise Liouville form}) on~$E$.  
The space of supported Liouville structures is then
\begin{equation*}
\begin{split}
\Liouville(\Pi) := \big\{ \omega \in \LiouvilleFib(\Pi)\ |\ 
&\text{$\omega^2 > 0$ and $\omega = d\lambda$ on $E$} \\
&\text{for some $\lambda$ with $\lambda|_{\p E} \in \Giroux(\p\Pi)$} \big\}.
\end{split}
\end{equation*}
\end{defn}

Observe that there are natural inclusions
$$
\LiouvilleFib(\Pi) \hookrightarrow \strongFib(\Pi) \hookrightarrow \weakFib(\Pi),
$$
and all three spaces are convex.

To handle the Stein case, we shall consider a special space of almost
complex structures.  Given any almost complex structure $J$ on $E$,
denote the maximal $J$-complex subbundle in $T(\p_h E)$ by
$$
\xi_J := T(\p_h E) \cap J T(\p_h E) \subset T(\p_h E).
$$

\begin{defn}
\label{defn:AC}
Let $\AC(\Pi)$ denote the space of pairs $(J,\p_\theta)$ where $J$ is an
almost complex structure on $E$ compatible with its orientation,
$\p_\theta$ is a nowhere zero vertical vector field on $\p_h E$, oriented
in the positive direction of the fibers,
and the following properties are satisfied:
\begin{enumerate}
\item
There exists a complex structure $j$ on $\Sigma$ for which
$\Pi : (E,J) \to (\Sigma,j)$ is pseudoholomorphic;
\item
The flow of $\p_\theta$ is $1$-periodic and preserves $\xi_J$.
\end{enumerate}
\end{defn}

Note that any $(J,\p_\theta) \in \AC(\Pi)$
uniquely determines $j$ on~$\Sigma$. The choice of vector field $\p_\theta$ 
is equivalent to a choice of principal $S^1$-bundle structure on~$\p_h E$, so it 
defines a fiber-preserving $S^1$-action that preserves both $\xi_J$ and
(due to the first condition)~$J|_{\xi_J}$.  We do not require $J$ to match
$J\crit$ near~$E\crit$, though they automatically match at $E\crit$ due to
Proposition~\ref{prop:J1J2}.
Using the fact that the space of positively oriented complex structures 
on any oriented real vector bundle of rank~$2$ is nonempty and contractible, 
it follows that the same is true for~$\AC(\Pi)$.

\begin{remark}
\label{remark:S1action}
In contact geometric terms, defining a principal $S^1$-bundle structure on 
$\p_h E$ is equivalent to giving it the structure of a \emph{strict}
contact fiber bundle (cf.~Remark~\ref{remark:higherDims}), i.e.~each fiber
is identified with the contact manifold $(S^1,dt)$ so that the vector field 
$\p_\theta$ generating the $S^1$-action satisfies $dt(\p_\theta) \equiv 1$.
Any fiberwise Giroux form $\lambda \in \GirouxFib(\p_h \Pi)$ near $\p_h E$
canonically determines a strict contact fiber bundle structure, with a
positive constant multiple of $\lambda$ as the contact form on each fiber;
here the condition $d\lambda(\p_\theta,\cdot)|_{T(\p_h E)} \equiv 0$ ensures
that all fibers are strictly contactomorphic since $\lambda$ has the same
integral on all of them, by Stokes' theorem.
\end{remark}

\begin{defn}
\label{defn:fiberwiseJconvex}
Given any $(J,\p_\theta) \in \AC(\Pi)$, we will say that a smooth 
function $f : E \to \RR$ 
is \defin{fiberwise $J$-convex} if, writing $\lambda_J := -df \circ J$,
the following conditions are satisfied:
\begin{enumerate}
\item $f$ is constant on each boundary component of each fiber 
$E_z \subset E$;
\item $d\lambda_J \in \LiouvilleFib(\Pi)$;
\item $\lambda_J|_{\p E} \in \GirouxFib(\p\Pi)$;
\item $\lambda_J(\p_\theta)$ is constant.
\end{enumerate}
The space of fiberwise $J$-convex functions for a fixed 
$(J,\p_\theta) \in \AC(\Pi)$ will be denoted by
$\JconvexFib_{(J,\p_\theta)}(\Pi)$.
\end{defn}

Observe that $\JconvexFib_{(J,\p_\theta)}(\Pi)$ is convex for each
$(J,\p_\theta) \in \AC(\Pi)$, and 
there is a natural map
$$
\JconvexFib_{(J,\p_\theta)}(\Pi) \to \LiouvilleFib(\Pi) : f \mapsto
-d(d f \circ J).
$$ 

\begin{defn}
\label{defn:JconvexPi}
Let $\Jconvex_{(J,\p_\theta)}(\Pi) \subset \JconvexFib_{(J,\p_\theta)}(\Pi)$
denote the subspace for which $d\lambda_J$ is also a symplectic form
taming $J$ and $\lambda_J|_{\p E} \in \Giroux(\p\Pi)$.
\end{defn}

The space of supported almost Stein structures is now precisely
$$
\ACStein(\Pi) = \{ (J,f) \ |\ \text{$(J,\p_\theta) \in \AC(\Pi)$ for
some $\p_\theta$, and
$f \in \Jconvex_{(J,\p_\theta)}(\Pi)$} \}.
$$
Note that for any $(J,f) \in \ACStein(\Pi)$, the vector field $\p_\theta$
is canonically determined via Remark~\ref{remark:S1action}, hence there is
a well-defined projection
\begin{equation}
\label{eqn:SteinProj}
\ACStein(\Pi) \to \AC(\Pi) : (J,f) \mapsto (J,\p_\theta),
\end{equation}
whose fiber over any $(J,\p_\theta) \in \AC(\Pi)$ is
$\Jconvex_{(J,\p_\theta)}(\Pi)$.
Since $\AC(\Pi)$ is homotopy equivalent to a point,
the almost Stein part of Theorem~\ref{thm:Gompf} will then be a consequence
of the following statement, to be proved at the very end of this
subsection:

\begin{prop}
\label{prop:almostSteinCtrbl}
If $\Pi : E \to \Sigma$ is allowable, then
the projection \eqref{eqn:SteinProj} is a Serre fibration with contractible
fibers; in particular, it is a homotopy equivalence.
\end{prop}

\begin{remark}
\label{remark:notConvex}
The contractibility of $\Jconvex_{(J,\p_\theta)}(\Pi)$ for each
$(J,\p_\theta) \in \AC(\Pi)$ is not as obvious as it may at first appear,
e.g.~since functions in $\Jconvex_{(J,\p_\theta)}(\Pi)$ are not constant
at the boundary, $\Jconvex_{(J,\p_\theta)}(\Pi)$ is not generally convex
(cf.~the discussion of almost Stein structures preceding 
Definition~\ref{defn:almostSteinFibration}).  The proof that
$\Jconvex_{(J,\p_\theta)}(\Pi)$ is contractible will instead require the
Thurston trick.
\end{remark}

We will frequently need to use the following standard lemma in constructions
of $J$-convex functions.  Recall that a hypersurface $V$ in an almost 
complex manifold $(W,J)$ is called
\defin{$J$-convex} whenever the maximal $J$-complex subbundle in $TV$ is a
contact structure whose canonical conformal symplectic structure tames~$J$.

\begin{lemma}[see e.g.~\cite{CieliebakEliashberg}*{Lemma~2.7} or \cite{LatschevWendl}*{Lemma~4.1}]
\label{lemma:sufficientlyConvex}
Suppose $(W,J)$ is a smooth almost complex manifold and $f : W \to \RR$ is a
smooth function such that $f$ is $J$-convex near all its critical points
and all level sets of~$f$ are $J$-convex hypersurfaces wherever they are regular.  
Then if $h : \RR \to \RR$ is any smooth function with $h' > 0$ and $h''$ 
everywhere sufficiently large, $h \circ f$ is a $J$-convex function. \qed
\end{lemma}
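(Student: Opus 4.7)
The plan is to compute $d\lambda_h$ in terms of $d\lambda_f$ and then estimate it pointwise on $TW$. Writing $\lambda_f := -df \circ J$ and $\omega_f := d\lambda_f$, one has $\lambda_h := -d(h\circ f)\circ J = h'(f)\lambda_f$, and hence
$$d\lambda_h = h''(f)\, df\wedge \lambda_f + h'(f)\,\omega_f.$$
Evaluating on a pair $(X, JX)$ gives
$$d\lambda_h(X, JX) = h''(f)\bigl[(df(X))^2 + (df(JX))^2\bigr] + h'(f)\,\omega_f(X, JX),$$
where the bracketed quadratic form $Q(X)$ is nonnegative. The whole game is to combine the two hypotheses on $f$ with a suitable size condition on $h''$ to force this to be strictly positive for every nonzero $X$.

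On a neighborhood $\uU$ of $\mathrm{Crit}(f)$ where $f$ is itself $J$-convex, $\omega_f(X, JX) > 0$ for all nonzero $X$, so positivity of $d\lambda_h(X, JX)$ is immediate provided $h' > 0$ and $h''$ is not too negative on $f(\uU)$. Away from the critical points, I would decompose $T_pW = V_p \oplus H_p$ at each $p$, where $V_p := \ker df_p \cap J\ker df_p$ is the maximal $J$-complex subbundle of the regular level set through $p$ and $H_p$ is a $J$-invariant complement (spanned, e.g., by the $\omega_f$-dual of $\lambda_f$ together with its $J$-image). The $J$-convex-level-set hypothesis amounts to a pointwise lower bound $\omega_f(X_V, JX_V) \geq c(p)|X_V|^2$ for $X_V \in V_p$, while $V_p$ is by construction the common kernel of $df$ and $df\circ J$, so $Q(X) \geq c''(p)|X_H|^2$ for some $c''(p) > 0$. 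A Cauchy--Schwarz/AM--GM bound on the cross terms in $\omega_f(X_V + X_H, JX_V + JX_H)$ then yields
$$d\lambda_h(X, JX) \geq \bigl(h''(f)c''(p) - h'(f)C(p)\bigr)|X_H|^2 + \tfrac{1}{2}h'(f)c(p)|X_V|^2,$$
which is strictly positive for $X \neq 0$ once $h''$ exceeds $h'\cdot C/c''$ at the value $f(p)$.

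The main obstacle is uniformity: the constants $c, c'', C$ are pointwise quantities on $W$, whereas the hypothesis of the lemma requires $h''$ to be a function of the real variable $t$ alone. I would resolve this by exhausting $W$ by compact subsets $K_n$, extracting on each a continuous positive function $\Lambda_n(t)$ that dominates the ratio $(C/c'')(p)$ for $p \in f^{-1}(t)\cap K_n$, and building $h$ by integrating a smooth positive function on $\RR$ that dominates every product $\Lambda_n\cdot h'$ on $f(K_n)$; the simplest concrete choice is $h(t) = e^{Nt}$ for $N$ sufficiently large. Patching this estimate with the one on $\uU$ near $\mathrm{Crit}(f)$ then yields a single smooth $h$ for which $h\circ f$ is globally $J$-convex.
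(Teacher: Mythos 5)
Your argument is correct and is essentially the standard proof of this lemma: the paper itself gives no proof, deferring to the cited references, where exactly this computation ($d\lambda_h = h''(f)\, df\wedge\lambda_f + h'(f)\, d\lambda_f$, estimated by splitting $T_pW$ into the maximal complex subspace of the level set and a complement on which $(df)^2+(df\circ J)^2$ is positive definite) is carried out. The only caveat is that your uniformity discussion genuinely works only when the relevant level sets are compact (e.g.\ $W$ compact or $f$ exhausting), which is the case in every application in this paper.
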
 
\begin{remark}
\label{remark:dimension2}
It will sometimes be useful to note that the $J$-convexity hypothesis on
hypersurfaces is vacuous when $\dim_\RR W = 2$.
\end{remark}

In order to construct fiberwise symplectic structures in the nonexact case, 
we will need first
to be able to pick a cohomology class that evaluates positively on every
irreducible component of every fiber.  For this we will make use of the 
following linear algebraic lemma due to Gompf.

\begin{lemma}[\cite{Gompf:locallyHolomorphic}*{Lemma~3.3}]
\label{lemma:GompfMatrix}
For a real $n$-by-$n$ symmetric matrix $A = ( a_{ij} )$, let $G_A$
denote the graph with $n$ vertices $v_1,\ldots,v_n$, and an edge
between any two distinct vertices $v_i, v_j$ whenever $a_{ij} \ne 0$.
Suppose that (a)~$G_A$ is connected, (b)~$a_{ij} \ge 0$ whenever
$i \ne j$, and (c)~there are positive real numbers $m_1,\ldots,m_n$
such that $\sum_{i=1}^m m_i a_{ij} \le 0$ for all~$j$.  Fix a choice
of such numbers~$m_i$.  Then the hypothesis (d), that the inequality
in~(c) is strict for some~$j$, implies $\rank A = n$.  If (d) is not
satisfied, then $\rank A = n-1$.
\qed
\end{lemma}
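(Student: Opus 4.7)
The plan is to reduce the rank computation to a quadratic form analysis on a rescaled matrix. Setting $D = \operatorname{diag}(m_1,\ldots,m_n)$, I would consider the symmetric matrix $B := DAD$, which has the same rank as $A$ since $D$ is invertible with positive diagonal. With $e = (1,\ldots,1)^T$, a short computation using the symmetry of $A$ together with hypothesis~(c) gives
\[
(Be)_i = m_i \sum_j a_{ij}\, m_j = m_i\, (Am)_i \le 0,
\]
with strict inequality for at least one index precisely when hypothesis~(d) holds.

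My next step would be to expand $x^T B x$ for arbitrary $x \in \RR^n$ via the identity $2 x_i x_j = x_i^2 + x_j^2 - (x_i - x_j)^2$, grouping terms so that the coefficient of each $x_i^2$ becomes the row sum $(Be)_i$. This yields the Laplacian-style identity
\[
x^T B x = \sum_i m_i (Am)_i\, x_i^2 \;-\; \sum_{i<j} m_i m_j\, a_{ij}\, (x_i - x_j)^2.
\]
Both sums are manifestly nonpositive by hypotheses (b) and (c), so $B$ is negative semidefinite and the lemma reduces to characterizing its kernel.

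For $x \in \ker B$, both sums above must vanish separately. The vanishing of the second sum forces $x_i = x_j$ across every edge of $G_A$, so hypothesis~(a) forces $x = c\, e$ for some scalar $c$. Substituting back into the first sum yields $c^2 \sum_i m_i (Am)_i = 0$. In case~(d), at least one $(Am)_i$ is strictly negative, forcing $c = 0$; thus $B$ is in fact negative definite and $\rank A = \rank B = n$. If (d) fails, every $(Am)_i$ vanishes, so every scalar multiple of $e$ lies in the kernel, which is therefore exactly one-dimensional, giving $\rank A = n - 1$.

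The only nontrivial piece of bookkeeping is the expansion of $x^T B x$ into a sum indexed by vertices plus a sum indexed by edges of $G_A$, but this is the standard graph-Laplacian rearrangement, and I do not foresee any real obstacle here; everything else is immediate from the hypotheses, with hypothesis~(a) used exactly once to propagate the edge-constancy conclusion $x_i = x_j$ to all of $\{1,\ldots,n\}$.
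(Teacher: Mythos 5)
Your proof is correct. Note that the paper itself offers no argument for this statement---it is quoted from Gompf (\emph{Locally holomorphic maps yield symplectic structures}, Lemma~3.3) with a \qed---so there is no in-paper proof to compare against; your argument is a complete, self-contained substitute in the same spirit as Gompf's. The key points all check out: conjugating by $D=\operatorname{diag}(m_1,\ldots,m_n)$ preserves rank; by symmetry of $A$ the hypothesis $\sum_i m_i a_{ij}\le 0$ is exactly $(Am)_j\le 0$, so the row sums of $B=DAD$ are $(Be)_i=m_i(Am)_i\le 0$; the graph-Laplacian rearrangement of $x^TBx$ is the standard identity and shows $B\preceq 0$; and since you only need the easy inclusion $\ker B\subseteq\{x: x^TBx=0\}$ (plus the explicit computation $Be=0$ in the degenerate case to get the reverse containment), connectivity of $G_A$ pins the kernel to $\operatorname{span}(e)$ or $\{0\}$ exactly as you say, yielding $\operatorname{rank} A=n-1$ or $n$ respectively.
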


\begin{lemma}
\label{lemma:positive}
Given any $\lambda \in \GirouxFib(\p_h\Pi)$ or $\GirouxFib(\p\Pi)$,
there exists a closed
$2$-form $\eta$ on~$E$ such that $\eta = d\lambda$ near $\p_h E$ or $\p E$
respectively
and $\int_C \eta > 0$ for every irreducible component~$C$ of every fiber.
\end{lemma}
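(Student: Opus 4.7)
My plan is to exhibit $\eta$ in two stages: first construct an easy closed $2$-form $\eta_0$ that realizes the prescribed boundary germ and is already positive on all ``easy'' cycles, then correct it near each singular fiber using Gompf's matrix lemma (Lemma~\ref{lemma:GompfMatrix}).

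For the first stage, extend $\lambda$ arbitrarily to a smooth $1$-form $\tilde\lambda$ on~$E$ and set $\eta_0 := d\tilde\lambda$, which automatically equals $d\lambda$ near $\p_h E$ (or $\p E$). For any compact oriented surface $C \subset E$ whose oriented boundary is tangent to $V(\p_h E)$---in particular for any regular fiber and for any irreducible component of a singular fiber that meets $\p_h E$---Stokes' theorem gives $\int_C \eta_0 = \int_{\p C}\lambda$, and this is strictly positive because $\lambda|_{V(\p_h E)} > 0$ for any fiberwise Giroux form. So the only components requiring further work are those irreducible components of singular fibers that lie entirely in the interior of~$E$, for which $\int_C \eta_0 = 0$ since $\eta_0$ is exact.

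For the second stage, fix a singular fiber $F = \sum_{i=1}^k m_i C_i$ containing such boundaryless components, and let $A = (a_{ij})$ with $a_{ij} := C_i \cdot C_j$ be its intersection matrix. The hypotheses of Lemma~\ref{lemma:GompfMatrix} are satisfied: $G_A$ is connected because $F$ is; $a_{ij} \ge 0$ for $i \ne j$ because distinct components of $F$ meet transversely and positively at nodes; and $\sum_i m_i a_{ij} = F \cdot C_j = 0$ after pushing $F$ to a nearby regular fiber disjoint from the interior of~$C_j$. The lemma then gives $\rank A = k-1$ with $\ker A = \RR \cdot (m_1,\dots,m_k)$, so the image of $A$ is the hyperplane $(m_1,\dots,m_k)^\perp$. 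Now choose for each $j$ a closed $2$-form $\omega_j$ supported in a tubular neighborhood of $C_j$ disjoint from the collar where $\eta_0 = d\lambda$, representing the (relative) Poincar\'e dual of $[C_j]$, so that $\int_{C_i}\omega_j = a_{ij}$. Then $\eta := \eta_0 + \sum_j t_j\, \omega_j$ has component integrals $v_i = \int_{C_i}\eta_0 + (At)_i$, which by the lemma can be prescribed to any positive values satisfying
\[
\sum_i m_i v_i \ = \ \sum_i m_i \int_{C_i}\eta_0 \ = \ \int_F \eta_0 \ = \ \int_{F_{\mathrm{reg}}}\eta_0 \ > \ 0,
\]
where the second equality uses homological invariance of the fiber class and the positivity comes from the first stage. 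Repeating independently near each singular fiber then produces the required $\eta$.

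The main technical obstacle I anticipate is the Poincar\'e duality step: each $\omega_j$ must be supported away from the collar of $\p_h E$ so that the correction preserves the prescribed boundary form there. For boundaryless $C_j$ this is the standard Thom class of its normal bundle in~$E$, but components with boundary meet $\p_h E$ and a geometric representative of their dual must be perturbed to sit in the interior. This should be manageable because only the integrals on \emph{boundaryless} components actually need modification, so one can restrict attention to duals of boundaryless components (after checking that the relevant principal submatrix still satisfies Gompf's hypotheses) or else justify the identity $F \cdot C_j = 0$ in a relative intersection-theoretic sense, using that fibers over distinct base points are disjoint in the interior of~$E$.
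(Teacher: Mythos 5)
Your two-stage strategy (Stokes' theorem handles every component with boundary; a correction built from Lemma~\ref{lemma:GompfMatrix} and supported near the closed components handles the rest) is the same skeleton as the paper's proof, but your main route has a genuine gap at exactly the point you flag and defer. You form the intersection matrix $A$ of an entire singular fiber, \emph{including} the irreducible components with boundary. For such a component the diagonal entry $[C_i]\cdot[C_i]$ is not a well-defined homological quantity (it depends on a choice of framing of $\p C_i$ in $\p_h E$), the identity $\sum_i m_i a_{ij}=F\cdot C_j=0$ needs the same relative conventions when $C_j$ has boundary, and---most seriously---the correction forms you need, namely closed $2$-forms $\omega_j$ supported in the interior away from the collar where $\eta$ must equal $d\lambda$, with $\int_{C_i}\omega_j = a_{ij}$, are not available for components with boundary by any routine construction: a Thom form of the normal bundle of such a $C_j$ lives in a neighborhood meeting $\p_h E$, cutting it off near the boundary destroys closedness, and the integral of an interior-supported closed form over a surface \emph{with boundary} is not determined by its cohomology class. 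So the rank-$(k-1)$ / hyperplane argument rests on data that has not actually been defined. (Also, since the Lefschetz critical points are nondegenerate, the fibers are reduced and all $m_i=1$, so the multiplicities play no role.)

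The fallback you mention at the end---work only with the boundaryless components---is precisely what the paper does, and the verification you leave open is also the key simplification. For the closed components $C_1,\ldots,C_n$ in a connected component of the incidence graph one has $\sum_j [C_i]\cdot[C_j] = n_i - \hat{n}_i \le 0$, where $\hat{n}_i$ counts all nodes of $C_i$ with other components and $n_i$ only those with other \emph{closed} components; because each fiber is connected with nonempty boundary, it is not a union of closed components, so this inequality is strict for some $i$. Thus hypothesis (d) of Lemma~\ref{lemma:GompfMatrix} holds with $m_1=\cdots=m_n=1$, the restricted matrix has \emph{full} rank $n$, and one can prescribe arbitrary positive values $\sum_j a_{ij}b_j>0$---no hyperplane constraint, and no duals of boundary components are ever needed. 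One must still control the effect of the correction on the components with boundary, whose integrals the added forms could push negative; the paper does this by setting $\eta = d\lambda + \epsilon\omega$ with $\epsilon>0$ small, exploiting the strict positivity from your first stage. With these two points supplied, your argument becomes the paper's proof.
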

\begin{proof}
Extending $\lambda$ arbitrarily to a smooth $1$-form on~$E$, Stokes'
theorem implies $\int_C d\lambda \ge 0$ for all irreducible components
$C$ of fibers, with strict inequality if and only if $\p C \ne \emptyset$.
Our main task will be to find a closed $2$-form $\omega$ supported in the
interior such that
$\int_C \omega > 0$ for every component 
$C$ with $\p C = \emptyset$, as we can then
set $\eta := d\lambda + \epsilon \omega$ for sufficiently small $\epsilon > 0$.

We construct $\omega$ as follows.  The collection of all closed irreducible
components of singular fibers defines a graph $\Gamma$, with vertices 
corresponding to closed irreducible components and edges corresponding to
critical points at which two such components intersect each other.  Pick any
connected component of $\Gamma$ and denote the corresponding closed irreducible
components of fibers by $C_1,\ldots,C_n$.  Pick closed $2$-forms
$\omega_1,\ldots,\omega_n$ such that for $i = 1,\ldots,n$, $\omega_i$
represents the Poincar\'e dual of $[C_i]$ and 
is supported in a neighborhood of $C_i$ disjoint from $\p E$.
Let $\hat{n}_i$ denote the number of critical points at which $C_i$
intersects other irreducible components (i.e.~not counting intersections of
$C_i$ with itself), and let $n_i \le \hat{n}_i$ denote the number of these
at which $C_i$ intersects other \emph{closed} components (this is the
number of edges touching the corresponding vertex in~$\Gamma$).
For $i,j \in 1,\ldots,n$, let $n_{ij}$ denote the number of critical points
at which $C_i$ and $C_j$ intersect, i.e.~the number of edges of $\Gamma$
connecting the two corresponding vertices.  The algebraic 
intersections numbers $[C_i] \cdot [C_j] \in \ZZ$ then satisfy
\begin{equation*}
\begin{split}
[C_i] \cdot [C_j] &= n_{ij} \text{ for $i \ne j$},\\
[C_i] \cdot [C_i] &= -\hat{n}_i,
\end{split}
\end{equation*}
thus for $i=1,\ldots,n$,
\begin{equation}
\label{eqn:whocares}
\sum_{j=1}^n [C_i] \cdot [C_j] = \sum_{j \ne i} n_{ij} - \hat{n}_i =
n_i - \hat{n}_i \le 0.
\end{equation}
Since no fiber consists exclusively of closed components, the
inequality $n_i - \hat{n}_i \le 0$ must be strict for some $i=1,\ldots,n$.

Define now an $n$-by-$n$ symmetric
matrix $A = ( a_{ij} )$ with entries $a_{ij} = [C_i] \cdot [C_j]$.
By \eqref{eqn:whocares}, $A$ satisfies the conditions of
Lemma~\ref{lemma:GompfMatrix} with $m_1 = \ldots = m_n = 1$, including
hypothesis~(d), hence $\rank A = n$.  It follows that one can find
coefficients $b_1,\ldots,b_n \in \RR$ such that
$$
\int_{C_i} \sum_{j=1}^n b_j \omega_j = \sum_{j=1}^n a_{ij} b_j > 0
$$
for all $i = 1,\ldots,n$.  The desired $2$-form $\omega$ can thus be defined
as a sum of $2$-forms of this type for each connected component of the
graph~$\Gamma$.
\end{proof}

The next proposition is the main existence result for fiberwise symplectic
structures.

\begin{prop}
\label{prop:nonempty}
Given any $\lambda \in \GirouxFib(\p_h \Pi)$ or $\GirouxFib(\p\Pi)$, there
exists $\omega \in \strongFib(\Pi)$ such that $\omega = d\lambda$ near
$\p_h E$ or $\p E$ respectively.
In particular, the spaces $\weakFib(\Pi)$ and $\strongFib(\Pi)$ are 
always nonempty.
Moreover, for any $(J,\p_\theta) \in \AC(\Pi)$,
$\JconvexFib_{(J,\p_\theta)}(\Pi)$ (and hence also $\LiouvilleFib(\Pi)$)
is nonempty if and only if $\Pi$ is allowable.
\end{prop}
\begin{proof}
If $\Pi$ is not allowable, then there is a closed component in some singular
fiber, thus Stokes' theorem implies there can be no exact $2$-form
that is positive on every fiber.  Consequently,
$\LiouvilleFib(\Pi)$ 
(and therefore also $\JconvexFib_{(J,\p_\theta)}(\Pi)$) must be empty.

In the following, we shall handle the construction of 
$\omega \in \strongFib(\Pi)$ and $f \in \JconvexFib_{(J,\p_\theta)}(\Pi)$ 
in parallel at each step.  The construction of
$f \in \JconvexFib_{(J,\p_\theta)}(\Pi)$ depends on an arbitrary choice of
$(J,\p_\theta) \in \AC(\Pi)$, which we shall assume fixed throughout.
Note that while $\omega \in \strongFib(\Pi)$ is required to match
$d\lambda$ for a prescribed primitive $\lambda$ near the boundary,
the statement for the almost Stein case does not require this.

Given $\lambda \in \GirouxFib(\p_h\Pi)$ or $\GirouxFib(\p \Pi)$, let
$\eta$ denote the closed $2$-form
guaranteed by Lemma~\ref{lemma:positive}.  Observe that by the fiberwise
Giroux form condition, the integrals
of $\lambda$ over boundary components of fibers $E_z$ are locally constant
functions of~$z$.  We proceed in three steps:

\textsl{Step~1: Neighborhoods of regular fibers.}
For each $z \in \Sigma \setminus \Sigma\crit$, there exists an open
neighborhood $z \in \uU_z \subset \Sigma\setminus\Sigma\crit$
and a $1$-form $\lambda_z$ on $E|_{\uU_z}$ which restricts to $\lambda$
near $\p_h E$ such that $\omega_z := d\lambda_z$ is an area form on
every fiber in~$E|_{\uU_z}$.  If $\lambda \in \GirouxFib(\p\Pi)$ then
$d\lambda$ is already positive on the fibers near $\p_v E$, thus we
can also arrange $\lambda_z = \lambda$ near $\p_v E$.

For the almost Stein case, observe first that
the vector field $-J\p_\theta$ along $\p_h E$ is necessarily 
vertical and points transversely outward.
Choose a smooth function
$f_z : E_z \to \RR$ such that $-d(d f_z \circ J) > 0$ on $E_z$,
while at $\p E_z$, $f_z \equiv c_z$ and $d f_z(-J \p_\theta) \equiv \nu_z$
for some constants $c_z, \nu_z > 0$. This can be achieved by starting
with any smooth function that satisfies these conditions at the boundary
and has only Morse critical points of index~$0$ and~$1$, then modifying
it by local diffeomorphisms to make it $J$-convex near the critical points,
and postcomposing it with a function with very large second derivative;
cf.~Lemma~\ref{lemma:sufficientlyConvex}.
We can then find a neighborhood $\uU_z \subset \Sigma \setminus \Sigma\crit$
of $z$ such that $f_z$ admits an extension to a function
$$
f_z : E|_{\uU_z} \to \RR
$$
satisfying these same properties on every fiber in $E|_{\uU_z}$.
Note that the constants $c_z$ and $\nu_z$ can always be made larger without 
changing the neighborhood~$\uU_z$. The $1$-form $\lambda_z := -df_z \circ J$
on $E|_{\uU_z}$ now satisfies $d\lambda_z > 0$ on every fiber, and 
its restriction to the horizontal boundary
$$
\alpha_z^h := \lambda_z|_{T(\p_h E)}
$$
satisfies $\alpha_z^h(\p_\theta) = \nu_z$, $\alpha_z^h|_{\xi_J} = 0$.
The invariance of $\xi_J$ under the flow of $\p_\theta$ then implies
$\Lie_{\p_\theta} \alpha_z^h = d\alpha_z^h(\p_\theta,\cdot) = 0$.

\textsl{Step~2: Neighborhoods of singular fibers.}
For each $z \in \Sigma\crit$, let $E_z\crit$ denote the finite set of
critical points in~$E_z$.  For each $p \in E_z\crit$, choose
$J\crit$-holomorphic Morse coordinates $(z_1,z_2)$ on a neighborhood
$\uU_p \subset E$ of~$p$, and let $\omega\crit$ denote the symplectic form
on $\uU_p$ which looks like the standard symplectic form on $\CC^2$ in
these coordinates.  Choose an area form $\omega_z$ on $E_z \setminus
E_z\crit$ satisfying the following conditions:
\begin{itemize}
\item $\omega_z$ restricts to $d\lambda$ near $\p_h E$;
\item $\omega_z = \omega\crit$ near $E_z\crit$;
\item For each irreducible component $C \subset E_z$,
$\int_C \omega_z = \int_C \eta$.
\end{itemize}
This can be extended to a
closed $2$-form on $E|_{\uU_z}$ for some open neighborhood
$z \in \uU_z \subset \Sigma$ with $\overline{\uU}_z \subset \mathring{\Sigma}$, 
such that the extended $\omega_z$
also matches $d\lambda$ near $\p_h E$ and is positive on fibers.

For the almost Stein case, we must assume explicitly at this step that
$\Pi : E \to \Sigma$ is allowable, so in particular, the connected
components of $E_z \setminus E_z\crit$ are all compact oriented surfaces
with \emph{nonempty boundary} and finitely many punctures. Using the same
$J\crit$-holomorphic coordinates $(z_1,z_2)$ as above near any
$p \in E_z\crit$, define a function $f_z : \uU_p \to \RR$ by
$$
f_z(z_1,z_2) = \frac{1}{2}\left( |z_1|^2 + |z_2|^2 \right).
$$
This function is $J\crit$-convex, and we claim that it is also
$J$-convex on a sufficiently small neighborhood of~$p$. To see this,
recall that $J$ and $J\crit$ match at $p$ due to 
Proposition~\ref{prop:J1J2}.  Since $d f_z(p) = 0$, the $1$-forms
$-d f_z \circ J$ and $-d f_z \circ J\crit$ have the same $1$-jet at~$p$,
so their exterior derivatives match at that point,
and the claim follows.  By shrinking $\uU_p$ if necessary, we
can therefore assume $-df_z \circ J$ is the primitive of a positive symplectic
form in $\uU_p$ that tames $J$ and restricts symplectically to the vertical
subspaces.  Now since every connected component of $E_z \setminus E_z\crit$
has nonempty boundary, we can extend $f_z$ over $E_z$ so that it is 
$J$-convex on $E_z$ and satisfies $f_z \equiv c_z$, $df_z(-J\p_\theta) \equiv
\nu_z$ at $\p E_z$.  Using the fact that $J$-convexity is an open condition,
we can then extend $f_z$ over $E|_{\uU_z}$ for some neighborhood
$z \in \uU_z \subset \Sigma$ so that it has these same properties on each
fiber. The constants $c_z$ and $\nu_z$ can again be made larger if desired
without changing the neighborhood~$\uU_z$.

\textsl{Step~3: Partition of unity.}
Since $\Sigma$ is compact, there is a finite subset $I \subset \Sigma$
such that the open sets $\{ \uU_z \}_{z \in I}$ cover $\Sigma$.
Choose a partition of unity $\{ \rho_z : \uU_z \to [0,1] \}_{z \in I}$
subordinate to this cover.  For each $z \in I$, the $2$-form
$\omega_z - \eta$ on $E|_{\uU_z}$ is exact by construction, thus we can
pick a $1$-form $\theta_z$ on $E|_{\uU_z}$ with
$$
\omega_z = \eta + d\theta_z,
$$
and since $\omega_z$ and $\eta$ both match $d\lambda$ on a neighborhood
of $\p_h E$ or $\p E$ respectively,
we can choose $\theta_z$ such that $\theta_z = 0$ on such a neighborhood.
We can then define $\omega \in \strongFib(\Pi)$ by
$$
\omega = \eta + d \left( \sum_{z \in I} (\rho_z \circ \Pi) \theta_z \right).
$$

For the almost Stein case, consider the same partition of unity with the
functions $f_z : E|_{\uU_z} \to \RR$ constructed in the
first two steps, for $z \in I$. By making these functions more convex near $\p_h E$,
we can increase the constants $c_z > 0$ for all $z \in I$ so that they
match a single constant $c > 0$, and likewise increase $\nu_z$ for
$z \in I$ to match some large number $\nu > 0$.  The function
$$
f: = \sum_{z \in I} (\rho_z \circ \Pi) f_z
$$
is then constant at $\p_h E$.
Writing $\lambda_J = -df \circ J$, we also have $d\lambda_J > 0$ on
all fibers, while $d\lambda_J$ is symplectic and tames $J$ near $E\crit$, 
and the
$1$-form $\alpha^h := \lambda_J|_{T(\p_h E)}$ satisfies
$$
\alpha^h(\p_\theta) \equiv \nu > 0, \quad\text{ and }\quad
\alpha^h|_{\xi_J} \equiv 0,
$$
thus the invariance of $\xi_J$ under the flow of $\p_\theta$ implies
$$
d\alpha^h(\p_\theta,\cdot) \equiv \Lie_{\p_\theta} \alpha^h \equiv 0.
$$
\end{proof}

\begin{remark}
\label{remark:notHolomorphic}
It will occasionally (e.g.~in Lemma~\ref{lemma:asyetunspecified})
be useful to observe that in the almost Stein case, the
above proof did not make any use of the assumption that $\Pi : (E,J) \to
(\Sigma,j)$ is pseudoholomorphic.  The conditions on $(J,\p_\theta)$ we used
were merely that every fiber is $J$-holomorphic and the $S^1$-action
defined by $\p_\theta$ on $\p_h E$ preserves $\xi_J := T(\p_h E) \cap J T(\p_h E)$
and~$J|_{\xi_J}$.
\end{remark}

To move from fiberwise structures to honest symplectic structures, we apply
the Thurston trick.  Fix a Liouville form $\sigma$ on~$\Sigma$.
For the almost Stein case, we may also assume
$$
\sigma = -d\varphi \circ j,
$$
where $\varphi : \Sigma \to \RR$ is a smooth function constant at the
boundary and $j$ is the unique complex structure on $\Sigma$ for which
$\Pi : (E,J) \to (\Sigma,j)$ is pseudoholomorphic.

\begin{prop}
\label{prop:Thurston}
Given $\omega$ in $\weakFib(\Pi)$, $\strongFib(\Pi)$ or $\LiouvilleFib(\Pi)$, there
exists a constant $K_0 \ge 0$, depending continuously on~$\omega$, such that
for every $K \ge K_0$,
$$
\omega_K := \omega + K \, \Pi^* d\sigma
$$
belongs to $\weak(\Pi)$, $\strong(\Pi)$ or $\Liouville(\Pi)$ respectively.

Similarly, given $(J,\p_\theta) \in \AC(\Pi)$ and 
$f \in \JconvexFib_{(J,\p_\theta)}(\Pi)$, there exists $K_0 \ge 0$, depending
continuously on $J$ and $f$, such that for every $K \ge K_0$,
$$
f_K := f + K (\varphi \circ \Pi)
$$
belongs to $\Jconvex_{(J,\p_\theta)}(\Pi)$.

Moreover, if $\omega$ is already in $\weak(\Pi)$, $\strong(\Pi)$ or 
$\Liouville(\Pi)$, or $f$ is already in $\Jconvex_{(J,\p_\theta)}(\Pi)$ 
respectively, then for both statements it suffices to set $K_0 = 0$.
\end{prop}
\begin{proof}
Let $\uU\crit \subset E$ denote a neighborhood of $E\crit$ on which the
integrable complex structure $J\crit$ is defined and
$\Pi|_{\uU\crit}$ is holomorphic; more precisely for each $p \in E\crit$,
a neighborhood of $\Pi(p)$ in $\Sigma$ admits a complex structure $j_p$
such that the restriction of $\Pi$ to the connected component $\uU_p$ of 
$\uU\crit$
containing $p$ is a holomorphic map $(\uU_p,J\crit) \to (\Pi(\uU_p),j_p)$.
Assume to start with that $\omega \in \weakFib(\Pi)$.
By shrinking $\uU\crit$ if necessary,
we may assume $\omega|_{\uU\crit}$ is symplectic and tames~$J\crit$.
Now for any nonzero vector $v \in TE|_{\uU_p}$ for $p \in E\crit$, we have
$$
\omega_K(v,J\crit v) = \omega(v,J\crit v) + 
K \, d\sigma(\Pi_*v, j_p \Pi_* v),
$$
in which the first term is positive and the second is nonnegative for any
$K \ge 0$, hence $\omega_K|_{\uU\crit}$ is symplectic and positive on the
fibers.  In the almost Stein case, we write $\lambda_J := - d f \circ J$
and $\sigma := - d\varphi \circ j$ and observe that
the holomorphicity of $\Pi$ implies
$- d (\varphi \circ \Pi) \circ J = \Pi^*( - d\varphi \circ j) =
\Pi^*\sigma$, hence $\lambda_J^K := - d f_K \circ J = \lambda + K \, \Pi^*\sigma$.
We then have
$$
d\lambda_J^K(v,J v) = d\lambda_J(v,Jv) + K\, d\sigma(\Pi_*v,j \Pi_*v),
$$
and for any $v \ne 0$ near $E\crit$ this is again positive since
$d\lambda_J$ tames $J\crit$ and, by Prop.~\ref{prop:J1J2},
the latter matches $J$ at~$E\crit$.

Outside a neighborhood of $E\crit$, the rest follows by direct application
of the results in \S\ref{sec:Thurston}.
\end{proof}

Applying Whitehead's theorem as in the proof of Theorem~\ref{thm:GirouxForms},
Propositions~\ref{prop:nonempty} and~\ref{prop:Thurston} together imply
that the various spaces of supported symplectic structures in
Theorem~\ref{thm:Gompf} are nonempty and contractible as claimed.
They also imply that the fibers of the projection
$\ACStein(\Pi) \to \AC(\Pi) : (J,f) \mapsto (J,\p_\theta)$ are nonempty
and contractible. To see that this projection is also a Serre fibration,
it suffices to observe that due to the continuous dependence on $J$ and $f$,
the construction in Proposition~\ref{prop:Thurston} of the 
$J$-convex function $f_K$ can be done parametrically.
This completes the proof of Proposition~\ref{prop:almostSteinCtrbl}.

\subsection{Smoothing corners}
\label{subsec:smoothing}

To finish the
proof of Theorem~\ref{thm:Gompf}, we must show that the corners of $\p E$
can be smoothed in a way that yields a symplectic filling canonically up
to deformation.  For strong fillings this is mostly obvious because we have
a Liouville vector field transverse to both smooth faces of $\p E$, but the
case of weak fillings requires a bit more thought since there is no Liouville
vector field.  We will consider a specific class of smoothings defined
as follows.

Fix a collar neighborhood $\nN(\p\Sigma) = (-1,0] \times \p\Sigma
\subset \Sigma$ and a corresponding collar neighborhood
$E|_{\nN(\p\Sigma)} =: \nN(\p_v E) = (-1,0] \times \p_v E \subset E$ such that
$$
\Pi|_{\nN(\p_v E)} : \nN(\p_v E) \to \nN(\p\Sigma) : (s,p) \mapsto (s,\Pi(p)).
$$
Fix also a collar $\nN(\p_h E) = (-1,0] \times \p_h E \subset E$
such that
$$
\Pi|_{\nN(\p_h E)} : \nN(\p_h E) \to \Sigma : (t,p) \mapsto \Pi(p).
$$
The intersection of these two collars is then a neighborhood of the
corner
$$
\nN(\p_v E \cap \p_h E) := \nN(\p_h E) \cap \nN(\p_v E) =
(-1,0] \times (-1,0] \times \left( \p_h E \cap \p_v E \right),
$$
and in coordinates $(s,t,p)$ on this neighborhood we have
$\Pi(s,t,p) = (s,\Pi(p))$.
Given a constant $\epsilon \in (0,1)$, choose a pair of
smooth functions $f_\epsilon , g_\epsilon : (-1,1) \to (-1,1)$ satisfying the following
conditions:
\begin{itemize}
\item For $\tau \le -\epsilon$, $f_\epsilon(\tau) = \tau$ and $g_\epsilon(\tau) = 0$,
\item For $\tau \in (-\epsilon,\epsilon)$, $f_\epsilon'(\tau) > 0$ and
$g_\epsilon'(\tau) < 0$,
\item For $\tau \ge \epsilon$, $f_\epsilon(\tau) = 0$ and $g_\epsilon(\tau) = -\tau$.
\end{itemize}
Denote by $\gamma_\epsilon \subset (-1,0] \times (-1,0]$ the image of the smooth
path $(f_\epsilon(\tau),g_\epsilon(\tau))$ for $\tau \in (-1,1)$; this divides
$(-1,0] \times (-1,0]$ into two connected components.  We shall
denote the component of $\left((-1,0] \times (-1,0]\right) \setminus \gamma_\epsilon$
containing $(0,0)$ by~$\Gamma_\epsilon$ (see Figure~\ref{fig:smoothing}), 
and then define the compact domain
$$
W_\epsilon =  E \setminus \left(\Gamma_\epsilon \times \nN(\p_v E \cap \p_h E)\right).
$$
This is a smooth manifold with boundary $M_\epsilon := \p W_\epsilon$,
and the latter can be identified with
$\p E$ canonically up to a continuous isotopy which is
smooth outside the corner.

\begin{figure}
\begin{minipage}[t]{3in}
\psfrag{f}{$f$}
\psfrag{g}{$g$}
\psfrag{gmmaeps}{$\gamma_\epsilon$}
\psfrag{Gammaeps}{$\Gamma_\epsilon$}
\psfrag{-eps}{$-\epsilon$}
\psfrag{-1}{$-1$}
\includegraphics[scale=1.5]{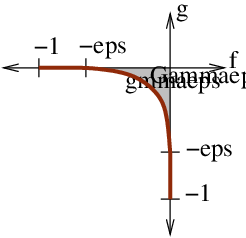}
\caption{\label{fig:smoothing} 
The path $\gamma_\epsilon$ and region $\Gamma_\epsilon \subset (-1,0] \times
(-1,0]$ used for smoothing corners.}
\end{minipage}
\hfill
\begin{minipage}[t]{3in}
\psfrag{s}{$s$}
\psfrag{t}{$t$}
\psfrag{Meps}{$M_\epsilon$}
\psfrag{Weps}{$W_\epsilon$}
\psfrag{-eps}{$-\epsilon$}
\includegraphics[scale=1.5]{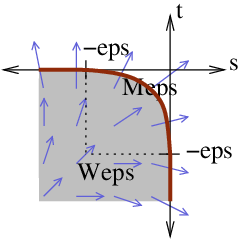}
\caption{\label{fig:smoothing2} 
The smoothed corner of $M_\epsilon = \p W_\epsilon$ with the transverse vector field~$V_K$.}
\end{minipage}
\end{figure}

\begin{prop}
\label{prop:smoothing}
Suppose $\omega \in \weak(\Pi)$, $\strong(\Pi)$ or $\Liouville(\Pi)$,
or $\omega = -d (d f \circ J)$ for some $(J,f) \in \ACStein(\Pi)$.
Then for sufficiently small
$\epsilon > 0$, the domain $W_\epsilon$ with its symplectic or almost Stein
data is a weak, strong, exact or almost Stein filling respectively of
$(M_\epsilon,\xi_\epsilon)$, where
$\xi_\epsilon$ is a contact structure supported by a spinal open
book with smooth overlap that is isotopic 
(in the sense of Remark~\ref{remark:smoothingSOB})
to~$\p\Pi$.  Moreover, any two fillings obtained in this way by different
choices of smoothing are deformation equivalent.
\end{prop}
\begin{proof}
Assume $\omega \in \weak(\Pi)$, so $\omega = d\lambda$ near $\p_h E$ for
some $\lambda \in \Giroux(\p_h\Pi)$.  One can extend $\lambda$ to a
neighborhood of $\p E$ so that $\lambda \in \GirouxFib(\p E)$; this follows
from Proposition~\ref{prop:nonempty} (or a simpler variant focusing only on a
neighborhood of~$\p_v E$).  Choosing a Liouville form $\sigma$ on $\Sigma$,
Proposition~\ref{prop:GirouxNearBoundary} then implies that for
sufficiently large constants $K > 0$, the $1$-form
$$
\lambda_K := \lambda + K\, \Pi^*\sigma
$$
defines a Giroux form near $\p E$.  Further, we claim that if
$K > 0$ is sufficiently large, then
$\lambda_K \wedge \omega > 0$ restricts positively to both
$\p_h E$ and $\p_v E$.  On $\p_h E$ this is immediate since
$\omega = d\lambda$ near $\p_h E$ and $\lambda|_{T(\p_h E)}$ is contact, hence
$$
\lambda_K \wedge \omega|_{T(\p_h E)} = (\lambda + K\, \Pi^*\sigma)
\wedge d\lambda|_{T(\p_h E)} = \lambda \wedge d\lambda|_{T(\p_h E)} > 0,
$$
where the term $(\Pi^*\sigma \wedge d\lambda)|_{T(\p_h E)}$ vanishes
because both $\Pi^*\sigma$ and $d\lambda|_{T(\p_h E)}$ kill $V(\p_h E)$.
On $\p_v E$, we have
$$
\lambda_K \wedge \omega|_{T(\p_v E)} = (\lambda + K\, \Pi^*\sigma)
\wedge \omega|_{T(\p_v E)} = K\, \Pi^*\sigma \wedge \omega|_{T(\p_v E)}
+ \lambda \wedge \omega|_{T(\p_v E)},
$$
in which the first term is positive since $\omega$ is positive on fibers,
hence the sum is positive for $K \gg 0$.

Since $\omega$ is symplectic, there is a vector field $V_K$ defined 
near $\p E$ by the condition $\omega(V_K,\cdot) = \lambda_K$, and 
$\lambda_K \wedge \omega$ is then positive on any given oriented
hypersurface if and only if $V_K$ is positively transverse to that
hypersurface.  It follows that $V_K$ is everywhere positively transverse
to both $\p_h E$ and $\p_v E$, so if $\epsilon > 0$ is chosen
sufficiently small, then $V_K$ has positive $\p_s$ and $\p_t$
components (in the coordinates $(s,t,p)$) everywhere on
$$
(-\epsilon,0] \times (-\epsilon,0] \times (\p_h E \cap \p_v E) \subset
\nN(\p_v E \cap \p_h E).
$$
For this choice of $\epsilon$, $V_K$ is then positively transverse to
$\p W_\epsilon$ everywhere (Figure~\ref{fig:smoothing2}), and it follows that
$$
\lambda_K \wedge \omega|_{TM_\epsilon} > 0.
$$
Thus $(W_\epsilon,\omega)$ is a weak filling of $(M_\epsilon,\xi_\epsilon)$, where
$\xi_\epsilon := \ker \left( \lambda_K|_{TM_\epsilon} \right)$.

To see that this filling is unique up to symplectic deformation, note
first that by the results of the previous subsection, 
$\omega \in \weak(\Pi)$ is unique up to homotopy through~$\weak(\Pi)$.
Given any such
homotopy $\omega_\tau \in \weak(\Pi)$, $\tau \in [0,1]$, one can choose 
a continuous family of primitives $\lambda_\tau \in \Giroux(\p_h\Pi)$,
then extend these to $\lambda_\tau \in \GirouxFib(\p\Pi)$ and choose
$K > 0$ large enough so that 
$\lambda_\tau + K\, \Pi^*\sigma$ defines a continuous family of
Giroux forms near $\p E$ with $(\lambda_\tau + K\, \Pi^*\sigma) \wedge \omega_\tau$
positive on both $\p_h E$ and $\p_v E$.  Then for some continuous deformation
of the parameter $\epsilon_\tau > 0$, shrinking it as small as necessary
for $\tau \in (0,1)$, we can
arrange for $(W_{\epsilon_\tau},\omega_\tau)$ to be a weak filling of
$(M_{\epsilon_\tau}, \ker (\lambda_\tau + K\, \Pi^*\sigma) )$ for all $\tau \in [0,1]$.

The corresponding statements for strong, exact or almost Stein fillings
are proved by a simplification of the above arguments:
if $\omega$ is strongly convex, we may assume $\omega = d\lambda$ with
$\lambda \in \Giroux(\p\Pi)$,
thus $\lambda \wedge d\lambda$ is positive on both boundary faces and
the corresponding Liouville vector field plays the role that $V_K$ played
above.

It remains to show that the contact structure induced on $M_\epsilon$ is
supported by a spinal open book isotopic to $\p \Pi$.  It will suffice
to show this for a particular choice of $\omega \in \strong(\Pi)$.
Choose a coordinate $\phi \in S^1$ for each connected component 
of $\p \Sigma$, so the collar $\nN(\p\Sigma)$ can be viewed as a disjoint
union of components $(-1,0] \times S^1$ with coordinates $(s,\phi)$,
and we can choose $\sigma = e^s\, d\phi$ in these collars.  Choose also
a trivialization of the $S^1$-bundle $\p_h E \cap \p_v E \to \p\Sigma$
and denote the fiber coordinate by $\theta \in S^1$, so each component of
$\p_h E \cap \p_v E$ now has coordinates $(\phi,\theta) \in T^2$, and the
components of $\nN(\p_v E \cap \p_h E)$ inherit coordinates
$(s,t,\phi,\theta) \in (-1,0] \times (-1,0] \times T^2$ with
$$
\Pi(s,t,\phi,\theta) = (s,\phi).
$$
One can then construct a fiberwise Giroux form $\lambda$ near $\p E$
that takes the form $e^t\, d\theta$ in $\nN(\p_v E \cap \p_h E)$,
and extend $d\lambda$ by Proposition~\ref{prop:nonempty} 
to a fiberwise symplectic structure $\omega \in \strongFib(\Pi)$.
Applying Proposition~\ref{prop:Thurston},
this yields a supported strongly convex symplectic structure with a
primitive of the form
$$
\lambda_K := e^t\, d\theta + K e^s\, d\phi
$$
on $\nN(\p_v E \cap \p_h E) = (-1,0] \times (-1,0] \times S^1 \times S^1$.
Defining $W_\epsilon$ as above for any choice of $\epsilon \in (0,1)$, 
$M := \p W_\epsilon$ inherits a spinal open book defined as follows:
the spine $M\spine$ is the complement in $\p_h E$ of the region
$(-\epsilon,0] \times \{0\} \times T^2 \subset \nN(\p_v E \cap \p_h E)$,
with the fibration
$$
\pi\spine : M\spine \to \Sigma'
$$
induced by $\Pi$, where $\Sigma'$ is the complement of the collars
$(-\epsilon,0] \times S^1$ in~$\Sigma$.  The closure of $M \setminus M\spine$
then constitutes the paper $M\paper$, with the $\phi$-coordinate defining
the fibration $\pi\paper : M\paper \to S^1$.  The restriction of
$\lambda_K$ to $M$ is now a Giroux form for this spinal open book.
\end{proof}

\section{A criterion for the canonical Stein homotopy type}
\label{sec:SteinHomotopy}

The characterization of supported almost Stein structures given in
Definition~\ref{defn:almostSteinFibration} is natural, but not general
enough to be useful in classifying fillings up to Stein homotopy.
In particular, the proof of Theorem~\ref{thma:quasi} stated in the
introduction will require us to consider bordered Lefschetz fibrations
$\Pi : E \to \Sigma$ with almost Stein structures $(J,f)$ for which
the fibers are almost complex submanifolds but the projection $\Pi$ is not pseudoholomorphic.
The more general characterization given by Theorem~\ref{thma:SteinHomotopy}
will therefore be useful, and it can be restated as follows.

\begin{thm}
\label{thm:SteinHomotopy}
Suppose $\Pi : E \to \Sigma$ is an allowable bordered Lefschetz fibration,
$j$ is a complex structure on $\Sigma$
and $(J,f)$ is an almost Stein structure on $E$ with the following properties:
\begin{enumerate}
\item $J$ restricts to a positively oriented complex structure on the smooth part of
every fiber;
\item $f$ is constant on the boundary components of every fiber;
\item The restriction of $-df \circ J$ to $\p E$ is a Giroux form for $\p\Pi$
(cf.~Remark~\ref{remark:corner}).
\item There exists an open neighborhood $\uU \subset \Sigma$ of $\p\Sigma$
such that the map
$$
(E|_\uU,J) \stackrel{\Pi}{\longrightarrow} (\uU,j)
$$
is pseudoholomorphic;
\item The maximal $J$-complex subbundle $\xi_J \subset T(\p_h E)$ is preserved
under the Reeb flow defined via $-df \circ J|_{T(\p_h E)}$, and
$J|_{\xi_J} = \Pi^*j$.
\end{enumerate}
Then $(J,f)$ is almost Stein homotopic to an almost Stein structure supported by~$\Pi$.
\end{thm}

\begin{remark}
We will not use this fact, but one can show that whenever the first 
and fifth conditions
in Theorem~\ref{thm:SteinHomotopy} hold, the projection $\Pi$ is pseudoholomorphic
(for a suitable choice of complex structure on the base) whenever the
Nijenhuis tensor takes values in the vertical subbundle. In particular, this is always
true if $J$ is integrable.
\end{remark}
%

We begin by generalizing the space $\AC(\Pi)$ from Definition~\ref{defn:AC}.

\begin{defn}
\label{defn:ACweaker}
Given an open subset $\uU \subset \Sigma$,
let $\ACweaker(\Pi;\uU)$ denote the space of pairs $(J,\p_\theta)$, where $J$ is an
almost complex structure on $E$ defining the correct orientation, $\p_\theta$ is a
positively oriented nowhere zero vertical vector field on $\p_h E$,
and $\Sigma$ admits a complex structure $j$ so that the following conditions are satisfied:
\begin{enumerate}
\item
$J$ restricts to a positively oriented complex structure on the smooth part of every
fiber;
\item
The equation $T\Pi \circ J = j \circ T\Pi$ is satisfied in
$E|_{\uU}$ and along~$\p_h E$;
\item
The flow of $\p_\theta$ on $\p_h E$ is $1$-periodic and preserves
$\xi_J := T(\p_h E) \cap J T(\p_h E)$.
\end{enumerate}
\end{defn}

Observe that $\ACweaker(\Pi;\Sigma)=\AC(\Pi)$, and 
$\ACweaker(\Pi;\uU') \subset \ACweaker(\Pi;\uU)$ whenever $\uU \subset\uU'$;
moreover, $\ACweaker(\Pi;\uU)$ is contractible for every choice of
$\uU \subset \Sigma$.
Theorem~\ref{thm:SteinHomotopy} would thus follow immediately if we could
show that every $(J,\p_\theta) \in \ACweaker(\Pi;\uU)$ admits a suitable
$J$-convex function, but this is probably not true in general---we at least have been
unable to prove it except when $\uU = \Sigma$.  What we will
show instead is that if $\uU$ contains~$\p\Sigma$, then every 
$(J,\p_\theta) \in \ACweaker(\Pi;\uU)$ has a
\emph{perturbation} that admits a suitable $J$-convex function, and this
perturbation can be arranged to depend continuously on parameters.
Here is the more technical result that implies Theorem~\ref{thm:SteinHomotopy}:

\begin{prop}
\label{prop:parametricJ}
Assume $\Pi : E \to \Sigma$ is allowable, $\uU \subset \Sigma$ is an
open neighborhood of~$\p\Sigma$, $X$ is a compact cell complex,
$A \subset X$ is a subcomplex, and
\begin{equation*}
\begin{split}
X &\to \ACweaker(\Pi;\uU) : \tau \mapsto (J_\tau,\p_\theta^\tau), \\
A &\to C^\infty(E) : \tau \mapsto f_\tau
\end{split}
\end{equation*}
are continuous maps such that for every $\tau \in A$,
$(J_\tau,f_\tau)$ is an almost Stein structure, $f_\tau$ is constant on all
boundary components of fibers, and $\lambda_\tau := -d f_\tau \circ J_\tau$
restricts to $\p E$ as a Giroux form for~$\p\Pi$ (in the sense of Remark~\ref{remark:corner})
satisfying $\lambda_\tau(\p_\theta^\tau) \equiv \operatorname{const}$.
Then there exists a continuous (with respect to the $C^\infty$-topology)
family of almost Stein structures 
$\left\{ (J'_\tau,f'_\tau) \right\}_{\tau \in X}$ matching
$(J_\tau,f_\tau)$ for all $\tau \in A$ such that $J'_\tau$ is $C^\infty$-close
to $J_\tau$ for all $\tau \in X$.
\end{prop}

The proof of Proposition~\ref{prop:parametricJ} requires several steps and
will occupy the remainder of this section, so here is an initial sketch.  
Let $\{j_\tau\}_{\tau \in X}$ 
denote the uniquely determined family of complex structures on $\Sigma$ such 
that $\Pi : (E,J_\tau) \to (\Sigma,j_\tau)$ is holomorphic in $E|_{\uU}$
and along~$\p_h E$ for all~$\tau$.  Since $\uU$ is open, we can choose a function
$\varphi : \Sigma \to \RR$ which has all its critical points in $\uU$ and
is $j_\tau$-convex for every~$\tau$.  Holomorphicity of $\Pi$ then allows the construction of
$J_\tau$-convex functions on $E|_\uU$ using the Thurston trick as
in Prop.~\ref{prop:ThurstonStein}.  Outside of
$E|_\uU$, the function $\varphi \circ \Pi$ has level sets that are
unions of $J_\tau$-holomorphic fibers and are thus Levi-flat, i.e.~the maximal complex
subbundle in each level set is a foliation.  A suitable choice of
fiberwise Liouville structure then allows us to perturb these foliations to contact
structures as in the Thurston-Winkelnkemper construction \cite{ThurstonWinkelnkemper} of contact forms
supported by open books (cf.~Prop.~\ref{prop:GirouxVertical}).
The almost complex structures admit corresponding perturbations $J_\tau'$ that
preserve these contact structures, so that
the function $\varphi \circ \Pi$,
after modifying $\varphi$ to make $\varphi''$ sufficiently large,
becomes $J_\tau'$-convex.  This makes use of Lemma~\ref{lemma:sufficientlyConvex},
and it produces $J_\tau'$-convex functions $f_\tau'$ that match
$\varphi \circ\Pi$ away from $E|_{\Crit(\varphi)}$ and take the form
$\varphi \circ \Pi + \epsilon f_\tau$ near $E|_{\Crit(\varphi)}$.
Actually proving that $(J_\tau',f_\tau')$ are almost Stein structures requires
also showing that the Liouville forms $- d f_\tau' \circ J_\tau'$ restrict
to the smooth faces of $\p E$ as contact forms.  Moreover, we need to be
able to keep this condition under linear interpolations between our constructed
functions $f_\tau'$ and the original $f_\tau$ in order solve the extension
problem.  Both steps will make essential use of the holomorphicity of
$\Pi$ at $\p E$, as well as the Thurston trick: a crucial detail for the
latter is that the original family of $J_\tau$-convex functions 
$\{f_\tau\}_{\tau \in A}$ can easily be extended to $\tau \in X$ as a family of
\emph{fiberwise} $J_\tau$-convex functions, which we use in the construction
of $J_\tau'$ and~$f_\tau'$.

We now proceed with the details of the argument sketched above.  
As in the statement of 
Proposition~\ref{prop:parametricJ}, all families of objects parametrized by
$X$ will be assumed in the following to be continuous in the $C^\infty$-topology,
and $\uU \subset \Sigma$ will be an open neighborhood of~$\p\Sigma$.
We will sometimes find it convenient to replace $\uU$ with a smaller neighborhood
of~$\p\Sigma$, which is not a loss of generality since it enlarges the space
$\ACweaker(\Pi;\uU)$.  In particular, since all critical values of $\Pi$ are 
in the interior, let us start by assuming
$$
\widebar{\uU} \cap \Sigma\crit = \emptyset.
$$

\subsection{Weinstein structures on the base}
\label{sec:step1}

\begin{lemma}
\label{lemma:phi}
There exists a smooth function $\varphi : \Sigma \to \RR$ which is
$j_\tau$-convex for all $\tau$ and constant on $\p\Sigma$, and has all
its critical points in $\uU \setminus \p\Sigma$.
\end{lemma}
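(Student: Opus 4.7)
The plan is to construct $\varphi$ in the form $\varphi := h \circ \rho_0$ for a carefully chosen Morse function $\rho_0 : \Sigma \to \RR$ and a sufficiently convex reparametrization $h : \RR \to \RR$, then invoke Lemma~\ref{lemma:sufficientlyConvex}. First I would choose $\rho_0$ to be constant on $\p\Sigma$, attaining its maximum there, with only index-$0$ and index-$1$ critical points, all located inside $\uU \setminus \p\Sigma$; such a function exists by standard Morse theory on surfaces with boundary, after a diffeotopy pushing all critical points into~$\uU$.

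The next step is to modify $\rho_0$ near each critical point so that it becomes $j_\tau$-convex there for every $\tau \in X$ simultaneously. At each index-$0$ critical point the Hessian of $\rho_0$ is positive definite, so the pairing
$$v \mapsto \mathrm{Hess}(\rho_0)(v,v) + \mathrm{Hess}(\rho_0)(j_\tau v, j_\tau v)$$
is automatically positive for any choice of~$j_\tau$. At each index-$1$ critical point~$p$ I would fix smooth local coordinates $(x,y)$ centered at~$p$ and arrange by a local modification that $\rho_0(x,y) = \frac{1}{2}(\lambda_1 x^2 - \lambda_2 y^2)$ with $\lambda_1, \lambda_2 > 0$. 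A short algebraic computation using only $j_\tau(p)^2 = -\mathrm{id}$ shows that $j_\tau$-convexity of $\rho_0$ at~$p$ reduces to a single inequality of the form $\lambda_1/\lambda_2 > \kappa(j_\tau(p))$, where $\kappa$ is an explicit continuous function of the matrix entries of~$j_\tau(p)$ in these coordinates. Crucially, $j_\tau(p)^2 = -\mathrm{id}$ forces the off-diagonal entries of $j_\tau(p)$ to be nonzero (otherwise $j_\tau(p)$ would preserve a real line), so $\kappa(j_\tau(p))$ is a continuous finite function of $\tau$ on the compact space~$X$; choosing $\lambda_1/\lambda_2$ above $\sup_\tau \kappa(j_\tau(p))$ then secures the inequality uniformly in~$\tau$. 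Openness of $j_\tau$-convexity combined with compactness of~$X$ yields a common open neighborhood of each critical point on which the modified $\rho_0$ is $j_\tau$-convex for every~$\tau$.

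With such a $\rho_0$ in hand, I would invoke Lemma~\ref{lemma:sufficientlyConvex} together with Remark~\ref{remark:dimension2}: since $\dim_\RR \Sigma = 2$, the hypothesis that regular level sets of $\rho_0$ be $j_\tau$-convex hypersurfaces is vacuous, so the lemma requires only $j_\tau$-convexity of $\rho_0$ near its critical points and $h''$ sufficiently large. Compactness of $X$ and the continuous dependence of $j_\tau$ on~$\tau$ allow one to select a single smooth strictly increasing $h : \RR \to \RR$ with $h''$ large enough that $\varphi := h \circ \rho_0$ is $j_\tau$-convex for every $\tau \in X$ simultaneously. By construction $\varphi$ is constant on $\p\Sigma$, and $\mathrm{Crit}(\varphi) = \mathrm{Crit}(\rho_0) \subset \uU \setminus \p\Sigma$, as required.

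The main obstacle is the uniform treatment of the index-$1$ critical points: one must show that a single Morse saddle model can be made strictly subharmonic for the entire family $\{j_\tau\}_{\tau \in X}$ at once. The key algebraic fact making this possible is that $j_\tau^2 = -\mathrm{id}$ prevents $j_\tau(p)$ from preserving any real line, which is exactly what keeps the supremum $\sup_\tau \kappa(j_\tau(p))$ finite and permits a single eigenvalue ratio to work uniformly across~$X$.
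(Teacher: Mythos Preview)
Your proof is correct and follows essentially the same route as the paper: choose a Morse function with only index~$0$ and~$1$ critical points placed in $\uU \setminus \p\Sigma$, stretch the eigenvalue ratio at each saddle so the quadratic model becomes $j_\tau$-convex uniformly in~$\tau$ by compactness of~$X$, and then apply Lemma~\ref{lemma:sufficientlyConvex} with Remark~\ref{remark:dimension2}. The paper packages your saddle computation as a separate lemma (Lemma~\ref{lemma:jconvexNearCritical}), but the content is the same; your formulation of the $j_\tau$-convexity condition at a critical point as $\mathrm{Hess}(\rho_0)(v,v) + \mathrm{Hess}(\rho_0)(j_\tau v, j_\tau v) > 0$ is exactly right and makes the index-$0$ case transparent.
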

\begin{proof}
Start by choosing a Morse function $\varphi : \Sigma \to \RR$ that is regular 
and constant on the boundary and has no local maxima.  By composing
with a suitable diffeomorphism of $\Sigma$, we can arrange that
$\Crit(\varphi) \subset \uU \setminus \p\Sigma$.
Now since every critical point has Morse index~$0$ or~$1$, we can fix
local coordinates $(x,y)$ near each critical point so that, up to addition
of constants, $\varphi(x,y)$
takes the form $x^2 + y^2$ or $x^2 - y^2$.  Given any constant $c > 0$, we
can further modify $\varphi$ by composing with a diffeomorphism supported
near the index~$1$ critical points so that these (in the same coordinates!)
now take the form $c x^2 - y^2$.  Since the parameter space $X$ is compact,
Lemma~\ref{lemma:jconvexNearCritical} below now implies that by selecting $c$
sufficiently large, we can assume $\varphi$ is $j_\tau$-convex near
$\Crit(\varphi)$ for all $\tau \in X$.  Lemma~\ref{lemma:sufficientlyConvex}
(with Remark~\ref{remark:dimension2})
can then be applied to make $\varphi$ into a globally $j_\tau$-convex function
for all $\tau \in X$ by postcomposing it with a sufficiently convex
function $\RR \to \RR$.
\end{proof}

The above proof required the following lemma:

\begin{lemma}
\label{lemma:jconvexNearCritical}
Suppose $j$ is a smooth almost complex structure on a neighborhood of~$0$
in $\CC$, compatible with the canonical orientation,
and let $\varphi_0, \varphi_1 : \CC \to \RR$ denote the functions
$$
\varphi_0(x + iy) = x^2 + y^2, \qquad
\varphi_1(x + iy) = c x^2 - y^2,
$$
where $c > 0$ is a constant.  Then $\varphi_0$ is $j$-convex near~$0$,
and $\varphi_1$ is also $j$-convex near~$0$ whenever $c$ is sufficiently
large.
\end{lemma}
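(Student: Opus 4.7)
The plan is to reduce $j$-convexity of each of $\varphi_0$ and $\varphi_1$ on a neighborhood of the origin to a single pointwise algebraic condition at the origin, and then to appeal to the openness of the taming condition. The crucial observation is that both $\varphi_0$ and $\varphi_1$ vanish to second order at $0$, so when one expands
\[
d\lambda_j = -d(d\varphi \circ j)
\]
in coordinates at $p = 0$, every term that contains a derivative of $j$ appears as a factor of $\varphi_x(0)$ or $\varphi_y(0)$ and hence drops out; only the value $j(0)$ and the Hessian of $\varphi$ at $0$ actually contribute to $d\lambda_j|_0$. Since the condition that $d\lambda_j$ tame $j$ is open in the $C^0$-topology on $j$ and the $C^2$-topology on $\varphi$, a strict inequality at $0$ automatically propagates to a whole neighborhood of $0$.

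First I would write $j(0) = \begin{pmatrix} \alpha & \beta \\ \gamma & -\alpha \end{pmatrix}$, where $j(0)^2 = -I$ imposes $\alpha^2 + \beta\gamma = -1$. Orientation compatibility applied to $(\partial_x, j(0)\partial_x) = (\partial_x, \alpha\partial_x + \gamma\partial_y)$ has determinant $\gamma$, so $\gamma > 0$, and hence $\beta = -(1+\alpha^2)/\gamma < 0$. A short computation in coordinates then yields, at any critical point $p$ of $\varphi$,
\[
d\lambda_j\big|_p = \bigl[\, 2\alpha(p)\, \varphi_{xy}(p) - \beta(p)\, \varphi_{xx}(p) + \gamma(p)\, \varphi_{yy}(p)\,\bigr]\, dx \wedge dy.
\]
In real dimension two, such a $2$-form tames $j(p)$ exactly when it is a positive multiple of $dx \wedge dy$, since $(dx \wedge dy)(v, j(p)v) > 0$ for every nonzero $v$ by orientation compatibility.

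Applying this formula to $\varphi_0 = x^2 + y^2$, whose Hessian at $0$ is $2I$, gives the bracketed coefficient $2(\gamma - \beta) > 0$ unconditionally, so $\varphi_0$ is $j$-convex on a neighborhood of $0$. Applying it to $\varphi_1 = cx^2 - y^2$, whose Hessian at $0$ has diagonal entries $2c$ and $-2$, yields the coefficient $-2c\beta - 2\gamma = 2c|\beta| - 2\gamma$, which is strictly positive as soon as $c > \gamma/|\beta|$; continuity then extends the inequality to a neighborhood. There is no substantial obstacle beyond getting the orientation sign conventions right (so that $\gamma > 0$ and $\beta < 0$); the lemma is essentially a packaged linear-algebra computation, designed so that in Lemma~\ref{lemma:phi} one may use compactness of the parameter space $X$ to choose a single large $c$ that works for the entire family $\{j_\tau\}_{\tau \in X}$ simultaneously.
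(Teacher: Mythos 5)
Your proposal is correct and follows essentially the same route as the paper: both arguments exploit $d\varphi_i(0)=0$ to see that $-d(d\varphi_i\circ j)|_0$ depends only on $j(0)$ (the paper phrases this by freezing $j$ at its value at the origin and matching $1$-jets, you by noting that all derivative-of-$j$ terms carry a factor of $\varphi_x(0)$ or $\varphi_y(0)$), then carry out the same $2\times 2$ computation giving positivity unconditionally for $\varphi_0$ and for $c$ above the same threshold $c>\gamma^2/(1+\alpha^2)$ for $\varphi_1$, and conclude by openness of the taming condition near the origin.
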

\begin{proof}
Let $j_0$ denote the ``constant'' complex structure on $\CC$ that
matches $j$ at the origin, in other words $j_0(z) := j(0)$ for all $z \in \CC$.
We claim first that the statement of the lemma is true if $j$ is replaced 
by~$j_0$.  Indeed, $j_0$ can be written as the matrix
$$
j_0 = \begin{pmatrix}
a & -\frac{1 + a^2}{b} \\
b & -a
\end{pmatrix},
$$
where $a$ and $b$ are real constants with $b > 0$ (due to the orientation
assumption).  Then we compute:
\begin{equation*}
\begin{split}
- d (d\varphi_0 \circ j_0) = 2 \left( \frac{1 + a^2}{b} + b \right) \, dx \wedge dy, \\
- d (d\varphi_1 \circ j_0) = 2 \left( c \frac{1 + a^2}{b} - b \right) \, 
dx \wedge dy.
\end{split}
\end{equation*}
The first is always positive, and the second is positive if and only if
$c > b^2 / (1 + a^2)$, so this proves the claim about $j_0$.  To
generalize this to~$j$, it suffices to observe that since $d\varphi_0(0) =
d\varphi_1(0) = 0$, the $1$-jets of $-d\varphi_0 \circ j$ and 
$-d\varphi_1 \circ j$ at~$0$ (and hence also the question of $j$-convexity 
on some neighborhood of that point)
depend on $j(0)$ but not on the derivatives of~$j$, so the fact that
$j(0) = j_0(0)$ implies the result.
\end{proof}

For the remainder of this section, we fix a function $\varphi : \Sigma \to \RR$ as given
by Lemma~\ref{lemma:phi} and define the family of $1$-forms
$$
\sigma_\tau = -d\varphi \circ j_\tau.
$$
By construction, $d\sigma_\tau > 0$ everywhere and
$d\varphi \wedge \sigma_\tau > 0$ away from $\Crit(\varphi)$, for all
$\tau \in X$.  In particular, this means that $d\sigma_\tau$, together
with $\varphi$ and the family of Liouville vector fields $d\sigma_\tau$-dual
to $\sigma_\tau$, define a family of Weinstein structures on~$\Sigma$.

\subsection{Perturbing $J$ near Lefschetz critical points}
\label{sec:step2}

Define the function
$$
F = \varphi \circ \Pi : E \to \RR.
$$
We shall now define a family of perturbations of $J_\tau$ near $E\crit$
that make $F$ plurisubharmonic on this neighborhood.
For any $p \in E\crit$, let $\nN(p)$ denote an open neighborhood of~$p$,
which we will always assume is arbitrarily small in order to satisfy various
conditions.  The first such condition is that $\nN(p)$ admits
complex coordinates
$(z_1,z_2)$ identifying $p$ with $(0,0) \in \CC^2$
so that $\Pi(z_1,z_2) = z_1^2 + z_2^2$ for a suitable choice
of complex coordinate $z$ on a neighborhood $\nN(\Pi(p)) \subset \Sigma$
of $\Pi(p)$, identifying $\Pi(p)$ with $0 \in \CC$.
We shall abbreviate the pair of coordinates on $\nN(p)$ together as
$\zeta = (z_1,z_2)$, and write the real and imaginary parts as
$$
\zeta = (z_1,z_2) = (x_1 + i y_1, x_2 + i y_2) \in \nN(p), \qquad
z = x + iy \in \nN(\Pi(p)).
$$
Note that the formula for
$\Pi(z_1,z_2)$ is invariant under simultaneous coordinate changes of the
form
$$
(z_1,z_2) \mapsto (a z_1, a z_2), \qquad z \mapsto a^2 z
$$
for any $a \in \CC$, thus we can choose a suitable constant~$a$ and make such
a transformation such that without loss of generality, the local coordinate
expression for $\varphi$ near $\Pi(p)$ satisfies
$$
d\varphi(0) = dx.
$$
This is possible because we have already arranged for all critical points
of $\varphi$ to be separate from $\Sigma\crit$; indeed,
$\Crit(\varphi) \subset \uU$ and $\uU \cap \Sigma\crit
= \emptyset$, where the latter can always be achieved by making $\uU$ a
smaller neighborhood of~$\p\Sigma$.
In particular, $\varphi$ then has the same $1$-jet at $\Pi(p)$
as the locally defined function
$$
\varphi_0(x + iy) := x + \varphi(0).
$$
We shall repeatedly make use of this fact via the following lemma,
which is an easy consequence of the fact that 
$d\varphi_0(0) = d\varphi(0)$ and $d\Pi(0,0) = 0$.

\begin{lemma}
\label{lemma:matchingJets}
The functions $F = \varphi \circ \Pi : E \to \RR$ and 
$F_0 := \varphi_0 \circ \Pi : \nN(p) \to \RR$
have the same $2$-jet at~$p$.  Moreover, for any smooth bundle endomorphism
$A : \nN(p) \to \End\left(TE|_{\nN(p)}\right)$, the $1$-forms
$dF \circ A$ and $dF_0 \circ A$ have matching $1$-jets at~$p$, which depend on
$A(p)$ but not on the derivatives of $A$ at~$p$. \qed
\end{lemma}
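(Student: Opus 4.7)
The plan is to reduce both assertions to two elementary facts: first, the Lefschetz normal form $\Pi(z_1,z_2) = z_1^2 + z_2^2$ makes $p$ a critical point of $\Pi$, so $d\Pi(p) = 0$; second, the coordinate adjustment already carried out in the paragraph preceding the lemma ensures $\varphi(\Pi(p)) = \varphi_0(\Pi(p))$ and $d\varphi(\Pi(p)) = d\varphi_0(\Pi(p))$. Everything else will follow from chain-rule bookkeeping, with no serious analytic content.

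For the $2$-jet statement I would first write $dF = (d\varphi \circ \Pi)\, d\Pi$, which vanishes at $p$ because $d\Pi(p)=0$; the identical computation gives $dF_0(p) = 0$. Differentiating once more at $p$, every term containing a factor of $d\Pi$ dies, and the Hessian reduces to $d\varphi(\Pi(p)) \circ d^2\Pi(p)$, which manifestly depends only on the $1$-jet of $\varphi$ at $\Pi(p)$. Since this $1$-jet agrees with that of $\varphi_0$ by construction, $F$ and $F_0$ share the same $2$-jet at $p$.

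For the second assertion, I would first note that $(dF \circ A)(p) = 0$ since $dF(p)=0$, and likewise $(dF_0 \circ A)(p) = 0$. To compare $1$-jets at $p$, pick any auxiliary connection $D$ on $TE$ near $p$, and for a tangent vector $X \in T_pE$ expand
\[
D_X(dF \circ A) = (D_X\, dF) \circ A + dF \circ (D_X A).
\]
At $p$ the second term vanishes because $dF(p)=0$, so the derivative at $p$ depends only on $D_X\,dF|_p$ (equivalently on $d^2F(p)$) and on $A(p)$; in particular, the derivatives of $A$ at $p$ play no role. The same computation applies verbatim to $F_0$ with the same $A$, and the $2$-jet equality established above then forces the two $1$-jets to coincide.

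No step in this argument presents a genuine obstacle; the entire content of the lemma is that $dF(p)=0$ annihilates the terms that would otherwise depend on higher jets of $\varphi$ or on derivatives of $A$.
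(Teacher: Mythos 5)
Your argument is correct and is exactly the argument the paper has in mind: the lemma is stated there as an easy consequence of $d\Pi(p)=0$ and $d\varphi_0(0)=d\varphi(0)$, which is precisely the chain-rule bookkeeping you carry out (the second-order terms of $\varphi$ are killed by the factor $d\Pi(p)=0$, and $dF(p)=0$ kills the terms involving derivatives of $A$). You have simply written out the details the authors leave implicit, so there is nothing to add.
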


Denote by $i$ the standard complex structure on $\CC^2$ and
identify this with an integrable complex structure on $\nN(p)$ via the
coordinates $(z_1,z_2)$.
For any $i$-antilinear map $Y$ on $\CC^2$ sufficiently close to~$0$,
one can define another complex structure close to~$i$ by
$$
\Phi(Y) := \left( \1 + \frac{1}{2} i Y \right) i \left( \1 + \frac{1}{2} i Y
\right)^{-1}.
$$
Indeed, $\Phi$ can be regarded as the inverse of a local chart for the
manifold of complex structures $\jJ(\CC^2)$, identifying a neighborhood of~$i$
in $\jJ(\CC^2)$ with a neighborhood of~$0$ in $T_i \jJ(\CC^2)$ such that
$d\Phi(0)$ is the identity on the space of $i$-antilinear maps.
By Proposition~\ref{prop:J1J2}, $J_\tau(p) = i$ for all $\tau \in X$, thus
there is a family of smooth maps $Y_\tau : \nN(p) \to T_i\jJ(\CC^2)$ such 
that for all $\zeta \in \nN(p)$,
$$
J_\tau(\zeta) = \Phi(Y_\tau(\zeta)),
$$
and $Y_\tau(0) = 0$.

Working in real coordinates $(x_1,y_1,x_2,y_2)$, define the $i$-antilinear 
matrix
$$
Y' = \begin{pmatrix}
0 & -1 & 0 & 0 \\
-1 & 0 & 0 & 0 \\
0 & 0 & 0 & -1 \\
0 & 0 & -1 & 0
\end{pmatrix}.
$$
We use this to define for all $\epsilon \ge 0$ sufficiently small a family 
of perturbed almost complex structures on $\nN(p)$ by
$$
J_\tau^\epsilon(\zeta) = \Phi(Y_\tau(\zeta) + \epsilon Y').
$$
Let
$$
Y'_\tau(\zeta) = \left. \frac{\p}{\p \epsilon} J_\tau^\epsilon(\zeta)\right|_{\epsilon=0}.
$$
Then since $Y_\tau(0,0) = 0$ and $d\Phi(0)$ is the identity, we have
$Y'_\tau(0,0) = Y'$.  Let
$$
\Lambda_\tau^\epsilon = - dF \circ J_\tau^\epsilon,
$$
and for $\epsilon > 0$, define smooth families of $1$-forms $\widehat{\eta}_\tau^\epsilon$
via the formula
\begin{equation}
\label{eqn:LambdaTauEps}
\Lambda_\tau^\epsilon = \Lambda_\tau^0 + \epsilon \widehat{\eta}_\tau^\epsilon.
\end{equation}
There is a smooth extension of $\widehat{\eta}_\tau^\epsilon$ to $\epsilon=0$, namely
$$
\widehat{\eta}_\tau^0 := \lim_{\epsilon \to 0} \frac{\Lambda_\tau^\epsilon - \Lambda_\tau^0}{\epsilon}
= \left. \frac{\p}{\p\epsilon} \Lambda_\tau^\epsilon \right|_{\epsilon=0} =
- d F \circ Y'_\tau.
$$

\begin{lemma}
\label{lemma:perturbedJcrit}
There exists a constant $\epsilon_0 > 0$ such that
for all $\epsilon \in
(0,\epsilon_0]$ and $\tau \in X$, $d\Lambda_\tau^\epsilon$ is symplectic 
on $\nN(p)$ and tames both $i$ and~$J_\tau^\epsilon$.  Moreover,
$d\widehat{\eta}_\tau^0$ is also symplectic on $\nN(p)$ and tames~$J_\tau$
for all $\tau \in X$.
\end{lemma}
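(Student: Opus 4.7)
The plan is to compute both assertions of the lemma at the single point $p=0$ and then extend by continuity, using Lemma~\ref{lemma:matchingJets} to reduce to a model computation and compactness of~$X$ for the required uniformity. Since $F$ and the model function $F_0 := \varphi_0 \circ \Pi$ have matching $2$-jets at~$p$, Lemma~\ref{lemma:matchingJets} ensures that the $1$-jets of $dF \circ J_\tau$ and $dF \circ Y'_\tau$ at~$p$ coincide with those of $dF_0 \circ i$ and $dF_0 \circ Y'$ respectively, so that $d\Lambda_\tau^0|_p$ and $d\widehat{\eta}_\tau^0|_p$ depend only on the model data $(F_0, i, Y')$ and are independent of~$\tau$.

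Next I would carry out the model computation in real coordinates. Since $F_0 = x_1^2 - y_1^2 + x_2^2 - y_2^2 + \varphi(0) = \operatorname{Re}(z_1^2 + z_2^2) + \varphi(0)$ is $i$-pluriharmonic, the $1$-form $-dF_0 \circ i = 2y_1\,dx_1 + 2x_1\,dy_1 + 2y_2\,dx_2 + 2x_2\,dy_2$ is closed, giving $d\Lambda_\tau^0|_p = 0$. The parallel calculation with the antilinear matrix~$Y'$ yields $-dF_0 \circ Y' = -2y_1\,dx_1 + 2x_1\,dy_1 - 2y_2\,dx_2 + 2x_2\,dy_2$, whose exterior derivative is the standard symplectic form $4(dx_1 \wedge dy_1 + dx_2 \wedge dy_2)$ on~$\CC^2$, which tames $i = J_\tau(p)$. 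Consequently $d\widehat{\eta}_\tau^0|_p$ equals this standard symplectic form, and the identity $d\Lambda_\tau^\epsilon = d\Lambda_\tau^0 + \epsilon\,d\widehat{\eta}_\tau^\epsilon$ combined with $d\Lambda_\tau^0|_p = 0$ gives $d\Lambda_\tau^\epsilon|_p = 4\epsilon(dx_1 \wedge dy_1 + dx_2 \wedge dy_2) + O(\epsilon^2)$ uniformly in~$\tau$. Since $J_\tau^\epsilon|_p = \Phi(\epsilon Y')$ is $C^\infty$-close to~$i$, this form tames both~$i$ and~$J_\tau^\epsilon|_p$ once $\epsilon_0 > 0$ is chosen sufficiently small, uniformly in $\tau \in X$.

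Both claims then follow by extending these pointwise properties to~$\nN(p)$ using openness of the taming condition and compactness of~$X$ (respectively $X \times [0,\epsilon_0]$). The ``moreover'' statement is immediate. The main obstacle is that $d\Lambda_\tau^0$ vanishes only to first order at~$p$, so on a ball of radius~$r$ its size is $O(r)$, while the positive contribution $\epsilon\,d\widehat{\eta}_\tau^\epsilon$ is only of size~$O(\epsilon)$; uniformity across the full range $\epsilon \in (0,\epsilon_0]$ on a single neighborhood is therefore delicate. I would handle this by exploiting the freedom, emphasized at the start of~\S\ref{sec:step2}, to shrink $\nN(p)$ as needed after fixing $\epsilon_0$, so that compactness of $X \times [0,\epsilon_0]$ provides simultaneous uniform bounds on $|d\Lambda_\tau^0|$ from above and on the taming constant of $d\widehat{\eta}_\tau^\epsilon$ from below, with the former dominated by $\epsilon_0$ times the latter throughout the required range of~$\epsilon$.
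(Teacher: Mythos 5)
Your reduction to the model computation at the point $p$ is exactly the paper's argument: by Lemma~\ref{lemma:matchingJets} the relevant $1$-jets at $p$ depend only on $J_\tau(p)=i$ and on $Y'$, the computation of $-d(\varphi_0\circ\Pi)\circ i$ shows it is closed and hence $d\Lambda_\tau^0|_p=0$, the computation of $-d(\varphi_0\circ\Pi)\circ Y'$ gives $d\widehat{\eta}_\tau^0|_p = 4\sum_j dx_j\wedge dy_j$, which tames $i=J_\tau(p)$ (whence the ``moreover'' clause by openness and compactness of $X$), and the positivity of the $\epsilon$-derivative at $\epsilon=0$ of both taming quantities at $p$ (the $\epsilon$-derivative of $J_\tau^\epsilon$ not contributing because $d\Lambda_\tau^0|_p=0$) yields taming at $p$ for all small $\epsilon>0$, uniformly in~$\tau$. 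Up to this point your proof coincides with the paper's.

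The gap is in your final paragraph, i.e.\ precisely at the uniformity issue you correctly single out as the main obstacle: the proposed cure does not work. If you shrink $\nN(p)$ so that $\| d\Lambda_\tau^0 \| \le \epsilon_0\, c$ on it, where $c>0$ is a uniform lower taming bound for $d\widehat{\eta}_\tau^\epsilon$, then all your estimate yields is
$d\Lambda_\tau^\epsilon(v,iv) = d\Lambda_\tau^0(v,iv) + \epsilon\, d\widehat{\eta}_\tau^\epsilon(v,iv) \ge c(\epsilon - \epsilon_0)|v|^2$,
which is nonpositive for every $\epsilon < \epsilon_0$; what you would need is $\|d\Lambda_\tau^0\| \le \epsilon\, c$ for \emph{each} $\epsilon$ separately, and a neighborhood chosen independently of $\epsilon$ cannot deliver this from the crude bound $\|d\Lambda_\tau^0|_\zeta\| \le C|\zeta|$, since $d\Lambda_\tau^0$ has no sign away from $p$ and vanishes there only to first order. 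In other words, your two-constant domination only controls the range where $\epsilon$ is comparable to $\epsilon_0$, or equivalently the region $|\zeta| \le \delta\epsilon$ for each fixed $\epsilon$ --- not a fixed ball for all $\epsilon\in(0,\epsilon_0]$. The paper does not attempt such a domination: its argument is carried out at the point $p$ itself, where $d\Lambda_\tau^0$ vanishes identically so that no competing term of order $|\zeta|$ appears, and the passage to a neighborhood is left to openness together with the standing convention from the beginning of \S\ref{sec:step2} that $\nN(p)$ may be shrunk as needed. So either argue as the paper does, or be explicit that a sup-norm bound on $d\Lambda_\tau^0$ over a fixed ball cannot substitute for its exact vanishing at~$p$.
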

\begin{proof}
We first prove the claim about $d\widehat{\eta}_\tau^0$, for which it suffices to show
that $d\widehat{\eta}_\tau^0|_p$ tames~$i$ since $J_\tau(p) = i$ for all $\tau \in X$
and the taming condition is open.  Consider the slightly simpler $1$-form
$$
\widehat{\eta}_0 := -d(\varphi_0 \circ \Pi) \circ Y',
$$
where we recall $\varphi_0(x+iy) = x + \varphi(0)$.
We then have $\varphi_0 \circ \Pi(z_1,z_2) - \varphi(0) = \Re\left( z_1^2 + z_2^2 \right)
= \sum_{j=1}^2 (x_j^2 - y_j^2)$ and
$$
dx_j \circ Y' = -dy_j, \qquad dy_j \circ Y' = -dx_j \quad \text{ for $j=1,2$},
$$
thus
$$
\widehat{\eta}_0 = 2 \sum_{j=1}^2 \left( x_j\, dy_j - y_j\, dx_j \right),
$$
giving $\displaystyle d\widehat{\eta}_0 = 4\sum_{j=1}^2 dx_j \wedge dy_j$,
which clearly tames~$i$.  Now Lemma~\ref{lemma:matchingJets} implies
that $\widehat{\eta}_\tau^0$ and $\widehat{\eta}_0$ have the same $1$-jet at~$p$,
hence $d\widehat{\eta}_\tau^0|_p = d\widehat{\eta}_0|_p$,  and the claim follows.

Next we show that $d\Lambda_\tau^\epsilon$ tames both~$i$ and $J_\tau^\epsilon$
on $\nN(p)$ when $\epsilon$ is positive but small.  Observe first that
$d\Lambda_\tau^0|_p = 0$: indeed, by Lemma~\ref{lemma:matchingJets} this holds
if the $1$-form $\lambda := -d(\varphi_0 \circ \Pi) \circ i$ satisfies
$d\lambda|_p = 0$, and since $\varphi_0 \circ \Pi(z_1,z_2) =
\sum_{j=1}^2 (x_j^2 - y_j^2) + \varphi(0)$,
an explicit computation shows
$$
\lambda = - d(\varphi_0 \circ \Pi) \circ i = 2 \sum_{j=1}^2 \left(
x_j\, dy_j + y_j \, dx_j \right),
$$
which is everywhere closed.  It will now suffice to show that for any
$\tau \in X$ and any nonzero vector $v \in T_p E$, the derivatives
\begin{equation}
\label{eqn:2derivatives}
\left. \frac{d}{d\epsilon} d\Lambda_\tau^\epsilon(v,iv)\right|_{\epsilon=0},
\qquad
\left. \frac{d}{d\epsilon} 
d\Lambda_\tau^\epsilon(v,J_\tau^\epsilon v)\right|_{\epsilon=0}
\end{equation}
are both positive.  Since 
$\left.\frac{\p}{\p\epsilon} \Lambda_\tau^\epsilon\right|_{\epsilon=0} =
\widehat{\eta}_\tau^0$, both of these are equal to
$d\widehat{\eta}_\tau^0(v,iv)$, which is positive by the first claim proved above.
Note that in computing the expression on the right in \eqref{eqn:2derivatives},
derivative of $J_\tau^\epsilon$ with respect to $\epsilon$ does not appear
since $d\Lambda_\tau^0|_p = 0$.
\end{proof}

To summarize this step so far, we have defined a family of almost complex 
structures $\left\{ J_\tau^\epsilon \right\}_{\epsilon \in [0,\epsilon_0],\tau \in X}$
near $E\crit$ such that $J_\tau^0 = J_\tau$ and $F = \varphi \circ \Pi$
is $J_\tau^\epsilon$-convex for $\epsilon > 0$.
Moreover, the Liouville forms
$\Lambda_\tau^\epsilon = -d F \circ J_\tau^\epsilon$ for $\epsilon > 0$ 
can be written as
$\Lambda_\tau^\epsilon = \Lambda_\tau^0 + \epsilon \widehat{\eta}_\tau^\epsilon$,
where $\widehat{\eta}_\tau^\epsilon$ is a smooth family of $1$-forms that define
fiberwise Liouville forms near $E\crit$ and converge as $\epsilon \to 0$ to
a fiberwise Liouville form~$\widehat{\eta}_\tau^0$ such that
$d\widehat{\eta}_\tau^0$ tames both~$i$ and~$J_\tau$.  The main point of this
construction was that it gives rise to a family of contact structures on the
level sets of~$F$: indeed, define on $\nN(p) \setminus \{p\}$ a family of 
co-oriented $2$-plane distributions
$$
\xi_\tau^\epsilon := \ker dF \cap \ker \Lambda_\tau^\epsilon.
$$
These are $J_\tau^\epsilon$-invariant, so the fact that $F$ is
$J_\tau^\epsilon$-convex for $\epsilon > 0$ implies that they are contact
on each level set of~$F$ whenever $\epsilon > 0$.  For $\epsilon=0$ this is
not the case, as
$$
\xi_\tau^0 = VE
$$
is the vertical subbundle of the Lefschetz fibration and thus defines
foliations on the level sets of~$F$.
In the next step, we will use the Liouville forms $\sigma_\tau$ from \S\ref{sec:step1}
to extend $\xi_\tau^\epsilon$ over the rest of $E \setminus 
\left(E\crit \cup E|_{\Crit(\varphi)} \right)$.  To do this we will need
Lemma~\ref{lemma:fibLiouvilleEcrit} below, for which the next two
lemmas are preparation.  In the following, we use the coordinates $\zeta = (z_1,z_2)$
to define Euclidean norms $|v|$ of vectors $v \in TE|_{\nN(p)}$, and keep in mind
that $\nN(p)$ can always be made smaller if necessary.

\begin{lemma}
\label{lemma:Reeb}
There exists a constant $c_1 > 0$ and a family of smooth vector fields 
$R_\tau$ on $\nN(p) \setminus \{p\}$
such that $|R_\tau| \equiv 1$, $dF(R_\tau) \equiv 0$, and 
$$
\Lambda_\tau^0(R_\tau(\zeta)) \ge c_1 |\zeta| \quad
\text{ for all $\zeta \in \nN(p) \setminus \{p\}$.}
$$
\end{lemma}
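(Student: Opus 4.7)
The plan is to start from an explicit model vector field adapted to the standard local form of $\Pi$ near a Lefschetz critical point, verify that it satisfies the required estimates up to higher-order error, then correct it by orthogonal projection onto $\ker dF$. In the coordinates $\zeta = (z_1,z_2) = (x_1+iy_1,x_2+iy_2)$ on $\nN(p)$, the natural candidate is
$$
\tilde R(\zeta) := (y_1,x_1,y_2,x_2),
$$
which corresponds to $i\bar\zeta$ under the identification $TE|_{\nN(p)} \cong \CC^2$. A direct computation with $F_0 := \varphi_0 \circ \Pi$, where $\varphi_0(x+iy) = x + \varphi(0)$, yields
$$
dF_0(\tilde R) \equiv 0, \qquad |\tilde R(\zeta)| = |\zeta|, \qquad \lambda_0(\tilde R)(\zeta) = 2|\zeta|^2,
$$
where $\lambda_0 := -dF_0 \circ i$ and $|\cdot|$ denotes the Euclidean norm in the $\zeta$-coordinates.

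The next step is to transfer these identities to the actual objects $F$ and $\Lambda_\tau^0$, with errors uniform in~$\tau$. By Lemma~\ref{lemma:matchingJets}, $F$ and $F_0$ agree to second order at $p$, so $F - F_0 = O(|\zeta|^3)$, $dF - dF_0 = O(|\zeta|^2)$, and hence $|dF(\zeta)| \ge c|\zeta|$ on a sufficiently small neighborhood of~$p$. By Proposition~\ref{prop:J1J2}, $J_\tau(p) = i$ for every $\tau \in X$, and since $X$ is compact and $\tau \mapsto J_\tau$ is continuous in the $C^\infty$-topology, $J_\tau - i = O(|\zeta|)$ as a matrix-valued function uniformly in~$\tau$. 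Writing
$$
\Lambda_\tau^0(\tilde R) - \lambda_0(\tilde R) = -(dF - dF_0)(J_\tau \tilde R) + dF_0\bigl((i - J_\tau)\tilde R\bigr),
$$
both summands are $O(|\zeta|^3)$ uniformly in~$\tau$, yielding $\Lambda_\tau^0(\tilde R)(\zeta) = 2|\zeta|^2 + O(|\zeta|^3)$; the same reasoning gives $dF(\tilde R)(\zeta) = O(|\zeta|^3)$.

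Next, I project $\tilde R$ onto $\ker dF$ using the Euclidean gradient:
$$
R(\zeta) := \tilde R(\zeta) - \frac{dF(\tilde R)(\zeta)}{|dF(\zeta)|^2}\, \nabla F(\zeta).
$$
This is well-defined and smooth on $\nN(p) \setminus \{p\}$ thanks to $|dF(\zeta)| \ge c|\zeta|$, and the correction has Euclidean norm $|dF(\tilde R)|/|dF| = O(|\zeta|^3)/O(|\zeta|) = O(|\zeta|^2)$. Hence $R = \tilde R + O(|\zeta|^2)$, which gives $|R| = |\zeta| + O(|\zeta|^2)$, and, using the pointwise bound $|\Lambda_\tau^0(\zeta)| = O(|\zeta|)$, $\Lambda_\tau^0(R)(\zeta) = 2|\zeta|^2 + O(|\zeta|^3)$. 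Setting $R_\tau := R/|R|$ (which in fact does not depend on~$\tau$), one obtains $|R_\tau| \equiv 1$, $dF(R_\tau) \equiv 0$, and
$$
\Lambda_\tau^0(R_\tau)(\zeta) = \frac{2|\zeta|^2 + O(|\zeta|^3)}{|\zeta| + O(|\zeta|^2)} = 2|\zeta| + O(|\zeta|^2)
$$
uniformly in~$\tau$; shrinking $\nN(p)$ if necessary yields the desired constant $c_1 > 0$.

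The main obstacle will be making every error estimate uniform in $\tau \in X$. This hinges on the fact that $J_\tau(p) = i$ is \emph{independent} of~$\tau$ (via Proposition~\ref{prop:J1J2}) together with compactness of~$X$; any $\tau$-dependence of $J_\tau$ at~$p$ would produce an additional contribution to $\Lambda_\tau^0(\tilde R) - \lambda_0(\tilde R)$ of order $|\zeta|^2$ rather than $|\zeta|^3$, which would be comparable to the leading term $2|\zeta|^2$ and would destroy the required lower bound.
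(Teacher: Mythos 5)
Your proof is correct, but it takes a different route from the paper's. The paper avoids both the explicit model field and the projection step: it fixes a family of $J_\tau$-invariant Riemannian metrics $g_\tau$ and simply sets $R_\tau = J_\tau \nabla^\tau F / |J_\tau \nabla^\tau F|$, where $\nabla^\tau F$ is the $g_\tau$-gradient. Then $dF(R_\tau) = 0$ is automatic from $g_\tau(v, J_\tau v) = 0$, and $\Lambda_\tau^0(R_\tau) = |\nabla^\tau F|_{g_\tau}^2 / |J_\tau \nabla^\tau F|$ is bounded below by $c_1|\zeta|$ because Lemma~\ref{lemma:matchingJets} identifies the Hessian of $F$ at $p$ with that of $\varphi_0 \circ \Pi$, which is nondegenerate, giving $|\nabla^\tau F(\zeta)| \ge k|\zeta|$ uniformly in $\tau$ (compactness of $X$ makes the $g_\tau$-norms uniformly equivalent to the Euclidean one). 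Both arguments thus rest on the same two inputs---Proposition~\ref{prop:J1J2} pinning $J_\tau(p) = i$ and Lemma~\ref{lemma:matchingJets} pinning the $2$-jet of $F$---but the paper's choice of a compatible metric dispenses with your error bookkeeping and the orthogonal projection onto $\ker dF$, whereas your construction is more explicit and, amusingly, produces a field independent of $\tau$, which is perfectly admissible; note also that your leading-order direction $i\bar\zeta$ agrees up to positive scale with the paper's $J_\tau \nabla^\tau F$, since $\nabla F_0 = 2\bar\zeta$, so the two constructions coincide to lowest order. The only points worth keeping an eye on in your version are the ones you already flagged: all $O(|\zeta|^k)$ constants must be uniform in $\tau$ (which holds by compactness of $X$ and $C^\infty$-continuity of $\tau \mapsto J_\tau$, after possibly shrinking $\nN(p)$), and $\nN(p)$ must be shrunk so that $|dF| \ge c|\zeta|$ and $|R| > 0$ away from $p$, making the projection and normalization smooth on $\nN(p) \setminus \{p\}$.
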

\begin{proof}
Choose a family of $J_\tau$-invariant Riemannian metrics $g_\tau$
on $\nN(p)$ and let $\nabla^\tau F$ denote the corresponding gradient vector 
fields of~$F$.  Note that since $\tau$ lives in a compact parameter space,
the norms defined via $g_\tau$ are uniformly (with respect to~$\tau$) equivalent
to the Euclidean norm.  
By Lemma~\ref{lemma:matchingJets}, the Hessian of $F$ at~$p$
matches that of $\varphi_0 \circ \Pi(\zeta) = \sum_{j=1}^2 (x_j^2 - y_j^2)
+ \varphi(0)$, thus the critical point of $F$ at $\zeta=0$ is nondegenerate.
It follows that one can find a constant $k > 0$ such that
$$
| \nabla^\tau F(\zeta) | \ge k |\zeta| \quad\text{ for all $\zeta \in \nN(p)$,
$\tau \in X$.}
$$
A family of vector fields with the desired properties can then be defined by
$R_\tau = \frac{J_\tau \nabla^\tau F}{| J_\tau \nabla^\tau F|}$.
\end{proof}

\begin{lemma}
\label{lemma:contactDerivative}
On $\nN(p) \setminus \{p\}$, $d\Lambda_\tau^0|_{\xi_\tau^0} = 0$,
$\left.\Lambda_\tau^0 \wedge d\Lambda_\tau^0\right|_{\ker dF} = 0$, and
$$
\left. \frac{\p}{\p\epsilon} \left(\Lambda_\tau^\epsilon \wedge
d\Lambda_\tau^\epsilon|_{\ker dF} \right)\right|_{\epsilon=0} > 0.
$$
\end{lemma}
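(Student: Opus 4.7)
The plan is to reduce every assertion to a pointwise computation in an adapted local frame for $\ker dF$. At each $\zeta \in \nN(p) \setminus \{p\}$, pick a unit vector $v_1 \in VE|_\zeta$ in some $J_\tau$-invariant metric, set $v_2 := J_\tau v_1$, and let $v_3 := R_\tau$ be the vector field supplied by Lemma~\ref{lemma:Reeb}. Since $VE \subset \ker dF$ has codimension one and $\Lambda_\tau^0(R_\tau) > 0$ makes $R_\tau$ transverse to $VE$, the triple $(v_1, v_2, v_3)$ is a basis of $\ker dF|_\zeta$. Two structural facts then govern everything. First, $\Lambda_\tau^0 = -dF \circ J_\tau$ annihilates $VE$, because $J_\tau$ preserves $VE = \ker d\Pi$ and $dF$ vanishes on $\ker d\Pi$. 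Second, extending $v_1, v_2$ to local vertical vector fields $V_1, V_2$, the Lie bracket $[V_1, V_2]$ remains tangent to the fibers of $\Pi$, so Cartan's formula gives $d\Lambda_\tau^0(V_1, V_2) = -\Lambda_\tau^0([V_1, V_2]) = 0$. This proves assertion~(i), and assertion~(ii) follows immediately: every summand of $\Lambda_\tau^0 \wedge d\Lambda_\tau^0(v_1, v_2, v_3)$ carries either a factor $\Lambda_\tau^0(v_i)$ with $i \in \{1,2\}$ or the factor $d\Lambda_\tau^0(v_1, v_2)$, each of which vanishes.

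The main work, and my expected chief obstacle, is assertion~(iii). Differentiating $\Lambda_\tau^\epsilon \wedge d\Lambda_\tau^\epsilon$ at $\epsilon = 0$ yields $\widehat{\eta}_\tau^0 \wedge d\Lambda_\tau^0 + \Lambda_\tau^0 \wedge d\widehat{\eta}_\tau^0$, and evaluating on $(v_1, v_2, v_3)$ while reapplying the vanishings above collapses this to
$$\widehat{\eta}_\tau^0(v_1)\, d\Lambda_\tau^0(v_2, v_3) - \widehat{\eta}_\tau^0(v_2)\, d\Lambda_\tau^0(v_1, v_3) + \Lambda_\tau^0(v_3)\, d\widehat{\eta}_\tau^0(v_1, v_2).$$
I would then compare orders of vanishing at $p$: since $dF(p) = 0$, both $\Lambda_\tau^0$ and $\widehat{\eta}_\tau^0 = -dF \circ Y'_\tau$ vanish linearly in $|\zeta|$, and the calculation early in the proof of Lemma~\ref{lemma:perturbedJcrit} shows $d\Lambda_\tau^0|_p = 0$, so $d\Lambda_\tau^0$ also vanishes linearly. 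Hence the first two summands are of order $|\zeta|^2$. The third, by contrast, is bounded below by a positive multiple of $|\zeta|$: Lemma~\ref{lemma:Reeb} gives $\Lambda_\tau^0(v_3) \ge c_1 |\zeta|$, while the taming of $J_\tau$ by $d\widehat{\eta}_\tau^0$ throughout $\nN(p)$ from Lemma~\ref{lemma:perturbedJcrit} yields $d\widehat{\eta}_\tau^0(v_1, J_\tau v_1) \ge c_2 > 0$, uniformly over unit $v_1 \in VE|_\zeta$ and $\tau \in X$ after a small enough shrinking of $\nN(p)$ to control the limit directions of $VE$ as $\zeta \to p$. Shrinking $\nN(p)$ further so that the linear term dominates the quadratic corrections forces strict positivity of the derivative throughout $\nN(p) \setminus \{p\}$.

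It remains only to check that the positive value on $(v_1, v_2, v_3)$ agrees with the ``$>0$'' convention of the lemma, i.e.~with the standard orientation of $\ker dF$. For small $\epsilon > 0$, $J_\tau^\epsilon$-convexity of $F$ (from Lemma~\ref{lemma:perturbedJcrit}) makes $\Lambda_\tau^\epsilon \wedge d\Lambda_\tau^\epsilon|_{\ker dF}$ a positive contact volume form, oriented by a Reeb direction followed by a positively oriented $J_\tau^\epsilon$-complex basis of $\xi_\tau^\epsilon$; letting $\epsilon \to 0$ identifies this with the orientation represented by $(v_3, v_1, J_\tau v_1)$, which differs from $(v_1, v_2, v_3)$ by an even permutation, so the sign carries over.
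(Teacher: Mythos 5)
Your proof is correct and takes essentially the same route as the paper's: the same frame (a unit vertical vector, its complex rotation, and $R_\tau$ from Lemma~\ref{lemma:Reeb}) for $\ker dF$, the same integrability argument for the two vanishing statements, and the same comparison of orders of vanishing, playing the lower bound $\Lambda_\tau^0(R_\tau) \ge c_1 |\zeta|$ and the taming of $d\widehat{\eta}_\tau^0$ against the quadratically small terms coming from $d\Lambda_\tau^0|_p = 0$ and $\widehat{\eta}_\tau^0|_p = 0$. The only cosmetic differences are that the paper works with $iv$ rather than $J_\tau v_1$ and simply asserts the positive orientation of the frame, which you instead justify via pseudoconvexity of the level sets for small $\epsilon > 0$.
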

\begin{proof}
Since $\ker \left(\Lambda_\tau^0|_{\ker dF}\right) = 
\xi_\tau^0 = VE$, the first two statements are both equivalent to the fact
that $VE$ defines a foliation on every level set of~$F$.
We will now prove that the third claim holds after shrinking the neighborhood
$\nN(p)$ sufficiently.  Recall from the proof of Lemma~\ref{lemma:perturbedJcrit}
that $d\Lambda_\tau^0|_p = 0$, and similarly, $\widehat{\eta}_\tau^0 = -dF \circ Y'_\tau$
vanishes at $p$. Since both are smooth, this implies there is a constant 
$c_2 > 0$ such that
\begin{equation}
\label{eqn:bound1}
\left\| d\Lambda_\tau^0|_\zeta\right\| \le c_2 |\zeta|, \quad
\left\| \widehat{\eta}_\tau^0|_\zeta\right\| \le c_2 |\zeta| \qquad
\text{ for $\zeta \in \nN(p)$,}
\end{equation}
where we denote by $\|\cdot\|$ the natural norm induced on tensors from the
Euclidean norm in the coordinates.
Now for any $\zeta \in \nN(p) \setminus \{p\}$, fix $v \in T_\zeta E$ with $v \in VE$
and $|v| = 1$, so the vector $iv \in T_\zeta E$ is also vertical and also has norm~$1$.
Denote the value at $\zeta$ of the vector field from Lemma~\ref{lemma:Reeb} by
$R := R_\tau(\zeta) \in \ker dF|_\zeta$, which according to the lemma, satisfies
\begin{equation}
\label{eqn:bound2}
\Lambda_\tau^0(R) \ge c_1 |\zeta|
\end{equation}
for some constant $c_1 > 0$ independent of $\zeta$ and~$\tau$.  The triple $(R,v,iv)$ now form a
positively oriented basis of $\ker dF|_\zeta$, and
$\Lambda_\tau^\epsilon \wedge d\Lambda_\tau^\epsilon(R,v,iv)$ is proportional to
$$
\Lambda_\tau^\epsilon(R)\, d\Lambda_\tau^\epsilon(v,iv) + 
\Lambda_\tau^\epsilon(v)\, d\Lambda_\tau^\epsilon(iv,R) +
\Lambda_\tau^\epsilon(iv)\, d\Lambda_\tau^\epsilon(R,v).
$$
Differentiating this with respect to $\epsilon$ and setting $\epsilon=0$, three terms
drop out since $\Lambda_\tau^0(v) = \Lambda_\tau^0(iv) = d\Lambda_\tau^0(v,iv) = 0$, 
and we are left with
\begin{multline*}
\Lambda_\tau^0(R)\, d\widehat{\eta}_\tau^0(v,iv) + \widehat{\eta}_\tau^0(v)\, 
d\Lambda_\tau^0(iv,R) + \widehat{\eta}_\tau^0(iv)\, d\Lambda_\tau^0(R,v) \\
 \ge c_1 |\zeta| \, d\widehat{\eta}_\tau^0(v,iv) - 2 c_2^2 |\zeta|^2
 = |\zeta| \cdot \left( c_1 \, d\widehat{\eta}_\tau^0(v,iv) - 2 c_2^2 |\zeta| \right),
\end{multline*}
where we've used \eqref{eqn:bound1} to bound the magnitude of the
last two terms from above
and \eqref{eqn:bound2} to bound the first from below.  Since
$d\widehat{\eta}_\tau^0$ tames~$i$ and $|v|=1$, the term 
$d\widehat{\eta}_\tau^0(v,iv)$ satisfies a uniform positive lower bound on
$\nN(p)$, thus the entire expression becomes positive as soon as
$|\zeta|$ is sufficiently small.
\end{proof}

\begin{lemma}
\label{lemma:fibLiouvilleEcrit}
There exists a family of smooth $1$-forms $\eta_\tau^\epsilon$ on
$\nN(p) \setminus \{p\}$, for $\tau \in X$ and $\epsilon \in [0,\epsilon_0]$,
such that $d\eta_\tau^0|_{VE} > 0$
and
$$
\xi_\tau^\epsilon = 
\ker dF \cap \ker \left( \Pi^*\sigma_\tau + \epsilon \eta_\tau^\epsilon \right).
$$
Moreover, the $\eta_\tau^\epsilon$ decay (uniformly in $\tau$ and $\epsilon$)
to zero at~$p$.
\end{lemma}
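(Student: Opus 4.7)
The plan is to construct $\eta_\tau^\epsilon$ as a rescaling of $\widehat\eta_\tau^\epsilon$ by a function that expresses $\Pi^*\sigma_\tau$ as the leading-order piece of $\Lambda_\tau^0$ transverse to the fibers. First observe that all three $1$-forms $\Lambda_\tau^0$, $\Pi^*\sigma_\tau$, and $dF$ annihilate the vertical subbundle $VE$: this is immediate for $\Pi^*\sigma_\tau = \sigma_\tau \circ d\Pi$ and $dF = d\varphi \circ d\Pi$, and for $\Lambda_\tau^0 = -dF \circ J_\tau$ it follows from the fact that $J_\tau$ preserves $VE$. At each $\zeta \ne p$ the annihilator of $VE|_\zeta$ is $2$-dimensional and is spanned by $\Pi^*\sigma_\tau$ and $dF$---they are linearly independent since $d\varphi \ne 0$ near $\Pi(p)$, by our earlier choice of coordinates---so we may write uniquely
$$
\Lambda_\tau^0 \;=\; g_\tau\, \Pi^*\sigma_\tau + h_\tau\, dF
$$
for smooth functions $g_\tau, h_\tau$ on $\nN(p) \setminus \{p\}$, depending continuously on~$\tau$.

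The crucial claim is that $g_\tau$ extends continuously across $p$ with a strictly positive value. Working in the holomorphic Morse coordinates $\Pi(z_1,z_2) = z_1^2 + z_2^2$ with $z = x+iy$ on the base (adjusted so that $d\varphi(0) = dx$), Lemma~\ref{lemma:matchingJets} and Proposition~\ref{prop:J1J2} imply that the $1$-jet of $\Lambda_\tau^0$ at $p$ agrees with that of $-dF_0 \circ i$, which a short computation identifies with $\Pi^*dy$. Writing $j_\tau(0)$ in the matrix form of Lemma~\ref{lemma:jconvexNearCritical} gives $\sigma_\tau(0) = -a\,dx + \beta\,dy$ with $\beta = (1+a^2)/b > 0$, so the $1$-jet of $\Pi^*\sigma_\tau$ at $p$ is $-a\, dF_0 + \beta\, \Pi^*dy$. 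Solving a $2\times 2$ linear system at leading order yields $g_\tau(p) = 1/\beta > 0$.

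Shrinking $\nN(p)$ so that $g_\tau > 0$ throughout, define
$$
\eta_\tau^\epsilon \;:=\; \frac{1}{g_\tau}\, \widehat\eta_\tau^\epsilon .
$$
Restricting to $\ker dF$ and using $\Lambda_\tau^0|_{\ker dF} = g_\tau\, \Pi^*\sigma_\tau|_{\ker dF}$ gives
$$
\bigl(\Pi^*\sigma_\tau + \epsilon\,\eta_\tau^\epsilon\bigr)\big|_{\ker dF} \;=\; \frac{1}{g_\tau}\, \Lambda_\tau^\epsilon\big|_{\ker dF},
$$
establishing the kernel identity $\xi_\tau^\epsilon = \ker dF \cap \ker(\Pi^*\sigma_\tau + \epsilon\,\eta_\tau^\epsilon)$. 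Decay at $p$ follows since $\widehat\eta_\tau^\epsilon = -dF \circ (J_\tau^\epsilon - J_\tau)/\epsilon = O(|\zeta|)$ (as $dF$ vanishes at $p$) and $1/g_\tau$ is bounded. For positivity of $d\eta_\tau^0|_{VE}$, expand
$$
d\eta_\tau^0 \;=\; d(1/g_\tau) \wedge \widehat\eta_\tau^0 + (1/g_\tau)\, d\widehat\eta_\tau^0 ;
$$
Lemma~\ref{lemma:perturbedJcrit} shows $d\widehat\eta_\tau^0$ tames $i$ at $p$, so its restriction to the $i$-complex line $V_\zeta E \subset \CC^2$ is a strictly positive multiple of the fiber area form for $\zeta$ near $p$, while the correction term $d(1/g_\tau)\wedge \widehat\eta_\tau^0|_{V_\zeta E}$ vanishes to strictly higher order as $\zeta \to p$ (since $\widehat\eta_\tau^0 = O(|\zeta|)$ and $d(1/g_\tau)$ is bounded). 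A final shrinking of $\nN(p)$ therefore secures $d\eta_\tau^0|_{VE} > 0$ throughout $\nN(p) \setminus \{p\}$.

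The main technical obstacle is the $1$-jet computation proving $g_\tau(p) > 0$: one must reconcile the leading-order behavior of $\Lambda_\tau^0$, which via Proposition~\ref{prop:J1J2} is controlled by $J\crit = i$ at $p$, with that of $\Pi^*\sigma_\tau$, controlled by $j_\tau(0)$ (which need not equal the base complex structure $j\crit$ near the critical value since $\Pi$ is not assumed $(j_\tau, J_\tau)$-holomorphic near $E\crit$), and verify the resulting ratio has the correct sign. Every other step is a direct algebraic manipulation using the decomposition.
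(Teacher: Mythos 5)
Your construction is in essence the same as the paper's: both define $\eta_\tau^\epsilon$ as $\widehat{\eta}_\tau^\epsilon$ rescaled by the proportionality factor between $\Lambda_\tau^0$ and $\Pi^*\sigma_\tau$ on $\ker dF$ (your $1/g_\tau$ is the paper's $g_\tau$), and the kernel identity and the decay at $p$ then follow exactly as you describe. Where you diverge is in the two estimates. For boundedness of the rescaling factor the paper never computes a limit at $p$; it only bounds the ratio, using the vector field of Lemma~\ref{lemma:Reeb} ($\Lambda_\tau^0(R_\tau(\zeta)) \ge c_1|\zeta|$ against $\|\Pi^*\sigma_\tau\| \le c_2 |\zeta|$). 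Your sharper claim $g_\tau \to 1/\beta > 0$ is correct, but the step ``solve the $2\times 2$ system at leading order'' needs more care than you give it: $\Pi^*\sigma_\tau$ and $dF$ both vanish linearly at $p$, so the system degenerates there, and one must check that the $O(|\zeta|^2)$ remainders coming from Lemma~\ref{lemma:matchingJets} beat the factor $|\zeta|$ lost in inverting $T\Pi|_\zeta$ (this works because both singular values of $T\Pi|_\zeta$ are comparable to $|\zeta|$ in the Morse model, but it should be said). That part is fixable and, uniformly in $\tau$, gives the required positivity and upper bound for $g_\tau$ near~$p$.

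The genuine gap is in your proof that $d\eta_\tau^0|_{VE} > 0$. You expand $d\eta_\tau^0 = d(1/g_\tau)\wedge \widehat{\eta}_\tau^0 + (1/g_\tau)\, d\widehat{\eta}_\tau^0$ and discard the first term because ``$d(1/g_\tau)$ is bounded.'' Nothing you have established gives that: $g_\tau$ is defined only on $\nN(p)\setminus\{p\}$ by solving a linear system whose coefficient $1$-forms all vanish at $p$, and continuity of $g_\tau$ at $p$ (which is all your jet computation yields) says nothing about its first derivatives, which a priori could blow up as $\zeta \to p$. (They do not, but proving this requires estimating $\nabla(\Lambda_\tau^0 - \Pi^*dy)$ together with the derivative of a right inverse of $T\Pi|_\zeta$, i.e.\ exactly the kind of quantitative argument your write-up omits.) The paper's proof is structured precisely so that $g_\tau$ is never differentiated: it uses the identity $(g\Lambda)\wedge d(g\Lambda) = g^2\, \Lambda\wedge d\Lambda$ on level sets of $F$, so that Lemma~\ref{lemma:contactDerivative} translates directly into $\Pi^*\sigma_\tau \wedge d\eta_\tau^0|_{\ker dF} > 0$, which is equivalent to $d\eta_\tau^0|_{VE} > 0$ because the kernel of $\Pi^*\sigma_\tau|_{\ker dF}$ is $VE$. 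Either supply the missing bound on $dg_\tau$, or replace your positivity argument with this wedge-square computation.
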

\begin{proof}
On $\nN(p) \setminus \{p\}$, the kernels of $\Lambda_\tau^0$ and
$\Pi^*\sigma_\tau$ restricted to level sets of $F$ are both
$\xi_\tau^0 = VE$, thus there is a family of smooth positive functions
$g_\tau : \nN(p) \setminus \{p\} \to (0,\infty)$ such that
\begin{equation}
\label{eqn:gtau}
\left.\Pi^*\sigma_\tau\right|_{\ker dF} = \left. g_\tau \Lambda_\tau^0
\right|_{\ker dF}.
\end{equation}
We can plug in the unit vector field $R_\tau$ from Lemma~\ref{lemma:Reeb}
to compute $g_\tau$ in coordinates, and since $\Pi^*\sigma_\tau$ 
vanishes at~$p$, the estimate in the lemma gives rise to an estimate
$$
|g_\tau(\zeta)| = \frac{\left| \Pi^*\sigma_\tau(R_\tau(\zeta))\right|}{|\Lambda^0_\tau(R_\tau(\zeta))|}
\le \frac{c_2 |\zeta|}{c_1 |\zeta|} = \frac{c_2}{c_1}
$$
for some constants $c_1,c_2 > 0$, so that the functions $g_\tau$ are uniformly
bounded near~$p$.

The relation \eqref{eqn:gtau} together with \eqref{eqn:LambdaTauEps} now implies
$$
\left. g_\tau \Lambda_\tau^\epsilon\right|_{\ker dF} =
\left.\left(\Pi^*\sigma_\tau + \epsilon g_\tau \widehat{\eta}_\tau^\epsilon
\right)\right|_{\ker dF},
$$
thus we can set
$$
\eta_\tau^\epsilon := g_\tau \widehat{\eta}_\tau^\epsilon
\quad\text{ for $\epsilon \in [0,\epsilon_0]$, $\tau \in X$.}
$$
Observe that $\widehat{\eta}_\tau^\epsilon|_p = 0$ for all $\tau$
and $\epsilon$ by definition, so the boundedness of $g_\tau$ implies
that $\eta_\tau^\epsilon$ also decays uniformly to zero at~$p$.

Our remaining task is to show that $d\eta_\tau^0|_{VE} > 0$ in some
(possibly smaller) open set of the form $\nN(p) \setminus \{p\}$.
Since $\left. g_\tau \Lambda_\tau^\epsilon \wedge d\left( g_\tau
\Lambda_\tau^\epsilon \right)\right|_{\ker dF} =
\left. g_\tau^2 \, \Lambda_\tau^\epsilon \wedge
d\Lambda_\tau^\epsilon\right|_{\ker dF}$, Lemma~\ref{lemma:contactDerivative}
implies that on a sufficiently small neighborhood $\nN(p) \setminus \{p\}$,
\begin{equation*}
\begin{split}
0 &< \left. g_\tau^2 \frac{\p}{\p\epsilon}\left(\left.
\Lambda_\tau^\epsilon \wedge d\Lambda_\tau^\epsilon\right|_{\ker dF}\right)\right|_{\epsilon = 0} \\
&= \left. \frac{\p}{\p\epsilon} \left[ \left. (\Pi^*\sigma_\tau + \epsilon \eta_\tau^\epsilon)
\wedge d(\Pi^*\sigma_\tau + \epsilon \eta_\tau^\epsilon)\right|_{\ker dF} \right]\right|_{\epsilon=0} \\
&= \left. \frac{\p}{\p\epsilon} \left[ \epsilon \cdot \left. (\Pi^*\sigma_\tau + \epsilon \eta_\tau^\epsilon)
\wedge d\eta_\tau^\epsilon \right|_{\ker dF} \right]\right|_{\epsilon=0} 
= \left.\Pi^*\sigma_\tau \wedge d\eta_\tau^0\right|_{\ker dF},
\end{split}
\end{equation*}
where we've used the fact that $\Pi^*\sigma_\tau$ is closed on the level 
sets of $F$ since $\sigma_\tau$ is (obviously) closed on the level sets of~$\varphi$.
The kernel of $\Pi^*\sigma|_{\ker dF}$ is $VE$, so this last relation is 
equivalent to $d\eta_\tau^0|_{VE} > 0$.
\end{proof}

\subsection{Perturbing from Levi flat to contact}
\label{sec:step3}

By assumption, there is a subcomplex $A \subset X$ and a family of
smooth functions $\{f_\tau : E \to \RR \}_{\tau \in A}$ such that
$\lambda_\tau := -d f_\tau \circ J_\tau$
are Liouville forms and restrict to $\p E$ as 
Giroux forms in the sense of Remark~\ref{remark:corner}.  Recall from
Definition~\ref{defn:fiberwiseJconvex} the convex space
$\JconvexFib_{(J,\p_\theta)}(\Pi)$ of fiberwise $J$-convex functions
associated to each $(J,\p_\theta) \in \AC(\Pi)$.  It will be convenient to
observe that this definition still makes sense and 
$\JconvexFib_{(J,\p_\theta)}(\Pi)$ is
still convex if $(J,\p_\theta)$ is only assumed to belong
to $\ACweaker(\Pi;\uU)$.  For example, $f_\tau \in \JconvexFib_{(J_\tau,\p_\theta^\tau)}(\Pi)$
for each $\tau \in A$.

\begin{lemma}
\label{lemma:asyetunspecified}
The family of functions $\{f_\tau : E \to \RR\}_{\tau \in A}$ can be extended
to a family $\{f_\tau : E \to \RR\}_{\tau \in X}$ such that
$f_\tau \in \JconvexFib_{(J_\tau,\p_\theta^\tau)}(\Pi)$ for every $\tau \in X$.
\end{lemma}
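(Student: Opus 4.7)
The strategy is to apply the construction from Proposition~\ref{prop:nonempty} parametrically in~$\tau$ to produce \emph{some} continuous extension $\{\tilde f_\tau\}_{\tau \in X}$, and then interpolate linearly between $\tilde f_\tau$ and the prescribed $f_\tau$ on a neighborhood of $A$, using the convexity of each space $\JconvexFib_{(J_\tau,\p_\theta^\tau)}(\Pi)$ together with the fact that $(X,A)$ is a CW pair.

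For the parametric existence step, I would rerun Steps~1--3 of the proof of Proposition~\ref{prop:nonempty} with $(J,\p_\theta)$ replaced by the continuous family $(J_\tau,\p_\theta^\tau)$. As recorded in Remark~\ref{remark:notHolomorphic}, that proof never invoked pseudoholomorphicity of $\Pi$ in the interior of~$E$; the only properties of $(J,\p_\theta)$ it used were that every fiber is $J$-holomorphic and that the $S^1$-action generated by $\p_\theta$ on $\p_h E$ preserves both $\xi_J$ and~$J|_{\xi_J}$. Both are available here: the first is built into the definition of $\ACweaker(\Pi;\uU)$, and the second follows from the pseudoholomorphicity of $\Pi$ along~$\p_h E$, which identifies $J|_{\xi_J}$ with $(T\Pi|_{\xi_J})^{-1}\circ j \circ T\Pi|_{\xi_J}$ (using that $\xi_J$ is transverse to $V(\p_h E)$ since $\Pi$ is a submersion on $\p_h E$) and hence makes it automatically $S^1$-invariant once $\xi_J$ is. Each piece of the local data near regular and singular fibers in the proof of Proposition~\ref{prop:nonempty} depends continuously on $(J_\tau,\p_\theta^\tau)$, and combining them with a partition of unity on $\Sigma$ chosen once and for all yields a continuous family $\{\tilde f_\tau\}_{\tau \in X}$ with $\tilde f_\tau \in \JconvexFib_{(J_\tau,\p_\theta^\tau)}(\Pi)$.

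For the interpolation step, I would pick a continuous cutoff $\rho : X \to [0,1]$ equal to~$1$ on $A$ and supported in a small neighborhood of $A$ in~$X$, and (using the homotopy extension property of the CW pair $(X,A)$) extend $\{f_\tau\}_{\tau\in A}$ continuously, without any geometric constraints, to a family of smooth functions on~$E$ over that neighborhood. Then set
$$
f'_\tau := \rho(\tau)\, f_\tau + \bigl(1 - \rho(\tau)\bigr)\, \tilde f_\tau
\qquad\text{for $\tau \in X$.}
$$
Since $\lambda_J = -df\circ J_\tau$ is linear in $f$ for fixed $J_\tau$, each of the four conditions defining $\JconvexFib_{(J_\tau,\p_\theta^\tau)}(\Pi)$ --- $f$ constant on the boundary components of each fiber, $d\lambda_J$ fiberwise Liouville (with taming of $J\crit$), $\lambda_J|_{\p E} \in \GirouxFib(\p\Pi)$, and $\lambda_J(\p_\theta)$ constant --- is convex in~$f$, so the family $\{f'_\tau\}$ lies in $\JconvexFib_{(J_\tau,\p_\theta^\tau)}(\Pi)$ at every $\tau$ and agrees with the prescribed $f_\tau$ for $\tau \in A$. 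The main obstacle is really just verifying these convexity claims and the invariance of $J|_{\xi_J}$ needed to invoke Proposition~\ref{prop:nonempty}; everything else is bookkeeping inherited from the CW structure of~$(X,A)$.
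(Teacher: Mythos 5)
Your first step (parametric existence of some family $g_\tau \in \JconvexFib_{(J_\tau,\p_\theta^\tau)}(\Pi)$ over all of $X$) is correct and is exactly what the paper does: rerun the partition-of-unity construction of Proposition~\ref{prop:nonempty} in families, invoking Remark~\ref{remark:notHolomorphic}, and your observation that invariance of $J_\tau|_{\xi_{J_\tau}}$ under the $\p_\theta^\tau$-flow comes for free from holomorphicity of $\Pi$ along $\p_h E$ (since $\xi_{J_\tau}$ is transverse to $V(\p_h E)$ and the flow covers the identity on $\Sigma$) is the same point the paper relies on.

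The gap is in the interpolation step. You extend $\{f_\tau\}_{\tau\in A}$ over a neighborhood of $A$ ``without any geometric constraints'' and then set $f'_\tau = \rho(\tau) f_\tau + (1-\rho(\tau))\tilde f_\tau$, arguing that the conditions defining $\JconvexFib_{(J_\tau,\p_\theta^\tau)}(\Pi)$ are convex in $f$. Convexity only tells you that a combination of two functions \emph{each belonging} to $\JconvexFib_{(J_\tau,\p_\theta^\tau)}(\Pi)$ stays in it; at parameters $\tau$ in the cutoff region with $\rho(\tau)\in(0,1)$, your unconstrained extension $f_\tau$ need not belong to this set, and then neither does $f'_\tau$. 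Continuity in $\tau$ does not rescue this, because several of the defining conditions are equalities rather than open conditions: $f$ must be \emph{exactly} constant on each boundary component of each fiber, $\lambda_J(\p_\theta^\tau)$ must be constant, and the fiberwise Giroux/Liouville conditions require $d\lambda_J$ to annihilate the vertical directions along $\p_h E$ exactly. An arbitrary nearby extension generically violates all of these, and convex combination with $\tilde f_\tau$ does not repair the violation (nor, without quantitative control, the open positivity conditions). This is precisely the subtlety the paper's proof addresses: the extension $\widehat f_\tau$ is \emph{not} arbitrary, but is chosen so that $\widehat f_\tau|_{\p_h E}$ is invariant under the flow of $\p_\theta^\tau$ and the normal derivatives $d\widehat f_\tau(-J_\tau\p_\theta^\tau)$ are locally constant; a Lie-derivative computation then shows the equality-type conditions hold identically, the remaining conditions are open and hold at $A$, hence on a neighborhood $A'\subset X$ of $A$, and only then does one interpolate with a cutoff supported in $A'$, so that both endpoints of the convex combination lie in $\JconvexFib_{(J_\tau,\p_\theta^\tau)}(\Pi)$. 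To repair your argument you need to build these constraints into the extension (or otherwise guarantee membership of the extended family near $A$) before invoking convexity; whether you then reduce to disk pairs, as the paper does, or use the homotopy extension property is immaterial.
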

\begin{proof}
Independently of the given functions~$f_\tau$, we first observe that there exists
a family $\{g_\tau \in \JconvexFib_{(J_\tau,\p_\theta^\tau)}(\Pi)\}_{\tau \in X}$.
In light of Remark~\ref{remark:notHolomorphic}, this follows from the 
partition of unity argument in the proof of Proposition~\ref{prop:nonempty};
the only meaningful difference is that one needs to consider families depending
continuously on $\tau$ at every step, though since $X$ is compact, one can
also use Lemmas~\ref{lemma:sufficientlyConvex} and~\ref{lemma:jconvexNearCritical}
to construct $g_\tau$ so that it is independent of $\tau$ away from~$\p_h E$.
This establishes the lemma in the case $A = \emptyset$.

To solve the extension
problem in general, it suffices to consider the case where $X$ is a disk
$\DD^k$ and $A = \p\DD^k = S^{k-1}$ for some $k \in \NN$.  We start by
extending the given family $\{f_\tau\}_{\tau \in A}$ arbitrarily to a 
family of smooth functions
$\widehat{f}_\tau : E \to \RR$ for $\tau \in X$ such that each
$\widehat{f}_\tau|_{\p_h E}$ is invariant under the $S^1$-action defined by
the flow of $\p_\theta^\tau$ and the normal derivatives 
$d \widehat{f}_\tau(-J_\tau \p_\theta^\tau)$ are locally constant for each~$\tau$.
Since each $J_\tau$ has an $S^1$-invariant restriction to $\p_h E$, the 
$1$-forms $\alpha_\tau := - d\widehat{f}_\tau \circ J_\tau|_{T(\p_h E)}$ 
are also $S^1$-invariant and thus satisfy
$$
0 = \Lie_{\p_\theta^\tau} \alpha_\tau = d\left( \alpha_\tau(\p_\theta^\tau)\right) +
d\alpha_\tau(\p_\theta^\tau,\cdot).
$$
In this expression, the first term at the right vanishes since 
$\alpha_\tau(\p_\theta^\tau) = - d\widehat{f}_\tau(J_\tau \p_\theta^\tau)$
is locally constant, thus $d\alpha_\tau(\p_\theta^\tau,\cdot) \equiv 0$.  The remaining
conditions in the definition of a fiberwise $J_\tau$-convex function are all
open, so it follows that $\widehat{f}_\tau$ is also fiberwise $J_\tau$-convex for every
$\tau$ in some open neighborhood $A' \subset X$ of~$A$.  Finally, choose a
cutoff function $\beta : X \to [0,1]$ that is supported in $A'$ and satisfies
$\beta|_A \equiv 1$, and set
$f_\tau := \beta(\tau) \widehat{f}_\tau + [1 - \beta(\tau)] g_\tau$.
This is fiberwise $J_\tau$-convex for every $\tau \in X$ since 
the space $\JconvexFib_{(J_\tau,\p_\theta^\tau)}(\Pi)$ is convex.
\end{proof}

From now on, denote by
$$
\lambda_\tau = - d f_\tau \circ J_\tau, \qquad \tau \in X
$$
the family of fiberwise Liouville forms on $E$ that arise from the above lemma.
Choose a neighborhood $\nN(E\crit) \subset E$ of $E\crit$ such that
the $1$-forms $\eta_\tau^\epsilon$ from Lemma~\ref{lemma:fibLiouvilleEcrit}
are defined and satisfy $d\eta_\tau^0|_{VE} > 0$ on an open neighborhood
of $\overline{\nN(E\crit)} \setminus E\crit$.  
For any smaller neighborhood $\nN'(E\crit)$ of $E\crit$ with compact closure in
$\nN(E\crit)$, we can choose $\epsilon_0 > 0$ small enough so that
$$
d\eta_\tau^\epsilon|_{VE} > 0 \quad
\text{ on $\nN(E\crit) \setminus \nN'(E\crit)$ 
for all $\epsilon \in [0,\epsilon_0]$}.
$$
The following lemma should be understood to be true after a possible further
shrinking of the neighborhoods $\nN(E\crit)$ and $\nN'(E\crit)$ together with
the number $\epsilon_0 > 0$.

\begin{lemma}
\label{lemma:LiouvilleExtension}
The family of $1$-forms $\eta_\tau^\epsilon$ on $\nN(E\crit) \setminus
\nN'(E\crit)$ for $\tau \in X$ and $\epsilon \in [0,\epsilon_0]$
can be extended over $E \setminus \nN'(E\crit)$ so that
$d\eta_\tau^\epsilon|_{VE} > 0$ everywhere and $\eta_\tau^\epsilon = \lambda_\tau$
near $\p_h E$ and in $E|_{\uU'}$ for some neighborhood $\uU' \subset \Sigma$ of 
$\Crit(\varphi) \cup \p\Sigma$.
\end{lemma}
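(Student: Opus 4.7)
My plan is to extend $\eta_\tau^\epsilon$ by convex combination with the globally defined fiberwise Liouville form $\lambda_\tau$ from Lemma~\ref{lemma:asyetunspecified}, using an ambient cutoff. The point is that $\lambda_\tau$ is already fiberwise Liouville on all of~$E$, and, provided the cutoff is supported in a small enough neighborhood of~$E\crit$, the boundary conditions $\widetilde{\eta}_\tau^\epsilon = \lambda_\tau$ near $\p_h E$ and in $E|_{\uU'}$ will be automatic. A cutoff on the base of~$\Pi$ would not work here, since $\Pi^{-1}(W)$ for any neighborhood $W$ of $\Sigma\crit$ contains entire singular fibers and need not lie inside $\nN(E\crit)$; the difficulty will be to control the resulting cross term produced by an ambient cutoff.

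Concretely, since $\Sigma\crit$ is disjoint from $\Crit(\varphi) \cup \p\Sigma$, I first choose an open neighborhood $\uU' \subset \Sigma$ of $\Crit(\varphi) \cup \p\Sigma$, and a neighborhood of $\p_h E$ in~$E$, whose union is disjoint from $\nN(E\crit)$. By the remarks preceding the lemma, after further shrinking $\nN(E\crit), \nN'(E\crit)$, and~$\epsilon_0$, the form $\eta_\tau^\epsilon$ is defined and satisfies $d\eta_\tau^\epsilon|_{VE} > 0$ on an open set $\nN^+(E\crit)$ with $\overline{\nN(E\crit)} \subset \nN^+(E\crit)$, uniformly for $(\tau,\epsilon) \in X \times [0,\epsilon_0]$. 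Fix a smooth cutoff $\chi : E \to [0,1]$ with $\chi \equiv 1$ on $\nN(E\crit)$ and $\chi \equiv 0$ outside $\nN^+(E\crit)$, and set
\begin{equation*}
\widetilde{\eta}_\tau^\epsilon := \chi \, \eta_\tau^\epsilon + (1-\chi)\, \lambda_\tau
\end{equation*}
on $\nN^+(E\crit) \setminus \nN'(E\crit)$, extended by $\lambda_\tau$ on $E \setminus \nN^+(E\crit)$. By design $\widetilde{\eta}_\tau^\epsilon = \eta_\tau^\epsilon$ on $\nN(E\crit) \setminus \nN'(E\crit)$ and $\widetilde{\eta}_\tau^\epsilon = \lambda_\tau$ on $E|_{\uU'}$ and near $\p_h E$, as required.

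The only substantive step is to verify $d\widetilde{\eta}_\tau^\epsilon|_{VE} > 0$ on the transition region $\nN^+(E\crit) \setminus \nN(E\crit)$, where
\begin{equation*}
d\widetilde{\eta}_\tau^\epsilon|_{VE} = \chi\, d\eta_\tau^\epsilon|_{VE} + (1-\chi)\, d\lambda_\tau|_{VE} + d\chi|_{VE} \wedge (\eta_\tau^\epsilon - \lambda_\tau)|_{VE}.
\end{equation*}
The first two terms sum to a positive multiple of the fiber area form bounded below by a uniform constant $c > 0$, by compactness of the closed transition region and of $X \times [0,\epsilon_0]$. The main technical obstacle is controlling the cross term: since $|(\eta_\tau^\epsilon - \lambda_\tau)|_{VE}|$ is uniformly bounded and I am free to shrink $\nN(E\crit)$ inside the fixed~$\nN^+(E\crit)$ to widen the transition region as much as desired, the cutoff~$\chi$ can be arranged with $|d\chi|$ small enough that the cross term is dominated by the positive convex combination and the sum remains positive on $VE$. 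Continuous dependence of $\widetilde{\eta}_\tau^\epsilon$ on $(\tau,\epsilon)$ is immediate, as $\chi$, $\nN^\pm(E\crit)$, and~$\uU'$ are all chosen independently of the parameters.
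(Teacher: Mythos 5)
There is a genuine gap, and it is exactly at the point you flag as ``the main technical obstacle'': the claim that the cutoff $\chi$ can be arranged with $|d\chi|$ small enough to absorb the cross term. The transition from $\chi\equiv 1$ to $\chi\equiv 0$ must take place inside $\nN^+(E\crit)$, and this neighborhood is \emph{fixed and small}---it is constrained by Lemma~\ref{lemma:fibLiouvilleEcrit} and the preceding shrinkings, and cannot be enlarged; shrinking $\nN(E\crit)$ only widens the transition region up to that fixed size. Moreover the fibers through points near $E\crit$ are not contained in $\nN^+(E\crit)$ (they continue out to the rest of the singular fiber), so $\chi$ is forced to drop from $1$ to $0$ along \emph{vertical} directions within a distance bounded by the size of $\nN^+(E\crit)$; hence $d\chi|_{VE}$ has a lower bound of order $1/\operatorname{diam}\nN^+(E\crit)$ that is not a tunable parameter. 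Nor is the other factor small: $\eta_\tau^\epsilon$ decays to zero at $E\crit$, but $\lambda_\tau=-df_\tau\circ J_\tau$ has no reason to vanish there, so $(\eta_\tau^\epsilon-\lambda_\tau)|_{VE}$ is of order one on the transition region. The cross term $d\chi\wedge(\eta_\tau^\epsilon-\lambda_\tau)|_{VE}$ is therefore of a fixed (possibly large) size compared with the uniform constant $c$ from the convex combination, and positivity on $VE$ is not guaranteed. An ambient (non-fiber-constant) cutoff simply cannot avoid this.

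The way the paper gets around it is the one your opening remark dismisses: one \emph{does} use cutoff functions pulled back from the base, because then $d(\rho_z\circ\Pi)$ annihilates $VE$ and the cross terms vanish identically on vertical $2$-planes---this is the whole point of the Thurston-style patching. The difficulty you correctly identify (a base neighborhood of $\Sigma\crit$ pulls back to a set containing entire singular fibers, not contained in $\nN(E\crit)$) is resolved not by switching to an ambient cutoff but by first \emph{extending} $\eta_\tau^\epsilon$ over each component of $E_z\setminus E_z\crit$ as a Liouville form matching $\lambda_\tau$ near $\p_h E$; this uses allowability (every such component has boundary), and the Stokes-type obstruction to matching $\lambda_\tau$ at $\p_h E$ is handled by observing that $\eta_\tau^\epsilon$ decays to zero at $E\crit$, so after shrinking $\nN(E\crit)$ its periods over vanishing cycles are as small as needed. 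One then covers $\Sigma$ by such neighborhoods of singular values, neighborhoods of regular values, and $\uU'$ (where one simply takes $\lambda_\tau$), and patches with a partition of unity on $\Sigma$; positivity on $VE$ is then automatic as a convex combination, with no cross terms to estimate. If you want to keep your interpolation idea, you would have to make your $\chi$ constant on fibers and first solve exactly this fiberwise extension problem---at which point you have reproduced the paper's argument.
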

\begin{proof}
We again use a variant of the partition of unity argument from
Proposition~\ref{prop:nonempty}.  
For step~1, choose an open neighborhood $\uU' \subset \Sigma$ of
$\Crit(\varphi) \cup \p\Sigma$ with closure in~$\uU$, and for each
regular value $z \in \Sigma \setminus \uU'$ of~$\Pi$, choose a neighborhood
$\uU_z \subset \Sigma \setminus \Sigma\crit$ of $z$ together with a
family of $1$-forms $\eta_{\tau,z}^\epsilon$ on $E|_{\uU_z}$ such that
$d\eta_{\tau,z}^\epsilon$ is positive on fibers and 
$\eta_{\tau,z}^\epsilon = \lambda_\tau$ near~$\p_h E$.
We can arrange this moreover so that $\uU_z$ is disjoint from
$\widebar{\uU'}$ whenever $z \not\in \widebar{\uU'}$ and $\uU_z \subset \uU$ for
$z \in \widebar{\uU'}$, which permits the choice
$\eta_{\tau,z}^\epsilon := \lambda_\tau$ in the latter case.

Step~2 is to define $\eta_{\tau,z}^\epsilon$ on $E|_{\uU_z}$ for a neighborhood
$\uU_z \subset \Sigma$ of each $z \in \Sigma\crit$, matching
the given $\eta_\tau^\epsilon$ near~$E\crit$.  We start by extending
$\eta_\tau^\epsilon$ over each component of $E_z \setminus E_z\crit$ as a
Liouville form, which is possible since $\Pi$ is allowable, though there is
a slightly subtle point if we want to arrange $\eta_{\tau,z}^\epsilon = \lambda_\tau$
near~$\p_h E$: Stokes' theorem may make this impossible if there are vanishing
cycles $C \subset E_z$ near $E\crit$ on which $\int_C \eta_{\tau,z}^\epsilon$
is too large.  Recall however
that while $\eta_\tau^\epsilon$ may fail to be smooth at~$E\crit$, it does
have a (uniformly in $\tau$ and $\epsilon$) continuous extension that
vanishes at~$E\crit$, so its integrals along cycles in $\nN(E\crit)$ can be
assumed arbitrarily small if we replace $\nN(E\crit)$ by a suitably smaller
neighborhood (which may necessitate making $\nN'(E\crit)$ and $\epsilon_0$
smaller as well).  With this understood, the required extension of
$\eta_\tau^\epsilon$ from $\nN(E\crit)$ to a fiberwise Liouville form 
$\eta_{\tau,z}^\epsilon$ on $E|_{\uU_z}$ exists for some neighborhood
$\uU_z \subset \Sigma \setminus \widebar{\uU'}$ of~$z$.

Step~3 is then to choose a finite subcover $\{\uU_z\}_{z \in I}$ of
$\Sigma \setminus \uU'$ with a subordinate partition of unity
$\{\rho_z : \uU_z \to [0,1]\}_{z \in I}$ and define the desired extension by
$\eta_\tau^\epsilon = \sum_{z \in I} (\rho_z \circ \Pi) \eta_{\tau,z}^\epsilon$.
\end{proof}

Since the smaller neighborhood $\uU' \subset \uU$ in the above lemma contains
$\Crit(\varphi) \cup \p\Sigma$, we can now relabel $\uU'$ as $\uU$ without
loss of generality, and let $\uU' \subset \uU$ denote a still smaller 
neighborhood of $\Crit(\varphi) \cup \p\Sigma$ with closure in~$\uU$.
Choose a smooth cutoff function 
$$
\beta : \Sigma \to [0,1]
$$
with compact support in $\uU$ such that $\beta|_{\uU'} \equiv 1$, and define
from this a family of smooth functions 
$$
F_\tau^\epsilon =  : \varphi \circ \Pi + \epsilon(\beta \circ \Pi) f_\tau :
E \to \RR
$$
for $\epsilon \in [0,\epsilon_0]$ and $\tau \in X$.  Observe that
$F_\tau^\epsilon = F = \varphi \circ \Pi$ outside of $E|_{\uU}$.
On $E \setminus \nN'(E\crit)$, we can also define the family of smooth $1$-forms
$$
\Theta_\tau^\epsilon = \Pi^*\sigma_\tau + \epsilon \eta_\tau^\epsilon,
$$
which are Liouville for $\epsilon > 0$ sufficiently small
(cf.~Proposition~\ref{prop:Thurston}).

\begin{lemma}
\label{lemma:WeinsteinFamily}
For all $\epsilon > 0$ sufficiently small and all $\tau \in X$,
$F_\tau^\epsilon$ is $J_\tau$-convex on~$E|_{\uU'}$, and
$$
d F_\tau^\epsilon \wedge \Theta_\tau^\epsilon \wedge d\Theta_\tau^\epsilon > 0
\quad\text{ on $E \setminus \left(\nN'(E\crit) \cup E|_{\Crit(\varphi)}\right)$}.
$$
\end{lemma}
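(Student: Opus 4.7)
My plan is to prove the two assertions in parallel, combining the Thurston trick (Proposition~\ref{prop:ThurstonStein}) with the two identities available on $E|_\uU$: pseudoholomorphicity $T\Pi \circ J_\tau = j_\tau \circ T\Pi$ and $\eta_\tau^\epsilon = \lambda_\tau := -df_\tau \circ J_\tau$, the latter coming from Lemma~\ref{lemma:LiouvilleExtension} after the relabeling preceding the statement. All thresholds below will depend continuously on~$\tau$, so compactness of $X$ converts pointwise estimates into uniform ones.

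For $J_\tau$-convexity on $E|_{\uU'}$: here $\beta \equiv 1$, so $F_\tau^\epsilon = \varphi \circ \Pi + \epsilon\,f_\tau$, and pseudoholomorphicity gives $-d(\varphi \circ \Pi) \circ J_\tau = \Pi^*\sigma_\tau$, hence
\[
-dF_\tau^\epsilon \circ J_\tau = \Pi^*\sigma_\tau + \epsilon\,\lambda_\tau,\qquad -d(dF_\tau^\epsilon \circ J_\tau) = \Pi^*d\sigma_\tau + \epsilon\,d\lambda_\tau.
\]
Direct application of Proposition~\ref{prop:ThurstonStein} with $K = 1/\epsilon$ yields taming of $J_\tau$: the $j_\tau$-convexity of $\varphi$ handles horizontal directions, the fiberwise $J_\tau$-convexity of $f_\tau$ (Lemma~\ref{lemma:asyetunspecified}) handles vertical ones, and small enough $\epsilon$ handles the mixed terms.

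For the $4$-form positivity, I expand $dF_\tau^\epsilon \wedge \Theta_\tau^\epsilon \wedge d\Theta_\tau^\epsilon$ in powers of $\epsilon$. The order-$\epsilon^0$ term $\Pi^*(d\varphi \wedge \sigma_\tau \wedge d\sigma_\tau)$ vanishes as a pulled-back $3$-form on the $2$-dimensional base, and the same base-dimension argument kills every order-$\epsilon$ contribution containing two factors pulled back from~$\Sigma$. The surviving leading-order piece is
\[
\epsilon\,\Pi^*(d\varphi \wedge \sigma_\tau) \wedge d\eta_\tau^0.
\]
Since $\sigma_\tau = -d\varphi \circ j_\tau$, a short local-frame computation shows that the base $2$-form $d\varphi \wedge \sigma_\tau$ equals $|d\varphi|_{j_\tau}^2$ times the area form, hence is strictly positive on $\Sigma \setminus \Crit(\varphi)$; combined with $d\eta_\tau^0|_{VE} > 0$ from Lemmas~\ref{lemma:fibLiouvilleEcrit} and~\ref{lemma:LiouvilleExtension}, the wedge of a horizontal $2$-form with a fiberwise-positive $2$-form yields a strictly positive $4$-form on $E \setminus (\nN'(E\crit) \cup E|_{\Crit(\varphi)})$. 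On the compact region $E \setminus (\nN'(E\crit) \cup E|_{\uU'})$, this leading term has a uniform positive lower bound while the higher-order corrections are $O(\epsilon^2)$ and uniformly bounded, giving positivity for small~$\epsilon$ there.

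The main obstacle is the inner patch $E|_{\uU'} \setminus E|_{\Crit(\varphi)}$, where the leading $\epsilon$-term degenerates quadratically near $E|_{\Crit(\varphi)}$ and the $\epsilon$-expansion alone cannot dominate the $O(\epsilon^2)$ corrections. Here I exploit the first claim: because $\beta \equiv 1$ and $\eta_\tau^\epsilon = \lambda_\tau$, one has $\Theta_\tau^\epsilon = -dF_\tau^\epsilon \circ J_\tau$, so the standard $J$-convexity identity
\[
dF_\tau^\epsilon \wedge \Theta_\tau^\epsilon \wedge d\Theta_\tau^\epsilon = \tfrac{1}{2}\,dF_\tau^\epsilon(V_\tau^\epsilon)\,(d\Theta_\tau^\epsilon)^2,
\]
with $V_\tau^\epsilon$ the Liouville vector field dual to $\Theta_\tau^\epsilon$, applies, and the taming of $J_\tau$ by $d\Theta_\tau^\epsilon$ forces $dF_\tau^\epsilon(V_\tau^\epsilon) = d\Theta_\tau^\epsilon(V_\tau^\epsilon, J_\tau V_\tau^\epsilon) > 0$ wherever $dF_\tau^\epsilon \ne 0$. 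The genuinely delicate point is to show that $F_\tau^\epsilon$ has no critical points on $E|_{\uU'} \setminus E|_{\Crit(\varphi)}$ for $\epsilon$ small uniformly in $\tau$: any such critical point must satisfy $\Pi^*d\varphi = -\epsilon\,df_\tau$, and since $df_\tau$ is uniformly bounded across $X$, candidates can only cluster toward $E|_{\Crit(\varphi)}$ as $\epsilon \to 0$; ruling them out entirely uses the precise form of $f_\tau$ along its fiberwise-critical locus from Lemma~\ref{lemma:asyetunspecified} together with compactness of~$X$.
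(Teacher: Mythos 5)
Your treatment of the two main regions coincides with the paper's: on $E|_{\uU'}$ you use $-dF_\tau^\epsilon \circ J_\tau = \Pi^*\sigma_\tau + \epsilon\lambda_\tau = \Theta_\tau^\epsilon$ and Proposition~\ref{prop:ThurstonStein}, and on $E \setminus \left(\nN'(E\crit) \cup E|_{\uU'}\right)$ you expand in $\epsilon$, kill the pure base terms by degree reasons, and bound the leading term $\epsilon\,\Pi^*(d\varphi \wedge \sigma_\tau) \wedge d\eta_\tau^0$ from below uniformly --- exactly the paper's computation. The gap is your final step. You correctly note that on $E|_{\uU'}$ the identity $dF_\tau^\epsilon \wedge \Theta_\tau^\epsilon \wedge d\Theta_\tau^\epsilon = \tfrac{1}{2} dF_\tau^\epsilon(V_\tau^\epsilon)\,(d\Theta_\tau^\epsilon)^2$ yields positivity only where $dF_\tau^\epsilon \ne 0$, and you then promise to show that $F_\tau^\epsilon$ has no critical points on $E|_{\uU'} \setminus E|_{\Crit(\varphi)}$; but you never give that argument (``uses the precise form of $f_\tau$ along its fiberwise-critical locus \ldots together with compactness of $X$'' is an assertion, not a proof), and the claim is not actually available. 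Lemma~\ref{lemma:asyetunspecified} builds $f_\tau$ by a partition of unity and gives no control over the \emph{horizontal} derivative of $f_\tau$ along its fiberwise critical locus. A critical point of $F_\tau^\epsilon$ in $E|_{\uU'}$ requires $df_\tau|_{VE} = 0$ together with $d\varphi \circ T\Pi = -\epsilon\, df_\tau$ in the horizontal directions; starting from a fiberwise critical point $p_0$ of $f_\tau|_{E_{z_0}}$ with $z_0 \in \Crit(\varphi)$, the implicit function theorem generically produces for each small $\epsilon > 0$ a genuine critical point of $F_\tau^\epsilon$ at distance $O(\epsilon)$ from $E_{z_0}$ but \emph{off} $E|_{\Crit(\varphi)}$ (it stays on that fiber only if the horizontal part of $df_\tau$ happens to vanish at $p_0$). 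At such points the $4$-form vanishes, so your strengthened claim cannot be proved in this generality.

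The resolution is that this step is unnecessary, and the paper does not attempt it: its proof verifies the displayed inequality by the $\epsilon$-expansion only on $E \setminus \left(\nN'(E\crit) \cup E|_{\uU'}\right)$, while on $E|_{\uU'}$ it records just the consequence of $J_\tau$-convexity that $\Theta_\tau^\epsilon$ is contact on the \emph{regular} level sets of $F_\tau^\epsilon$, i.e.\ the inequality at points where $dF_\tau^\epsilon \ne 0$. That weaker conclusion is all that is used later: the definition of the distributions $\xi_\tau^\epsilon$ and the application of Lemma~\ref{lemma:sufficientlyConvex} in Lemma~\ref{lemma:almostStein} only need $F_\tau^\epsilon$ to be $J$-convex near all of its critical points (which lie in $\nN'(E\crit) \cup E|_{\uU'}$, where your first claim provides convexity) together with convexity of the regular level sets. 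So you should drop the ``no critical points off $E|_{\Crit(\varphi)}$'' claim and instead state and use the conclusion on $E|_{\uU'}$ in this weaker form, which your own wedge identity already establishes.
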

\begin{proof}
Consider first the region $E|_{\uU'}$.  Here
$\Pi$ is $J_\tau$-$j_\tau$-holomorphic and $\beta \circ \Pi \equiv 1$, thus
$$
- d F_\tau^\epsilon \circ J_\tau = \Pi^*(-d\varphi \circ j_\tau) +
\epsilon (-d f_\tau \circ J_\tau) = \Pi^*\sigma_\tau + \epsilon \lambda_\tau =
\Theta_\tau^\epsilon.
$$
Proposition~\ref{prop:ThurstonStein} then implies that $F_\tau^\epsilon$
is $J_\tau$-convex on $E|_{\uU'}$ for $\epsilon > 0$ sufficiently small,
and consequently that $\Theta_\tau^\epsilon$ is
contact on all the regular level sets of $F_\tau^\epsilon$ in this region.

On $E \setminus \left(\nN'(E\crit) \cup E|_{\uU'}\right)$,
we compute
\begin{equation*}
\begin{split}
d F_\tau^\epsilon \wedge \Theta_\tau^\epsilon \wedge d\Theta_\tau^\epsilon &=
\left(\Pi^*d\varphi + \epsilon \, d\left[ (\beta \circ \Pi) f \right] \right)
\wedge
(\Pi^*\sigma_\tau + \epsilon \eta_\tau^\epsilon) \wedge
(\Pi^*d\sigma_\tau + \epsilon\, d\eta_\tau^\epsilon) \\
&= \epsilon \, \Pi^*(d\varphi \wedge \sigma_\tau) \wedge d\eta_\tau^\epsilon
+ \oO(\epsilon^2).
\end{split}
\end{equation*}
This is positive for all $\epsilon > 0$ sufficiently small since
outside of any neighborhood of $\Crit(\varphi)$ and of
$E\crit$ respectively, $d\varphi \wedge \sigma_\tau$ and
$d\eta_\tau^\epsilon|_{VE}$ can each be assumed to satisfy uniform positive
lower bounds; note that the latter depends on the fact that
$d\eta_\tau^\epsilon|_{VE} > 0$ holds even for $\epsilon=0$
(cf.~Lemma~\ref{lemma:fibLiouvilleEcrit}).
\end{proof}

In light of Lemma~\ref{lemma:fibLiouvilleEcrit}, the family of 
$2$-plane distributions $\xi_\tau^\epsilon$ can now be extended from
$\nN(E\crit)$ over the entirety of 
$E \setminus \left(E\crit \cup E|_{\Crit(\varphi)}\right)$ by setting
$$
\xi_\tau^\epsilon = \ker dF_\tau^\epsilon \cap \ker \Theta_\tau^\epsilon.
$$
Lemma~\ref{lemma:WeinsteinFamily} then implies that for all
$\epsilon > 0$ sufficiently small, $\xi_\tau^\epsilon$ defines a family of
contact structures on the level sets of~$F_\tau^\epsilon$.
Observe that by construction, $\xi_\tau^\epsilon$ is also preserved by
$J_\tau$ on the neighborhood $E|_{\uU'}$ of $E|_{\Crit(\varphi)} \cup \p_v E$,
and it is preserved by $J_\tau^\epsilon$ in $\nN(E\crit)$.

\begin{lemma}
\label{lemma:perturbedJ}
After possibly shrinking $\epsilon_0 > 0$,
the family $J_\tau^\epsilon$ defined near $E\crit$ in \S\ref{sec:step2} for 
$\epsilon \in [0,\epsilon_0]$ and $\tau \in X$
can be extended
to a family of global almost complex structures on $E$ that depend smoothly
on~$\epsilon$, preserve $\xi_\tau^\epsilon$, and satisfy $J_\tau^\epsilon = J_\tau$
in some fixed neighborhood of $E|_{\Crit(\varphi)} \cup \p_v E$ for all $\epsilon$
and $J_\tau^0 \equiv J_\tau$.  Moreover,
\begin{equation}
\label{eqn:Gtaueps}
- d F_\tau^\epsilon \circ J_\tau^\epsilon = G_\tau^\epsilon \Theta_\tau^\epsilon
\quad \text{ along } \quad \p_h E,
\end{equation}
for a (uniquely determined) family of functions 
$G_\tau^\epsilon : \p_h E \to (0,\infty)$ which depend smoothly on
$\epsilon$ and satisfy $G_\tau^0 \equiv 1$.
\end{lemma}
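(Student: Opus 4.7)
The plan is to assemble $J_\tau^\epsilon$ from three local specifications and interpolate using a partition of unity, exploiting that the space of orientation-compatible almost complex structures on~$TE$ preserving a fixed rank-$2$ distribution $D \subset TE$ is, at each point, a nonempty and contractible set---parametrized by a choice of compatible complex structure on~$D$ and another on any transverse rank-$2$ complement. To begin, introduce an intermediate open $\uU' \subset \uU'' \subset \uU$ with $\Crit(\varphi) \cup \p\Sigma \subset \uU''$ and $\widebar{\uU''} \subset \uU'$, and declare $J_\tau^\epsilon := J_\tau$ on $E|_{\uU''}$. Since $\Pi$ is $J_\tau$-$j_\tau$-holomorphic on $E|_\uU$, $\beta \circ \Pi \equiv 1$ on $\uU''$, and $\eta_\tau^\epsilon = \lambda_\tau$ on $E|_{\uU'}$ by the proof of Lemma~\ref{lemma:LiouvilleExtension}, a direct computation gives
\[
-dF_\tau^\epsilon \circ J_\tau \;=\; \Pi^*\sigma_\tau + \epsilon\lambda_\tau \;=\; \Theta_\tau^\epsilon \qquad \text{on } E|_{\uU''}.
\]
Hence \eqref{eqn:Gtaueps} holds there with $G_\tau^\epsilon \equiv 1$, and $J_\tau$ preserves $\xi_\tau^\epsilon = \ker dF_\tau^\epsilon \cap \ker \Theta_\tau^\epsilon$. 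Near $E\crit$, $J_\tau^\epsilon$ is already prescribed by \S\ref{sec:step2} and preserves $\xi_\tau^\epsilon$ by Lemma~\ref{lemma:fibLiouvilleEcrit}.

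In the intermediate region the only constraint is $\xi_\tau^\epsilon$-invariance. Since $\xi_\tau^0 = VE$ is automatically $J_\tau$-invariant and $\xi_\tau^\epsilon$ varies smoothly with~$\epsilon$, the space of such $J_\tau^\epsilon$ is nonempty and contractible at every point, depending smoothly on $(\tau,\epsilon)$. A partition of unity on $\Sigma$ subordinate to a cover matching the two prescribed pieces then yields a smooth $J_\tau^\epsilon$ on all of~$E$, with $J_\tau^0 \equiv J_\tau$ since no perturbation is required at $\epsilon = 0$.

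The subtler step is to enforce \eqref{eqn:Gtaueps} along the remainder of~$\p_h E$, namely where $\p_h E \not\subset E|_{\uU''}$. The formula is equivalent to the hyperplane-mapping condition $J_\tau^\epsilon(\ker dF_\tau^\epsilon) = \ker \Theta_\tau^\epsilon$ in $TE|_{\p_h E}$; given $\xi_\tau^\epsilon$-invariance, this is a further one-dimensional constraint on the action of $J_\tau^\epsilon$ in a direction transverse to $\xi_\tau^\epsilon$ inside $\ker dF_\tau^\epsilon$. At $\epsilon = 0$ this condition is satisfied by $J_\tau$ with $G_\tau^0 \equiv 1$, thanks to the hypothesis that $\Pi$ is $J_\tau$-holomorphic along all of~$\p_h E$. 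An inverse-function argument pointwise along~$\p_h E$, parametric in $(\tau,\epsilon)$, thus produces a smooth family $J_\tau^\epsilon|_{\p_h E}$ for small $\epsilon > 0$, with a contractible set of choices at each point. I would then extend into a collar neighborhood of~$\p_h E$ disjoint from $E|_{\uU''}$ and glue with the preceding construction via partition of unity. The function $G_\tau^\epsilon$ is uniquely determined by \eqref{eqn:Gtaueps}, is smooth in $\epsilon$, satisfies $G_\tau^0 \equiv 1$, and remains positive after shrinking $\epsilon_0$ by continuity.

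The main obstacle is this final step: simultaneously enforcing $\xi_\tau^\epsilon$-invariance and the hyperplane-mapping condition along~$\p_h E$ while remaining compatible with the prescribed values near $E\crit$ and on $E|_{\uU''}$. The assumption that $\Pi$ is $J_\tau$-holomorphic along~$\p_h E$ (and not merely on $E|_\uU$) is essential, as it provides the unperturbed $\epsilon = 0$ solution around which the required perturbation can be built.
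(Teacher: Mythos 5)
Your skeleton does match the paper's proof: keep $J_\tau$ over a neighborhood of $\Crit(\varphi)\cup\p\Sigma$, where holomorphicity of $\Pi$ gives $-dF_\tau^\epsilon\circ J_\tau=\Theta_\tau^\epsilon$ exactly; keep the structures from \S\ref{sec:step2} near $E\crit$; fill in the rest by $\xi_\tau^\epsilon$-preserving structures; then correct along $\p_h E$, using that holomorphicity along $\p_h E$ makes $J_\tau$ itself a solution at $\epsilon=0$. But two steps are asserted rather than proved, and the first would fail as literally stated: almost complex structures do not form a convex set, so they cannot be glued by a partition of unity. The device the paper uses is exactly the chart $\Phi_{J}(Y)$ on antilinear maps: for small $\epsilon$ all local candidates are $C^0$-close to $J_\tau$, so one writes them as $\Phi_{J_\tau^\epsilon}(Y_\tau^\epsilon)$ with $\xi_\tau^\epsilon$-preserving antilinear $Y_\tau^\epsilon$ and interpolates those; moreover the interior extension is not obtained from abstract pointwise contractibility but canonically, via a $J_\tau$-invariant metric splitting and the projection isomorphism $\xi_\tau^\epsilon\to VE$, which also yields $J_\tau^0\equiv J_\tau$ for free. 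Your appeal to contractibility could in principle replace this, but only through a relative section-extension argument that you would still have to set up; ``partition of unity'' does not do it.

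The second and more serious gap is the boundary step, which is where the actual content of the lemma lies. The ``inverse-function argument pointwise along $\p_h E$'' needs a nondegeneracy input that you never identify: one must exhibit a direction transverse to $\xi_\tau^\epsilon$ on which $J$ can be adjusted, by a perturbation that is small and smooth in $(\tau,\epsilon)$ down to $\epsilon=0$, so that $J(\ker dF_\tau^\epsilon)=\ker\Theta_\tau^\epsilon$, and one must do this compatibly with the unmodified $J_\tau$ near $E|_{\Crit(\varphi)}$, where at $\epsilon=0$ the two hyperplane fields degenerate. The paper's proof supplies precisely this: it takes the Liouville vector field $V_\tau^\epsilon$ dual to $\Theta_\tau^\epsilon$ (so $\Theta_\tau^\epsilon(V_\tau^\epsilon)=0$ automatically), proves $V_\tau^\epsilon\notin\xi_\tau^\epsilon$ away from $E|_{\Crit(\varphi)}$ by computing the limit $V_\tau^0=V_{\lambda_\tau}+V_{\sigma_\tau}^\#$ and noting its horizontal part is nonvanishing off $\Crit(\varphi)$, and then checks $-dF_\tau^0(J_\tau V_\tau^0)=0$ along $\p_h E$ using $T\Pi\circ J_\tau=j_\tau\circ T\Pi$ there; only after these computations is it legitimate to say that a small modification of $J_\tau^\epsilon$ on a complement of $\xi_\tau^\epsilon$ achieves $J_\tau^\epsilon V_\tau^\epsilon\in\ker dF_\tau^\epsilon$, which is equivalent to \eqref{eqn:Gtaueps} with $G_\tau^\epsilon>0$ and $G_\tau^0\equiv 1$. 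Finally, the splicing of this boundary-modified family with the interior one must be performed in the collar direction, away from $\p_h E$ itself, so that \eqref{eqn:Gtaueps} is not destroyed in the interpolation region; your proposal gestures at this but again relies on the invalid partition-of-unity gluing. So the approach is the right one, but the decisive verifications (transversality of $V_\tau^\epsilon$, the $\epsilon=0$ identity along $\p_h E$, and a legitimate splicing mechanism) are missing.
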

\begin{proof}
Pick an open set $E^{\reg} \subset E$ with closure disjoint from
$E\crit \cup E|_{\Crit(\varphi)} \cup \p_v E$ such that
$$
E = E^{\reg} \cup E|_{\uU'} \cup \nN(E\crit).
$$
Choose also a family $g_\tau$ of $J_\tau$-invariant Riemannian metrics on~$E$
and let $H_\tau E^{\reg} \subset TE^{\reg}$ denote the $g_\tau$-orthogonal complement
of~$VE|_{E^{\reg}}$.  By construction, $H_\tau E^{\reg}$ is $J_\tau$-invariant, and
since $\xi_\tau^0 = VE$, we are free to assume
$H_\tau E^{\reg} \pitchfork \xi_\tau^\epsilon$ whenever $\epsilon \ge 0$ is
sufficiently small.  Then the projections $TE^{\reg} \to VE$
along $H_\tau E$ restrict to a family of bundle isomorphisms 
$\Psi_\tau : \xi_\tau^\epsilon \to VE$, and
there is a unique family of almost complex structures 
$\widehat{J}_\tau^\epsilon$ on $E^{\reg}$ defined by the conditions
$$
\widehat{J}_\tau^\epsilon|_{\xi_\tau^\epsilon} = 
\Psi_\tau^*J_\tau|_{VE}, \qquad
\widehat{J}_\tau^\epsilon|_{H_\tau E} = J_\tau|_{H_\tau E}.
$$
These preserve $\xi_\tau^\epsilon$ and match $J_\tau$ for
$\epsilon=0$.

We next splice $\widehat{J}_\tau^\epsilon$ together with the existing
families $J_\tau^\epsilon$ on $\nN(E\crit)$ and $J_\tau^\epsilon := J_\tau$
on~$E|_{\uU'}$.  For any point $p \in E$ with
a complex structure $J$ on $T_p E$ and sufficiently small $J$-antilinear
map $Y : T_p E \to T_p E$, define
$$
\Phi_J(Y) = \left( \1 + \frac{1}{2} J Y \right) J \left( \1 + \frac{1}{2} J Y
\right)^{-1}.
$$
This identifies a neighborhood of~$0$ in the space of $J$-antilinear maps
on $T_p E$ with a neighborhood of~$J$ in the manifold of complex structures
on~$T_p E$, and moreover, if $J$ preserves some subspace $V \subset T_p E$,
then $\Phi_J(Y)$ also preserves $V$ if and only if $Y$ preserves~$V$.
On $E^{\reg} \cap \left(\nN(E\crit) \cup E|_{\uU'}\right)$, we may assume
for sufficiently small $\epsilon \ge 0$ that
$\widehat{J}_\tau^\epsilon$ and $J_\tau^\epsilon$ are each $C^0$-close
to $J_\tau$ and therefore also to each other, so there exists a family
of $\xi_\tau^\epsilon$-preserving $J_\tau^\epsilon$-antilinear bundle 
endomorphisms $Y_\tau^\epsilon$ such that
$$
\widehat{J}_\tau^\epsilon = \Phi_{J_\tau^\epsilon}\left(Y_\tau^\epsilon\right),
$$
and $Y_\tau^0 \equiv 0$.
Now for any choice of smooth function 
$\psi : E \to [0,1]$ 
that equals $1$ outside $E|_{\uU'} \cup \nN(E\crit)$ and has compact support in~$E^{\reg}$,
a family of almost complex structures satisfying most of the desired properties
can be defined by
\begin{equation}
\label{eqn:cutoffJ}
J_\tau^\epsilon :=
\begin{cases}
\widehat{J}_\tau^\epsilon & \text{ on $E^{\reg} \setminus \left(\nN(E\crit) \cup
E|_{\uU'} \right)$},\\
J_\tau^\epsilon & \text{ on $\left(\nN(E\crit) \cup E|_{\uU'}\right) \setminus
E^{\reg}$}, \\
\Phi_{J_\tau^\epsilon}\left(\psi Y_\tau^\epsilon\right) & \text{ on
$E^{\reg} \cap \left(\nN(E\crit) \cup E|_{\uU'} \right)$}.
\end{cases}
\end{equation}
Notice that on the region where $\Pi$ is holomorphic, $\beta = 1$ and $J_\tau^\epsilon = J_\tau$,
we have
$$
- d F_\tau^\epsilon \circ J_\tau^\epsilon = - d (\varphi \circ \Pi + \epsilon f_\tau) \circ J_\tau =
\Pi^*\sigma_\tau + \epsilon \lambda_\tau = \Theta_\tau^\epsilon.
$$
This applies in particular on a neighborhood of $E|_{\Crit(\varphi)} \cup \p_v E$,
so that \eqref{eqn:Gtaueps} is already established with $G_\tau^\epsilon = 1$
near $E|_{\Crit(\varphi)}$.  In order to achieve \eqref{eqn:Gtaueps} everywhere
else, we can modify the definition of $J_\tau^\epsilon$ near $\p_h E$ on
a subbundle transverse to~$\xi_\tau^\epsilon$.  Indeed, observe first that
on $\p_h E$, the relation $T\Pi \circ J_\tau = j_\tau \circ T\Pi$ implies
$-dF_\tau^0 \circ J_\tau^0 = \Pi^*\sigma_\tau$, and the latter is nowhere 
zero away from $E|_{\Crit(\varphi)}$, hence so are both $-d F_\tau^\epsilon \circ J_\tau^\epsilon|_{T(\p_h E)}$
and $\Theta_\tau^\epsilon$ for $\epsilon \ge 0$ sufficiently small. Our
goal will thus be to achieve
$$
\ker (-d F_\tau^\epsilon \circ J_\tau^\epsilon) = \ker \Theta_\tau^\epsilon \quad
\text{ along }\quad \p_h E.
$$
Since $\Theta_\tau^\epsilon$
and $-d F_\tau^\epsilon \circ J_\tau^\epsilon$ both annihilate $\xi_\tau^\epsilon$,
it suffices to find a $1$-dimensional subbundle of $TE|_{\p_h E}$ that
intersects $\xi_\tau^\epsilon$ trivially and is also annihilated by both.
For $\Theta_\tau^\epsilon$ there is a clear choice: this $1$-form is Liouville
for sufficiently small $\epsilon > 0$, so its dual Liouville vector field
$V_\tau^\epsilon$ satisfies
$$
\Theta_\tau^\epsilon(V_\tau^\epsilon) = d\Theta_\tau^\epsilon(V_\tau^\epsilon,V_\tau^\epsilon) = 0,
$$
and we will see presently that it is not contained in~$VE$, and therefore also not
in~$\xi_\tau^\epsilon$ for $\epsilon > 0$ small, outside a neighborhood of~$E|_{\Crit(\varphi)}$.
Indeed, working in a 
neighborhood of $\p_h E$ where $\Pi$ has no critical points and $\eta_\tau^\epsilon = \lambda_\tau$, let 
$H_\tau E \subset TE$
denote the $d\lambda_\tau$-symplectic complement of~$VE$.
With respect to this splitting, write $V_\tau^\epsilon = v_\tau^\epsilon + h_\tau^\epsilon$
for $v_\tau^\epsilon \in VE$ and $h_\tau^\epsilon \in H_\tau E$.
Writing $\Theta_\tau^\epsilon = \Pi^*\sigma_\tau + \epsilon \lambda_\tau$ and
restricting the relation $\Theta_\tau^\epsilon = d\Theta_\tau^\epsilon(V_\tau^\epsilon,\cdot)$ 
to the subbundles $VE$ and $H_\tau E$ then gives
$$
\left.\lambda_\tau\right|_{VE} = \left.d\lambda_\tau(v_\tau^\epsilon,\cdot)\right|_{VE}
\quad\text{ and }\quad
\left.(\Pi^*\sigma_\tau + \epsilon \lambda_\tau)\right|_{H_\tau E} =
\left.(\Pi^*d\sigma_\tau + \epsilon \, d\lambda_\tau)(h_\tau^\epsilon,\cdot)\right|_{H_\tau E}.
$$
The first relation identifies $v_\tau^\epsilon$ as the ``vertical Liouville
vector field'' $V_{\lambda_\tau}$, defined on the smooth part of each 
fiber $E_z$ as the Liouville vector field dual to $\lambda_\tau|_{TE_z}$.
In particular, the vertical term does not depend on~$\epsilon$.
The horizontal term $h_\tau^\epsilon$ also has a well-behaved limit 
as $\epsilon \to 0$, determined by
$$
\Pi^*\sigma_\tau|_{H_\tau E} = \Pi^*d\sigma_\tau(h_\tau^0,\cdot)|_{H_\tau E},
$$
which means $h_\tau^0$ is the horizontal lift $V_{\sigma_\tau}^\#$ of the 
Liouville vector field $V_{\sigma_\tau}$ on $\Sigma$, 
defined by $d\sigma_\tau(V_{\sigma_\tau},\cdot) = \sigma_\tau$.  
The latter is nowhere zero away from $\Crit(\varphi)$,
implying that
$$
V_\tau^0 := \lim_{\epsilon \to 0} V_\tau^\epsilon = V_{\lambda_\tau} + V_{\sigma_\tau}^\#
$$
always has a nontrivial horizontal part on the region of interest.  This establishes
the claim that $V_\tau^\epsilon \not\in \xi_\tau^\epsilon$ on this region for all
$\epsilon \ge 0$ sufficiently small.

Now observe that at $\p_h E$ for $\epsilon=0$,
$$
- d F_\tau^0(J_\tau V_\tau^0) = - d \varphi(\Pi_* J_\tau V_{\lambda_\tau} + \Pi_* J_\tau V_{\sigma_\tau}^\#)
= - d\varphi(j_\tau \Pi_*V_{\sigma_\tau}^\#) = \sigma_\tau(V_{\sigma_\tau}) =
d\sigma_\tau(V_{\sigma_\tau},V_{\sigma_\tau}) = 0,
$$
where we've again used the assumption that $T\Pi \circ J_\tau = j_\tau \circ T\Pi$
along~$\p_h E$.  In other words, on $\p_h E$ away from $E|_{\Crit(\varphi)}$, 
$V_\tau^0$ and $J_\tau V_\tau^0$ span a
$J_\tau$-complex subbundle of $TE$ that is transverse to $VE$
and intersects $\ker dF_\tau^0$ transversely in the subspace spanned
by~$J_\tau V_\tau^0$.  At $E|_{\Crit(\varphi)}$, the transversality fails
because $V_{\sigma_\tau}^\#$ vanishes, but since $-d F_\tau^\epsilon\circ J_\tau^\epsilon
= \Theta_\tau^\epsilon$ along $\p_h E$ in this region, we also have
$$
- d F_\tau^\epsilon(J_\tau^\epsilon V_\tau^\epsilon) =
\Theta_\tau^\epsilon(V_\tau^\epsilon) = 0
$$
here.  It is therefore possible to modify the family 
$J_\tau^\epsilon$ near $\p_h E$ without changing it near $E|_{\Crit(\varphi)}$
or changing its action on $\xi_\tau^\epsilon$ anywhere so that it satisfies
$$
J_\tau^\epsilon V_\tau^\epsilon \in \ker d F_\tau^\epsilon
$$
everywhere along $\p_h E$ for $\epsilon \ge 0$ sufficiently small.
This identifies the kernels of $-d F_\tau^\epsilon \circ J_\tau^\epsilon$
and $\Theta_\tau^\epsilon$ along $\p_h E$ and thus establishes
\eqref{eqn:Gtaueps} for a uniquely determined family of functions
$G_\tau^\epsilon : \p_h E \to (0,\infty)$ which necessarily equal $1$
near~$E|_{\Crit(\varphi)}$.  Since both families of $1$-forms match
$\Pi^*\sigma_\tau$ for $\epsilon=0$, we also have $G_\tau^0 \equiv 1$.
The modified family $J_\tau^\epsilon$ can now be spliced together with the
previously constructed family away from $\p_h E$ using the same trick
as in \eqref{eqn:cutoffJ}.
\end{proof}

\begin{lemma}
\label{lemma:almostStein}
After replacing $\varphi : \Sigma \to \RR$ by a function of the form
$h \circ \varphi$ with $h' > 0$ and $h'' \gg 0$,
the pairs $(J_\tau^\epsilon,F_\tau^\epsilon)$ become almost Stein structures
for all $\tau \in X$ and all $\epsilon > 0$ sufficiently small.
\end{lemma}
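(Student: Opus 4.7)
The plan is to verify that the pair $(J_\tau^\epsilon, F_\tau^\epsilon)$, with $\varphi$ replaced by $h\circ\varphi$, satisfies the two requirements of Definition~\ref{defn:almostStein}: global $J_\tau^\epsilon$-convexity of $F_\tau^\epsilon$, and the restriction of $-dF_\tau^\epsilon \circ J_\tau^\epsilon$ to each smooth face of $\p E$ being a contact form. The constructions of the preceding subsections already encode most of what is needed; the role of the reparametrization $\varphi \mapsto h\circ\varphi$ is to amplify the Levi form in the horizontal directions so that Lemma~\ref{lemma:sufficientlyConvex} applies globally.

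For $J_\tau^\epsilon$-convexity I will check the two hypotheses of Lemma~\ref{lemma:sufficientlyConvex}. Near $\Crit(F_\tau^\epsilon) \subset E\crit \cup E|_{\Crit(\varphi)}$, convexity is already in place: near $E\crit$ the cutoff $\beta$ vanishes, so $F_\tau^\epsilon = \varphi\circ\Pi$ is $J_\tau^\epsilon$-convex by Lemma~\ref{lemma:perturbedJcrit}; near $E|_{\Crit(\varphi)}$ we have $\beta\equiv 1$ and $J_\tau^\epsilon = J_\tau$, so Lemma~\ref{lemma:WeinsteinFamily} furnishes $J_\tau$-convexity. On the regular complement, Lemmas~\ref{lemma:WeinsteinFamily} and~\ref{lemma:perturbedJ} together show that every regular level set of $F_\tau^\epsilon$ is a $J_\tau^\epsilon$-convex hypersurface: the positive contact volume $dF_\tau^\epsilon\wedge\Theta_\tau^\epsilon\wedge d\Theta_\tau^\epsilon$ combines with the fact that $J_\tau^\epsilon$ preserves $\xi_\tau^\epsilon$ and that $d\Theta_\tau^\epsilon|_{\xi_\tau^\epsilon} = \epsilon\, d\eta_\tau^\epsilon|_{\xi_\tau^\epsilon}$ tames $J_\tau^\epsilon|_{\xi_\tau^\epsilon}$ (via the bundle isomorphism $\xi_\tau^\epsilon \cong VE$ from Lemma~\ref{lemma:perturbedJ} and fiberwise positivity of $d\eta_\tau^\epsilon$). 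Both hypotheses of Lemma~\ref{lemma:sufficientlyConvex} being satisfied, composing $\varphi$ with $h$ having $h'>0$ and $h''$ sufficiently large promotes level-set convexity to global $J_\tau^\epsilon$-convexity: the extra term proportional to $h''(\varphi)\,d\varphi\wedge\sigma_\tau$ in the Levi form dominates the bounded cross terms on vectors transverse to level sets, while on level-set-tangent vectors the existing positivity is preserved. Compactness of $X$ allows one such $h$ to be chosen uniformly in $\tau$.

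For the contact conditions on $\p E$, I will use Lemma~\ref{lemma:perturbedJ} together with the Thurston-trick results of \S\ref{sec:Thurston}. Along $\p_h E$, the identity $-dF_\tau^\epsilon \circ J_\tau^\epsilon = G_\tau^\epsilon\,\Theta_\tau^\epsilon$ with $G_\tau^\epsilon > 0$ reduces matters to showing that $\Theta_\tau^\epsilon|_{T(\p_h E)} = (\Pi^*\sigma_\tau + \epsilon\,\lambda_\tau)|_{T(\p_h E)}$ is contact, which follows from Proposition~\ref{prop:GirouxHorizontal} applied to $\Pi|_{\p_h E}$ (using $\lambda_\tau|_{V(\p_h E)} > 0$, $V(\p_h E)\subset\ker(d\lambda_\tau|_{T(\p_h E)})$, and $d\sigma_\tau > 0$) after a suitable rescaling. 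Along $\p_v E \subset E|_\uU$, holomorphicity of $\Pi$ and the equality $J_\tau^\epsilon = J_\tau$ give $-dF_\tau^\epsilon\circ J_\tau^\epsilon = h'(\varphi)\,\Pi^*\sigma_\tau + \epsilon\,\lambda_\tau$, whose restriction to $T(\p_v E)$ is contact by Proposition~\ref{prop:GirouxVertical} applied to $\Pi|_{\p_v E}\colon \p_v E \to \p\Sigma$, with $d\lambda_\tau$ positive on fibers and $\sigma_\tau|_{\p\Sigma}$ a positive volume form (since $\varphi$ is a regular constant level of $h\circ\varphi$ at $\p\Sigma$).

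The main obstacle is uniformity of constants: the various positive bounds depend continuously on $\tau$, and the smallness of $\epsilon$ required for the Thurston-trick contact conditions and for the perturbation step in Lemma~\ref{lemma:perturbedJ} both depend on the chosen $h$. The correct order of quantifiers is to first fix $h$ large enough so that Lemma~\ref{lemma:sufficientlyConvex} yields global $J_\tau^\epsilon$-convexity for every $\tau \in X$, and then shrink $\epsilon_0$ so that the contact conditions on $\p_h E$ and $\p_v E$ hold simultaneously for all $\tau$ and all $\epsilon \in (0,\epsilon_0]$; compactness of $X$ is what makes this possible.
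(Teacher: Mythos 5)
Your argument follows the paper's proof of this lemma essentially step for step: $J_\tau^\epsilon$-convexity near $E\crit$ and $E|_{\Crit(\varphi)}$ from Lemmas~\ref{lemma:perturbedJcrit} and~\ref{lemma:WeinsteinFamily}, $J_\tau^\epsilon$-convexity of the regular level sets (whose maximal complex subbundles are the contact structures $\xi_\tau^\epsilon$) promoted to global convexity via Lemma~\ref{lemma:sufficientlyConvex} and the reparametrization of $\varphi$, and the boundary contact conditions obtained from \eqref{eqn:Gtaueps} together with Propositions~\ref{prop:GirouxVertical} and~\ref{prop:GirouxHorizontal}. The only small imprecision is your claim that $d\Theta_\tau^\epsilon|_{\xi_\tau^\epsilon} = \epsilon\, d\eta_\tau^\epsilon|_{\xi_\tau^\epsilon}$: since $\xi_\tau^\epsilon \ne VE$ for $\epsilon>0$, the term $\Pi^*d\sigma_\tau$ contributes an $O(\epsilon^2)$ correction, which does not affect the taming conclusion for $\epsilon$ small.
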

\begin{proof}
The functions $F_\tau^\epsilon$ have critical points at $E\crit$ and
in $E|_{\Crit(\varphi)}$, but are $J_\tau^\epsilon$-convex near both
due to Lemmas~\ref{lemma:perturbedJcrit} and~\ref{lemma:WeinsteinFamily}.
Outside these neighborhoods, the maximal $J_\tau^\epsilon$-complex subbundles
on the level sets of $F_\tau^\epsilon$ are the contact 
structures~$\xi_\tau^\epsilon$, so $F_\tau^\epsilon$ becomes $J_\tau^\epsilon$-convex
after postcomposition with a sufficiently convex function, using
Lemma~\ref{lemma:sufficientlyConvex}.

It remains to check that $-d F_\tau^\epsilon \circ J_\tau^\epsilon$ restricts
to contact forms on both $\p_v E$ and~$\p_h E$.  The former lies in the
region where $-d F_\tau^\epsilon \circ J_\tau^\epsilon = \Theta_\tau^\epsilon
= \Pi^*\sigma_\tau + \epsilon \lambda_\tau$, and Proposition~\ref{prop:GirouxVertical}
proves that the latter is contact on $\p_v E$ for sufficiently small $\epsilon > 0$
since $\sigma_\tau|_{T(\p\Sigma)} > 0$ and $\lambda_\tau$ is fiberwise
Liouville.  Using Proposition~\ref{prop:GirouxHorizontal} similarly, 
$\Theta_\tau^\epsilon$ is also contact
on $\p_h E$ for small $\epsilon > 0$, so the contact condition on $\p_h E$
follows from \eqref{eqn:Gtaueps}.
\end{proof}

\subsection{Interpolation of almost Stein structures}
\label{sec:step4}

To complete the proof of Proposition~\ref{prop:parametricJ}, we need to relate
the family of almost Stein structures 
$\{(J_\tau^\epsilon,F_\tau^\epsilon)\}_{\tau \in X,\, \epsilon \in (0,\epsilon_0]}$
constructed above to the given family $\{(J_\tau,f_\tau)\}_{\tau \in A}$.
The functions $f_\tau$ where extended to all $\tau \in X$ in Lemma~\ref{lemma:asyetunspecified}
but are only \emph{fiberwise} $J_\tau$-convex in general for $\tau \not\in A$;
on the other hand, all conditions that distinguish $J_\tau$-convexity from its
fiberwise counterpart are open, thus we can assume $(J_\tau,f_\tau)$
are almost Stein structures for all $\tau$ in some open neighborhood
$A' \subset X$ of~$A$.  The same can also be assumed for 
$(J_\tau^\epsilon,f_\tau)$ for any $\epsilon \in [0,\epsilon_0]$ if
$\epsilon_0 > 0$ is sufficiently small.
Now choose a cutoff function $\rho : X \to [0,1]$
with support in $A'$ and $\rho|_A \equiv 1$, and consider the family
of interpolated functions
$$
f_\tau^\epsilon := \rho(\tau) f_\tau + [1 - \rho(\tau)] F_\tau^\epsilon
$$
for $\tau \in X$ and $\epsilon \in [0,\epsilon_0]$.  These functions are
$J_\tau^\epsilon$-convex everywhere when $\epsilon > 0$, but we still need
to check that the remaining conditions of an almost Stein structure are
satisfied for $\tau \in A' \setminus A$, i.e.~that the interpolated
Liouville forms
$$
-d f_\tau^\epsilon \circ J_\tau^\epsilon = \rho(\tau) 
\left( -d f_\tau \circ J_\tau^\epsilon\right) + [1-\rho(\tau)]
\left( -d F_\tau^\epsilon \circ J_\tau^\epsilon \right)
$$
are contact on both faces of~$\p E$.  After shrinking $\epsilon_0 > 0$
further if necessary, this will follow from the next two lemmas.

\begin{lemma}
\label{lemma:interpVertical}
For all $\tau \in A'$, $\epsilon > 0$ sufficiently small and $\rho \in [0,1]$,
the $1$-forms $\rho (-d f_\tau \circ J_\tau^\epsilon) + (1-\rho) (-d F_\tau^\epsilon \circ J_\tau^\epsilon)$
restrict to contact forms on~$\p_v E$.
\end{lemma}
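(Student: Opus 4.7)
The plan is to exploit the fact that near $\p_v E$ nothing has been perturbed and the relevant expressions simplify drastically, so that the contact condition reduces to a direct Thurston-type calculation of the same flavor as Proposition~\ref{prop:GirouxVertical}.

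First I would observe that the cutoff $\beta : \Sigma \to [0,1]$ constructed in \S\ref{sec:step3} is identically~$1$ on the neighborhood $\uU'$ of $\p\Sigma$, and that Lemma~\ref{lemma:perturbedJ} guarantees $J_\tau^\epsilon \equiv J_\tau$ on a fixed neighborhood of $\p_v E$. Combining this with the hypothesis $T\Pi \circ J_\tau = j_\tau \circ T\Pi$ on $E|_\uU$, one sees that near $\p_v E$
\begin{equation*}
-df_\tau \circ J_\tau^\epsilon = \lambda_\tau, \qquad
-dF_\tau^\epsilon \circ J_\tau^\epsilon = \Pi^*\sigma_\tau + \epsilon\,\lambda_\tau = \Theta_\tau^\epsilon,
\end{equation*}
so that the interpolated $1$-form in question takes the form $a\,\lambda_\tau + b\,\Pi^*\sigma_\tau$, where $a := \rho + (1-\rho)\epsilon > 0$ and $b := 1 - \rho \ge 0$ for every $\rho \in [0,1]$ and $\epsilon > 0$.

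Next I would verify the contact condition by expanding $(a\,\lambda_\tau + b\,\Pi^*\sigma_\tau) \wedge d(a\,\lambda_\tau + b\,\Pi^*\sigma_\tau)$ and restricting to $T\p_v E$. The crucial simplification is that $\p\Sigma$ is one-dimensional, so $\Pi^*d\sigma_\tau|_{T\p_v E} = 0$, and together with $\Pi^*\sigma_\tau \wedge \Pi^*\sigma_\tau = 0$ this kills the cross term and leaves only
\begin{equation*}
a^2\,\lambda_\tau \wedge d\lambda_\tau\big|_{T\p_v E} \; + \; ab\,\Pi^*\sigma_\tau \wedge d\lambda_\tau\big|_{T\p_v E}.
\end{equation*}
For $\tau \in A'$ the pair $(J_\tau,f_\tau)$ is almost Stein, so $\lambda_\tau|_{T\p_v E}$ is contact and the first term is strictly positive, which already handles the case $\rho = 1$ (where $b = 0$). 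For $\rho < 1$ the second term is also strictly positive: $\sigma_\tau|_{T\p\Sigma} > 0$ since $\varphi$ is a regular $j_\tau$-convex function constant on~$\p\Sigma$, making $\Pi^*\sigma_\tau$ positive on the fibers of $\Pi|_{\p_v E} : \p_v E \to \p\Sigma$, while $d\lambda_\tau$ is a positive area form on those same fibers (the pages) because $\lambda_\tau$ is fiberwise Liouville.

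I do not expect a serious obstacle here beyond carefully bookkeeping which pullbacks vanish along $\p_v E$; once $\Pi^*d\sigma_\tau|_{T\p_v E} = 0$ is noted, positivity is immediate and in fact independent of~$\epsilon$. The ``$\epsilon$ sufficiently small'' clause is inherited from the surrounding construction (it is needed, for instance, to ensure that $(J_\tau^\epsilon, f_\tau)$ remains almost Stein on~$A'$ and that the parallel estimates on $\p_h E$ handled by the following lemma go through), but it plays no essential role in the contact computation on~$\p_v E$ itself.
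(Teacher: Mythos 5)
Your proposal is correct and follows essentially the same route as the paper: identify the interpolated form near $\p_v E$ as $[\rho+\epsilon(1-\rho)]\lambda_\tau + (1-\rho)\Pi^*\sigma_\tau$ using $J_\tau^\epsilon = J_\tau$, $\beta\circ\Pi\equiv 1$ and the holomorphicity of $\Pi$ near $\p\Sigma$, then verify the contact condition via the Thurston-type positivity argument. The only cosmetic difference is that the paper factors out the positive constant and cites Proposition~\ref{prop:GirouxVertical}, whereas you expand the wedge product by hand, which is precisely the proof of that proposition.
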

\begin{proof}
In a neighborhood of $\p_v E$, we have $J_\tau^\epsilon = J_\tau$ and thus
$-d f_\tau \circ J_\tau^\epsilon = \lambda_\tau$, and similarly,
$F_\tau^\epsilon = \varphi \circ \Pi + \epsilon f_\tau$ and
$T\Pi \circ J_\tau = j_\tau \circ T\Pi$ imply $-d F_\tau^\epsilon \circ J_\tau^\epsilon
= \Pi^*\sigma_\tau + \epsilon \lambda_\tau$.  The $1$-form in question is thus
$$
\rho \lambda_\tau + (1-\rho)(\Pi^*\sigma_\tau + \epsilon \lambda_\tau) 
= [\rho + \epsilon(1-\rho)] \left( \lambda_\tau + \frac{1-\rho}{\rho + \epsilon(1-\rho)} \Pi^*\sigma_\tau\right),
$$
assuming $\epsilon > 0$ so that $\rho + \epsilon(1-\rho) > 0$ for all~$\rho$.
Since $\lambda_\tau$ is fiberwise Liouville and defines a contact form on $\p_v E$ and
$\sigma_\tau|_{T(\p\Sigma)} > 0$, the expression in parentheses is contact for
all $\rho \in [0,1]$ by Proposition~\ref{prop:GirouxVertical}.
\end{proof}

\begin{lemma}
\label{lemma:interpHorizontal}
The statement of Lemma~\ref{lemma:interpVertical} also holds for the restriction
to~$\p_h E$.
\end{lemma}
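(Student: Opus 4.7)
The plan is to reduce contactness on $\p_h E$ to the Thurston trick (Proposition~\ref{prop:GirouxHorizontal}), supplemented by a case-by-case error estimate that handles the $\epsilon$-dependence of $J_\tau^\epsilon$. First I would substitute the identities available on $\p_h E$. By Lemma~\ref{lemma:perturbedJ}, $-dF_\tau^\epsilon \circ J_\tau^\epsilon = G_\tau^\epsilon\,\Theta_\tau^\epsilon$ with $G_\tau^\epsilon > 0$ and $G_\tau^0 \equiv 1$, and since $\eta_\tau^\epsilon = \lambda_\tau$ near $\p_h E$ by Lemma~\ref{lemma:LiouvilleExtension}, this simplifies to $G_\tau^\epsilon(\Pi^*\sigma_\tau + \epsilon\lambda_\tau)$. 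The other term $\alpha_\tau^\epsilon := -df_\tau \circ J_\tau^\epsilon$ reduces to $\lambda_\tau$ at $\epsilon = 0$, so smooth dependence of $J_\tau^\epsilon$ on $\epsilon$ yields $\alpha_\tau^\epsilon = \lambda_\tau + \epsilon\tilde\delta_\tau^\epsilon$ with $\tilde\delta_\tau^\epsilon$ uniformly $C^\infty$-bounded in $\tau \in A'$ and $\epsilon \in [0,\epsilon_0]$. Setting $a := \rho + (1-\rho)G_\tau^\epsilon\epsilon$ and $b := (1-\rho)G_\tau^\epsilon$, the interpolated $1$-form on $\p_h E$ becomes
\[
\gamma_\tau^{\epsilon,\rho} \;=\; a\,\lambda_\tau \;+\; b\,\Pi^*\sigma_\tau \;+\; \epsilon\rho\,\tilde\delta_\tau^\epsilon .
\]

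Next I would analyze the ``main part'' $\eta := a\lambda_\tau + b\Pi^*\sigma_\tau$ via the Thurston trick. Using that both $d\lambda_\tau|_{T(\p_h E)}$ and $\Pi^*\sigma_\tau$ annihilate the vertical bundle $V(\p_h E)$, and that $\sigma_\tau \wedge d\sigma_\tau = 0$ for dimensional reasons on $\Sigma$, direct computation gives
\[
\eta \wedge d\eta \;=\; a^2\,\lambda_\tau\wedge d\lambda_\tau \;+\; ab\,\lambda_\tau\wedge \Pi^*d\sigma_\tau .
\]
Both summands are positive $3$-forms on $\p_h E$: the first because $\lambda_\tau$ is a Giroux form for $\tau\in A'$, the second because $\lambda_\tau$ is positive on vertical fibers and $d\sigma_\tau > 0$ on $\Sigma$. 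By continuity and compactness of $\overline{A'}$ and $\p_h E$, this yields pointwise lower bounds $c_1 a^2$ and $c_2 ab$ (relative to a fixed volume form) for uniform constants $c_1,c_2 > 0$, while the perturbation contributions $\eta\wedge d\mu + \mu\wedge d\eta + \mu\wedge d\mu$ with $\mu := \epsilon\rho\tilde\delta_\tau^\epsilon$ are bounded pointwise by $C\rho\epsilon$ for some uniform $C > 0$.

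The main obstacle is that this lower bound $c_1 a^2 + c_2 ab$ degenerates as $\rho \to 0$ and $\epsilon\to 0$ simultaneously, so I would split into two regimes. For $\rho \ge 1/2$ we have $a \ge 1/2$, producing the fixed lower bound $c_1/4$ for $\eta \wedge d\eta$, which dominates the $O(\epsilon)$ perturbation for all sufficiently small $\epsilon$. For $\rho \le 1/2$, the bounds $b \ge G_\tau^\epsilon/2$ and $a \ge (1-\rho)G_\tau^\epsilon\epsilon \ge G_\tau^\epsilon\epsilon/2$ (valid for $\epsilon$ small enough that $G_\tau^\epsilon \ge 1/2$) give $ab \ge \kappa\epsilon$ for a uniform $\kappa > 0$, and combined with $a^2 \ge \rho^2$, positivity of $\gamma\wedge d\gamma$ reduces to the quadratic inequality
\[
c_1 \rho^2 \;-\; C\rho\epsilon \;+\; c_2\kappa\epsilon \;>\; 0 \qquad \text{for all }\rho \in [0,1/2].
\]
Its discriminant in $\rho$ is $C^2\epsilon^2 - 4c_1 c_2\kappa\epsilon$, which is negative for all sufficiently small $\epsilon > 0$, so the quadratic is uniformly positive in $\rho$. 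Shrinking $\epsilon_0$ to satisfy both regimes simultaneously completes the proof.
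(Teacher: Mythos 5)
Your proposal follows essentially the same route as the paper's proof: the same reduction via Lemma~\ref{lemma:perturbedJ} to $G_\tau^\epsilon(\Pi^*\sigma_\tau+\epsilon\lambda_\tau)$, the same first-order expansion of $-df_\tau\circ J_\tau^\epsilon$ about $\lambda_\tau$, and the same wedge computation driven by $\Pi^*\sigma_\tau\wedge d\lambda_\tau|_{T(\p_h E)}=0$ and $\lambda_\tau\wedge\Pi^*d\sigma_\tau>0$; the only structural difference is that you make the smallness quantitative (two regimes plus a discriminant estimate covering all of $[0,1]\times(0,\epsilon_1]$ at once), whereas the paper argues qualitatively near $(\rho,\epsilon)=(0,0)$ and handles the rest by openness.

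One computational slip needs repair. Your coefficients $a=\rho+(1-\rho)G_\tau^\epsilon\epsilon$ and $b=(1-\rho)G_\tau^\epsilon$ are \emph{functions} on $\p_h E$, not constants, so the terms $da\wedge\lambda_\tau$ and $db\wedge\Pi^*\sigma_\tau$ do not drop out; the correct identity is
\[
\eta\wedge d\eta \;=\; a^2\,\lambda_\tau\wedge d\lambda_\tau \;+\; ab\,\lambda_\tau\wedge\Pi^*d\sigma_\tau \;-\; \rho(1-\rho)\,\lambda_\tau\wedge\Pi^*\sigma_\tau\wedge dG_\tau^\epsilon .
\]
This does not sink the argument: since $G_\tau^0\equiv 1$ and the family depends smoothly on $\epsilon$ with $\tau$ in a compact family, $\|dG_\tau^\epsilon\|=O(\epsilon)$ uniformly, so the extra term is $O(\rho\epsilon)$ and is absorbed into the same $C\rho\epsilon$ error your two-regime estimate already dominates. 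Alternatively, you can avoid the issue entirely by the paper's normalization: factor $G_\tau^\epsilon$ out front and expand $\tfrac{1}{G_\tau^\epsilon}(-df_\tau\circ J_\tau^\epsilon)=\lambda_\tau+\epsilon\gamma_\tau^\epsilon$, so the interpolated form is a positive function times a $1$-form with genuinely constant coefficients, and contactness of the two is equivalent. Finally, your uniform constant $c_1$ for $\lambda_\tau\wedge d\lambda_\tau$ requires positivity on $\overline{A'}$, which is not automatic from contactness on the open set $A'$; this is fixed by shrinking $A'$ so that its closure lies in the region where $(J_\tau,f_\tau)$ is almost Stein.
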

\begin{proof}
Near $\p_h E$, Lemma~\ref{lemma:perturbedJ} gives $-d F_\tau^\epsilon \circ 
J_\tau^\epsilon = G_\tau^\epsilon \Theta_\tau^\epsilon$ for a family of
functions $G_\tau^\epsilon : \p_h E \to (0,\infty)$ satisfying 
$G_\tau^0 \equiv 1$, while the $1$-form $\Theta_\tau^\epsilon =
\Pi^*\sigma_\tau + \epsilon \lambda_\tau$ is contact for $\epsilon > 0$
sufficiently small due to Prop.~\ref{prop:GirouxHorizontal}.  
For $\epsilon=0$, the interpolated $1$-forms in question are thus
$$
\rho \lambda_\tau + (1-\rho) \Pi^*\sigma_\tau = \rho \left( \lambda_\tau +
\frac{1-\rho}{\rho} \Pi^*\sigma_\tau \right)
$$
along $\p_h E$, and these are contact for all $\rho > 0$ by another application
of Prop.~\ref{prop:GirouxHorizontal} since $\sigma_\tau$ is
Liouville and $\lambda_\tau|_{T(\p_h E)}$ satisfies the conditions of a Giroux 
form.  The contact condition ceases to hold for $\rho=\epsilon=0$, but since the
condition is open, the lemma will follow from the claim that
$\rho (-d f_\tau \circ J_\tau^\epsilon) + (1-\rho) (-d F_\tau^\epsilon \circ J_\tau^\epsilon)$
restricted to $\p_h E$ is contact for every $(\rho,\epsilon)$ in a neighborhood
of $(0,0)$ excluding~$(0,0)$ itself.  To see this, note first that since
$\frac{1}{G_\tau^0} (-d f_\tau \circ J_\tau^0) = \lambda_\tau$ and everything
depends smoothly on $\epsilon$, we can write
$$
\frac{1}{G_\tau^\epsilon} (-d f_\tau \circ J_\tau^\epsilon) = \lambda_\tau +
\epsilon \gamma_\tau^\epsilon
$$
for some family of smooth $1$-forms 
$\{\gamma_\tau^\epsilon\}_{\tau \in X,\, \epsilon \in [0,\epsilon_0]}$.
Our family of interpolated $1$-forms on $\p_h E$ can then be rewritten whenever
$(\rho,\epsilon) \ne (0,0)$ as
\begin{equation*}
\begin{split}
\rho\left( -d f_\tau \circ J_\tau^\epsilon\right) + (1-\rho) \left( - d F_\tau^\epsilon
\circ J_\tau^\epsilon \right) &= \rho G_\tau^\epsilon \left( \lambda_\tau +
\epsilon \gamma_\tau^\epsilon \right) + (1-\rho) G_\tau^\epsilon \left(
\Pi^*\sigma_\tau + \epsilon \lambda_\tau \right) \\
&= G_\tau^\epsilon \left( c \,\Pi^*\sigma_\tau + a \lambda_\tau + \epsilon \rho \gamma_\tau^\epsilon \right)
=: G_\tau^\epsilon \mu,
\end{split}
\end{equation*}
where we are abbreviating $c = c(\rho) := 1-\rho$ and $a = a(\rho,\epsilon) :=
\rho + \epsilon(1-\rho)$.  Notice that while $c(\rho)$ approaches~$1$, 
$a(\rho,\epsilon)$ and $\epsilon \rho / a(\rho,\epsilon)$ each decay to $0$ as
$(\rho,\epsilon) \to (0,0)$.
Since $\Pi^*\sigma_\tau \wedge d\lambda_\tau = 0$ and
$\lambda_\tau \wedge \Pi^*d\sigma_\tau > 0$ by the fiberwise Giroux condition
on~$\lambda_\tau$, we then find that
\begin{equation*}
\begin{split}
\mu \wedge d\mu &= \left(c\, \Pi^*\sigma_\tau + a \lambda_\tau +
\epsilon \rho \gamma_\tau^\epsilon \right) \wedge \left( c\, \Pi^*d\sigma_\tau +
a\, d\lambda_\tau + \epsilon \rho \, d\gamma_\tau^\epsilon \right) \\
&= a c \left[ \lambda_\tau \wedge \Pi^*d\sigma_\tau + \frac{a}{c} \lambda_\tau
\wedge d\lambda_\tau + \frac{\epsilon \rho}{c} \left( \lambda_\tau \wedge d\gamma_\tau^\epsilon
+ \gamma_\tau^\epsilon \wedge d\lambda_\tau + \frac{\epsilon \rho}{a}
\gamma_\tau^\epsilon \wedge d\gamma_\tau^\epsilon \right) \right]
\end{split}
\end{equation*}
is positive as soon as $(\rho,\epsilon)$ gets close enough to~$(0,0)$.
\end{proof}

With this, the pairs $(J_\tau^\epsilon,f_\tau^\epsilon)$ for all $\tau \in X$
and $\epsilon > 0$ sufficiently small are seen to be
almost Stein structures that match $(J_\tau,f_\tau)$ for $\tau \in A$,
so the proof of Proposition~\ref{prop:parametricJ} (and therefore also of
Theorem~\ref{thm:SteinHomotopy}) is now complete.

\section{A symplectic model of a collar neighborhood with corners}
\label{sec:model}

Throughout this section, assume $(M',\xi)$ is a
closed connected contact $3$-manifold, and $M \subset M'$ is a
compact connected $3$-dimensional submanifold $M \subset M'$, possibly 
with boundary, on which $\xi$ is supported by a spinal open book
$$
\boldsymbol{\pi} := \Big(\pi\spine : M\spine \to \Sigma,
\pi\paper : M\paper \to S^1, \{m_T\}_{T \subset \p M}\Big).
$$
The immediate purpose of this section is to construct a precise symplectic 
model of a collar neighborhood of the form $(-\epsilon,0] \times M'$ in
the symplectization of $(M',\xi)$, designed such that spine removal cobordisms
can be defined via an easy modification of the model.  The intuition for
the construction comes from the neighborhood of $\p E$ when $\Pi : E \to \Sigma$
is a bordered Lefschetz fibration that fills a spinal open book---however,
it will not be necessary to assume in the following that $(M',\xi)$ is 
symplectically fillable, as we will
instead make use of the trivial observation that \emph{every} closed
contact manifold arises as the convex boundary of a noncompact subset of
its own symplectization. The model we construct
will thus be a noncompact $4$-manifold
$E'$ whose boundary has two smooth faces 
$$
\p E' = \p_v E' \cup \p_h E',
$$
interpreted as the vertical and horizontal boundaries respectively of a
(locally defined) symplectic fibration, such that the smoothed contact boundary
of $E'$ can be identified with~$(M',\xi)$.
We will elaborate further on this
model in \cite{LisiVanhornWendl2} by attaching cylindrical ends to both
its fibers and its base, producing the so-called \emph{double completion}
of~$E'$, which will admit an abundance of holomorphic curves modeled after
the pages of~$\boldsymbol{\pi}$.  These curves generate the moduli space
needed for classifying fillings as in Theorems~\ref{thma:main} 
and~\ref{thma:quasi}.

\subsection{The Liouville collar}
\label{sec:LiouvilleCollar}

\subsubsection{The collar and its boundary}
\label{sec:collarBoundary}

We shall denote the union of the paper with the ``rest'' of $M'$ by
$$
M\paper' := M\paper \cup (M' \setminus M) = M' \setminus \mathring{M}\spine \subset M',
$$
hence $M'$ is the union of $M\spine$ with $M\paper'$ along their common
boundary $\p M\spine = \p M\paper'$, a disjoint union of $2$-tori.
Recall from \S\ref{sec:coordinates} the collar neighborhoods
$\nN(\p \Sigma)$, $\nN(\p M\spine)$ and $\nN(\p M\paper)$ with their
coordinate systems $(s,\phi)$, $(s,\phi,\theta)$ and $(\phi,t,\theta)$
respectively.  We will denote by
$$
\nN(\p M) \subset M\paper
$$
the neighborhood of $\p M$ in~$M$ defined as the union of all components
of $\nN(\p M\paper)$ that touch~$\p M$.  Similarly, the union of
components of $\nN(\p M\paper)$ that are disjoint from~$\p M$ will be denoted
by
$$
\nN(\p M\paper') \subset M\paper',
$$
as this forms a collar neighborhood of
$\p M\paper'$ in~$M\paper'$.  For assistance in keeping track of this
notation, see Figure~\ref{fig:Mprime}.

\begin{figure}
\psfrag{MP}{$M\paper$}
\psfrag{MSigma}{$M\spine$}
\psfrag{M'minusM}{$M' \setminus M$}
\psfrag{N(pM)}{$\nN(\p M)$}
\psfrag{N(pMP')}{$\nN(\p M\paper')$}
\psfrag{pM}{$\p M$}
\psfrag{M}{$M$}
\psfrag{MP'}{$M\paper'$}
\includegraphics{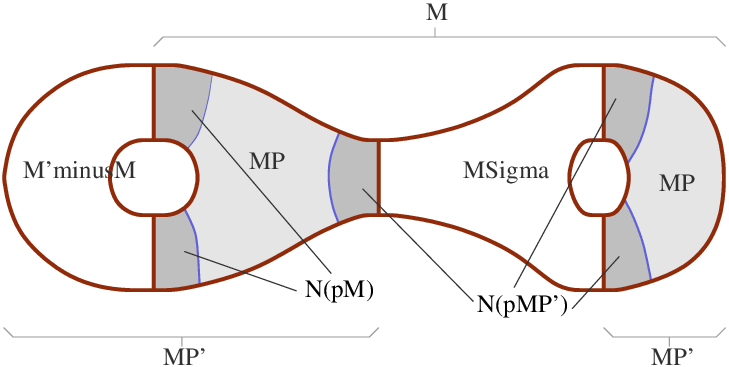}
\caption{\label{fig:Mprime} 
A schematic picture of the closed manifold $M'$ with compact subdomains
$M = M\paper \cup M\spine \subset M'$ and $M\paper' \subset M'$, together with
the various collar neighborhoods
$\nN(\p M)$ and $\nN(\p M\paper')$ of~$\p M\paper$.}
\end{figure}

Now since $\p M\spine = \p M\paper'$, we can use the collars
$\nN(\p M\spine) = (-1,0] \times \p M\spine$ and
$\nN(\p M\paper') = (-1,0] \times \p M\paper'$ to define a diffeomorphism
\begin{equation*}
\begin{split}
\Phi : (-1,0] \times \nN(\p M\spine) &\to (-1,0] \times \nN(\p M\paper') \\
\left( t , (s,x) \right) &\mapsto \left(s , (t,x) \right),
\end{split}
\end{equation*}
and then use this as a gluing map to define (see Figure~\ref{fig:corners})
$$
E' := \big( (-1,0] \times M\spine \big) \cup_\Phi
\big( (-1,0] \times M\paper' \big),
$$
along with the distinguished subdomain
$$
E := \big( (-1,0] \times M\spine \big) \cup_\Phi
\big( (-1,0] \times M\paper \big) \subset E'.
$$
This construction makes $E'$ and $E$ into smooth noncompact $4$-manifolds 
with boundary and codimension~$2$ corners.  The boundary of $E'$ consists of 
two smooth faces
$$
\p E' = \p_v E' \cup \p_h E'
$$
defined as follows:
\begin{itemize}
\item The \defin{vertical boundary} $\p_v E'$ is $\{0\} \times M\paper'$,
so it is a copy of~$M\paper'$.  We will denote the resulting collar
neighborhood of the vertical boundary by
$$
\nN(\p_v E') := (-1,0] \times M\paper' \subset E',
$$
and denote the coordinate on the first factor by~$s$.
We will also want to consider the distinguished subset
$$
\p_v E := \{0\} \times M\paper \subset \p_v E'
$$
and the corresponding collar
$$
\nN(\p_v E) := (-1,0] \times M\paper \subset
\nN(\p_v E'),
$$
which are the same as $\p_v E'$ and $\nN(\p_v E')$ respectively 
if $\p M = \emptyset$.
\item The \defin{horizontal boundary} $\p_h E'$ is $\{0\} \times M\spine$,
a copy of~$M\spine$, and it can also be denoted by $\p_h E := \p_h E'$
since it lies in the subdomain~$E$.  The resulting collar neighborhood of 
this face will be denoted by
$$
\nN(\p_h E) := \nN(\p_h E') := (-1,0] \times M\spine \subset E,
$$
with the coordinate on the first factor denoted by~$t$.
\end{itemize}
Notice that $\p_v E' \cup \p_h E'$ is naturally homeomorphic to~$M'$, 
and similarly $\p_v E \cup \p_h E$ is homoemorphic to~$M$, in both cases by
a homeomorphism that identifies the corner 
$\p_v E \cap \p_h E = \p_v E' \cap \p_h E'$ with
$\p M\spine = \p M\paper' = M\spine \cap M\paper$.  Each connected 
component of the neighborhood 
$$
\nN(\p_v E \cap \p_h E) := \nN(\p_v E) \cap \nN(\p_h E) \subset E
$$
of this corner carries coordinates
$$
(s,\phi,t,\theta) \in (-1,0] \times S^1 \times (-1,0] \times S^1 \subset
\nN(\p_v E \cap \p_h E),
$$
as the construction of the gluing map guarantees that each of these
coordinates is unambiguously defined.  We assign to $E'$ and $E$
the orientation determined by this coordinate system.  A similar coordinate
system exists on each connected component of
$$
\nN(\p\p_v E) := (-1,0] \times \nN(\p M) \subset \nN(\p_v E) \subset E.
$$

\begin{figure}
\psfrag{...}{$\ldots$}
\psfrag{phE}{$\p_h E$}
\psfrag{-1}{$-1$}
\psfrag{s}{$s$}
\psfrag{t}{$t$}
\psfrag{E'minusE}{$E' \setminus E$}
\psfrag{N(ppvE)}{$\nN(\p\p_v E)$}
\psfrag{N(phE)}{$\nN(\p_h E)$}
\psfrag{N(pvEintphE)}{$\nN(\p_v E \cap \p_h E)$}
\psfrag{N(pvE)}{$\nN(\p_v E)$}
\psfrag{pvE}{$\p_v E$}
\psfrag{pvE'}{$\p_v E'$}
\psfrag{pM}{$\p M$}
\psfrag{MPintMS}{$M\spine \cap M\paper$}
\psfrag{N(pM)}{$\nN(\p M)$}
\psfrag{N(pMP')}{$\nN(\p M\paper')$}
\psfrag{N(MSigma)}{$\nN(\p M\spine)$}
\psfrag{Sigma}{$\Sigma$}
\psfrag{pSigma}{$\p\Sigma$}
\psfrag{N(pSigma)}{$\nN(\p\Sigma)$}
\psfrag{Pih}{$\Pi_h$}
\includegraphics{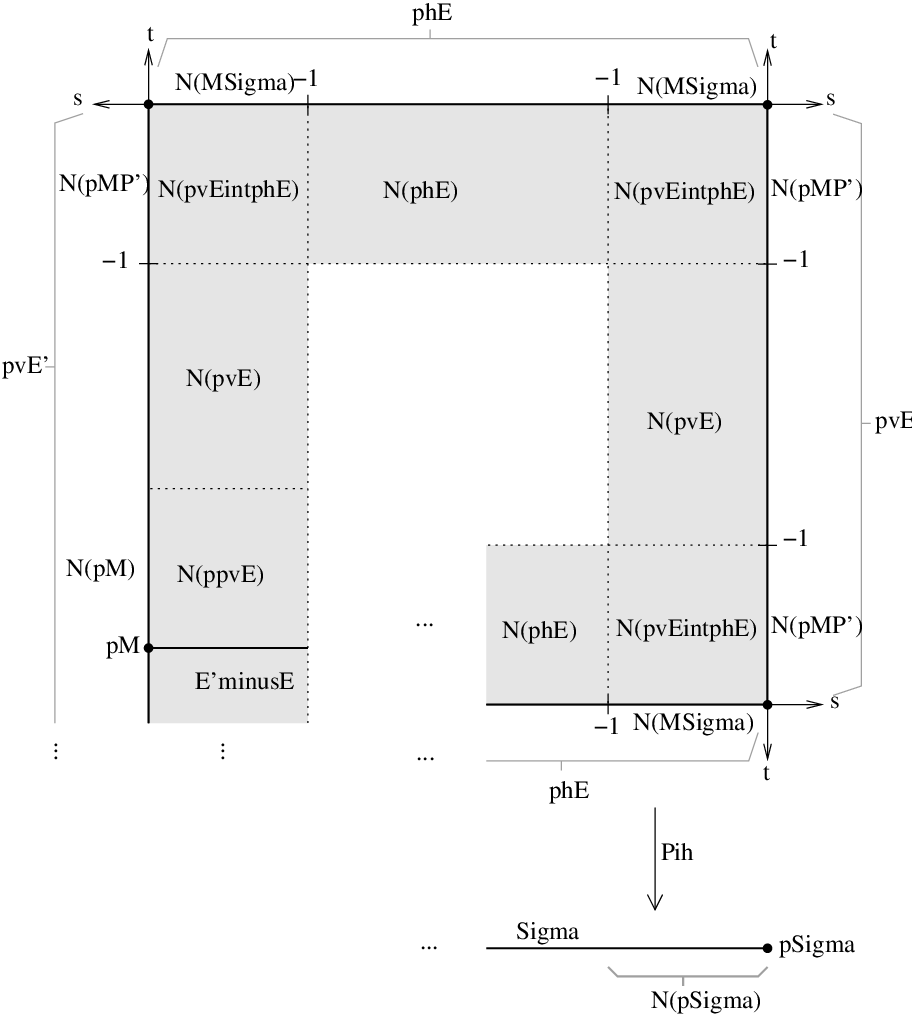}
\caption{\label{fig:corners}
The domain $E'$ with its boundary faces and collar neighborhoods, shown
together with a portion of the fibration $\Pi_h : \nN(\p_h E) \to \Sigma$.
In this example, $M\paper$ contains at least two connected components,
one (shown at the right) that touches two separate spinal components but
not the boundary, and
another (at the left) that does touch~$\p M$.}
\end{figure}

On the collars $\nN(\p_h E)$ and $\nN(\p_v E)$,
one can separately define fibrations
$$
\Pi_h : \nN(\p_h E) = (-1,0] \times (\Sigma \times S^1) \to \Sigma : 
\big(t,(z,\theta)\big) \mapsto \pi\spine(z,\theta) = z,
$$
and
$$
\Pi_v : \nN(\p_v E) = (-1,0] \times M\paper 
\to (-1,0] \times S^1 : (s,x) \mapsto (s,\pi\paper(x)).
$$
On the region where the domains of these two fibrations overlap, we can
write them in $(s,\phi,t,\theta)$-coordinates as
\begin{equation}
\label{eqn:fibrationCorner}
\Pi_h(s,\phi,t,\theta) = (s,\phi), \qquad
\Pi_v(s,\phi,t,\theta) = (s,m\phi).
\end{equation}
While it may not be true in general that $\Pi_h$ and $\Pi_v$
can be fit together to define a global fibration on~$E$, they have the same
fibers on the region of overlap and thus give rise to
a well-defined \defin{vertical subbundle}
$$
V E := \ker T\Pi_h \text{ or } \ker T\Pi_v \subset T E,
$$
which on $\nN(\p_h E)$ is spanned by the vector fields
$\p_t$ and~$\p_\theta$.  Figure~\ref{fig:corners} has been drawn so that
the fibers can be represented as vertical lines in the picture.

\subsubsection{The Liouville structure on~$E'$}
\label{sec:LiouvilleData}

We will use the fibrations $\Pi_v$ and $\Pi_h$ to construct
a Liouville structure on $E$ via the Thurston trick
as in \S\ref{sec:Thurston}, and then
extend it to~$E'$ using the given contact structure on~$M'$.

Fix a Liouville form $\sigma$ on $\Sigma$ that takes the form
$$
\sigma = m e^s \, d\phi \quad \text{ on $\nN(\p\Sigma)$},
$$
where $m \in \NN$ is the \emph{multiplicity} of $\pi\paper : M\paper \to S^1$
at its boundary component adjacent to the relevant component of
$\nN(\p M\spine)$; recall that this number may differ on
distinct connected components of $\nN(\p\Sigma)$,
cf.~\S\ref{sec:coordinates}.  
We will also use $\sigma$ to denote the pullback of this Liouville form
under the trivial bundle projection 
$\Pi_h : \nN(\p_h E) \to \Sigma$, and since 
$\pi\paper(\phi,t,\theta) = m\phi$ on $\nN(\p M\paper)$, $\sigma$ extends 
globally to a $1$-form on $E$ satisfying
$$
\sigma = e^s \, d\pi\paper \quad \text{ on $\nN(\p_v E)$},
$$
where we are abusing notation slightly by using $\pi\paper : 
\nN(\p_v E) \to S^1$ to denote the composition of the 
fibration $\pi\paper : M\paper \to S^1$ with the obvious projection
$\nN(\p_v E) = (-1,0] \times M\paper \to M\paper$,
hence defining $d\pi\paper$ as a real-valued $1$-form on~$\nN(\p_v E)$.

We next define a $1$-form on~$E$ that can be regarded as a \emph{fiberwise
Liouville} structure with respect to the fibrations $\Pi_h$ and~$\Pi_v$.
By Lemma~\ref{lemma:fiberLiouville}, there exists
a $1$-form $\lambda$ on $M\paper$ such that $d\lambda$ is
positive on all fibers of $\pi\paper : M\paper \to S^1$ and
$$
\lambda = e^t \, d\theta \quad \text{ on $\nN(\p M\paper)$}.
$$
Using the same symbol to denote the pullback of $\lambda$ via the 
projection $\nN(\p_v E) = (-1,0] \times M\paper \to
M\paper$, we can then extend $\lambda$ to a global $1$-form 
on $E$ satisfying
$$
\lambda = e^t \, d\theta \quad \text{ on $\nN(\p_h E)$}.
$$
It is fiberwise Liouville in the sense that $d\lambda|_{V E} > 0$
everywhere on~$E$, and since $\lambda|_{T(\p_h E)} = d\theta$, 
the boundaries of the fibers of $\Pi_h$ are positive with respect
to $\lambda$ and are annihilated by $d\lambda|_{T(\p_h E)}$.

We can now apply the Thurston trick: for any constant $K \ge 0$, we define a
$1$-form $\lambda_K$ by
$$
\lambda_K := K\sigma + \lambda.
$$
Corollary~\ref{cor:ThurstonLiouville} in conjunction with
Remark~\ref{remark:Enoncompact} below then provides a constant $K_0 > 0$ such that
$d\lambda_K$ is symplectic everywhere on~$E$ for each $K \ge K_0$.  Near
the boundary, we have
\begin{equation}
\label{eqn:lambdaKboundary}
\lambda_K = K\, \sigma + e^t \, d\theta
\text{ on $\nN(\p_h E)$},\qquad\text{ and }\qquad
\lambda_K = K e^s\, d\pi\paper + \lambda
\text{ on $\nN(\p_v E)$},
\end{equation}
so in particular
\begin{equation}
\label{eqn:lambdaKcorner}
\lambda_K = K m e^s\, d\phi + e^t\, d\theta
\quad \text{ on $\nN(\p_v E \cup \p_h E) \cup \nN(\p\p_v E)$}.
\end{equation}

\begin{remark}
\label{remark:Enoncompact}
The noncompactness of $E$ does not pose any problem in the above use of the
Thurston trick: the reason is that if we fix on $E$ any Riemannian
metric that is independent of the $s$- and/or $t$-coordinates wherever
these are defined, then $|d\lambda \wedge d\lambda|$ is bounded above
and $d\sigma \wedge d\lambda$ is bounded away from zero.  This observation
will be even more useful when we discuss the double completion
in~\cite{LisiVanhornWendl2}.
\end{remark}

We will always assume from 
now on that $K \ge K_0$ so that $d\lambda_K$ is symplectic,
and we will occasionally require further increases in the
value of $K_0$ for convenience.  There is now a Liouville vector 
field $V_K$ on $(E,d\lambda_K)$ defined via the condition
$$
d\lambda_K(V_K,\cdot) \equiv \lambda_K.
$$
From \eqref{eqn:lambdaKboundary} we compute
\begin{equation}
\label{eqn:LiouvilleFormula}
V_K = V_\sigma + \p_t \quad \text{ on $\nN(\p_h E)$},
\end{equation}
where $V_\sigma$ denotes the Liouville vector field on $\Sigma$
dual to~$\sigma$, and from \eqref{eqn:lambdaKcorner}, 
$$
V_K = \p_s + \p_t \quad \text{ on $\nN(\p_v E \cap \p_h E) \cup \nN(\p\p_v E)$}.
$$

\begin{lemma}
\label{lemma:LiouvilleTransverse}
For all $K > 0$ sufficiently large, $ds(V_K) > 0$ on $\nN(\p_v E)$.
\end{lemma}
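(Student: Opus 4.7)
The plan is to compute the $\p_s$-component of $V_K$ explicitly enough to see that it equals $1 + O(1/K)$ uniformly on the collar, with a correction term independent of $K$. On $\nN(\p_v E) = (-1,0]\times M\paper$, recall from \eqref{eqn:lambdaKboundary} that $\lambda_K = Ke^s\,d\pi\paper + \lambda$; since both $d\pi\paper$ and $\lambda$ are pulled back from $M\paper$, they contain no $ds$-term, so
$$d\lambda_K = Ke^s\, ds\wedge d\pi\paper + d\lambda.$$
Write $V_K = a\,\p_s + W$, where $W$ is tangent to the slices $\{s\}\times M\paper$, and compare both sides of $\iota_{V_K}d\lambda_K = \lambda_K$.

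The $ds$-component of the equation gives $-Ke^s\, d\pi\paper(W) = 0$, so $W$ lies in $\ker d\pi\paper$, i.e.\ $W$ is tangent to the pages of $\pi\paper$. Evaluating the remaining equation on an arbitrary fiber-tangent vector then yields $d\lambda(W,\cdot) = \lambda(\cdot)$ on each page, which together with page-tangency identifies $W$ uniquely, on each page $P = \pi\paper^{-1}(\mathrm{pt})$, as the fiberwise Liouville vector field $V_\lambda^{\mathrm{fib}}$ dual to $\lambda|_P$. In particular, $W$ is smooth on $M\paper$ and independent of both $s$ and $K$, hence globally bounded by compactness of $M\paper$.

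Finally, pick a global vector field $Y_h$ on $M\paper$ with $d\pi\paper(Y_h)\equiv 1$ — such a $Y_h$ exists via any connection on the submersion $\pi\paper : M\paper \to S^1$, the one-dimensional base making this trivial. Contracting $\iota_{V_K}d\lambda_K = \lambda_K$ with $Y_h$ gives
$$aKe^s + d\lambda(V_\lambda^{\mathrm{fib}},Y_h) = Ke^s + \lambda(Y_h),$$
and hence
$$ds(V_K) = a = 1 + \frac{e^{-s}\bigl(\lambda(Y_h) - d\lambda(V_\lambda^{\mathrm{fib}},Y_h)\bigr)}{K}.$$
Since $e^{-s}$ is bounded on $(-1,0]$ and the numerator is bounded on the compact $M\paper$, the correction is $O(1/K)$ uniformly on $\nN(\p_v E)$, so $ds(V_K)>0$ once $K_0$ is taken large enough. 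There is no serious obstacle — the only point worth verifying is that $V_\lambda^{\mathrm{fib}}$ is well-defined and bounded up to $\p M\paper$, which follows immediately from $\lambda = e^t\,d\theta$ near $\p M\paper$ (giving $V_\lambda^{\mathrm{fib}} = \p_t$); this also confirms consistency with \eqref{eqn:lambdaKcorner}, where $Y_h = \p_\phi/m$ makes the correction vanish identically and recovers $V_K = \p_s + \p_t$ on $\nN(\p\p_v E)$.
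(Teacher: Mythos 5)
Your proof is correct, but it takes a different route from the paper's. The paper argues in two lines: the condition $ds(V_K)>0$ on $\nN(\p_v E) = (-1,0]\times M\paper$ is equivalent to the restriction of $\lambda_K = Ke^s\,d\pi\paper + \lambda$ to each slice $\{s\}\times M\paper$ being a positive contact form, and that contactness is exactly an instance of the vertical Thurston trick (Proposition~\ref{prop:GirouxVertical}), applied uniformly in $s$ because $Ke^s \ge Ke^{-1}$ on the collar. You instead decompose $V_K = a\,\p_s + W$ and solve the defining equation $\iota_{V_K}d\lambda_K = \lambda_K$ componentwise: the $ds$-part forces $W$ to be page-tangent, the page-tangent part identifies $W$ as the fiberwise Liouville field $V^{\mathrm{fib}}_\lambda$ (well defined since $d\lambda$ is nondegenerate on pages, and equal to $\p_t$ in the collar coordinates, consistent with \eqref{eqn:lambdaKcorner}), and contracting with a lift $Y_h$ of the base vector field gives the exact formula $a = 1 + e^{-s}\bigl(\lambda(Y_h) - d\lambda(V^{\mathrm{fib}}_\lambda,Y_h)\bigr)/K$, whose correction term is uniformly $O(1/K)$ by compactness of $M\paper$ and boundedness of $e^{-s}$ on $(-1,0]$. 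What the paper's argument buys is brevity and reuse of an already-established lemma; what yours buys is an explicit description of $V_K$ (namely $V_K = a\,\p_s + V^{\mathrm{fib}}_\lambda$ with $a = 1 + O(1/K)$), which makes the transversality quantitative rather than just existential. Both treatments handle the needed uniformity in $s$, and your boundary check near $\p M\paper$ is the right thing to verify.
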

\begin{proof}
It is equivalent to show that the restriction of 
$\lambda_K = K e^s \, d\pi\paper + \lambda$ to 
$\{s\} \times M\paper$ for each $s \in (-1,0]$ is a positive contact form.
Since $e^s \, d\pi\paper$ is the pullback via $\pi\paper : M\paper \to S^1$ 
of a volume form on~$S^1$ for each fixed $s \in [-1,0]$, the result follows
from Proposition~\ref{prop:GirouxVertical}.
\end{proof}

In light of the lemma, we shall assume from now on that $K_0 > 0$ is large 
enough to ensure $ds(V_K) > 0$ for all $K \ge K_0$.
Before extending $\lambda_K$ to the rest of $E'$, we must make a minor
adjustment in the neighborhood of $(-1,0] \times \p M \subset \nN(\p_v E)$.

\begin{lemma}
\label{lemma:bndryAdjustment}
There exists a smooth homotopy of Liouville forms 
$\{\lambda_K^\tau\}_{\tau \in [0,1]}$ on~$E$ with the following properties:
\begin{enumerate}
\item $\lambda_K^0 \equiv \lambda_K$;
\item The restrictions of $\lambda_K^\tau$ to $T(\p_v E)$ are identical
for every $\tau \in [0,1]$;
\item $\lambda_K^\tau \equiv \lambda_K$ outside a small open neighborhood
of $\nN(\p \p_v E)$ for all $\tau \in [0,1]$;
\item For each $\tau \in [0,1]$, the Liouville vector 
field $V_K^\tau$ determined by $\lambda_K^\tau$ satisfies $ds(V_K^\tau) > 0$
on~$\nN(\p_v E)$;
\item $\lambda_K^1 = e^s \left( K m \, d\phi + e^t \, d\theta \right)$
near $(-1,0] \times \p M \subset \nN(\p_v E)$,
where $m \in \NN$ is the multiplicity of $\pi\paper$ at the relevant
component of~$\p M$.
\end{enumerate}
\end{lemma}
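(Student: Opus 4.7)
The plan is to modify $\lambda_K$ only on a neighborhood of $(-1,0] \times \p M$ inside $\nN(\p_v E)$, by rescaling the fiberwise Liouville summand $\lambda$ by an $s$-dependent factor. Recall that on $\nN(\p_v E) = (-1,0] \times M\paper$ we have $\lambda_K = Ke^s\, d\pi\paper + \lambda$; in the collar coordinates $(s,\phi,t,\theta)$ near $(-1,0] \times \p M$ this reads $Kme^s\, d\phi + e^t\, d\theta$, while the target reads $e^s(Km\, d\phi + e^t\, d\theta) = Ke^s\, d\pi\paper + e^s \lambda$. So the task is to interpolate, in a localized way, between the factor $1$ and the factor $e^s$ multiplying $\lambda$.

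Pick a smooth cutoff $\chi : M\paper \to [0,1]$ supported in $\nN(\p M)$, identically~$1$ on a smaller neighborhood of $\p M$, depending only on $t$ within $\nN(\p M)$, and with $\chi' \ge 0$. Define on $\nN(\p_v E)$
\[
  \lambda_K^\tau := Ke^s\, d\pi\paper + h_\tau \lambda, \qquad h_\tau(s,x) := \exp\bigl(\tau\, s\, \chi(x)\bigr),
\]
and extend by $\lambda_K^\tau = \lambda_K$ on the rest of $E$. Properties~(1), (2), (3), and~(5) are then immediate: $h_0 \equiv 1$ gives $\lambda_K^0 = \lambda_K$; $h_\tau(0,x) \equiv 1$ makes $\lambda_K^\tau|_{s=0}$ independent of $\tau$, hence so is its restriction to $T(\p_v E)$; $h_\tau \equiv 1$ off the support of $\chi$ provides the smooth extension; and where $\chi \equiv 1$, $h_1 = e^s$ gives precisely $\lambda_K^1 = e^s(Km\, d\phi + e^t\, d\theta)$.

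The content of the proof is then verifying~(4): that each $d\lambda_K^\tau$ is symplectic and that $ds(V_K^\tau) > 0$. Outside the support of $\chi$ both are inherited from $\lambda_K$ via Lemma~\ref{lemma:LiouvilleTransverse}. On the support of $\chi$, a direct computation in the collar coordinates yields
\[
  (d\lambda_K^\tau)^2 = 2Km\, h_\tau\, e^{s+t}\bigl(1 + \tau s \chi'(t)\bigr)\, ds \wedge d\phi \wedge dt \wedge d\theta,
\]
while solving $\iota_{V_K^\tau} d\lambda_K^\tau = \lambda_K^\tau$ gives
\[
  V_K^\tau = \p_s + \frac{1 - \tau\chi}{1 + \tau s\chi'(t)}\,\p_t,
\]
so that $ds(V_K^\tau) \equiv 1$ whenever the vector field is well defined. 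Both conditions thus reduce to the pointwise inequality $1 + \tau s \chi'(t) > 0$.

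The principal obstacle is securing this inequality uniformly in $\tau \in [0,1]$ and $s \in (-1,0]$. Since $s \le 0$ and $\chi' \ge 0$, the inequality reads $\tau|s|\chi'(t) < 1$; because $\int \chi'\, dt = 1$ across the transition interval, no smooth cutoff supported in a $t$-interval of length $\le 1$ can satisfy $\chi' < 1$ everywhere, so the inequality cannot hold when $|s|$ reaches~$1$. The standard remedy, implicit throughout this section, is to shrink the $s$-collar $\nN(\p M\spine)$ to $(-\epsilon_0, 0] \times \p M\spine$ for $\epsilon_0 > 0$ sufficiently small once the cutoff $\chi$ has been fixed; this is a free choice that does not affect any earlier construction, and once $\epsilon_0 \cdot \max|\chi'| < 1$ the positivity holds throughout. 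With this choice all five conclusions of the lemma follow.
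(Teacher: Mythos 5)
Your interpolation $\lambda_K^\tau = Ke^s\,d\pi\paper + e^{\tau s\chi}\lambda$ and the computation reducing both the Liouville condition and $ds(V_K^\tau)>0$ to the pointwise inequality $1+\tau s\chi'(t)>0$ are correct, and you have located the genuine difficulty; the gap is in your remedy. Shrinking ``the $s$-collar $\nN(\p M\spine)$'' does not address it: over the components $\nN(\p M)$ adjacent to $\p M$ --- exactly where conclusion~(5) lives --- the coordinate $s$ is not the collar parameter of $\nN(\p M\spine)$ but the first factor of $\nN(\p_v E') = (-1,0] \times M\paper'$, which is hard-wired into the definition of $E'$; shrinking its range means shrinking the model itself, not adjusting an auxiliary choice. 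Moreover, conclusions (2), (4), (5) are asserted on the fixed collars, and the full range $s \in (-1,0]$ is genuinely used afterwards: the subsequent replacement of $\lambda_K$ by $e^s\left[f(t)\,d\theta + Kmg(t)\,d\phi\right]$, the extension $\lambda_K = e^s\alpha'$ over $(-1,0] \times (M'\setminus M)$, and the hypersurface $M^-$ (which passes through $s=-3/4$) all need the conformal factor $e^s$ near $\p M$ for every $s \in (-1,0]$, so a version of the lemma valid only for $s \in (-\epsilon_0,0]$ is too weak for the construction it feeds. Finally, no cleverer cutoff rescues the ansatz on the collar as given: since $\chi$ must vanish to all orders at the inner end of $\nN(\p M)$ and equal $1$ for $t \ge t_1$ with some $t_1 < 0$, the logarithm $t + s\chi(t)$ of the $d\theta$-coefficient of $\lambda_K^1$ changes by at most $s + t_1 + 1$ across the collar; for $s < -(1+t_1)$ this is negative, so $1 + s\chi' < 0$ somewhere and $d\lambda_K^1$ is degenerate there. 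The obstruction is intrinsic to any interpolation that keeps the $Ke^s\,d\pi\paper$ summand fixed and works inside a $t$-interval of length less than~$1$.

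The viable repairs go in the opposite direction: lengthen the $t$-collar rather than shorten the $s$-range. If you arrange (as one may, by taking the collars of \S\ref{sec:coordinates} and Lemma~\ref{lemma:fiberLiouville} slightly larger at the outset) that the coordinates and the normal form $\lambda = e^t\,d\theta$ persist for $t \in (-1-\epsilon,0]$, then $\chi$ can make its transition over a $t$-interval of length greater than~$1$ with $\chi' \le 1$, the inequality $1 + \tau s \chi' > 0$ holds for all $s \in (-1,0]$, and your argument closes with all five conclusions on the original collars. The paper's proof is of a different nature but exploits the same extra room: it expands the collar to $\widehat{\nN}(\p M)$ with $t \in (-1-\epsilon,0]$ and sets $\lambda_K^\tau := \Psi_\tau^*\lambda_K$ for an isotopy of embeddings generated by a vector field equal to $V_K = \p_s + \p_t$ near $\p M$ and to $\p_s$ deep in the expanded collar; pullbacks of a Liouville form are automatically Liouville, flowing along $V_K$ produces the factor $e^s$ for free, and $ds(V_K^\tau) > 0$ follows because $ds$ of the generating field is kept close to~$1$. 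Your explicit formula is more elementary where it applies, but as written the proof has a hole at precisely the step you flagged.
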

\begin{proof}
Working in $(\phi,t,\theta)$-coordinates on a connected component of 
$\nN(\p M)$, let $\widehat{\nN}(\p M) \subset M$ denote a slightly expanded 
collar neighborhood in which the $t$-coordinate takes values in 
$(-1-\epsilon,0]$ for some $\epsilon > 0$ small.  
Let us similarly extend the
$s$-coordinate to the interval $(-1-\delta,0]$ and consider the expanded
domain
$$
\widehat{\nN}(\p\p_v E) := (-1-\delta,0] \times \widehat{\nN}(\p M)
$$
for some $\delta > 0$ small enough so that $\lambda_K = K m e^s\, d\phi + \lambda$
is still a Liouville form on this domain and its Liouville vector field
$V_K$ is still transverse to all hypersurfaces of the form
$\{s=\operatorname{const}\}$.  Notice that in the region $\{t \ge -1\}
\subset \widehat{\nN}(\p\p_v E)$, we have $\lambda_K = K m e^s\, d\phi + e^t\, d\theta$
and thus $V_K = \p_s + \p_t$.  Now if $\epsilon > 0$ is sufficiently small, 
we can assume that the flow $\Phi^\rho_{V_K}$ of $V_K$ in $\widehat{\nN}(\p\p_v E)$
for times
$\rho \in [-1,0]$ is well defined on the small collar
$$
\{ t\ge  -\epsilon/2 \} \subset \nN(\p M) \subset \p_v E.
$$
Choose a smooth vector field $V$ on $\widehat{\nN}(\p\p_v E)$ with the
following properties:
\begin{enumerate}
\item $V \equiv V_K$ throughout $\nN(\p\p_v E)$ and also in
the region obtained by flowing
$\{ t \ge -\epsilon/2 \} \subset \nN(\p M)$ backwards from time $0$
to time~$-1$;
\item $ds(V) > 0$ is close to~$1$ everywhere;
\item $V \equiv \p_s$ in a neighborhood of $\{t = -1 - \epsilon$\}.
\end{enumerate}
Using the flow $\Phi^\bullet_V$ of $V$, define the embedding
(see Figure~\ref{fig:adjustment})
$$
(-1,0] \times S^1 \times (-1-\epsilon,0] \times S^1 
\stackrel{\Psi}{\hookrightarrow}
\widehat{\nN}(\p\p_v E) : (s,\phi,t,\theta) \mapsto \Phi^s_{V}(\phi,t,\theta).
$$
Identifying the domain of $\Psi$ with the
obvious collar neighborhood in $\nN(\p_v E)$, this map equals the identity
near $\{t=-1-\epsilon\}$ and at $\{s=0\}$, and by deforming the vector field
$V$ we can also find a smooth isotopy
of embeddings $\{\Psi_\tau\}_{\tau \in [0,1]}$ with both of these properties
such that $\Psi_1 = \Psi$ and $\Psi_0 = \Id$.  The desired family of
Liouville forms can then be defined on this collar by
$$
\lambda_K^\tau = \Psi_\tau^*\lambda_K
$$
and extended to the rest of $E$ as~$\lambda_K$.
In particular, we have $\lambda_K^1 = e^s \left( K m\, d\phi + e^t \, d\theta\right)$
for $t \ge -\epsilon/2$ since $\Psi$ redefines the $s$-coordinate via the 
flow of the Liouville vector field.  Since $V_K = \p_s + \p_t$ on
$\nN(\p\p_v E)$, the condition $ds(V_K^\tau) > 0$ is
easily achieved as long as $\delta$ and $\epsilon$ are both sufficiently small.
\end{proof}

\begin{figure}
\psfrag{...}{$\ldots$}
\psfrag{-1}{$-1$}
\psfrag{s}{$s$}
\psfrag{t}{$t$}
\psfrag{N(ppvE)}{$\nN(\p\p_v E)$}
\psfrag{N(pvE)}{$\nN(\p_v E)$}
\psfrag{pM}{$\p M$}
\psfrag{pE}{$\p E$}
\psfrag{-eps/2}{$-\epsilon/2$}
\psfrag{-1-eps}{$-1 - \epsilon$}
\psfrag{-1-delta}{$-1 - \delta$}
\psfrag{N(pM)}{$\nN(\p M)$}
\psfrag{E'minusE}{$E' \setminus E$}
\psfrag{Psi}{$\Psi$}
\includegraphics{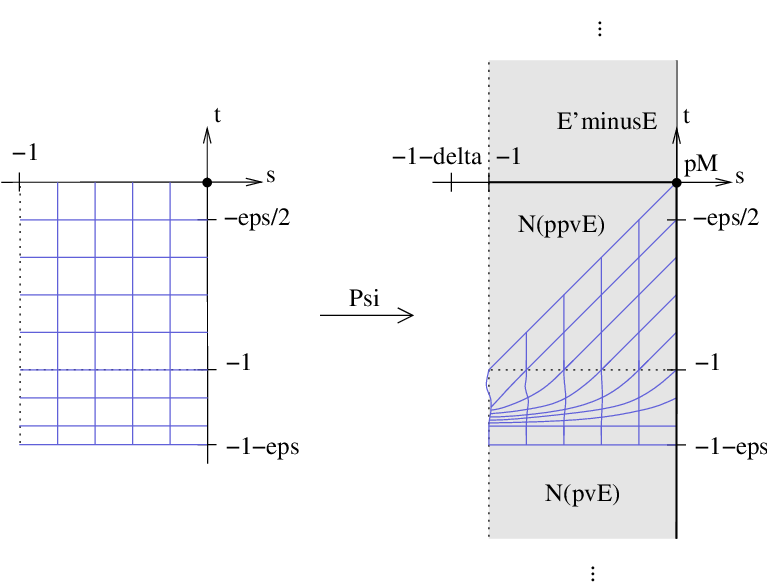}
\caption{\label{fig:adjustment}
The embedding $\Psi$ in the proof of Lemma~\ref{lemma:bndryAdjustment}.}
\end{figure}

Let us now replace $\lambda_K$ with $\lambda_K^1$ from the lemma, so as
to assume without loss of generality that
$\lambda_K = e^s \left( K m\, d\phi + e^t \, d\theta\right)$ on
$(-1,0] \times S^1 \times (-\delta,0] \times S^1 \subset \nN(\p \p_v E)$
for some $\delta > 0$.  We then make one further modification on the same
region and redefine $\lambda_K$ in the form
$$
\lambda_K = e^s \left[ f(t)\, d\theta + K m g(t)\, d\phi \right],
$$
where $f, g : (-\delta,0] \to [0,\infty)$ are smooth functions chosen such that
(see Figure~\ref{fig:fgBoundary})
\begin{itemize}
\item $(f(t),g(t)) = (e^t,1)$ for $t$ near~$-\delta$;
\item $f' g - f g' > 0$;
\item $f(0) = 1$ and $g(0) = 0$;
\item $f'(0) = 0$.
\end{itemize}
\begin{figure}
\psfrag{f}{$f$}
\psfrag{g}{$g$}
\psfrag{t=-delta}{$t=-\delta$}
\psfrag{t=0}{$t=0$}
\includegraphics{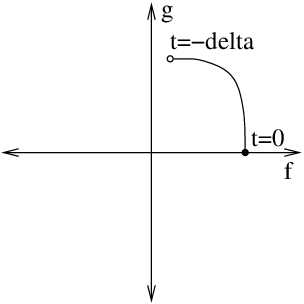}
\caption{\label{fig:fgBoundary}
The path $t \mapsto (f(t),g(t))$ for $-\delta < t \le 0$.}
\end{figure}
These conditions guarantee that $\alpha' := f(t) \, d\theta + K m g(t)\, d\phi$ 
defines a positive contact form on $S^1 \times (-\delta,0] \times S^1 
\subset \nN(\p M)$ satisfying $\alpha'(\p_\phi) = 0$ and 
$d\alpha'(\p_\theta,\cdot) = 0$ at~$\p M$.  One can now extend
$\alpha'$ smoothly beyond $\p M$ so that it defines a
contact form for $\xi$ on $M' \setminus M$.  The corresponding extension
of $\lambda_K$ is defined by
$$
\lambda_K := e^s \alpha' \quad \text{ on $(-1,0] \times (M' \setminus M)
\subset \nN(\p_v E')$}.
$$
The corresponding Liouville vector field on $(-1,0] \times (M' \setminus M)$
is simply~$\p_s$.

\subsubsection{Contact hypersurfaces and smoothing corners}
\label{sec:corners}

It is immediate from the above constructions that the Liouville vector field
$V_K$ is transverse to both $\p_v E'$ and $\p_h E'$, so smoothing the corners
makes $\p E'$ into a contact hypersurface.  Moreover, the fiberwise
Liouville condition on $\lambda_K$ and the specific way that it was
modified in $\nN(\p \p_v E)$ mean that the induced contact structure
on the smoothing of $\p_h E \cup \p_v E$ will be isotopic to one
supported by $\boldsymbol{\pi}$, hence the contact structure on $\p E'$
is isotopic to~$\xi$ after identifying the latter with~$M'$.  

To define the smoothing more precisely,
choose a pair of smooth functions
$F , G : (-1,1) \to (-1,0]$ that satisfy the following conditions:
\begin{itemize}
\item $(F(\rho),G(\rho)) = (\rho,0)$ for $\rho \le - 1 / 4$;
\item $(F(\rho),G(\rho)) = (0,-\rho)$ for $\rho \ge 1 / 4$;
\item $G'(\rho) < 0$ for $\rho > - 1 / 4$;
\item $F'(\rho) > 0$ for $\rho < 1 / 4$.
\end{itemize}
Now let
$$
M^0 \subset E'
$$
denote the smooth hypersurface obtained from $\p E'$ by replacing 
$\p E' \cap \nN(\p_v E \cap \p_h E)$ in $(s,\phi,t,\theta)$-coordinates with
\begin{equation}
\label{eqn:smoothing}
\left\{ (F(\rho),\phi, G(\rho),\theta)\ \Big|\ 
\phi,\theta \in S^1,\ -1 < \rho < 1 \right\};
\end{equation}
see Figure~\ref{fig:cornersSmoothed}.
This smoothing is transverse to $V_K = \p_s + \p_t$ by construction,
thus $M^0$ is a contact hypersurface and inherits the contact structure
$$
\xi_0 := \ker \alpha_0, \qquad \alpha_0 := \lambda_K|_{TM^0}.
$$

\begin{figure}
\psfrag{-1/4}{$-1/4$}
\psfrag{-3/4}{$-3/4$}
\psfrag{M0}{$M^0$}
\psfrag{M-}{$M^-$}
\psfrag{s}{$s$}
\psfrag{t}{$t$}
\psfrag{E'minusE}{$E' \setminus E$}
\psfrag{...}{$\ldots$}
\psfrag{pM0}{$\p M^0$}
\includegraphics{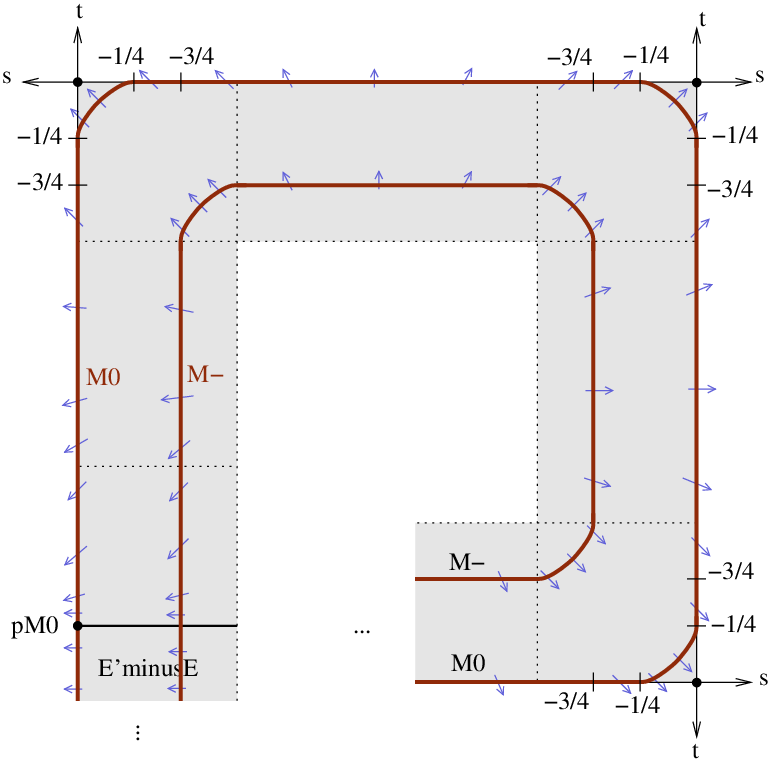}
\caption{\label{fig:cornersSmoothed}
The smoothed hypersurfaces $M^0$ and $M^-$ sitting inside the same model of
$E'$ as shown in Figure~\ref{fig:corners}, together with the
transverse Liouville vector field~$V_K$.}
\end{figure}

By translating $M^0$ a distance of $-3/4$ in both the $s$- and $t$-coordinates,
one obtains another contact hypersurface
$$
(M^-,\xi_-) \subset (E',d\lambda_K)
$$
which contains portions of the two hypersurfaces
$\{-3/4\} \times M\spine \subset \nN(\p_h E)$ and
$\{-3/4\} \times M\paper' \subset \nN(\p_v E')$ and a translated copy of
\eqref{eqn:smoothing} replacing the neighborhood of their intersection
(see the inner hypersurface in Figure~\ref{fig:cornersSmoothed}).
Since $(M^-,\xi_-)$ and $(M^0,\xi_0)$ can evidently be connected by a smooth
$1$-parameter family of contact hypersurfaces in $(E',d\lambda_K)$,
their contact structures are isotopic, so in particular $\xi_-$ is isotopic
to $\xi$ after a suitable identification of $M^-$ with~$M'$.

\subsection{Spine removal cobordisms}
\label{sec:easySpineRemoval}

In this section we use the model $(E',d\lambda_K)$ with contact hypersurfaces
$(M^-,\xi_-)$ and $(M^0,\xi_0)$ constructed in \S\ref{sec:LiouvilleCollar}
to prove Theorem~\ref{thm:spineRemoval}.  In particular, we will enlarge
$E'$ in order to construct
a symplectic spine removal cobordism whose negative weakly contact boundary 
is the contact hypersurface $(M^-,\xi_-)$.

Fix a decomposition of $\Sigma$ into open and closed subsets
$$
\Sigma = \Sigma\remove \amalg \Sigma\other,
$$
and assume $\Sigma\remove$ is nonempty.
Fix also a trivialization $M\spine = \Sigma \times S^1$, so in particular
$\pi\spine^{-1}(\Sigma\remove) = \Sigma\remove \times S^1$.  The choice of
decomposition $\Sigma = \Sigma\remove \amalg \Sigma\other$
splits the horizontal boundary $\p_h E$ into a disjoint union
$$
\p_h E = \p_h\remove E \amalg \p_h\other E := \left( \Sigma\remove \times S^1 \right)
\amalg \left( \Sigma\other \times S^1 \right),
$$
and the collar $\nN(\p_h E)$ decomposes accordingly as
$$
\nN(\p_h E) = \nN(\p_h\remove E) \amalg \nN(\p_h\other E).
$$
Recall that $\lambda_K = K\sigma + e^t\, d\theta$ in $\nN(\p_h E)$.

\begin{figure}
\psfrag{(0)xS1}{$\{0\} \times S^1$}
\psfrag{(-1/2)xS1}{$\{-1/2\} \times S^1$}
\includegraphics{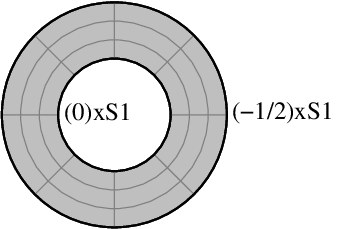}
\caption{\label{fig:annulus}
The diffeomorphism $\psi : [-1/2,0] \times S^1 \to \DD^2 \setminus 
\mathring{\DD}_{1/2}^2$.}
\end{figure}

We will now modify $E'$ by attaching a generalized notion of a
``symplectic handle'' to~$\p_h\remove E$.  Choose a diffeomorphism
$$
\psi: [-1/2,0] \times S^1 \stackrel{\cong}{\longrightarrow} 
\DD^2 \setminus \mathring{\DD^2}_{1/2},
$$
where $\DD^2_{1/2}$ denotes the closed disk of radius $1/2$ inside the unit
disk $\DD^2 \subset \CC$, and assume $\psi$
maps $\{-1/2\} \times S^1$ to $\p \DD^2$; see Figure~\ref{fig:annulus}.
Using the obvious coordinates
$(t,\theta)$ on $[-1/2,0] \times S^1$, let $\omega_\DD$ denote
any area form on $\DD^2$ that restricts to $\psi_*(e^t \, dt \wedge d\theta)$ 
outside of~$\DD^2_{1/2}$.
We can then define a new symplectic manifold with boundary and corners by
$$
(\widetilde{E}',\widetilde{\omega}_K) := (E',d\lambda_K) \cup_{\Psi} 
\left( \Sigma\remove \times \DD^2, K\, d\sigma + \omega_\DD \right),
$$
where $d\sigma$ and $\omega_\DD$ are each identified with their pullbacks
to $\Sigma\remove \times \DD^2$ via the obvious projections, and
the gluing map is defined by
$$
\Psi : \nN(\p_h\remove E) \supset 
(-1/2,0] \times \Sigma\remove \times S^1 \hookrightarrow \Sigma\remove \times \DD^2 :
(t,z,\theta) \mapsto (z,\psi(t,\theta)).
$$
Schematic pictures of this modification are shown in 
Figures~\ref{fig:spineRemoval} and~\ref{fig:spineRemoval3}, for cases
where $\Sigma\remove$ has one or two connected components respectively.
Since $\nN(\p_h\remove E)$ lies entirely in the subdomain $E \subset E'$, we
can define a corresponding subdomain 
$$
\widetilde{E} \subset \widetilde{E}'
$$
by attaching $\Sigma\remove \times \DD^2$ in this way to $E$ instead of~$E'$.
The boundary of $\widetilde{E}'$ now has two smooth faces
$\p \widetilde{E}' = \p_h \widetilde{E}' \cup \p_v \widetilde{E}'$,
where the ``horizontal'' boundary is
$$
\p_h \widetilde{E}' := \p_h \widetilde{E} := \p_h\other E,
$$
and the ``vertical'' boundary 
$$
\p_v \widetilde{E}' = \p_v E' \cup (\p \Sigma\remove \times \DD^2)
$$
is obtained from $\p_v E'$ by gluing in
$\p \Sigma\remove \times \DD^2$---a disjoint union of solid tori---along the boundary 
components of
$\p_v E'$ that touch $\p_h\remove E$.  Again this attachment has nothing to do
with the region $E' \setminus E$, so we can define
$$
\p_v \widetilde{E} := \p_v E \cup (\p \Sigma\remove \times \DD^2) \subset
\p_v \widetilde{E}'.
$$
Each of these gives rise to collars which are also subsets of
$\widetilde{E}'$: we shall denote
$$
\nN(\p_h \widetilde{E}') := \nN(\p_h \widetilde{E}) :=
(-1,0] \times \p_h\other E = (-1,0] \times \Sigma\other \times S^1
\subset \widetilde{E},
$$
with $t$ denoting the coordinate in $(-1,0]$, and
\begin{equation*}
\begin{split}
\nN(\p_v \widetilde{E}') &:= (-1,0] \times \p_v \widetilde{E}'
\subset \widetilde{E}', \\
\nN(\p_v \widetilde{E}) &:= (-1,0] \times \p_v \widetilde{E}
\subset \widetilde{E},
\end{split}
\end{equation*}
with the coordinate on $(-1,0]$ denoted by~$s$.  These collars do not
cover all of $\tilde{E}'$, as we also have
$$
\widetilde{\nN}(\p_h\remove E) := \nN(\p_h\remove E) \cup_\Psi (\Sigma\remove \times \DD^2)
\cong \Sigma\remove \times \DD^2 \subset \widetilde{E};
$$
see Figures~\ref{fig:spineRemoval} and~\ref{fig:spineRemoval3}.
Here the slightly different notational convention is meant to emphasize
the fact that $\widetilde{\nN}(\p_h\remove E)$ is not actually a collar
neighborhood of any part of the boundary---it contains the original
$\p_h\remove E$, but this now lives in the \emph{interior} of the ``handle''
$\Sigma\remove \times \DD^2$.  With this notation in place,
$\widetilde{E}'$ and $\widetilde{E}$ can be presented as the unions of
overlapping regions
\begin{equation*}
\begin{split}
\widetilde{E}' &= \nN(\p_v \widetilde{E}') \cup \nN(\p_h \widetilde{E})
\cup \widetilde{\nN}(\p_h\remove E),\\
\widetilde{E} &= \nN(\p_v \widetilde{E}) \cup \nN(\p_h \widetilde{E})
\cup \widetilde{\nN}(\p_h\remove E).
\end{split}
\end{equation*}
The fibrations $\Pi_v : \nN(\p_v E) \to (-1,0] \times S^1$ and
$\Pi_h : \nN(\p_h E) \to \Sigma$ extend in obvious ways: on the horizontal
neighborhoods we have trivial projections
\begin{equation*}
\begin{split}
&\widetilde{\Pi}_h : \nN(\p_h \widetilde{E}) = (-1,0] \times \Sigma\other \times S^1
\to \Sigma\other, \\
&\widetilde{\Pi}_h : \widetilde{\nN}(\p_h\remove E) \cong \Sigma\remove \times \DD^2 \to \Sigma\remove,
\end{split}
\end{equation*}
and on the vertical collars, the formula $\Pi_v(s,\phi,t,\theta) = (s,m\phi)$
produces an extension
$$
\widetilde{\Pi}_v : \nN(\p_v \widetilde{E}) \to (-1,0] \times S^1
$$
which is defined on each connected component of the attached region 
$(-1,0] \times \p \Sigma\remove \times \DD^2$ by
$$
(-1,0] \times S^1 \times \DD^2 \stackrel{\widetilde{\Pi}_v}{\longrightarrow}
(-1,0] \times S^1 : (s,\phi,\zeta) \mapsto (s,m\phi),
$$
with the multiplicity $m \in \NN$ as usual depending on the component
under consideration.  Denote the resulting vertical subbundle by
$V \widetilde{E} \subset T\widetilde{E}$ and observe that
$$
\widetilde{\omega}_K|_{V\widetilde{E}} > 0
$$
by construction.

\begin{figure}
\psfrag{s}{$s$}
\psfrag{t}{$t$}
\psfrag{Ntilde}{$\widetilde{\nN}(\p_h\remove E)$}
\psfrag{SigmaremxpD2}{$\Sigma\remove \times \p\DD^2$}
\psfrag{Sigmaremx(0)}{$\Sigma\remove \times \{0\}$}
\psfrag{phremE}{$\p_h\remove E$}
\psfrag{-1/2}{$-1/2$}
\psfrag{...}{$\ldots$}
\psfrag{M-}{$M^-$}
\psfrag{pvcvxEtilde'}{$\p_v\convex\widetilde{E}'$}
\psfrag{pvcvxEtilde}{$\p_v\convex\widetilde{E}$}
\psfrag{E'minusE}{$E' \setminus E$}
\psfrag{N(phothE)}{$\nN(\p_h \widetilde{E}) = \nN(\p_h\other E)$}
\psfrag{Mtildecvx}{$\widetilde{M}\convex$}
\psfrag{N(pvcvxEtilde)}{$\nN(\p_v\convex \widetilde{E})$}
\psfrag{N(pvcvxEtilde')}{$\nN(\p_v\convex \widetilde{E}')$}
\psfrag{phEtilde}{$\p_h \widetilde{E}$}
\includegraphics{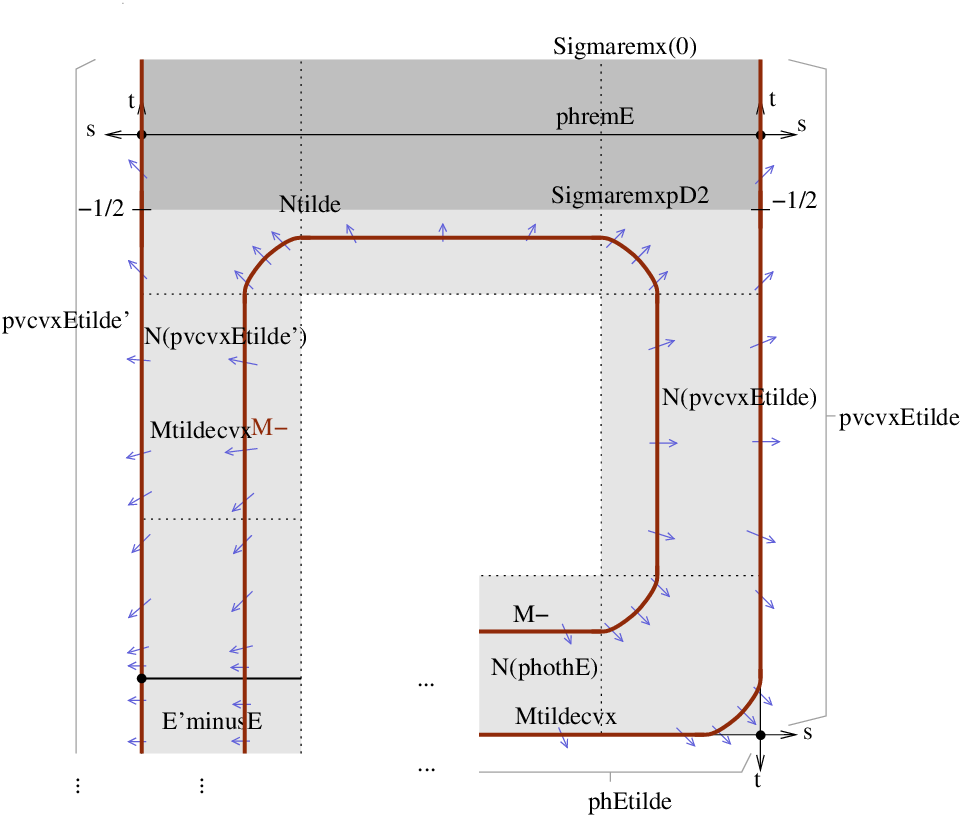}
\caption{\label{fig:spineRemoval}
The domain $\widetilde{E}'$ constructed from $E'$ of
Figure~\ref{fig:corners} by gluing $\Sigma\remove \times \DD^2$ (the darkly
shaded region) to the spinal component at the top of the picture.  The picture
is slightly misleading at its top border because there is no actual boundary
of $\widetilde{E}'$ here: one can think of this instead as the
``center'' $\Sigma\remove \times \{0\}$ of $\Sigma\remove \times \DD^2$,
and in particular, the only actual corner of $\widetilde{E}'$ shown in the
picture is the one at the bottom right.  The spine removal cobordism is
defined to be the region between the two hypersurfaces $M_-$ and
$\widetilde{M}\convex$; the former is contact type since it remains transverse
to the same Liouville vector field, but this vector does not extend over
all of $\widetilde{M}\convex$, hence the latter is in general only weakly
convex.}
\end{figure}

\begin{figure}
\psfrag{s}{$s$}
\psfrag{t}{$t$}
\psfrag{Ntilde}{$\widetilde{\nN}(\p_h\remove E)$}
\psfrag{SigmaremxpD2}{$\Sigma\remove \times \p\DD^2$}
\psfrag{Sigmaremx(0)}{$\Sigma\remove \times \{0\}$}
\psfrag{phremE}{$\p_h\remove E$}
\psfrag{-1/2}{$-1/2$}
\psfrag{...}{$\ldots$}
\psfrag{M-}{$M^-$}
\psfrag{pvcvxEtilde'}{$\p_v\convex\widetilde{E}'$}
\psfrag{pvflatEtilde}{$\p_v\flat\widetilde{E} = \widetilde{M}\flat$}
\psfrag{E'minusE}{$E' \setminus E$}
\psfrag{Mtildecvx}{$\widetilde{M}\convex$}
\psfrag{N(pvflatEtilde)}{$\nN(\p_v\flat \widetilde{E})$}
\psfrag{N(pvcvxEtilde')}{$\nN(\p_v\convex \widetilde{E}')$}
\includegraphics{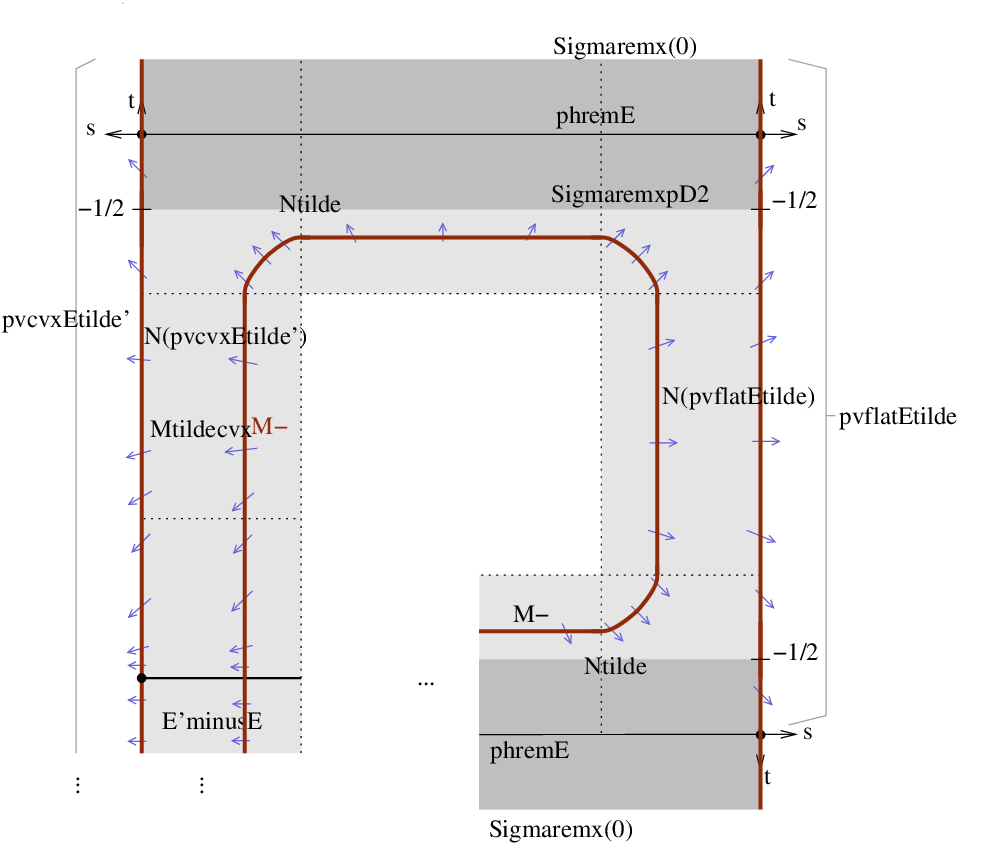}
\caption{\label{fig:spineRemoval3}
A variant of Figure~\ref{fig:spineRemoval} in which $\Sigma\remove \times \DD^2$ has two
connected components, attached at both the top and the bottom of the picture.
The upper boundary of the cobordism now includes a component $\widetilde{M}\flat$
that is not contact, as it is foliated by closed pages of a \emph{generalized}
spinal open book.  (Note that the only actual boundary of $\widetilde{E}'$ in
this picture is at the sides; the top and bottom represent two distinct connected
components of the \emph{interior} submanifold $\Sigma\remove \times \{0\} \subset
\Sigma\remove \times \DD^2$.)}
\end{figure}

It will be useful to decompose $\p_v\widetilde{E}$ 
and $\p_v\widetilde{E}'$ further into the components
$$
\p_v\widetilde{E} = \p_v\flat\widetilde{E} \amalg \p_v\convex\widetilde{E},\qquad
\p_v\widetilde{E}' = \p_v\flat\widetilde{E} \amalg \p_v\convex\widetilde{E}',
$$
with corresponding collars
$\nN(\p_v\flat\widetilde{E})$, $\nN(\p_v\convex\widetilde{E})$ and
$\nN(\p_v\convex\widetilde{E}')$,
where $\p_v\flat\widetilde{E}$ is defined as the union of all components of
$\p_v\widetilde{E}$ such that the fibers of
$\widetilde{\Pi}_v : \nN(\p_v\flat\widetilde{E}) \to (-1,0] \times S^1$
have empty boundary.  Such components arise whenever $M\paper$ has components
with boundary contained in $\pi\spine^{-1}(\Sigma\remove)$;
see Figure~\ref{fig:spineRemoval3}.
The notation is motivated by the fact that, as we'll
see below, $\p_v\convex\widetilde{E}'$ inherits a natural contact structure that
is dominated by~$\widetilde{\omega}_K$, hence making $\p_v\convex\widetilde{E}'$ weakly
convex, but $\p_v\flat\widetilde{E}$ does
not; in fact for certain natural choices of almost complex structure on 
$\widetilde{E}$, $\p_v\convex\widetilde{E}'$ is pseudoconvex while
$\p_v\flat\widetilde{E}$ is Levi flat.

The fibrations $\widetilde{\Pi}_v$ and $\widetilde{\Pi}_h$ induce on
$\p_v\convex\widetilde{E} \cup \p_h\widetilde{E}$ the structure of a
spinal open book $\widetilde{\boldsymbol{\pi}}$ with paper 
$\p_v\convex\widetilde{E}$ and spine
$\p_h\widetilde{E}$, the latter fibering over~$\Sigma\other$.

\begin{lemma}
\label{lemma:dominating}
After smoothing the corner of 
$\p_v\convex\widetilde{E} \cup \p_h\widetilde{E}$,
$\widetilde{\boldsymbol{\pi}}$ supports a contact structure that is dominated
by~$\widetilde{\omega}_K$.
\end{lemma}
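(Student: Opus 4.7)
The plan is to construct a Giroux form $\widetilde{\alpha}$ for $\widetilde{\boldsymbol{\pi}}$ on the smoothed hypersurface by patching $\lambda_K$ (on the portions coming from $E'$) together with a separately built Thurston-type contact form on the solid tori $\p\Sigma\remove \times \DD^2$ introduced by the handle, and then to verify dominance by $\widetilde{\omega}_K$ pointwise in each piece.

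On the part of $\widetilde{M}\convex$ coming from the original $E'$, the natural choice is $\widetilde{\alpha} := \lambda_K|_{T\widetilde{M}\convex}$; after smoothing the corner between $\p_h\other E$ and the adjacent components of $\p_v E'$ exactly as in \S\ref{sec:corners}, this is a Giroux form for the restriction of $\widetilde{\boldsymbol{\pi}}$ to that region, and because $\widetilde{\omega}_K = d\lambda_K$ there, domination is immediate from the contact condition $\lambda_K \wedge d\lambda_K > 0$. On the handle contribution, first choose a Liouville form $\beta$ on $\DD^2$ that agrees with $\psi_*(e^t\, d\theta)$ on the annulus $\DD^2 \setminus \mathring{\DD}^2_{1/2}$; since $\omega_\DD$ was a priori unconstrained on $\mathring{\DD}^2_{1/2}$, we are free to take $\omega_\DD := d\beta$ on all of $\DD^2$. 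Pulling $\beta$ back via the projection to $\DD^2$, set $\widetilde{\alpha} := K\sigma + \beta$ on $\p\Sigma\remove \times \DD^2$. Proposition~\ref{prop:GirouxVertical} applied to the disk fibration $(\phi,\zeta) \mapsto m\phi$ then shows that $\widetilde{\alpha}$ is contact for $K \gg 0$, with Reeb field tangent to the disk pages; the remaining Giroux conditions follow immediately (the spine condition is vacuous here, as this region is pure paper). The two definitions agree on their overlap $\p\Sigma\remove \times \p\DD^2 = \p_h\remove E$ by direct substitution ($\beta$ pulls back to $e^t\, d\theta$ along $\psi$), so $\widetilde{\alpha}$ patches into a global $1$-form.

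For dominance on the handle piece, the key computation is that $\widetilde{\omega}_K|_{T(\p\Sigma\remove \times \DD^2)}$ reduces to $\omega_\DD$, because $d\sigma$ dies when pulled back to the $1$-manifold $\p\Sigma\remove$; at any point $(\phi,\zeta)$ the $2$-plane $\ker\widetilde{\alpha}$ projects isomorphically and with positive orientation onto $T_\zeta \DD^2$, so $\omega_\DD|_{\ker\widetilde{\alpha}}$ is a positive area form. The main technical obstacle is verifying smoothness of the hypersurface and of $\widetilde{\alpha}$ across the interface $\p\Sigma\remove \times \p\DD^2$ inside $\p_v\widetilde{E}'$; this is handled by observing that, under the identification $\psi$, the handle-side data literally coincides with the pre-existing $\lambda_K$ in a collar, so the same smoothing procedure from \S\ref{sec:corners} applies without any separate interpolation. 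After also smoothing the codimension-two corners between $\p_h\widetilde{E}$ and the adjacent components of $\p_v\convex\widetilde{E}$ on the $\Sigma\other$-side, the patched form $\widetilde{\alpha}$ becomes a Giroux form for $\widetilde{\boldsymbol{\pi}}$ on the smooth closed hypersurface, and its kernel is the desired contact structure dominated by $\widetilde{\omega}_K$.
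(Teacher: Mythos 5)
Your treatment of the part of $\widetilde{M}\convex$ lying in $E'$ is fine: there $\widetilde{\omega}_K = d\lambda_K$ and domination follows from the contact condition. The gap is on the handle, at the very first step: the $1$-form $\beta$ you need does not exist, and neither does the replacement $\omega_\DD := d\beta$. Keep track of how $\psi$ is oriented: $\psi$ sends $\{-1/2\}\times S^1$ to $\p\DD^2$ and $\{0\}\times S^1$ to $\p\DD^2_{1/2}$, and it is orientation preserving from $e^t\,dt\wedge d\theta$ to $\omega_\DD$, so with respect to the orientation in which $\omega_\DD>0$, the outward normal of $\DD^2_{1/2}$ is $-\p_t$ and the induced boundary orientation of $\p\DD^2_{1/2}$ is the \emph{negative} $\theta$-direction. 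Hence for any $1$-form $\beta$ on $\DD^2$ equal to $\psi_*(e^t\,d\theta)$ on $\DD^2\setminus\mathring{\DD}^2_{1/2}$, Stokes gives
$$
\int_{\DD^2_{1/2}} d\beta \;=\; \oint_{\p\DD^2_{1/2}} \psi_*(e^0\, d\theta) \;=\; -1 \;<\; 0,
$$
so $d\beta$ cannot be positive on the cap: there is no Liouville form with the prescribed annular behavior, and $d\beta$ necessarily vanishes somewhere, so it is not an admissible choice of $\omega_\DD$ either. (This is the same sign phenomenon recorded in Theorem~\ref{thm:spineRemoval}(4): the core disks are symplectic with \emph{reversed} orientation, i.e.\ the $\theta$-circles bound the cap disks ``the wrong way''.) Consequently your key claim that $\widetilde{\omega}_K|_{T(\p\Sigma\remove\times\DD^2)} = d\widetilde{\alpha}$ on the solid tori, which is what made domination automatic there, cannot be arranged; if it could, the top boundary of the spine removal cobordism would be strongly convex, which is exactly what this construction cannot and does not achieve.

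This is why the paper's proof takes a different route: it builds a \emph{new} fiberwise Liouville form $\widetilde{\lambda}$ for the capped pages, required to equal $e^t\,d\theta$ only near the \emph{remaining} (uncapped) boundary components --- such a form exists because the capped pages in $\p_v\convex\widetilde{E}$ still have boundary, with no matching condition imposed over the caps --- and then sets $\widetilde{\lambda}_K = K\sigma + \widetilde{\lambda}$. Since $d\widetilde{\lambda}_K \ne \widetilde{\omega}_K$ over the caps, the domination is not a consequence of exactness; it is proved by the Thurston trick, writing $\widetilde{\omega}_K = K\, d(e^s\, d\widetilde{\Pi}_v) + \omega\fib$ with $\omega\fib$ positive on the vertical subbundle and showing $\widetilde{\lambda}_K \wedge \widetilde{\omega}_K > 0$ on both faces for $K$ large. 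If you want to salvage your patching strategy, you would have to replace $\beta$ by an interpolated form on the solid torus (of the type $h_1(|\zeta|)\,d\phi + h_2(|\zeta|)\,d\theta$, as in the Thurston--Winkelnkemper binding model) and then verify the wedge inequality against $\widetilde{\omega}_K$ directly --- which is essentially the paper's argument in local coordinates.
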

\begin{proof}
We mimic the procedure
that was used in \S\ref{sec:LiouvilleCollar} to define $\lambda_K$ on~$E'$:
first choose a fiberwise Liouville form $\widetilde{\lambda}$ on
$\p_v\convex\widetilde{E}$ that equals $e^t\, d\theta$ in the collar
neighborhoods of remaining boundary components (i.e.~those that were not
capped off in the transformation from $\p_v E$ to $\p_v\convex\widetilde{E}$).
Pulling back via the obvious projection defines $\widetilde{\lambda}$
on $\nN(\p_v\convex\widetilde{E})$, and the formula $\widetilde{\lambda} =
e^t\, d\theta$ extends it over $\nN(\p_h \widetilde{E})$.  (Note that by
Stokes' theorem, $\widetilde{\lambda}$ cannot be extended
to~$\nN(\p_v\flat\widetilde{E})$.)
We can then use the Thurston trick to define a Liouville form
$$
\widetilde{\lambda}_K := K\sigma + \widetilde{\lambda}
$$
on $\nN(\p_v\convex\widetilde{E}) \cup \nN(\p_h \widetilde{E})$, after
possibly increasing the value of $K > 0$, and this Liouville form
matches $\lambda_K$ on the regions where $\widetilde{\lambda} = e^t\, d\theta$
and can thus be extended to
$\nN(\p_v \widetilde{E}') \setminus \nN(\p_v \widetilde{E})$ in the
same way as~$\lambda_K$.  We claim now that if $K > 0$ is sufficiently
large, then
$$
\left.\widetilde{\lambda}_K \wedge \widetilde{\omega}_K\right|_{T(\p_v\convex\widetilde{E}')} > 0 
\quad \text{ and }\quad
\left.\widetilde{\lambda}_K \wedge \widetilde{\omega}_K\right|_{T(\p_h \widetilde{E}')} > 0.
$$
The second relation is immediate because $\lambda_K = \widetilde{\lambda}_K$
near $\p_h \widetilde{E}'$, so we are merely rephrasing the fact that
$\p_h\other E$ is a contact hypersurface in $(E,d\lambda_K)$.  The first
relation is similarly immediate on the regions where 
$\lambda_K = \widetilde{\lambda}_K$, so we only still need to check that it
holds on $\p_v\convex\widetilde{E}$.  To see this, notice that $\widetilde{\omega}_K$
can be written in $\nN(\p_v\convex\widetilde{E})$ as
$$
\widetilde{\omega}_K = K d\left( e^s\, d\widetilde{\Pi}_v \right) + \omega\fib,
$$
where $\omega\fib$ is a closed $2$-form that satisfies 
$\omega\fib|_{V\widetilde{E}} > 0$ and is independent of~$K$, while
$e^s\, d\widetilde{\Pi}_v$ can be regarded as the pullback via
$\widetilde{\Pi}_v$ of a Liouville form on $[-1,0] \times S^1$.  The claim thus
follows via Proposition~\ref{prop:ThurstonSymp}.

Finally, the same argument used previously for $\lambda_K$ shows that 
$\widetilde{\lambda}_K$ restricts to both $\p_h\widetilde{E}'$ and
$\p_v\convex\widetilde{E}'$ as a contact form, and by construction it matches
the contact form induced by $\lambda_K$ in a neighborhood of the corners
of $\p_v\convex\widetilde{E}' \cup \p_h\widetilde{E}'$.  It follows that
we can smooth these corners by the same procedure that was used in
\S\ref{sec:corners} to define the contact hypersurface~$M^0$, giving rise in this
case to a weakly contact hypersurface
$$
(\widetilde{M}\convex,\widetilde{\xi}) \subset (\widetilde{E}',\widetilde{\omega}_K)
$$
whose contact structure $\widetilde{\xi}$ is defined by restricting
$\widetilde{\lambda}_K$ to~$\widetilde{M}\convex$.
\end{proof}

The weakly contact hypersurface $(\widetilde{M}\convex,\widetilde{\xi})$ found
in the above proof is shown in Figure~\ref{fig:spineRemoval} as the smooth
curve traversing the outer boundary of $\widetilde{E}'$ with some rounding at the
corners.  We are now in a position to define an actual spine removal cobordism:
let 
$$
X \subset \widetilde{E}'
$$
denote the region that is sandwiched in between $M^- \subset E' \subset \widetilde{E}'$
and $\p_v\flat\widetilde{E} \amalg \widetilde{M}\convex \subset \widetilde{E}'$, 
making $(X,\widetilde{\omega}_K)$ a compact symplectic manifold with
strongly concave boundary $(M^-,\xi_-)$, weakly convex
boundary $(\widetilde{M}\convex,\widetilde{\xi})$, and additional boundary components
$\p_v\flat\widetilde{E}$ which are neither concave nor convex but are
fibered by closed symplectic surfaces.

To finish the proof of Theorem~\ref{thm:spineRemoval}, we need to modify
$(X,\widetilde{\omega}_K)$ to allow symplectic forms that are not exact at the negative
boundary.  Suppose $\Omega$ is a closed $2$-form on $M'$ that satisfies
$\Omega|_{\xi} > 0$ and is exact on $\Sigma\remove \times S^1$.
We can then find a closed $2$-form $\eta$ on $M'$
with the following properties:
\begin{enumerate}
\item $[\eta] = [\Omega] \in H^2_\dR(M')$;
\item On each of the collar components $S^1 \times (-1,0] \times S^1
\subset \nN(\p M\paper)$ and $(-1,0] \times S^1 \times S^1
\subset \nN(\p M\spine)$, $\eta$ is a constant multiple of $d\phi \wedge d\theta$;
\item $\eta$ vanishes on  $\pi\spine^{-1}(\Sigma\remove) = \Sigma\remove \times S^1$.
\end{enumerate}
The third condition is possible due to the cohomological assumption,
and combining this assumption with the second condition implies that
$\eta$ also vanishes on all components of $\nN(\p M\paper)$ adjacent
to $\pi\spine^{-1}(\Sigma\remove)$.  We can now define $\eta$ as a closed
$2$-form on $\nN(\p_v E')$ and $\nN(\p_h E')$ by pulling back via the
projections $(-1,0] \times M\paper' \to M\paper'$ and 
$(-1,0] \times M\spine$ respectively, and the second condition implies that
$\eta$ remains well defined after gluing these collars together to form~$E'$,
thus we shall regard $\eta$ as a closed $2$-form on~$E'$.  By construction,
$\eta$ vanishes near $\p_h\remove E$, hence $\eta$ can also be regarded as
defining a closed $2$-form on $\widetilde{E}'$.  
Its restriction
$$
\eta^- := \eta|_{T M^-}
$$
is cohomologous to~$\Omega$ after identifying $M^-$ with~$M$.  The following
is an immediate consequence of the fact that the nondegeneracy of $2$-forms and 
the ``weakly contact'' condition are both open.

\begin{lemma}
\label{lemma:weakContact}
There exists a constant $C_0 > 0$ such that for all
$C \ge C_0$, the $2$-form
$$
\widetilde{\omega}_K' := C \widetilde{\omega}_K + \eta
$$
is symplectic on $X$, the boundary components $(M^-,\xi_-)$ and
$(\widetilde{M}\convex,\widetilde{\xi})$ are weakly concave and convex respectively, and
$\widetilde{\omega}_K'$ is positive on the closed surface fibers in $\p_v\flat\widetilde{E}$.
\qed
\end{lemma}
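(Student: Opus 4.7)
The plan is to observe that $\widetilde{\omega}_K' = C\widetilde{\omega}_K + \eta$ is automatically closed — both summands are — so only four pointwise conditions remain to be verified: nondegeneracy on the compact domain $X$, weak concavity at $(M^-,\xi_-)$, weak convexity at $(\widetilde{M}\convex,\widetilde{\xi})$, and positivity on the closed fibers in $\p_v\flat\widetilde{E}$. Each of these is an open condition in the $C^0$-topology on the space of $2$-forms.

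First I would verify that $\widetilde{\omega}_K$ itself already satisfies all four. Symplecticness on $X$ holds by the construction of $\lambda_K$ in \S\ref{sec:LiouvilleCollar} together with the product symplectic structure on the handle $\Sigma\remove \times \DD^2$. The hypersurface $(M^-,\xi_-)$ is even \emph{strongly} concave, since the Liouville vector field $V_K$ is everywhere transverse to it (Lemma~\ref{lemma:LiouvilleTransverse} together with the corner smoothing of \S\ref{sec:corners}). Weak convexity at $(\widetilde{M}\convex,\widetilde{\xi})$ is precisely the content of Lemma~\ref{lemma:dominating}. And fiberwise positivity on $\p_v\flat\widetilde{E}$ is immediate from the relation $\widetilde{\omega}_K|_{V\widetilde{E}} > 0$ noted just after the construction of the extended fibration $\widetilde{\Pi}_v$.

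To conclude I would rescale and write
\[
\tfrac{1}{C}\widetilde{\omega}_K' = \widetilde{\omega}_K + \tfrac{1}{C}\eta,
\]
so that on the compact set $X$ this converges to $\widetilde{\omega}_K$ in $C^0$ as $C \to \infty$; the $2$-form $\eta$ is smooth on $M'$ (bounded in each collar by its constant coefficient multiple of $d\phi\wedge d\theta$) and its pullback extension to $\widetilde{E}'$ is therefore bounded on $X$. Compactness of $X$ upgrades each of the four open conditions into a uniform one: there exists $\epsilon > 0$, with respect to any fixed auxiliary Riemannian metric on a neighborhood of $X$, such that every closed $2$-form within $C^0$-distance $\epsilon$ of $\widetilde{\omega}_K$ satisfies all four. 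Setting $C_0 := \|\eta\|_{C^0}/\epsilon$ and noting that each of the four conditions is invariant under multiplication by the positive constant $C$, the lemma then follows for all $C \ge C_0$. There is no real obstacle here; the only item worth checking carefully is the boundedness of $\eta$ on $X$, and that is ensured by the explicit collar form chosen just before the statement of the lemma.
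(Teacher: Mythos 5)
Your argument is correct and is exactly what the paper intends: the text introduces this lemma as an immediate consequence of the openness of nondegeneracy and of the weak-domination conditions, and your rescaling $\tfrac{1}{C}\widetilde{\omega}_K' = \widetilde{\omega}_K + \tfrac{1}{C}\eta$ together with compactness of $X$, boundedness of $\eta$, and invariance of all four conditions under positive scaling simply spells out that one-line proof. No gaps.
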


Finally, observe that since $\Omega$ and $\eta$ are cohomologous on~$M'$
and $\Omega|_\xi > 0$,
\cite{MassotNiederkruegerWendl}*{Lemma~2.10} provides a symplectic form
on $[0,1] \times M'$ that restricts to $\Omega$ on $\{0\} \times M'$ and
$C\, d\alpha' + \eta$ on $\{1\} \times M'$, where one has the freedom to
choose $\alpha'$ as any contact form for~$\xi$ at the expense of inserting
a sufficiently large constant $C > 0$.  We can therefore make these
choices and increase the value of $C \ge C_0$ if necessary so that the
weak symplectic cobordism $(X,\widetilde{\omega}_K')$ provided by
Lemma~\ref{lemma:weakContact} can be attached on top of
$[0,1] \times M'$.  All together, this provides a weak symplectic cobordism
with the properties stated in Theorem~\ref{thm:spineRemoval} and thus
completes the proof.

\section{Nonfillability via spine removal}
\label{sec:spineRemoval}

In this section we use spine removal surgery to prove
Theorems~\ref{thm:nonseparating} and~\ref{thm:planarTorsion}.
Theorem~\ref{thm:nonseparating} will be an immediate 
corollary of the following result, using the method of
\cite{AlbersBramhamWendl}; it says essentially that any contact manifold
with a partially planar domain can be
given a \emph{symplectic cap} that contains a nonnegative symplectic sphere.

\begin{thm}
\label{thm:cap}
Suppose $(M',\xi)$ is a contact $3$-manifold containing an
$\Omega$-separating partially planar domain for some closed $2$-form
$\Omega$ with $\Omega|_\xi > 0$.  Then there exists a compact symplectic
manifold $(X,\omega)$ with $\p X = -M'$ and $\omega|_{TM'} = \Omega$
such that $(X,\omega)$ contains a symplectically embedded $2$-sphere
with vanishing self-intersection number.
\end{thm}
\begin{proof}
Let $M \subset M'$ denote the partially planar domain, $M\planar\paper \subset M$
its planar piece, and $\Sigma_1 \times S^1,\ldots,\Sigma_r \times S^1
\subset M\spine$ the smallest collection of spinal components that 
contain $\p M\planar\paper$.
Since $\Omega$ is exact on all these components, Theorem~\ref{thm:spineRemoval}
provides a spine removal cobordism $(X_0,\omega)$ with
$$
\p X_0 = - M' \amalg \widetilde{M}'
$$
and $\omega|_{TM'} = \Omega$, constructed by attaching handles 
$\Sigma_i \times \DD^2$
along each of the spinal components surrounding $\p M\planar\paper$.  The
surgered manifold $\widetilde{M}'$ is then disconnected and can be written as
$$
\widetilde{M}' = \widetilde{M}_1' \amalg \widetilde{M}_2',
$$
where $\widetilde{M}_1'$ is a symplectic sphere bundle over $S^1$, 
and $\widetilde{M}_2'$ is
either a contact manifold $(\widetilde{M}_2',\xi_2)$ with 
$\omega|_{\xi_2} > 0$
or another symplectic fibration over $S^1$ with closed fibers.
Both components can now be capped using the method of Eliashberg
\cite{Eliashberg:cap}, and the symplectic $S^2$-fibers of~$\widetilde{M}_1$
give the desired symplectic spheres with vanishing self-intersection.
\end{proof}

We recall briefly why this result implies Theorem~\ref{thm:nonseparating}:
if $(W,\omega)$ is a closed symplectic $4$-manifold and 
$M \hookrightarrow W$ is a (weak) contact embedding that
does not separate~$W$, then by cutting~$W$ open
along~$M$ we obtain a (weak) symplectic cobordism between $(M,\xi)$ and
itself.  Attaching infinitely many copies of this cobordism to each other
in a sequence, one constructs a ``noncompact symplectic filling'' 
$(W_\infty,\omega_\infty)$ 
of $(M,\xi)$ which is nonetheless geometrically bounded.  If $(M,\xi)$
contains a partially planar domain for which $\omega_\infty$ is exact
on the spine, then one can attach
the cap from Theorem~\ref{thm:cap} and then choose a geometrically
bounded compatible almost complex structure~$J_\infty$ so that the 
symplectic spheres in the
cap become embedded $J_\infty$-holomorphic spheres which are Fredholm regular
and have index~$2$.  Arguing as in McDuff \cite{McDuff:rationalRuled},
the moduli space generated by these spheres is then compact and foliates
all of~$W_\infty$, but this is impossible since the latter is noncompact.
The full details for the case $[\Omega] = 0 \in H^2_\dR(M)$ are carried out
in \cite{AlbersBramhamWendl}, and the generalization for nontrivial
cohomology classes following the above scheme is immediate.

For planar torsion, we will make use of the following simple lemma in
the style of \cite{McDuff:rationalRuled}:
\begin{lemma}
\label{lemma:McDuff}
Suppose $(W,\omega)$ is a compact symplectic $4$-manifold, possibly with
boundary, such that $\p W$ carries a positive contact structure
dominated by~$\omega$.  Suppose moreover that~$W$ contains a symplectically
embedded sphere $S_1 \subset W$ with vanishing self-intersection number.
Then $\p W = \emptyset$, and any other symplectically embedded
surface $S_2 \subset W \setminus S_1$ with vanishing self-intersection 
is also a sphere and satisfies
$$
\int_{S_1} \omega = \int_{S_2} \omega.
$$
\end{lemma}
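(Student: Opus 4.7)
The plan is to prove this by the standard McDuff-type argument: use the symplectic sphere $S_1$ to foliate $W$ by $J$-holomorphic spheres, then use positivity of intersections together with the foliation structure to control $S_2$. The main subtle point will be upgrading the classical closed-manifold theorem of McDuff \cite{McDuff:rationalRuled} to the present setting of a manifold with dominated contact boundary.

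First I would choose an almost complex structure $J$ on $W$, tamed by $\omega$, with three properties: (i) $S_1$ and $S_2$ are both $J$-holomorphic, which is a non-empty convex condition since they are disjoint symplectic surfaces; (ii) the boundary $\p W$ is $J$-convex with respect to its dominated contact structure $\xi$, which is possible because a tame $J$ can always be chosen to make $\xi$ into the maximal $J$-invariant subbundle of $T(\p W)$; (iii) $J$ is generic enough for the moduli space of unparametrized $J$-holomorphic spheres in the class $[S_1]$ to be regular. Because $\p W$ is $J$-convex, the maximum principle forbids any non-constant $J$-holomorphic curve from touching $\p W$, so the entire moduli space lives in $\mathring W$.

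Next I would apply McDuff's foliation theorem to the moduli space $\mM$ of unparametrized $J$-holomorphic spheres in class $[S_1]$ containing~$S_1$. Positivity of intersections together with $[S_1]\cdot[S_1]=0$ forces any two such spheres to be disjoint; Gromov compactness plus the observation that a nodal degeneration $[S_1]=A_1+A_2$ with $A_i\cdot A_i<0$ would violate the adjunction/positivity constraints (using that $c_1([S_1])=2$ and $[S_1]^2=0$) rules out bubbling. Consequently $\mM$ is a closed $2$-manifold and the evaluation map $\mM\times S^2 \to W$ descends to a smooth projection
\[
\pi: W \longrightarrow \mM
\]
realizing $W$ as an $S^2$-bundle over the closed surface $\mM$. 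Since $S^2$-bundles over closed surfaces are closed $4$-manifolds, we conclude $\p W=\emptyset$.

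Finally, for the assertion about $S_2$: the map $\pi|_{S_2}:S_2\to \mM$ is holomorphic (both sides inherit integrable complex structures from $J$ on the fibers and base). If $\pi|_{S_2}$ has positive degree, then the geometric intersection of $S_2$ with any fiber is positive, giving $[S_2]\cdot[S_1]>0$; but $S_2\subset W\setminus S_1$ is disjoint from $S_1$, so their transverse geometric intersection, and hence $[S_2]\cdot[S_1]$, must vanish. Therefore $\pi|_{S_2}$ is constant, $S_2$ is contained in a single fiber, and since $S_2$ is an embedded symplectic (hence $J$-holomorphic) submanifold of that $S^2$-fiber with the same self-intersection number, it must equal the fiber. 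Thus $S_2$ is a sphere in class $[S_1]$ and $\int_{S_2}\omega=\int_{S_1}\omega$. The hardest step is the bubbling analysis needed to ensure that the $J$-holomorphic foliation truly covers all of $W$ in the presence of the convex boundary, but this reduces via maximum-principle arguments to the standard closed case once one checks that limiting nodal curves cannot form in class $[S_1]$.
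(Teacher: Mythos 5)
Your overall strategy is the same McDuff-style argument the paper uses, but it contains one genuinely false sub-claim and one muddled deduction. The false claim is that bubbling is excluded: a nodal degeneration $[S_1] = A_1 + A_2$ with $A_i\cdot A_i = -1$, $c_1(A_i) = 1$ and $A_1\cdot A_2 = 1$ is perfectly consistent with adjunction and positivity of intersections, and it actually occurs (think of the fiber class in a blown-up ruled surface); genericity of $J$ does not remove it, since such exceptional spheres are automatically regular of index~$0$ and persist. Consequently you are not entitled to the conclusion that the moduli space makes $W$ an honest $S^2$-bundle over a closed surface; the correct statement (and the one the paper uses) is that the curves homotopic to $S_1$ foliate $W$ \emph{except} at finitely many nodal fibers consisting of pairs of $J$-holomorphic exceptional spheres. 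Your deduction of $\p W = \emptyset$ then inherits a problem: you simultaneously assert that no curve touches $\p W$ (maximum principle) and that the curves fill $W$ as a bundle, and you extract $\p W=\emptyset$ from the topology of sphere bundles. The actual mechanism is the open--closed argument: the union of the (possibly nodal) leaves is open and closed, hence all of $W$, and if $\p W\ne\emptyset$ some leaf would have to touch $\p W$ tangentially from the inside, contradicting $J$-convexity -- this is exactly how the paper phrases it.

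The conclusions about $S_2$ can still be salvaged, but not via your map $\pi|_{S_2}$, which presupposes the global bundle projection. Instead argue as the paper does: by positivity of intersections and $[S_1]\cdot[S_1]=0$, no leaf of the family can meet $S_2$ in an isolated point, so since the leaves (including nodal ones) cover $W$, the $J$-holomorphic surface $S_2$ must coincide with a leaf or a component of a nodal leaf; the components of nodal leaves have self-intersection $-1\ne 0$, so $S_2$ is a smooth leaf, hence a sphere homologous to $S_1$ with $\int_{S_2}\omega = \int_{S_1}\omega$. So: same approach as the paper, but your bubbling exclusion is wrong and must be replaced by accommodating the nodal exceptional-sphere fibers, and the boundary contradiction should be run through $J$-convexity against a leaf tangent to $\p W$ rather than through bundle topology.
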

\begin{proof}
Choose a compatible almost complex structure~$J$ which preserves the
contact structure at the boundary and makes both $S_1$
and~$S_2$ $J$-holomorphic.  Then~$S_1$ is a Fredholm regular index~$2$
curve, and arguing as in \cite{McDuff:rationalRuled}, we find that 
the set of all $J$-holomorphic curves homotopic to~$S_1$
foliates~$W$ except at finitely many nodal singularities, which are
intersections of finitely many $J$-holomorphic exceptional spheres.  Then if
$\p W \ne \emptyset$, some holomorphic sphere must touch $\p W$ tangentially,
thus violating $J$-convexity.  Moreover, positivity of intersections
implies that no curve in this family can have any isolated intersection
with~$S_2$, thus $S_2$ itself must belong to the family, implying that it
is a sphere with the same symplectic area as~$S_1$.
\end{proof}

\begin{proof}[Proof of Theorem~\ref{thm:planarTorsion}] 
Consider again the spine removal cobordism $(X_0,\omega)$ from the proof of
Theorem~\ref{thm:cap} with $\p X_0 = - M' \amalg \widetilde{M}'$ and
$\widetilde{M}' = \widetilde{M}_1' \amalg \widetilde{M}_2'$, but now
under the extra assumption that the partially
planar domain $M \subset M'$ is not symmetric.  This implies in particular
that in addition to the planar piece $M\planar\paper$, the paper 
$M\paper \subset M$ contains another connected component 
$M\other\paper \subset M\paper$ for which at least one of the following is true:
\begin{enumerate}
\item $\p M\other\paper$ is not contained in $\Sigma_1 \times S^1 \cup \ldots \cup
\Sigma_r \times S^1$;
\item The pages in $M\other\paper$ have positive genus;
\item The pages in $M\other\paper$ have genus zero but there is a spinal component
$\Sigma_i \times S^1$ that contains differing numbers of boundary components
of pages in $M\planar\paper$ and $M\other\paper$.
\end{enumerate}
In the first case, it follows that $\widetilde{M}_2'$ carries a contact structure
dominated by~$\omega$, so after capping $\widetilde{M}_1'$ we have a contradiction
to Lemma~\ref{lemma:McDuff}.  In the second case, either the same thing
happens or $\widetilde{M}_2'$ is a symplectic fibration over~$S^1$ with closed
pages of positive genus, so capping both $\widetilde{M}_1'$ and $\widetilde{M}_2'$ with
Lefschetz fibrations as in \cite{Eliashberg:cap} gives disjoint
symplectically embedded surfaces with zero self-intersection, 
one rational and one not, again contradicting the lemma.  For the third
case we instead may obtain two disjoint symplectically embedded
spheres, but they can be arranged to have different symplectic area.
\end{proof}

\begin{bibdiv}
\begin{biblist}
\bibselect{wendlc}
\end{biblist}
\end{bibdiv}

\end{document}